\documentclass[12pt]{article}
\usepackage{amsmath,amsthm,amsfonts,amssymb}
\usepackage[all]{xy}
\usepackage[usenames]{color} 

\raggedbottom 
\textwidth 6in 
\oddsidemargin .25in 
\evensidemargin.25in 
\textheight 8.5in 
\topmargin 0in 

\newcounter{enumitemp}
\newenvironment{enumeratecontinue}{
 \setcounter{enumitemp}{\value{enumi}}
 \begin{enumerate}
 \setcounter{enumi}{\value{enumitemp}}
}
{
 \end{enumerate}
}


\newcommand\pref[1]{(\ref{#1})}

\newtheorem{thm}{Theorem}[section]

\newtheorem{theorem}[thm]{Theorem}

\newtheorem{lemma}[thm]{Lemma}

\newtheorem*{theoremA}{Theorem A}
\newtheorem*{theoremB}{Theorem B}

\newtheorem*{theoremC}{Theorem C}
\newtheorem*{theoremE}{WWPD Construction Theorem}
\newtheorem*{theoremWWPD}{Global WWPD Theorem}
\newtheorem*{corollary*}{Corollary}

\newtheorem{corollary}[thm]{Corollary}
\newtheorem{proposition}[thm]{Proposition}
\newtheorem*{proposition*}{Proposition}

\newtheorem{prop}[thm]{Proposition}

\newtheorem{fact}[thm]{Fact}

\theoremstyle{definition}

\newtheorem{definition}[thm]{Definition} 

\newtheorem*{defn*}{Definition}

\newtheorem{remark}[thm]{Remark}
\theoremstyle{remark}

\newcounter{remarks}
{\paragraph*{Remarks}\smallskip
 \begin{list}{\arabic{remarks}. }{\usecounter{remarks}%
 \setlength{\leftmargin}{0in}%
 \setlength{\rightmargin}{0in}%
 \setlength{\labelsep}{0pt}%
 \setlength{\labelwidth}{0pt}%
 \setlength{\listparindent}{0pt}%
 }
}
{
\end{list}
}

\newcommand\cbdy\delta
\newcommand\from\colon
\newcommand\inv{{-1}}
\newcommand\subgroup{<}
\newcommand\normal\triangleleft
\newcommand\infinity\infty
\newcommand\na{\text{na}}
\newcommand\supp{\text{\!\!\tiny{supp}}}
\newcommand\disjunion\coprod
\newcommand\act\curvearrowright

\DeclareMathOperator{\Fix}{Fix}

\DeclareMathOperator\image{Image}
\DeclareMathOperator\kernel{Ker}

\DeclareMathOperator\Ker{Ker}

\DeclareMathOperator\Isom{Isom}

\newcommand{\R}{{\mathbb R}}
\newcommand\reals{\R}

\newcommand{\Z}{{\mathbb Z}}
\newcommand{\C}{{\mathcal C}}

\renewcommand{\S}{{\mathcal S}}

\newcommand{\M}{{\mathcal M}}
\newcommand{\K}{{\mathcal K}}


\DeclareMathOperator{\Out}{\mathsf{Out}}
\DeclareMathOperator{\Aut}{\mathsf{Aut}}
\DeclareMathOperator{\Inn}{\mathsf{Inn}}
\DeclareMathOperator{\mcg}{\mathsf{MCG}}
\DeclareMathOperator\MCG{\mcg}
\DeclareMathOperator{\Stab}{\mathsf{Stab}}


\newcommand{\F}{\mathcal F}
\newcommand{\rtt}{relative train track map}

\renewcommand\L{\mathcal L}

\def\B{\mathcal B}
\newcommand{\A}{\mathcal A}
\newcommand{\h}{\mathcal H}

\newcommand{\fG} {f : G \to G}
\newcommand{\ti} {\tilde}
\newcommand{\iNp} {indivisible Nielsen path}

\newcommand{\eg}{EG}
\newcommand{\noneg}{NEG}
\renewcommand\neg\noneg

\newcommand{\wt}{\widetilde}

\newcommand{\ct}{CT}
\newcommand{\cts}{CTs}

\newcommand{\comment}[1]{}

\newcommand\BeFujiTag{BestvinaFujiwara:bounded}
\newcommand\BeFuji{\cite{BestvinaFujiwara:bounded}}
\newcommand\BeFeighn{\cite{BestvinaFeighn:HypComplex}}
\newcommand\BHTag{BestvinaHandel:tt}
\newcommand\BH{\cite{\BHTag}}
\newcommand\BookZeroTag{BFH:laminations}

\newcommand\BookOneTag{BFH:TitsOne}
\newcommand\BookOne{\cite{\BookOneTag}}
\newcommand\BookTwoTag{BFH:TitsTwo}

\newcommand\BookThreeTag{BFH:Solvable}

\newcommand\recognitionTag{FeighnHandel:recognition}
\newcommand\recognition{\cite{\recognitionTag}}
\newcommand\abelianTag{FeighnHandel:abelian}

\newcommand\SubgroupsTag{HandelMosher:Subgroups}
\newcommand\Subgroups{\cite{\SubgroupsTag}}

\newcommand\SubgroupsOne{\cite[Part I]{\SubgroupsTag}}
\newcommand\SubgroupsTwo{\cite[Part II]{\SubgroupsTag}}
\newcommand\SubgroupsThree{\cite[Part III]{\SubgroupsTag}}
\newcommand\SubgroupsFour{\cite[Part IV]{\SubgroupsTag}}\newcommand\FSHypTag{HandelMosher:FreeSplittingHyperbolic}
\newcommand\FSHyp{\cite{\FSHypTag}}
\newcommand\FSLoxTag{HandelMosher:FreeSplittingLox}
\newcommand\FSLox{\cite{\FSLoxTag}}
\newcommand\MargulisDiscreteTag{Margulis:DiscreteSubgroups}
\newcommand\GhoshWeakTag{Ghosh:WeakAttraction}
\newcommand\GhoshWeak{\cite{\GhoshWeakTag}}

\newcommand\bdy\partial

\newcommand\intersect\cap
\newcommand\union\cup
\newcommand\<\langle
\renewcommand\>\rangle
\newcommand\meet\wedge
\newcommand\composed{\circ}
\newcommand\cross\times
\newcommand\restrict{\bigm |}
\newcommand\wh{\widehat}
\newcommand\inject\hookrightarrow

\newcommand\abs[1]{\left|#1\right|}
\newcommand\Id{\text{Id}}

\newcommand\injectto\hookrightarrow
\newcommand\injectfrom\hookleftarrow
\newcommand\surjectto\twoheadrightarrow
\newcommand\surjectfrom\twoheadleftarrow

 \newcommand\surjection\twoheadrightarrow
 

\newcommand\suchthat{\bigm|}

\DeclareMathOperator\IA{IA}

\newcommand\IAThree{\IA_n(\Z/3)}

\newcommand\cH{{\cal H}}

\newcommand{\cS}{{\mathcal S}}
\newcommand{\cM}{{\mathcal M}}

\DeclareMathOperator\FS{{\cal FS}}

\newcommand\fscn{{\cal FS}(F_n)}
\DeclareMathOperator\FF{{\cal FF}}

\newcommand\wwpd{WWPD}

\newcommand\shsh{{\#\!\#}}
\newcommand\lsm{\ell^-_s}

\newcommand\itemH{\item$\!\!\!{}_\Gamma$\ }
\newcommand\prefH[1]{\pref{#1}$_{\Gamma}$}

\title{Hyperbolic actions and 2nd bounded cohomology \\of subgroups of $\Out(F_n)$ \\ Part I: Infinite lamination subgroups}

\author{Michael Handel and Lee Mosher \thanks{The first author  was supported by the National Science Foundation under Grant No.~DMS-1308710 and by PSC-CUNY under grants in Program Years 46 and 47. The second author was supported by the National Science Foundation under Grant No.~DMS-1406376.}}

\begin{document}

\maketitle

\begin{abstract}
In this two part work (with sequel  \cite{HandelMosher:BddCohomologyII})
we prove that for every finitely generated subgroup $\Gamma \subgroup \Out(F_n)$, either $\Gamma$ is virtually abelian or $H^2_b(\Gamma;\reals)$ contains an embedding of $\ell^1$. The method uses actions on hyperbolic spaces. Here in Part I we focus on the case of infinite lamination subgroups $\Gamma$ --- those for which the set of all attracting laminations of all elements of $\Gamma$ is an infinite set --- using actions on free splitting complexes of free groups.
\end{abstract}

\section{Introduction}
\label{SectionIntro}

The study of hyperbolic actions --- group actions on Gromov hyperbolic spaces --- has co-evolved with the study of the 2nd bounded cohomology $H^2_b(\Gamma;\reals)$ of a group~$\Gamma$. This started with Brooks' theorem, using the action of a free group $\Gamma$ of rank~$\ge 2$ on its Cayley tree to prove that there is an embedding $\ell^1 \inject H^2_b(\Gamma;\reals)$ \cite{Brooks:H2bRemarks}. In works to follow, this proof was developed in increasing generality for certain proper(ly discontinuous) hyperbolic actions \cite{BrooksSeries:H2bSurface}, \cite{EpsteinFujiwara}, \cite{Fujiwara:H2BHyp}. Fujiwara extended the method to work for certain nonproper actions \cite{Fujiwara:H2bFreeProduct}. 
Bestvina and Fujiwara \BeFuji\ used hyperbolicity of the curve complex $\C(S)$ of a finite type surface $S$ \cite{MasurMinsky:complex1} to prove an ``$H^2_b$-alternative'' for subgroups $\Gamma \subgroup \MCG(S)$ of the mapping class group: either $\Gamma$ is virtually abelian, or there is an embedding $\ell^1 \inject H^2_b(\Gamma;\reals)$. While the action on $\C(S)$ is not proper, nonetheless Bestvina and Fujiwara distilled enough proper discontinuity to extend $H^2_b$-methods further, by introducing the WPD or ``weak proper discontinuity'' property. Later, the \emph{really}~weak but still useful WWPD property was introduced by Bestvina, Bromberg, and Fujiwara \cite{BBF:MCGquasitrees} who used it to study stable commutator length in~$\MCG(S)$ \cite{BBF:SCLonMCG}. For later steps of this co-evolution, using WPD to further the study of hyperbolic actions, see \cite{Bowditch:tight}, \cite{Osin:AcylHyp}, and \cite{BHS:Hierarchy}.

For subgroups $\Gamma \subgroup \Out(F_n)$, Bestvina and Feighn \cite{BestvinaFeighn:HypComplex} produced enough WPD hyperbolic actions to prove the $H^2_b$-alternative if $\Gamma$ contains a fully irreducible outer automorphism; for another proof by Hamenst\"adt see \cite{Hamenstadt:LinesOfMinima}. From our early subgroup decomposition theory \cite{HandelMosher:SubgroupOutF_n}, the same conclusion holds if $\Gamma$ is finitely generated and fully irreducible (see Section~\ref{SectionPPResults}); and by more recent work of Horbez using other methods \cite{Horbez:HandelMosher} one may eliminate the finite generation hypothesis. 

Here is our main result, to be proved over Parts~I and~II of this work:

\begin{theoremA}
Finitely generated subgroups of $\Out(F_n)$ satisfy the $H^2_b$-alternative.
\end{theoremA}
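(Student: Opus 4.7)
The plan is to prove the $H^2_b$-alternative by dichotomy according to the cardinality of the \emph{lamination set} of $\Gamma$, namely $\L(\Gamma) = \bigcup_{\phi \in \Gamma} \L(\phi)$ where $\L(\phi)$ is the (finite) set of attracting laminations of $\phi$. Part~I of the paper, as indicated by the abstract, will handle the case $|\L(\Gamma)| = \infty$, and Part~II will handle $|\L(\Gamma)| < \infty$. In either case the goal is either to exhibit $\Gamma$ as virtually abelian or to produce an isometric action of $\Gamma$ on a Gromov hyperbolic space together with an element $\phi \in \Gamma$ acting loxodromically and satisfying the WWPD property; by the Bestvina--Bromberg--Fujiwara construction this yields the desired embedding $\ell^1 \inject H^2_b(\Gamma; \reals)$.

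For the infinite lamination case (Part~I), the plan is to use the action of $\Out(F_n)$ on the free splitting complex $\fscn$, which is Gromov hyperbolic by \FSHyp\ and on which loxodromic elements are detected by their attracting/repelling laminations. First I would use the infiniteness of $\L(\Gamma)$, together with the finiteness of $\L(\phi)$ for individual $\phi$ and the pigeonhole/Zorn-type arguments from our prior subgroup decomposition work \SubgroupsOne, to extract an element $\phi \in \Gamma$ whose lamination pair $\Lambda^\pm_\phi$ is in ``generic position'' with respect to the $\Gamma$-orbit of the collection of free factor systems that can carry the laminations of other elements. The precise output should be a $\Gamma$-invariant free factor system $\F$ (possibly $\{[F_n]\}$) and an element $\phi$ acting loxodromically on the relative free splitting complex $\FS(F_n; \F)$. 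Then I would verify WWPD for $\phi$: the quasi-axis in $\FS(F_n; \F)$ is coarsely stabilized only by elements of $\Gamma$ that fix $\Lambda^\pm_\phi$ setwise, and such stabilizers are virtually cyclic by the stallings-like results of \recognition\ combined with genericity of $\Lambda^\pm_\phi$ in $\L(\Gamma)$. Finally apply the quasimorphism construction of \BBF\ (or equivalently the Bestvina--Fujiwara extension \BeFuji).

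For the finite lamination case (Part~II), the strategy is different: a finite index subgroup $\Gamma_0 \normal \Gamma$ fixes $\L(\Gamma)$ pointwise and preserves each associated attracting/repelling lamination pair. One then studies the kernel $K$ of the action of $\Gamma_0$ on these laminations. When $K$ is all of $\Gamma_0$ the subgroup consists entirely of automorphisms that are polynomially growing relative to the canonical filtration determined by $\L(\Gamma)$, and results of \abelian\ (in the spirit of Kolchin) should yield virtual abelianness. Otherwise one produces a loxodromic WWPD action on a different hyperbolic complex, most likely an arational limit-tree complex or a co-surface-type complex built from the common lamination structure, and again invokes \BBF.

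The main obstacle, and the reason the paper splits into two parts, is the verification of the WWPD property in the infinite lamination case: one must rule out that large subgroups of $\Gamma$ coarsely stabilize the quasi-axis of $\phi$ in $\FS(F_n; \F)$. The subtlety is that WPD in the strong sense of \BeFuji\ may fail (the action on $\FS$ is famously non-acylindrical in general), so we must genuinely use the weaker WWPD and control the coarse stabilizer using the lamination dictionary together with a careful choice of $\phi$ extracted from the hypothesized infinite set $\L(\Gamma)$. The secondary obstacle, reserved for Part~II, is the dichotomy within the finite lamination case, where the Kolchin-type machinery and the construction of an auxiliary hyperbolic space must be carried out simultaneously.
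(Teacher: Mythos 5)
Your broad dichotomy (infinite versus finite lamination set, with the free splitting complex in the infinite case) matches the paper, but there are two genuine gaps. First, your endgame --- ``produce an action of $\Gamma$ with one loxodromic WWPD element and apply the Bestvina--Bromberg--Fujiwara/Bestvina--Fujiwara construction'' --- does not suffice for Theorem~A, because the hyperbolic action one can actually build is not an action of $\Gamma$ itself. The paper's reduction first passes to $G_0 = G \intersect \IAThree$ and then restricts to a free factor $F_r$ of \emph{minimal} rank on which the image is not virtually abelian; minimality is what yields the ``(virtually) abelian restrictions'' hypothesis that Theorems B, C and E all require, and the resulting hyperbolic action (on $\FS(F_r)$, or on the space furnished by Theorem~C) is only an action of a finite index normal subgroup $N \normal \Gamma$. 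A single WWPD element for such an action only gives the $H^2_b$-alternative for a finite index subgroup (the paper says exactly this in the remark following Theorem~\ref{ThmWWPDHTwoBSpecial}); since unbounded quasimorphisms on $N$ do not automatically extend or transfer to $\Gamma$, one needs the full ``global WWPD hypothesis'' of Definition~\ref{DefWWPDHypothesis} --- a Schottky subgroup $F \subgroup [N,N]$ all of whose nontrivial elements are WWPD not just for $N \act X$ but for every action $N \act_\kappa X$ obtained by precomposing with the inner automorphisms of $\Gamma$ --- and Theorem~D's Kaloujnin--Krasner wreath-product argument to sum quasimorphisms over the cosets of $N$. Producing that Schottky subgroup with control over \emph{all} the conjugated actions is where most of the work of Section~\ref{SectionBCDImplyA} lies (Proposition~\ref{multiple filling}, Ghosh's theorem, the $\M_0$-consistent lamination assignments); none of this is addressed by your sketch.

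Second, your proposed WWPD verification is based on a claim that fails: you assert that the coarse stabilizer of the quasi-axis of $\phi$ in the (relative) free splitting complex, equivalently $\Stab(\bdy_\pm\phi)$, is virtually cyclic. That is the WPD property, and it is false for some loxodromic elements of the action on $\FS(F_n)$ (this is precisely why the paper works with WWPD, which only demands discreteness of the orbit of $\bdy_\pm\phi$ and places no restriction on its stabilizer); it certainly cannot be extracted from ``genericity'' of $\Lambda^\pm_\phi$ together with \recognition. The paper's actual WWPD verification is Theorem~E, whose hypotheses --- $\phi$ irreducible rel a $\Gamma$-invariant $\F$, with a filling lamination, and with \emph{trivial} restrictions to each component of $\F$ --- are exactly what the restriction-to-$F_r$ step plus taking $\phi \in [\Gamma,\Gamma]$ (so that the abelian restrictions die) are designed to deliver; its proof is a long argument via well functions and weak tilings of CT representatives, a solution-to-the-Scott-conjecture rank bound, and a contradiction with the Distinct Coset Property, not a stabilizer computation. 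Relatedly, in your finite-lamination branch note that finite lamination subgroups of $\Out(F_n)$ need not be virtually abelian, so Theorem~C must construct a genuinely new hyperbolic action there; your sketch gestures at this but, as in the infinite case, would again only produce an action of a finite index subgroup and so again needs the global WWPD/Theorem~D machinery rather than a one-element WWPD argument.
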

\noindent
Theorem~A is proved using Theorems~B and~C regarding certain hyperbolic actions with WWPD elements. Theorem~B is proved here, and Theorem~C is proved in Part~II \cite{HandelMosher:BddCohomologyII}. Those proofs also depend on WWPD methods found in \cite{HandelMosher:WWPD}\footnote{\cite{HandelMosher:WWPD} was split off from an old version of this work (\texttt{arXiv:1511.06913v4}) to decrease length.} (and see Proposition~\ref{PropWWPDProps}). To state Theorems~B and~C, we first review some definitions.

\smallskip\textbf{WWPD.} Given an action $G \act X$ on a hyperbolic space, an element $\gamma \in G$ is \emph{loxodromic} if its action on the Gromov boundary $\bdy X$ has north-south dynamics, with repeller--attractor pair $\bdy_\pm\gamma = (\bdy_-\gamma,\bdy_+\gamma) \in \bdy X \times \bdy X - \Delta$. Two loxodromic elements $\gamma,\delta \in G$ are said to be \emph{independent} if $\{\bdy_-\delta,\bdy_+\delta\} \intersect \{\bdy_-\gamma,\bdy_+\gamma\} = \emptyset$. Following \BeFuji, the action $G \act X$ is \emph{nonelementary} if there exists an independent pair $\delta,\gamma \in G$ of loxodromic elements (see Proposition~\ref{PropActionWithWWPD} and the following remark). The WWPD property for a loxodromic element $\gamma \in G$ is equivalent to saying that under the natural induced action $G \act \bdy X \cross \bdy X - \Delta$, the orbit $G \cdot \bdy_\pm\gamma$ is a discrete subset of the topological space $\bdy X \cross \bdy X - \Delta$ (see \cite{HandelMosher:WWPD} Proposition~\ref{PropWWPDProps} for the proof of equivalence of the property just stated with the version of WWPD in \cite{BBF:SCLonMCG}).

\smallskip\textbf{Finite and infinite lamination subgroups.} Behind the results of \BeFuji\ are Thurston's decomposition theory for elements of $\MCG(S)$ \cite{FLPKM}, Ivanov's decomposition theory for subgroups of $\MCG(S)$ \cite{Ivanov:subgroups}, and the Masur-Minsky results on hyperbolicity of $\C(S)$ \cite{MasurMinsky:complex1}. Given $\phi \in \MCG(S)$, its lamination set $\L(\phi)$ is a finite set consisting of one unstable lamination for each pseudo-Anosov piece of the Thurston decomposition of~$\phi$. Each subgroup $\Gamma \subgroup \MCG(S)$ has its associated lamination set $\L(\Gamma) = \cup_{\phi \in \Gamma} \L(\phi)$ which is useful in applying and reformulating results of subgroup decomposition theory. For example, Ivanov's results imply that $\Gamma \subgroup \MCG(S)$ is virtually abelian if and only if $\L(\Gamma)$ is finite. 

Behind the proof of Theorem~A are the decomposition theory for elements of $\Out(F_n)$ due to Bestvina, Feighn, and Handel \cite{\BookZeroTag,\BookOneTag,\BookTwoTag,\recognitionTag}, a decomposition theory for abelian subgroups of $\Out(F_n)$ due to Feighn and \hbox{Handel} \cite{\abelianTag}, our decomposition theory for finitely generated subgroups of $\Out(F_n)$ \Subgroups, and our results on hyperbolicity and dynamics of the free splitting complex $\FS(F_n)$ \cite{\FSHypTag,\FSLoxTag}. Associated to each $\phi \in \Out(F_n)$ is its finite set $\L(\phi)$ of \emph{attracting laminations} \cite{\BookOneTag}. Each subgroup $\Gamma \subgroup \Out(F_n)$ has its associated lamination set $\L(\Gamma) = \union_{\phi \in \Gamma} \L(\phi)$. If $\L(\Gamma)$ is finite then $\Gamma$ is a \emph{finite lamination subgroup}, otherwise it is an \emph{infinite lamination subgroup}. Every virtually abelian subgroup of $\Out(F_n)$ is a finite lamination subgroup, but the converse does not hold, unlike in $\MCG(S)$.

\smallskip\textbf{Subgroups of $\IAThree$ with (virtually) abelian restrictions.} In \BeFuji, Bestvina and Fujiwara use Ivanov's results \cite{Ivanov:subgroups} to reduce to the case of an ``irreducible'' subgroup $\Gamma \subgroup \MCG(S)$. We use a somewhat different reduction to special subgroups of $\Out(F_n)$. Recall the finite index, characteristic subgroup 

\medskip
\centerline{$\IAThree = \IA(F_n;\Z/3) = \kernel\biggl(\Out(F_n) \mapsto H_1(F_n;\Z/3) \approx GL(n,\Z/3)\biggr)
$}
\medskip

\noindent
$\IAThree$ has useful invariance properties (Section~\ref{SectionPPResults}): it is torsion free (\cite{BaumslagTaylor:Center}, \cite{Vogtmann:OuterSpaceSurvey}); if $\phi \in \IAThree$ then any $\phi$-periodic conjugacy class of an element or free factor of $F_n$ is fixed by~$\phi$ (see \SubgroupsTwo); every virtually abelian subgroup of $\IAThree$ is abelian \cite{HandelMosher:VirtuallyAbelian}, which we invoke by writing ``(virtually) abelian''. To achieve these properties we often restrict a subgroup to its intersection with $\IAThree$.

For any free factor $B \subgroup F_n$ with conjugacy class $[B]$ and stabilizer subgroup $\Stab[B] \subgroup \Out(F_n)$, 
there is a natural \emph{restriction homomorphism} $\Stab[B] \mapsto \Out(B)$ (see e.g.\ \cite[Section 2.6]{\BookOneTag}, and \SubgroupsOne\ Fact 1.4). A subgroup $\Gamma \subgroup \IAThree$ is said to have \emph{(virtually) abelian restrictions} if for any proper free factor $B \subgroup F_n$ such that $\Gamma \subgroup \Stab[B]$, the restriction map $\Gamma \mapsto \Out(B)$ has (virtually) abelian image; see Corollary~\ref{CorollaryVirtuallyAbelian} for the fact that the image of the natural homomorphism $\IA(F_n;\Z/3) \intersect \Stab[B] \mapsto \Out(B)$ is contained in $\IA(B;\Z/3)$. 

Among finitely generated subgroups of $\IAThree$ with (virtually) abelian restrictions, Theorems~B and~C are concerned, respectively, with infinite lamination and finite lamination subgroups, and their actions on hyperbolic spaces. 

\begin{theoremB}
For any infinite lamination subgroup $\Gamma \subgroup \IAThree$ with (virtually) abelian restrictions, and any maximal, $\Gamma$-invariant, proper free factor system $\A$,
\begin{enumerate}
\item\label{ItemLoxExistsB} 
The action $\Gamma \act \FS(F_n)$ is nonelementary.
\vspace{-.5em}
\item\label{ItemWWPDExistsB}
There exists $\phi \in \Gamma$ such that $\phi \in [\Gamma,\Gamma]$, $\phi$ is fully irreducible rel~$\A$, and $\phi$ is a loxodromic element for the action $\Gamma \act \FS(F_n)$. Furthermore, any such $\phi$ is a WWPD element for that action.
\end{enumerate}
\end{theoremB}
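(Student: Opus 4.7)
The plan is to combine the dynamical characterization of loxodromic elements on $\FS(F_n)$ from \FSLox\ with the subgroup decomposition theory of \SubgroupsOne--\SubgroupsFour. The unifying observation is this: if $\phi \in \Gamma$ is fully irreducible rel $\A$, then the smallest $\phi$-invariant free factor system carrying its top attracting lamination $\Lambda^+_\phi$ must contain $\A$, but by fully irreducibility rel $\A$ no proper $\phi$-invariant extension of $\A$ exists, so it equals $\{[F_n]\}$; hence $\Lambda^+_\phi$ is filling, and by \FSLox, $\phi$ acts loxodromically on $\FS(F_n)$ with endpoints $\bdy_\pm\phi$ naturally identified with the lamination pair $\Lambda^\pm_\phi$.

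For \itemref{ItemLoxExistsB}, I would first produce an element of $\Gamma$ fully irreducible rel $\A$. The maximality of the $\Gamma$-invariant $\A$ combined with (virtually) abelian restrictions rules out $\phi$-invariant proper extensions of $\A$ for suitable $\phi$: if every $\phi \in \Gamma$ admitted such an extension, the finitely many possibilities (bounded by complexity of $\A$) together with the virtually abelian image on the quotient would patch into a common $\Gamma$-invariant proper extension, contradicting maximality. Infinite lamination supplies the many elements needed to carry out this argument and yields $\phi \in \Gamma$ that is loxodromic on $\FS(F_n)$. To upgrade to an independent pair of loxodromics, I would again invoke infinite lamination to find $g \in \Gamma$ with $g \cdot \Lambda^+_\phi \not\in \{\Lambda^\pm_\phi\}$, giving a loxodromic conjugate $g\phi g^{-1}$ with endpoint pair disjoint from $\bdy_\pm\phi$; a hyperbolic ping-pong on $\FS(F_n)$ then promotes this to a truly independent pair, proving nonelementarity.

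For \itemref{ItemWWPDExistsB}, I would construct the required loxodromic $\phi \in [\Gamma,\Gamma]$ fully irreducible rel $\A$ as a commutator $[\phi_1,\phi_2^N]$ for large $N$, where $\phi_1,\phi_2 \in \Gamma$ are fully irreducible rel $\A$ with sufficiently independent top-lamination dynamics on the quotient by $\A$; a standard expansion/contraction argument on the quotient then shows this commutator remains fully irreducible rel $\A$. For the WWPD assertion, let $\phi$ be any loxodromic element of $[\Gamma,\Gamma]$ fully irreducible rel $\A$, and suppose $g_k \in \Gamma$ satisfies $g_k \cdot (\bdy_-\phi,\bdy_+\phi) \to (\bdy_-\phi,\bdy_+\phi)$. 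The endpoint-to-lamination identification transfers this to $g_k \cdot \Lambda^\pm_\phi \to \Lambda^\pm_\phi$ in the lamination space. Since $\Lambda^\pm_\phi \in \L(\Gamma)$ and the $\Gamma$-action on top filling-rel-$\A$ laminations is locally rigid --- controlled by the stabilizer analysis of $\Gamma$-invariant lamination pairs under the (virtually) abelian restrictions hypothesis --- we conclude $g_k \cdot \Lambda^\pm_\phi = \Lambda^\pm_\phi$ for $k$ large, giving the required discreteness.

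The main obstacle will be the local-rigidity step inside the WWPD argument: precisely translating convergence of $g_k$ on $\bdy\FS(F_n)$ into convergence in the space of laminations, and using (virtually) abelian restrictions together with the decomposition theory to pin down enough finiteness in the $\Gamma$-orbit structure of top laminations to promote ``near-approach'' to actual equality. This requires carefully linking three distinct dynamical pictures --- the action on $\FS(F_n)\cup\bdy\FS(F_n)$, the action on lamination space, and the action on the poset of free factor systems --- in a uniform framework.
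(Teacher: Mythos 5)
There are two genuine gaps. First, your ``unifying observation'' is false as stated: if $\phi$ is fully irreducible rel~$\A$, what follows is only that $\Lambda^+_\phi$ fills \emph{relative to}~$\A$ (i.e.\ $\F_\supp(\Lambda^\pm_\phi;\A)=\{[F_n]\}$), not that $\Lambda^+_\phi$ fills $F_n$ absolutely; and it is the absolute filling property that characterizes loxodromic action on $\FS(F_n)$ in \FSLox. There is also no reason for $\F_\supp(\Lambda^+_\phi)$ to contain $\A$. In the paper the absolute filling of $\Lambda^\pm_\eta$ is a group-level fact: one first shows (by a maximality/ping-pong argument using the Inductive Step of Proposition~2.4 of \SubgroupsFour, separately in the geometric and nongeometric cases) that $\F_\supp(\Lambda^\pm_\eta)$ is $\Gamma$-invariant, and then invokes the property that the $\Gamma$-orbit of every element of $\L(\Gamma;\A)$ fills $F_n$ --- which itself is where the infinite lamination hypothesis, Proposition~\ref{PropInfIndexStab}, and the abelian-restrictions hypothesis enter. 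Your ``patching'' argument for producing a fully irreducible element is also not a proof: producing an element of an irreducible-rel-$\A$ subgroup that is \emph{fully} irreducible rel~$\A$ is the content of Theorem~\ref{Theorem_I} (Theorem~I of \SubgroupsFour), which is a substantial theorem, not a finiteness count of intermediate free factor systems (of which there are infinitely many).

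Second, and more seriously, your WWPD step is circular: the ``local rigidity'' assertion --- that $g_k\cdot\Lambda^\pm_\phi\to\Lambda^\pm_\phi$ forces $g_k\cdot\Lambda^\pm_\phi=\Lambda^\pm_\phi$ for large $k$ --- is exactly the equivariant discreteness that WWPD demands (Proposition~\ref{PropDefWWPD}), and nothing in the abelian-restrictions hypothesis or the orbit structure delivers it for free: the relevant orbits are infinite (Proposition~\ref{PropInfIndexStab}), WPD genuinely fails for some of these loxodromics by \FSLox, and the boundary-to-lamination correspondence is only an equivariant bijection, so convergence in $\bdy\FS(F_n)$ does not hand you a compactness or finiteness argument in lamination space. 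In the paper this discreteness is precisely Theorem~E, whose proof occupies Sections~\ref{SectionWells}--\ref{SectionTheoremEProof}: well functions and weak tiling functions with their coarse equivalence (Proposition~\ref{PropKTiles}), the uniform splitting Lemma~\ref{stabilizes}, the regularity Corollaries~\ref{tilings from tiles} and~\ref{bounded intersection}, and a case analysis (via the Distinct Coset and Long Cylinder properties) that crucially exploits the \emph{trivial restrictions} hypothesis --- available exactly because $\phi\in[\Gamma,\Gamma]$ and the restrictions of $\Gamma$ are abelian --- to force $f\restrict G_r=\mathrm{id}$ and all lower non-fixed edges to be linear, giving $\Lambda^-_\phi$ the simple structure needed in Case~2b (ending with the rank bound on fixed subgroups from the Scott conjecture). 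Your proposal never uses triviality of the restrictions in any mechanism, so the heart of the theorem is missing; what you have is a correct reduction sketch for the easier parts plus a restatement of the hard part as an unproved claim.
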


\begin{theoremC} For any finitely generated, finite lamination subgroup $\Gamma \subgroup \IAThree$ which is not (virtually) abelian and has (virtually) abelian restrictions, there is a finite index normal subgroup $N \subgroup \Gamma$ and an action $N \act X$ on a hyperbolic space, such that:
\begin{enumerate}
\item\label{ThmCLoxOrEll} Every element of $N$ acts elliptically or loxodromically on~$X$.
\vspace{-.5em}
\item\label{ThmCNonelementary}
The action $N \act X$ is nonelementary.
\vspace{-.5em}
\item\label{ThmCWWPD}
Every loxodromic element of $[N,N]$ is a strongly axial, WWPD element with respect to the action $N \act X$.
\end{enumerate}
\end{theoremC}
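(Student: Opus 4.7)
The plan is to use the subgroup decomposition theory of \SubgroupsZero--\SubgroupsFour\ together with the abelian subgroup theory of \abelian\ to reduce $\Gamma$ to a structural situation in which all non-(virtually) abelian behavior is concentrated in twist-like dynamics relative to a maximal $\Gamma$-invariant proper free factor system $\A$, and then to build a hyperbolic space $X$ on which this twist structure produces the required loxodromic dynamics and WWPD.

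First, I would pass to a finite index normal subgroup $N \subgroup \Gamma$ such that (a)~$N$ fixes each lamination in the finite set $\L(\Gamma)$ together with its transverse orientation; (b)~$N$ preserves a maximal $N$-invariant proper free factor system $\A$ componentwise; and (c)~by the (virtually) abelian restriction hypothesis and \cite{HandelMosher:VirtuallyAbelian}, each restriction $N \to \Out(A)$ for $[A] \in \A$ has abelian image. Existence of such $N$ uses torsion-freeness of $\IAThree$ and the fixed-point preservation properties from \SubgroupsTwo. Letting $K$ be the kernel of the combined restriction $N \to \prod_{[A]\in\A}\Out(A)$, we have $[N,N] \subgroup K$, and elements of $K$ act as identity-rel-$\A$ twists. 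Because $N$ is not (virtually) abelian while every restriction is abelian, $K$ itself must be non-abelian, supplying a rich supply of commutator twists to exploit.

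Second, I would construct $X$ as a hyperbolic space adapted to $\A$: the natural candidates are a relative free splitting complex $\FS(F_n,\A)$ in the sense of \FSHyp, or a Bass-Serre tree coming from a carefully chosen $N$-invariant free splitting of $F_n$ rel $\A$ that is sensitive to the twist generators of $K$. The idea is that twists along edges not collapsed in the splitting should translate along $X$, while the pure $\A$-restriction part of any element acts elliptically; combining with the characterization of loxodromics from \FSLox\ should yield the elliptic/loxodromic dichotomy in item~\itemref{ThmCLoxOrEll}. Nonelementarity in item~\itemref{ThmCNonelementary} would then follow by producing two independent loxodromic commutators in $K$, exploiting the non-abelianness of~$K$ to find two twist elements whose axes have disjoint endpoints in $\bdy X$.

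Third, for the WWPD conclusion in item~\itemref{ThmCWWPD} I would analyze the $N$-stabilizer of the axis endpoints $\bdy_\pm\gamma$ of a loxodromic $\gamma \in [N,N]$: such a stabilizer must preserve the twist system that defines the axis, and the (virtually) abelian restriction hypothesis together with the abelian subgroup theory of \abelian\ should force this stabilizer to be virtually cyclic modulo the elliptic kernel, giving discreteness of the orbit $N \cdot \bdy_\pm \gamma$. The main obstacle I expect is precisely the elliptic/loxodromic dichotomy for \emph{all} of $N$ in item~\itemref{ThmCLoxOrEll}: elements of $N \setminus K$ act through a mixture of restriction dynamics (elliptic on $X$) and twist dynamics (loxodromic on $X$), and one must design $X$ so that this mixture cannot produce parabolic-like behavior, while simultaneously retaining the WWPD control on commutator axes. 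Achieving both requires a careful choice of the relative splitting underlying $X$, and is the technical heart of the argument to be carried out in the sequel \cite{HandelMosher:BddCohomologyII}.
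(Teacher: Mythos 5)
There is no proof of Theorem~C in this paper for me to check you against: the paper explicitly uses Theorem~C as a black box here in Part~I and defers its proof to Part~II, where ``some new hyperbolic actions are constructed.'' So the only question is whether your proposal stands on its own, and it does not: it is a plan whose decisive steps are missing, and you say so yourself when you write that the elliptic/loxodromic dichotomy and the WWPD control are ``the technical heart of the argument to be carried out in the sequel.'' A proof proposal that defers the heart of the proof to the very paper it is trying to replace is a gap, not a proof.

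The gap is concrete, not just rhetorical: your candidate spaces are the wrong ones. By the criterion you yourself cite from \FSLox, an outer automorphism acts loxodromically on a free splitting complex if and only if it has a filling attracting lamination; but in your setup the interesting elements are the ``identity-rel-$\A$ twists'' in $K \supset [N,N]$, which have no attracting laminations at all rel~$\A$ (this is exactly the finite-lamination/abelian-restriction regime), so on $\FS(F_n)$ or on any relative free splitting complex these elements act \emph{elliptically}, not loxodromically --- a Dehn-twist-like automorphism fixes the free splitting it twists about. Likewise, for $N$ to act on the Bass--Serre tree of a single splitting you must take an $N$-invariant splitting, and then the twists along its edges again act elliptically. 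So neither candidate can deliver conclusion~\itemref{ThmCNonelementary} (two independent loxodromics in $[N,N]$), and without loxodromic commutators conclusion~\itemref{ThmCWWPD} is vacuous and conclusion~\itemref{ThmCLoxOrEll} is uninformative. This is precisely why the authors must build genuinely new hyperbolic spaces in Part~II rather than reuse $\FS$-type complexes; your first reduction step (passing to $N$ fixing the finitely many laminations and each component of $\A$, with abelian restrictions and a non-abelian twist kernel) is reasonable and in the expected spirit, but everything after it --- the definition of $X$, the dichotomy, the independence of axes, and the discreteness of $N \cdot \bdy_\pm\gamma$ --- is asserted rather than argued, and the assertions are pinned to spaces on which they fail.
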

\noindent
To say that a loxodromic element $\phi \in G$ of a hyperbolic action $G \act X$ is \emph{strongly axial} means that there exists a quasi-isometric embedding $\ell \from \reals \to X$ and a homomorphism $\tau \from \Stab(\bdy_\pm\phi) \to \reals$ such that for all $\psi \in \Stab(\bdy_\pm\phi)$ and all $s \in \reals$ we have $\psi(\ell(s)) = \ell(s+\tau(\psi))$. The ``strongly axial'' conclusion in item~\pref{ThmCWWPD} will be applied in Section~\ref{SectionBCDImplyA}, Case~1.

The proof of Theorem~B uses hyperbolicity of the free splitting complex $\FS(F_n)$ \FSHyp\ and properties of the action $\Out(F_n) \act \FS(F_n)$ \FSLox: a description of loxodromic elements; and the fact that every element acts elliptically or loxodromically. The proof of Theorem~C, which is found in the sequel Part~II \cite{HandelMosher:BddCohomologyII}, uses some new hyperbolic actions. Here in Part~I, Theorem~C is used as a black box for purposes of proving Theorem~A (the place where this occurs is Section~\ref{SectionBCDImplyA}, Case~1).

\subsection*{Description and motivation of WWPD methods.} 

Reducing Theorem~A to Theorems~B and~C is carried out in Section~\ref{SectionBCDImplyA}. The reduction proof will apply WWPD methods that are summarized in the Global WWPD Theorem of \cite{HandelMosher:WWPD}, as we now briefly describe.

Bestvina and Fujiwara demonstrate in \BeFuji\ that the WPD property holds for loxodromic elements of the action of the mapping class group $\MCG(S)$ on the curve complex $\C(S)$ of a finite type surface $S$, those elements being the pseudo-Anosov mapping classes. Bestvina and Feighn demonstrate in \cite{BestvinaFeighn:HypComplex} that WPD holds for elements of $\Out(F_n)$ that act loxodromically on the free factor complex $\FF(F_n)$, those elements being the fully irreducible outer automorphisms. In both cases those demonstrations suffice for applying the WPD methods of \BeFuji\ to analyze the second bounded cohomology of a subgroup $\Gamma$ of $\MCG(S)$ or of $\Out(F_n)$ that happens to contain the appropriate type of element. And in \BeFuji\ this is used, in combination with Ivanov's subgroup decomposition theory \cite{Ivanov:subgroups}, to prove the $H^2_b$-alternative for an \emph{arbitrary} subgroup of $\MCG(S)$.

However, fully irreducible elements of $\Out(F_n)$ seem insufficient for understanding the $H^2_b$ alternative for a general subgroup of $\Out(F_n)$. We use the strictly broader class of elements $\phi \in \Out(F_n)$ that act loxodromically on the free splitting complex $\FS(F_n)$ --- equivalently, $\phi$ has an attracting lamination that fills the free group \FSLox. Although acting loxodromically on $\FS(F_n)$ does not imply that $\phi \in \Out(F_n)$ is a WWPD element of the action $\Out(F_n) \act \FS(F_n)$ \FSLox, our proof produces enough loxodromic WWPD elements for applications to the $H^2_b$ alternative of finitely generated subgroups of $\Out(F_n)$, applying WWPD methods as follows.

We shall use a ``global WWPD property'' \cite{HandelMosher:WWPD} that generalizes and abstracts various features of the WPD methods in \BeFuji. This property does not require a hyperbolic action of $\Gamma$ itself, only a hyperbolic action $N \act X$ of a finite index normal subgroup~$N \, \normal \, \Gamma$. And it does not require every loxodromic element of $N$ to satisfy WWPD, only that WWPD holds for each nontrivial element of some Schottky subgroup $F \subset [N,N]$ of the commutator subgroup of~$N$. The global WWPD property also imposes similar conditions on the actions of $N$ that are obtained by pre-composing the given action of $N$ with inner automorphisms of $\Gamma$. For the precise statement of the global WWPD property see Definition~\ref{DefWWPDHypothesis}. 

\begin{theoremWWPD}[\cite{HandelMosher:WWPD}] 
If a group $\Gamma$ satisfies the global WWPD property then $H^2_b(\Gamma;\reals)$ contains an embedded $\ell^1$. 
\end{theoremWWPD}

In \cite{HandelMosher:WWPD}, besides the Global WWPD Theorem one will also find a general theory of WWPD and its relation to second bounded cohomology. Portions of that theory relevant for this paper are summarized here in Propositions~\ref{PropWWPDProps} and~\ref{PropActionWithWWPD}. Also, in some cases the rather intricate Global WWPD Theorem can be avoided by applying a less intricate theorem from 
\cite{HandelMosher:WWPD}; see ``Remark: On avoiding the Global WWPD Theorem'' in Section~\ref{SectionBCDImplyA}.

\subsection*{Methods of proof of Theorem B.} Consider a subgroup $\Gamma \subgroup \IAThree$ satisfying the hypotheses of Theorem~B, so $\Gamma$ is an infinite lamination subgroup with (virtually) abelian restrictions. Consider also a maximal, proper, $\Gamma$-invariant free factor system $\A$, and recall that each element or \emph{component} of $\A$ is the conjugacy class $[A]$ of a certain proper, nontrivial free factor $A$ (see Section~\ref{SectionBasicNotions}). In Section~\ref{SectionReducingBToE}, we attack Theorem~B by using lamination ping-pong methods from \SubgroupsFour\ to produce $\phi \in \Gamma$ satisfying the following properties:  $\phi$ is irreducible relative to~$\A$; $\phi$ is in the commutator subgroup of~$\Gamma$; and $\phi$ has a filling attracting lamination and hence acts loxodromically on $\FS(F_n)$  \FSLox. Since the image of the restriction homomorphism $\Gamma \mapsto \Out(A)$ is abelian, and since $\phi$ is in the commutator subgroup of $\Gamma$, the restriction $\phi \restrict \Out(A)$ is trivial. These arguments are carried out in full detail in Section~\ref{SectionReducingBToE}, with the effect of reducing Theorem~B to the following:

\begin{theoremE}
Let $n \ge 3$, let $\Gamma$ be a subgroup of $\IAThree$ that preserves a (possibly empty) proper free factor system $\A$, and let $\phi \in \Gamma$ have the following properties: 

\medskip\noindent{\rm \bfseries Relative irreducibility:} $\phi$ is irreducible rel $\A$; 

\medskip\noindent{\rm \bfseries Trivial restrictions:} $\phi \restrict \Out(A)$ is trivial for each component $[A] \in \A$; 

\medskip\noindent{\rm \bfseries Filling lamination:} $\phi$ has a filling attracting lamination.  

\medskip\noindent
Then $\phi$ is a WWPD element for the action of $\Gamma$ on $\fscn$.
\end{theoremE}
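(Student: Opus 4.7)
The plan is to verify WWPD by showing the $\Gamma$-orbit of $(\bdy_-\phi, \bdy_+\phi)$ is discrete in $\bdy\fscn \times \bdy\fscn - \Delta$; by equivariance it suffices that $(\bdy_-\phi,\bdy_+\phi)$ is isolated in its orbit. Equivalently, given any sequence $\gamma_i \in \Gamma$ with $\gamma_i \cdot (\bdy_\pm\phi) \to (\bdy_\pm\phi)$, I would show $\gamma_i \cdot (\bdy_\pm\phi) = (\bdy_\pm\phi)$ for all sufficiently large $i$. The first move is to pass from $\bdy\fscn$ to laminations: by \FSLox\ the loxodromic fixed points $\bdy_\pm\phi$ are in natural correspondence with the attracting and repelling laminations $\Lambda^\pm_\phi$, and convergence of loxodromic fixed points in $\bdy\fscn$ is governed by convergence in the appropriate space of laminations. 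Thus the claim reduces to showing $\gamma_i \cdot \Lambda^+_\phi = \Lambda^+_\phi$ for all large $i$ (and symmetrically for $\Lambda^-_\phi$).

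Next, apply \recognition\ to obtain a CT $f : G \to G$ representing a positive iterate of $\phi$, with $\F$ realized as a filtration and with top EG stratum $H_r$ carrying $\Lambda^+_\phi$. The hypothesis $\phi \in \IAThree$ combined with trivial restrictions $\phi\restrict\Out(A) = \Id$ for $[A] \in \F$ forces $f$ to act trivially on the conjugacy classes of elements and free factors carried by~$\F$; all nontrivial dynamics of $\phi$ occurs in $H_r$. Relative irreducibility makes $\Lambda^+_\phi$ the unique attracting lamination of $\phi$ not carried by $\F$, and ensures it fills rel $\F$. Because $\Gamma$ preserves $\F$, each conjugate $\gamma_i \phi \gamma_i^\inv$ admits a CT of the same combinatorial shape with $\gamma_i \cdot \Lambda^+_\phi$ as its filling attractor.

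The crux is to upgrade convergence $\gamma_i \cdot \Lambda^+_\phi \to \Lambda^+_\phi$ to eventual equality, via the weak attraction / ping-pong machinery of \SubgroupsFour\ (see also \GhoshWeak). Any generic leaf of a translate $\gamma_i \cdot \Lambda^+_\phi$ is, under forward or backward iteration of $\phi$, weakly attracted either to $\Lambda^+_\phi$ or to $\Lambda^-_\phi$, unless it already coincides with one of them. Convergence to $\Lambda^+_\phi$ rules out attraction to $\Lambda^-_\phi$; and if $\gamma_i \cdot \Lambda^+_\phi \ne \Lambda^+_\phi$ is weakly attracted to $\Lambda^+_\phi$, then generic leaves of $\gamma_i \cdot \Lambda^+_\phi$ must, before attraction sets in, lie outside a definite neighborhood of $\Lambda^+_\phi$ in the lamination topology, contradicting the assumed convergence. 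Hence $\gamma_i \cdot \Lambda^+_\phi = \Lambda^+_\phi$ eventually, and the symmetric argument for $\Lambda^-_\phi$ completes the proof.

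The main obstacle is this third step: upgrading pointwise convergence of lamination translates to eventual equality. The trivial-restriction and relative-irreducibility hypotheses are precisely what rules out ``twist-type'' near misses; without them, relative twists supported on components of $\F$ could produce distinct sequences of translates converging to $\Lambda^+_\phi$. These hypotheses confine the possible near misses to attracting laminations of conjugates $\gamma_i \phi \gamma_i^\inv$ whose CT structure mirrors that of $f$, so that the ping-pong dichotomy above becomes decisive. This is exactly why $\phi$ was produced inside $[\Gamma,\Gamma]$ in the reduction from Theorem~B: the commutator subgroup lies in the kernel of the (virtually) abelian restriction maps to each $\Out(A)$, thereby inheriting the trivial restrictions demanded by Theorem~E.
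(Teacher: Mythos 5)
Your first move (reduce WWPD to discreteness of the orbit of $(\bdy_-\phi,\bdy_+\phi)$ via Proposition~\ref{PropDefWWPD}, then transfer to laminations via \FSLox) matches the paper's setup, as does your correct identification of where the trivial-restriction hypothesis does its work. But the third step of your proposal --- upgrading convergence $\gamma_i\cdot\Lambda^+_\phi \to \Lambda^+_\phi$ to eventual equality by a weak-attraction dichotomy --- has a real gap, and it is precisely the gap that the bulk of the paper's machinery (Sections~\ref{SectionWells}--\ref{SectionTheoremEProof}) exists to fill.

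The specific flaw: you argue that if $\gamma_i\cdot\Lambda^+_\phi \ne \Lambda^+_\phi$ is weakly attracted to $\Lambda^+_\phi$, then its generic leaves ``must, before attraction sets in, lie outside a definite neighborhood of $\Lambda^+_\phi$.'' Weak attraction asserts nothing of the sort. It tells you that iterates of a non-carried line accumulate on $\Lambda^+_\phi$; it gives no lower bound on how far the line must start from $\Lambda^+_\phi$, and indeed a line distinct from any leaf of $\Lambda^+_\phi$ can share arbitrarily long subpaths with generic leaves. So the ping-pong dichotomy you invoke does not produce the contradiction you need: distinct translates $\gamma_i\cdot\Lambda^+_\phi$ could in principle converge weakly to $\Lambda^+_\phi$ without ever equaling it, unless one brings in quantitative control of a completely different sort. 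There is also an unaddressed topological mismatch: \cite[Theorem~1.2]{\FSLoxTag} gives a set-theoretic, $\Out(F_n)$-equivariant bijection between loxodromic endpoints in $\bdy\fscn$ and filling laminations, not a homeomorphism or continuity statement in either direction, so ``convergence of loxodromic fixed points is governed by convergence in the space of laminations'' is not automatic and cannot be taken off the shelf.

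The paper's proof instead encodes the failure of WWPD as the \emph{Long Cylinder Property}, a purely quantitative statement about distances $d(S_0\cdot\phi^k\theta_i, S_0\cdot\phi^k)$ in $\fscn$, and then translates that into constraints on a \emph{well function} $W_\phi$ on the spine of relative outer space. The crux is Proposition~\ref{PropKTiles}, which identifies $W_\phi$ (up to bounded error) with a combinatorial \emph{weak tiling function} $\tau_f$ read off a \ct\ for $\phi$, and its regularity consequences (Corollaries~\ref{tilings from tiles} and~\ref{bounded intersection}). The proof then runs a three-way case analysis: Cases~1 and~2a are eliminated by showing the purported sequence $\theta_i$ would violate these regularity bounds; only in the residual Case~2b does the trivial-restrictions hypothesis enter, forcing $G_u$ to consist of fixed edges plus NEG-linear edges and hence giving $\Lambda^-_\phi$ the very rigid ``constant / once-linear / twice-linear bottom piece'' structure. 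That rigidity is then exploited via a Stallings core-graph and primitivity argument to extract a subsequence with $\theta_i^\inv(\Lambda^-_\phi) = \theta_j^\inv(\Lambda^-_\phi)$, which contradicts the Distinct Coset Property. None of this combinatorial apparatus --- well functions, weak tilings, the uniform splitting Lemma~\ref{stabilizes}, or the case analysis --- appears in your proposal, and without it the passage from ``close'' to ``equal'' is unsupported.
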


We do not know whether the conclusion of the WWPD Construction Theorem continues to hold if the hypothesis ``Trivial restrictions'' is dropped. But as our proof of Theorem~B shows, this hypothesis can be verified for appropriate elements of the subgroups occuring in Theorem~B.

\medskip

The proof of the WWPD Construction Theorem is carried out in Section~\ref{SectionTheoremEProof}. That proof depends on results about \emph{well functions} in the free splitting complex, developed in \FSLox\ in analogy to the well functions of Algom-Kfir in the context of outer space \cite{AlgomKfir:Contracting}, and further developed in Section~\ref{SectionWells}. In particular, Section~\ref{SectionTileRegularity} describes regularity properties of attracting laminations which we use in place of measure theoretic properties of currents as applied in \BeFuji\ and \BeFeighn. Underlying Section~\ref{SectionTileRegularity} is a study of uniform splitting properties of \eg\ strata of relative train track maps carried out in Section~\ref{SectionUniformSplitting}, strengthening splitting properties derived in earlier works.

In both Theorem B and the WWPD Construction Theorem, the free factor system~$\F$ is empty if and only if the subgroup $\cH$ is fully irreducible, in which case we recover the result of Bestvina and Feighn \cite{BestvinaFeighn:HypComplex} with a different proof. There might be various ways to relax the assumption that $\F$ be empty. The way expressed in the WWPD Construction Theorem---allowing $\F$ to be nonempty but requiring $\cH$ to be abelian when restricted to each component of~$\F$---was chosen because it is sufficient for purposes of proving Theorem~B, and because the laminations involved in the WWPD Construction Theorem have a particularly simple structure that is easier than general laminations. For a description of that structure see the heading \emph{The topological structure of $\Lambda^-_\phi$}, under Case 2b of Section~\ref{SectionThmECaseAnalysis}.

\subsection*{Application to the Bridson-Wade Theorem}

As a corollary to Theorem~A we prove the following theorem of Bridson-Wade, following the same lines as the analogous proof for mapping class groups found in \BeFuji.

\begin{corollary*}[\cite{BridsonWade:ActionsOnFreeGroups}] If $\Gamma$ is an irreducible lattice in a connected, semisimple Lie group of real rank $\ge 2$ having finite center, then every homomorphism $h \from \Gamma \to \Out(F_n)$ has finite image.
\end{corollary*}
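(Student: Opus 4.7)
The plan is to follow the strategy that Bestvina and Fujiwara used for the analogous statement about mapping class groups \BeFuji, with Theorem~A supplying the crucial $H^2_b$-alternative for $\Out(F_n)$. Two further classical inputs are needed: Margulis's normal subgroup theorem and a bounded cohomology result of Burger and Monod.

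Let $h : \Gamma \to \Out(F_n)$ be a homomorphism. Since $\Gamma$ is finitely generated (being a higher rank lattice), so is its image $h(\Gamma) \subgroup \Out(F_n)$. By Theorem~A applied to $h(\Gamma)$, either $h(\Gamma)$ is virtually abelian, or $H^2_b(h(\Gamma);\reals)$ contains an embedded copy of $\ell^1$. I will argue that in either case $h(\Gamma)$ must be finite.

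By Margulis's normal subgroup theorem \cite{\MargulisDiscreteTag}, $\ker(h) \normal \Gamma$ is either of finite index in $\Gamma$ (in which case $h(\Gamma)$ is finite and we are done), or is contained in the finite center of $\Gamma$. Assume the latter, so that $h$ has finite central kernel and $h(\Gamma)$ is isomorphic to $\Gamma$ modulo a finite central subgroup. If $h(\Gamma)$ were virtually abelian, then $\Gamma$ modulo its finite center would be virtually abelian, and hence so would $\Gamma$; but a higher rank irreducible lattice is not virtually abelian, a contradiction. If instead $H^2_b(h(\Gamma);\reals)$ contains an embedded $\ell^1$, then by standard comparison results for bounded cohomology under finite central extensions the same holds for $H^2_b(\Gamma;\reals)$. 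However, work of Burger and Monod shows that for an irreducible lattice $\Gamma$ in a connected semisimple Lie group of real rank $\ge 2$ with finite center, $H^2_b(\Gamma;\reals)$ is finite-dimensional, which rules out any embedded~$\ell^1$.

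The main obstacle is not conceptual: once Theorem~A is in hand, the argument marries it to Margulis and to Burger-Monod in the expected way. The only point requiring attention is confirming that the finite central kernel of $h$ does not interfere with either horn of the $H^2_b$-alternative --- that is, that both virtual abelianness and the presence of an embedded $\ell^1$ in $H^2_b$ transfer between $\Gamma$ and the quotient $\Gamma/\ker(h) \cong h(\Gamma)$ in the direction needed to produce the contradiction.
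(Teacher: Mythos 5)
Your argument is correct in substance, but it takes a genuinely different route from the paper's. The paper splits into cases: for a \emph{nonuniform} lattice it never invokes Theorem~A at all, instead producing a solvable subgroup of $\Gamma$ that does not map with finite kernel onto an abelian group, whose image in $\Out(F_n)$ would contradict the theorem that solvable (more generally polynomially growing) subgroups of $\Out(F_n)$ are virtually abelian; Theorem~A, the free-subgroup fact, and Burger--Monod are used only for \emph{uniform} lattices, where finite generation of $\Gamma$ is immediate from cocompactness. Your unified argument applies Theorem~A to $h(\Gamma)$ in both cases, which silently relies on finite generation (indeed, finite presentability if you want $H^2(\Gamma;\reals)$ finite dimensional behind the ``$H^2_b$ is finite dimensional'' form of Burger--Monod) of nonuniform higher-rank lattices --- true, but a nontrivial input that the paper's case split deliberately avoids. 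Two further differences of mechanism: (i) where you transfer the embedded $\ell^1$ from $H^2_b(h(\Gamma);\reals)$ to $H^2_b(\Gamma;\reals)$ via the finite (amenable) kernel inflation isomorphism, the paper works one level down, taking an unbounded quasimorphism on $\Gamma/K$, composing with $\Gamma \to \Gamma/K$, and contradicting the Burger--Monod statement that such lattices admit no unbounded quasimorphisms, thereby avoiding any bounded-cohomology comparison machinery; (ii) in the virtually abelian horn you push virtual abelianness back from $h(\Gamma)$ to $\Gamma$, which needs the standard but unstated lemma that a finitely generated finite-by-(virtually abelian) group is virtually abelian, whereas the paper argues in the other direction: a rank-two free subgroup of $\Gamma$ meets the finite kernel trivially, so $\Gamma/K$ contains a nonabelian free group and cannot be virtually abelian. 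Both routes are sound; yours buys a uniform treatment of the two lattice cases at the cost of heavier citations, while the paper's buys more elementary, self-contained arguments in each case.
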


\begin{proof} By the Margulis-Kahzdan normal subgroup theorem \cite[Theorem 8.1.2]{Zimmer:book}, $K=\kernel(h)$ is either finite or of finite index in $\Gamma$. Assuming $K$ is finite we derive a contradiction. 

If $\Gamma$ is a nonuniform lattice then it contains a solvable subgroup $H$ which does not surject with finite kernel onto an abelian group, so the group $H / H \intersect K$ is solvable and not virtually abelian, but it injects to $\Out(F_n)$ contradicting \cite{\BookThreeTag,Alibegovic:translation}. 

If $\Gamma$ is a uniform lattice then it has a free subgroup of rank~$\ge 2$ \cite[Theorem 5.6]{\MargulisDiscreteTag}, so the group $\Gamma/K$ is not virtually abelian. Also, $\Gamma$ is finitely generated. But $\Gamma/K$ injects to $\Out(F_n)$ and so, by Theorem~A, $H^2_b(\Gamma/K;\reals)$ contains an embedded $\ell^1$. In particular there is an unbounded quasimorphism $\Gamma/K \mapsto \reals$ (see e.g.\ \cite{HandelMosher:WWPD}), and by composition we get an unbounded quasimorphism $\Gamma \mapsto \Gamma/K \mapsto \reals$, contradicting the theorem of Burger and Monod \cite[Corollary 1.3]{BurgerMonod:BoundedCohomology}.
\end{proof}

\vfill\break

\subsection*{Outline of the logic}
To summarize, we prove Theorem~A by the following outline:

\smallskip\noindent
$\bullet$ Section~\ref{SectionReducingBToE}: Proof that the WWPD Construction Theorem implies Theorem B.

\smallskip\noindent
$\bullet$  Section~\ref{SectionBCDImplyA}: Proof that Theorems B, C imply Theorem A (applying also the Global WWPD Theorem \cite{HandelMosher:WWPD}).

\smallskip\noindent
$\bullet$  Section~\ref{SectionTheoremEProof}: Proof of the WWPD Construction Theorem.

\smallskip\noindent
$\bullet$ Part II \cite{HandelMosher:BddCohomologyII}: Proof of Theorem~C. 

\bigskip

\setcounter{tocdepth}{2}
\tableofcontents

\section{Background material}
\label{SectionFlaringProperties}

Section~\ref{SectionHypActions} reviews concepts of second bounded cohomology and WWPD from \cite{HandelMosher:WWPD}.
Section~\ref{SectionBasicNotions} reviews basic definitions, terminology, and facts regarding $\Out(F_n)$, referring to the literature for full details. Section~\ref{SectionUniformSplitting} contains Lemma~\ref{stabilizes}, a uniform splitting property which will be used for our study of well functions in the proof of Proposition~\ref{PropKTiles}, and in Part~II~\cite{HandelMosher:BddCohomologyII}.

\subsection{$H^2_b(G;\reals)$ and WWPD methods.}
\label{SectionHypActions}

A \emph{metric complex} is a connected simplicial complex equipped with the geodesic metric making each $k$-simplex isometric to a regular $k$-simplex in $\reals^{k+1}$ of edge lengths equal to~$1$, using barycentric coordinates to define the isometry. A \emph{hyperbolic complex} $X$ is a Gromov hyperbolic metric complex; equivalently, the $1$-skeleton is Gromov hyperbolic. The Gromov boundary is denoted $\bdy X$, and we denote $\overline X = X \union \bdy X$ with its Gromov topology. The space of two point subsets is denoted $\bdy^2 X = \{\{\xi,\eta\} \suchthat \xi \ne \eta \in \bdy X\}$. The topology on $\bdy^2 X$ is induced by the 2-1 covering map $(\bdy X \times \bdy X) - \Delta \mapsto \bdy^2 X$; equivalently, it is the subspace topology induced by the Hausdorff topology on compact subsets of~$\bdy X$. 

\smallskip\textbf{Remark.} Using the Rips complex one easily sees that every geodesic metric space is quasi-isometric to some metric complex \cite{Gromov:hyperbolic}. Since the spaces we use in this work are all metric complexes, we couch our results in that language.

\paragraph{Hyperbolic actions.} An action of a group $\Gamma$ on an object $X$ is given by a homomorphism $\A \from \Gamma \mapsto \Isom(X)$ from $\Gamma$ to the self-isomorphism group of~$X$ (we use this definition primarily for metric complexes and occasionally for objects in other categories). We often use ``action notation'' $\A \from \Gamma \act X$, usually suppressing $\A$ and writing simply $\Gamma \act X$. For $g \in \Gamma$ and a subset $Y \in X$, we use expressions like $g(Y)$ or $g \cdot Y$ for $\A(g)(Y)$. Given an action $\Gamma \act X$, the \emph{stabilizer} of a subset $Y \subset X$ is the subgroup denoted $\Stab(Y) = \Stab(Y;\Gamma) = \{g \in \Gamma \suchthat g \cdot Y = Y\} \subgroup \Gamma$.

A \emph{hyperbolic action} of a group $\Gamma$ is defined to be an isometric action \hbox{$\Gamma \act X$} on a hyperbolic complex~$X$. Any such action extends to a topological action $\Gamma \act \overline X = X \union \bdy X$. According to the classification of isometries of Gromov hyperbolic spaces, each isometry $h \from X \to X$ fits into one of three types: $h$ is \emph{elliptic} if each orbit $\{h^i \cdot x\}$ is of bounded diameter in $X$; $h$ is \emph{loxodromic} if each orbit map $i \mapsto h^i \cdot x$ is a quasi-isometric embedding $\Z \mapsto X$; and $h$ is \emph{parabolic} if it has a unique fixed point on $\bdy X$. 

For $h$ to be loxodromic is equivalent to saying that the extension $h \from \overline X \to \overline X$ has \emph{source-sink} or \emph{north-south dynamics}, meaning that there exists a repelling fixed point $\bdy_- h \in \bdy X$ and an attracting fixed point $\bdy_+ h \in \bdy X$ such that for each $x \in \overline X - \{\bdy_- h,\bdy_+ h\}$ the sequence $h^i(x)$ converges to $\bdy_- h$ as $i \to -\infinity$ and to $\bdy_+ h$ as $i \to +\infinity$. As ordered and as unordered pairs we denote these fixed points as $\bdy_\pm h = (\bdy_- h, \bdy_+ h) \in \bdy X \times \bdy X - \Delta$ and $\bdy h = \{\bdy_- h, \bdy_+ h\} \in \bdy^2 X$. Note also that $h$ is loxodromic if and only if the action of $h$ on $X$ has a \emph{quasi-axis} which is a quasi-isometric embedding $\gamma \from \reals \to X$ such that for some $T > 0$ called the \emph{period} we have $\gamma(t+T)=h(\gamma(t))$, $t \in \reals$; we can always take $\gamma$ to be a bi-infinite edge path in the 1-skeleton of $X$. A quasi-axis is determined by its restriction $\gamma \restrict [0,T]$, subject to the requirement that $h(\gamma(0))=\gamma(T)$; that restriction, and any of the restrictions $\gamma \restrict [t,t+T]$, are called \emph{fundamental domains} for the action of $h$ on its quasi-axis.

There is a classification of hyperbolic actions going back to Gromov's original work \cite[Section 8.2]{Gromov:hyperbolic}; see also \cite{Hamann:GroupActions}, and \cite[Section 3]{Osin:AcylHyp}. Because of our focus on actions which possess WWPD elements, we follow \BeFuji\ in adopting the following simplified classification scheme (see Proposition~\ref{PropActionWithWWPD} and the following remark). Two loxodromic isometries $g,h \from X \to X$ are \emph{independent} if $\bdy h \intersect \bdy g = \emptyset$. The action $\Gamma \act X$ on a hyperbolic complex is \emph{nonelementary} if it contains an independent pair of loxodromic elements, and is \emph{elementary} otherwise. In the elementary case we single out two possibilities: the action is \emph{elliptic} if every element is elliptic; and the action is \emph{axial} if there exist two points $\xi \ne \eta \in \bdy X$ such that $\Gamma = \Stab(\{\xi,\eta\};\Gamma)$. 

\paragraph{Second bounded cohomology} 

Consider a group $G$, its cochain complex $(C^*(G;\reals))$, and the subcomplex of bounded cochains $(C^k_b(G;\reals))$, where the cochain group $C^k(G;\reals)$ is the vector space of all functions $f \from G^k \to \reals$, and $C^k_b(G;\reals)$ is the subspace of bounded functions. The coboundary operator $\delta \from C^k(G;\reals) \to C^{k+1}(G;\reals)$ that is used for these cochain complexes has the form
$$\delta(f)(g_0,g_1,\ldots,g_k) = \sum_{i=0}^{k+1} (-1)^i  f(\pi_i(g_0,g_1,\ldots,g_k))
$$ 
where for each $i=0,\ldots,k+1$ the function $\pi_i \from G^{k+1} \to G^{k}$ is defined on a $(k+1)$-tuple $(g_0,g_1,\ldots,g_k)$ as follows: the function $\pi_0$ deletes the $g_0$ coordinate; the function $\pi_{k+1}$ deletes the $g_k$ coordinate; and for $1 \le i \le k$ the function $\pi_i$ deletes the comma between the $g_{i-1}$ and $g_i$ coordinates and multiplies those coordinates together using the group operation.

\paragraph{Invididual and global WWPD properties.} Given a hyperbolic action $G \act X$ and a loxodromic element $\gamma \in G$, we need to define what it means for $\gamma$ to be a WWPD element of the action. We start with a conceptually simple definition, but in Proposition~\ref{PropWWPDProps} we give several equivalent formulations, including the original definition of Bestvina, Bromberg and Fujiwara \cite{BBF:MCGquasitrees}.

Consider the topological space $\bdy X \cross \bdy X$ and its diagonal $\Delta \subset \bdy X \times \bdy X$. Consider also the orbit of the ordered pair $\bdy_\pm\gamma = (\bdy_- \gamma,\bdy_+ \gamma)$, namely the subset
$$G \cdot \bdy_\pm \gamma = \{\bdy_\pm(\delta \gamma \delta^\inv) \suchthat \delta \in G\} = \{(\delta \cdot \bdy_- \gamma,\delta \cdot \bdy_+ \gamma) \suchthat \delta \in G\} \subset \bdy X \times \bdy X - \Delta
$$ 
We say that $\gamma$ is a WWPD element of the action $G \act X$ if $G \cdot \bdy_\pm \gamma$ is a discrete subset of $\bdy X \cross \bdy X - \Delta$. 

Besides defining WWPD for individual loxodromic elements of a hyperbolic action, we must also formulate a ``global'' WWPD property of a hyperbolic action which abstracts and generalizes the ``global'' WPD methods used in \BeFuji. Our formulation of this global version allows for the possibility that it is not the whole group that acts but only a finite index normal subgroup.

Consider a group $\Gamma$, a finite index normal subgroup $N \normal \Gamma$, and a hyperbolic action $\A \from N \act X$. For each element $g \in \Gamma$, the inner automorphism $i_g \in \Inn(\Gamma)$ defined by $i_g(h)=ghg^\inv$ may be restricted to the normal subgroup $N$, and then composed with~$\A$, to obtain an action which is denoted as~$\A_g = \A \circ i_g \from N \to \Isom(X)$ or in shorthand as $N \act_g X$. 


\begin{definition} 
\label{DefWWPDHypothesis} Given a group $\Gamma$, the \emph{global WWPD property} for $\Gamma$ says that there exists a finite index normal subgroup $N \normal \Gamma$, a hyperbolic action $\A \from N \act X$, and a rank~$2$ free subgroup $F \subgroup N$, such that the following hold:
\begin{enumerate}
\item\label{ItemLoxOrEll}
Each element of $N$ acts either loxodromically or elliptically on $X$.
\item\label{ItemFirstWWPD}
The restricted action $F \act X$ is Schottky and each of its nonidentity elements is WWPD with respect to the action $N \act X$.
\item\label{ItemWWPDOrEllRestrict} For each $g \in \Gamma$, the action $F \act_g X$ obtained by restricting $N \act_g X$ satisfies one of the following two properties:
\begin{enumerate}
\item\label{ItemSchottkyRestriction} $F \act_{g} X$ is Schottky and each of its nonidentity elements is WWPD with respect to the action $N \act_{g} X$; or 
\item\label{ItemEllipticRestriction}
$F \act_{g} X$ is elliptic.
\end{enumerate}
\end{enumerate}
In situations where $N$, its action $N \act X$, and/or $F$ are specified, we shall adopt phrases like ``$\Gamma$ satisfies WWPD'' or ``$\gamma$ satisfies WWPD with respect to $N$'', etc.
\end{definition}
\noindent

Our main result relating WWPD to second bounded cohomology is the following theorem, the proof of which is found in \cite{HandelMosher:WWPD}:

\begin{theoremWWPD}
Given a group $\Gamma$, if the global WWPD property holds for $\Gamma$ then $H^2_b(\Gamma;\reals)$ contains an embedded $\ell^1$. 
\end{theoremWWPD}

\paragraph{A shortcut to verifying the global WWPD property.}
The requirement that $N$ has finite index in $\Gamma$ implies that one need only verify \pref{ItemWWPDOrEllRestrict} for finitely many elements of $\Gamma$:

\begin{lemma}\label{LemmaCosetWWPD}
Item~\pref{ItemWWPDOrEllRestrict} of Definition~\ref{DefWWPDHypothesis} holds if and only if it holds for all $g$ in a set of coset representatives of $N$ in $\Gamma$. 
\end{lemma} 

For the proof and later application of this lemma we set up some notation. Choose coset representatives $g_\kappa$ of $N$ in $\Gamma$, where $\kappa \in \{1,\ldots,K\}$ and $K = [\Gamma:N]$, and by convention choose $g_1 \in N \in \Gamma$ to be the identity. We use $i_\kappa$ and $N \act_\kappa X$ as abbreviations for $i_{g_\kappa}$ and $N \act_{g_\kappa} X$ respectively, and we use $F \act_\kappa X$ for the restriction to~$F$ of $N \act_\kappa X$. We refer to the restricted automorphisms 
$$i_1 \restrict N, \,\, \ldots, \,\, i_K \restrict N \in \Aut(N)
$$ 
as \emph{outer representatives of the inner action of $\Gamma$ on $N$}, which refers to the fact that in the commutative diagram
$$\xymatrix{
\Gamma \ar[rr]^{i} \ar[d] && \Aut(N) \ar[d] \\
\Gamma / N \ar[rr] && \Out(N) \ar@{=}[r] &\Aut(N) / \Inn(N)
}$$
the automorphisms $i_\kappa \in \Aut(N)$ represent all of the elements of the image of the homomorphism $\Gamma / N \to \Out(N)$ (that homomorphism need not be injective, and so there may be some duplication of elements of $\Out(N)$ represented by the list $i_1,\ldots,i_K$, but this is inconsequential). 

\begin{proof}[Proof of Lemma~\ref{LemmaCosetWWPD}] For each coset representative $g_\kappa$ consider another element $h = \nu g_\kappa$ in its coset ($\nu \in N$). We have $i_h = i_\nu \composed i_\kappa \in \Aut(N)$, and so the restricted actions $F \act_\kappa X$ and $F \act_{h} X$ are conjugate by an isometry of $X$, namely $\A(\nu) \from X \to X$. But each of properties~\pref{ItemWWPDOrEllRestrict}(a) and~\pref{ItemWWPDOrEllRestrict}(b) is invariant under isometric conjugation, which proves the lemma.
\end{proof}

\paragraph{Properties of WWPD elements.} We state here a result from \cite{HandelMosher:WWPD} which gives several equivalent formulations of the WWPD property, and some additional consequences of that property. These formulations and properties will be used in various places around this paper, in particular item~\pref{ItemBadRayNo} will play a crucial role in the opening passages of Section~\ref{SectionThmESetup} where we set up the proof of the WWPD Construction Theorem.


\begin{proposition}[\protect{\cite[Proposition~2.6]{HandelMosher:WWPD}}]
\label{PropWWPDProps}
For each hyperbolic action $\Gamma \act X$, and for each loxodromic element $h \in \Gamma$, the following are equivalent:
\begin{enumerate}
\item\label{ItemDefWWPDYes}
\emph{(Generalizing the WPD definition of \cite[Section 3]{\BeFujiTag}, \cite[Definition 2.5]{Osin:AcylHyp})} \\ For every $x \in X$ and $R > 0$ there exists an integer $M > 0$ such that any subset $Z \subset \Gamma$ that satisfies the following two properties is finite:
\begin{itemize}
\item For each $g \in Z$ we have $d(x,g(x)) < R)$ and $d(h^M(x),g h^M(x)) < R$.
\item No two elements of $Z$ lie in the same left coset of $\Stab(\bdy_\pm h)$.
\end{itemize}
\item\label{ItemEqDiscYes}
\emph{(The WWPD definition of Section~\ref{SectionHypActions})} The orbit $\Gamma \cdot \bdy_\pm \gamma$ is a discrete subset of the space $\bdy X \times \bdy X - \Delta$.
\item\label{ItemWWPDQuasiAxes}
\emph{(The original WWPD definition of \cite{BBF:MCGquasitrees})}
For any quasi-axis $\ell$ of $h$ there exists $D \ge 0$ such that for any $g \in \Gamma$, if $g \not\in \Stab(\bdy_\pm h)$ then the image of any closest point map $g(\ell) \mapsto \ell$ has diameter~$\le D$.
\item\label{ItemBadRayNo}
\emph{(A variant of \pref{ItemDefWWPDYes})}
In the group $\Gamma$ \emph{there is NO} infinite sequence $g_1,g_2,g_3,\ldots$ satisfying the following properties:
\begin{enumerate}
\item\label{ItemBadRayDiffCosets}
For all $i \ne j$ the elements $g_i,g_j$ lie in different left cosets of $\Stab(\bdy_\pm h)$.
\item\label{ItemBadRayUnmoved}
For every $x \in X$ there exists $R>0$ such that for all integers $M \ge 0$ there exists an integer $I \ge 0$ such that if $0 \le m \le M$ and if $i \ge I$ then \hbox{$d(g_i \, h^m(x), h^m(x)) < R$.}
\end{enumerate}
\end{enumerate}
Furthermore if $h$ satisfies WWPD then:
\begin{enumeratecontinue}
\item\label{ItemWWPDNiceEnds}
$\Stab(\bdy_- h) = \Stab(\bdy_+ h) = \Stab(\bdy_\pm h)$. In particular, for every loxodromic element $\gamma \in H$, the set $\bdy \gamma$ is either equal to or disjoint from $\bdy h$. 
\item\label{ItemWWPDNoStabInd}
If $k \in \Gamma - \Stab(\bdy h)$ then $h$ and $khk^\inv$ are independent.
\end{enumeratecontinue}
\end{proposition}

We also need the following special version of the dynamical classification of hyperbolic actions (\cite[Sections 8.2, 8.3]{Gromov:hyperbolic}; see also \cite[Section 3]{Osin:AcylHyp} and \cite{Hamann:GroupActions}).


\begin{proposition}[\protect{\cite[Corollary 2.6]{HandelMosher:WWPD}}]
\label{PropActionWithWWPD}
For each hyperbolic action $\Gamma \act X$, if $\Gamma$ contains a loxodromic WWPD element then one of the following alternatives holds:
\begin{enumerate}
\item The action is nonelementary, i.e.\ it possesses an independent pair of loxodromic elements; or
\item\label{ItemAxialConclusion} The action is axial, meaning that there exist $\xi \ne \eta \in \bdy X$ such that \break \hbox{$\Gamma = \Stab(\{\xi,\eta\};\Gamma)$.}
\end{enumerate} 
\end{proposition}

\paragraph{Remark:} Because of Proposition~\ref{PropActionWithWWPD}, if a WWPD loxodromic element exists then the definition of ``nonelementary'' that we use following \BeFuji\ is in agreement with the definition in \cite{Gromov:hyperbolic}. For general hyperbolic actions the definition in \cite{Gromov:hyperbolic} also allows for nonelementary groups which are ``quasiparabolic''; see \cite[Corollary 2.6]{HandelMosher:WWPD}. Theorems~B and~C are written so that their meanings are unchanged whether one uses the definition of nonelementary from \BeFuji\ or from \cite{Gromov:hyperbolic}: both theorems assert the existence of a WWPD loxodromic element.

\paragraph{Remark:} When Proposition~\ref{PropActionWithWWPD} is applied in Section~\ref{SectionBCDImplyA}, 
the ``axial'' conclusion~\pref{ItemAxialConclusion} will not be strong enough. In addition we will need the action to be ``strongly axial'', a property which will be supplied by application of Theorem~C.

\subsection{Basic concepts of $\Out(F_n)$}
\label{SectionBasicNotions}

In this subsection we briefly review basic terminology and notation and key definitions, focussing on relative train track maps. The original sources for this material include \cite{CullerVogtmann:moduli}, \BH, \BookOne, and \recognition. See Section 1 of \SubgroupsOne\ for full definitions and citations in a still brief but much more comprehensive overview.

\smallskip\textbf{Marked graphs and topological representatives.} 
\BH. 
A \emph{rank~$n$ marked graph} is a finite graph $G$ without valence~1 vertices equipped with a homotopy equivalence $R_n \mapsto G$, where $R_n$ is the base rose of rank~$n$ whose edges are indexed and oriented so as to identify $F_n \approx \pi_1(R_n)$. The marking induces an isomorphism $F_n \approx \pi_1(G)$, a deck transformation action $F_n \act \wt G$ on the universal cover, and an equivariant homeomorphism $\bdy F_n \approx \bdy\wt G$, all well-defined up to inner automorphism of~$F_n$. A homotopy equivalence $f \from G \to G'$ of marked graphs is always assumed to take vertices to vertices and to be locally injective on each edge, and so $f$ induces a map of directions (initial germs of oriented edges) and a map of turns (pairs of directions at the same vertex). Given a marked graph $G$ there is an induced isomorphism between the group of self-homotopy equivalences of $G$ modulo homotopy and the group $\Out(F_n)$. A self-homotopy equivalence $f \from G \to G$ that represents $\phi \in \Out(F_n)$ in this way is called a \emph{topological representative} of $\phi$; in such a case, iterating the induced map on turns, a turn is \emph{illegal} if some iterate is degenerate, and it is \emph{legal} otherwise.

\smallskip\textbf{Paths, lines, and circuits.}  \BookOne. A \emph{path} in a marked graph $G$ is a concatenation of edges and partial edges without backtracking, such that a partial edge may occur only incident to an endpoint of the path; in the degenerate case, a \emph{trivial path} consists of just a vertex. An infinite path in $G$ can be a line or a ray depending on whether the concatenation is doubly infinite or singly infinite. In some situations we must restrict to paths with endpoints at vertices, e.g.\ for defining complete splitting of paths; in other situations we must allow for more general paths with arbitrary endpoints, e.g.\ when taking pre-images of paths. In other situations we will use more general \emph{edge-paths} in which backtracking is allowed. We will use appropriate language to indicate such situations.

A \emph{circuit} is a cyclic concatenation of edges without backtracking, i.e.\ a circle immersion. 

Two paths or circuits are \emph{equivalent} if they differ only by reparameterizaiton and inversion. The space of paths and circuits up to equivalence is denoted $\wh\B(G)$, and is equipped with the \emph{weak topology} having a basis element for each finite path~$\alpha$, consisting of all paths and circuits having $\alpha$ as a subpath. The subspace of lines is denoted $\B(G) \subset \wh\B(G)$. The abstract space of lines of the group~$F_n$ is the quotient space $\B=\B(F_n) = \wt\B(F_n) / F_n$ where $\wt \B(F_n)$ is the space of two point subsets of $\bdy F_n$, endowed with the \emph{weak topology} naturally induced by the topology of $\bdy F_n$. The identification $\bdy F_n \approx \bdy \wt G$ induces a canonical homeomorphism $\B(F_n) \approx \B(G)$. The element of $\B(G)$ corresponding to a given $\ell \in \B(F_n)$ is called the \emph{realization} of $\ell$ in~$G$. The natural action $\Aut(F_n) \act \bdy F_n$ induces an action \hbox{$\Out(F_n) \act \B$}. 

\smallskip\textbf{Path maps.}  \SubgroupsOne, Section 1.1.6. Consider a homotopy equivalence of possibly distinct marked graphs $f \from G \to G'$ that represents $\phi \in \Out(F_n)$ in a manner similar to a topological representative: the composition $R_n \mapsto G \xrightarrow{f} G' \mapsto R_n$ defined using marking maps or their homotopy inverses is homotopic to a topological representative of $\phi$. There is an induced path map $f_\# \from \wh\B(G) \to \wh\B(G')$, where $f_\#(\gamma)$ is obtained from the composition $f \circ\gamma$ by straightening rel endpoints, lifting first to the universal cover when $\gamma$ is infinite in order to make use of endpoints at infinity. For each line in $\B$ realized as $\ell \in \B(G)$ its image $\phi(\ell) \in \B$ is realized as $f_\#(\ell) \in \B(G')$. 

There is another induced path map $f_{\shsh} \from \wh\B(G) \to \wh\B(G')$ defined as follows: for any path $\gamma \in \wh\B(G)$, lift to a path $\ti\gamma$ in the universal cover~$\wt G$, choose any lift $\ti f \from \wt G \to \wt G'$, let $\ti f_\shsh(\ti\gamma)$ be the intersection of all paths in $\wt G'$ that contain~$\ti \gamma$ as a subpath, and let $f_\shsh(\gamma)$ be the projection to $G'$ of $\ti f_\shsh(\ti\gamma)$, which is well-defined independent of choices. This path $f_\shsh(\gamma)$ is obtained from $f_\#(\gamma)$ by truncating a subpath incident to each endpoint of length at most equal to the bounded cancellation constant of $f$. 

Assuming now that $G=G'$, a finite path $\gamma$ is a \emph{Nielsen path} if $f_\#(\gamma) = \gamma$. If a Nielsen path can not be written as a concatenation of two  non-trivial Nielsen paths then it is \emph{indivisible}.

\smallskip\textbf{Splittings of paths.} \BookOne.
A decomposition $\gamma = \gamma_1 \cdot \ldots \cdot \gamma_K$ of a finite path into subpaths is a \emph{splitting} if $f^i_\#(\gamma) = f^i_\#(\gamma_1) \ldots f^i_\#(\gamma_K)$ for all integers $i \ge 1$ (the single dot $\cdot$ in expressions like $\gamma_1 \cdot \ldots \cdot \gamma_K$ \emph{always} refers to a splitting).  If $P$ is a set of finite paths and if each $\gamma_i \in P$ then we say that  $\gamma = \gamma_1 \cdot \ldots \cdot \gamma_K$ is a \emph{splitting of $\gamma$ with terms in~$P$}. This concept of ``splitting with terms in~$P$'' is most useful if for each $\alpha \in P$ the path $f_\#(\alpha)$ splits with terms in~$P$; see for example the definition of a \ct\ \recognition.

\smallskip\textbf{Free factor systems and free factor supports.} \cite{\BookOneTag}. Conjugacy classes in $F_n$ are denoted $[\cdot]$. A \emph{subgroup system} in $F_n$ is a set $\A = \{[A_1],\ldots,[A_K]\}$ such that $A_1,\ldots, A_K \subgroup F_n$ are nontrivial finitely generated subgroups (\SubgroupsOne\ Section~1.1.2). The elements of a subgroup system are called its \emph{components}. The partial ordering of \emph{containment} $\A \sqsubset \A'$ means that for all $[A] \in \A$ and $[A'] \in \A'$ there exists $g \in F_n$ such that $A \subgroup gA'g^\inv$. 

A subgroup system $\A$ is a \emph{free factor system} if there exists a free factorization $F_n = A_1 * \cdots * A_K * B$ such that $\A = \{[A_1],\ldots,[A_K]\}$. Associated to each subgraph $H \subset G$ of a marked graph is the free factor system $[H] = [\pi_1 H] = \{[\pi_1 H_1],\ldots,[\pi_1 H_K]\}$, where $H_1,\ldots,H_K$ are the noncontractible components of $H$ and $[\pi_1 H_k]$ denotes the conjugacy class of the image of the inclusion induced monomorphism $\pi_1(H_k) \inject \pi_1(G) \approx F_n$. Every chain $\A_1 \sqsubset \cdots \sqsubset \A_L$ of free factor systems can be realized in this manner as a chain $G_1 \subset \cdots \subset G_L \subset G$ of subgraphs of some marked graph~$G$.

A line $\ell \in \B(F_n)$ is \emph{supported by} or \emph{carried by} a subgroup system $\A$ if there exists $\ti\ell \in \wt\B(F_n)$ covering $\ell$ and $A \subgroup F_n$ such that $[A] \in \A$ and such that $\bdy\ti\ell \subset \bdy A$; let $\B(\A) = \{\ell \in \B(F_n) \suchthat \text{$\ell$ is carried by $\A$}\}$; equivalently, if $\A$ is realized by a subgraph $H \subset G$ then the realization of $\ell$ in $G$ is contained in~$H$. 

The \emph{free factor support} of a subset $L\subset\B(F_n)$, denoted $\F_\supp(L)$, is the unique minimal free factor system $\F_\supp(L)$ which supports each element of~$L$. Also, given a subgroup system $\A$ we use the shorthand notation $\F_\supp(L;\A) = \F_\supp(L \union \B(\A))$, and we call this the \emph{free factor support of $L$ relative to $\A$}. Also, we say that $L$ \emph{fills relative to~$\A$} if $\F_\supp(L;\A) = \{[F_n]\}$.

\smallskip\textbf{Relative train track maps.} \BH. A topological representative $f \from G \to G$ is \emph{filtered} if it comes equipped with an $f$-invariant filtration of subgraphs $\emptyset = G_0 \subset G_1 \subset \cdots \subset G_R = G$, with corresponding subgraphs $H_r = G_k \setminus G_{r-1}$ called \emph{strata}, such that if $H_r$ has $m$ edges with corresponding $m \times m$ transition matrix $M$ then either $H_r$ is an \emph{irreducible stratum} meaning that $M$ is irreducible or $H_r$ is a \emph{zero stratum} meaning that $M$ is a zero matrix. Irreducible strata $H_r$ are further classified by the value of the Perron-Frobenius eigenvalue $\lambda \ge 1$ of $M$: $H_r$ is an \emph{\eg-stratum} if $\lambda>1$; and $H_r$ is an \emph{\neg-stratum} if $\lambda=1$. An \eg\ stratum is \emph{aperiodic} if $M$ is Perron-Frobenius (some power of $M$ is positive).

For $f \from G \to G$ to be a \emph{relative train track} representative requires some conditions to be imposed on its \eg\ strata. The definition and some further properties are found in \BH\ Section 5. We will use only the following properties of an \eg\ stratum $H_r$:
\begin{description}
\item[(RTT-(i))] Each initial direction of an edge in $H_r$ maps to some initial direction of an edge in $H_r$.
\end{description}
Paths in $G_r$ whose only turns in $H_r$ are legal turns are called \emph{$H_r$-legal paths} or simply \emph{$r$-legal paths}. An edge in $H_r$, having no turns at all, is by default $H_r$-legal.
\begin{description}
\item[(\BH\ Lemma 5.8)]  If $\sigma$ is an $r$-legal path in $G_r$ with endpoints in $H_r$ then $\sigma$ splits with terms in 
$$\{\text{edges of $H_r$}\} \union \{\text{nontrivial subpaths of $G_{r-1}$ with endpoints in $H_r$}\}
$$
and $f_\#(\sigma)$ is $r$-legal.
\end{description}
Paths of the form $f^k_\#(E)$, for $k \ge 0$ and edges $E \subset H_r$, are called \emph{$k$-tiles} of $H_r$ (note that when $f^k(E)$ is straightened to form $f^k_\#(E)$, no edges of $H_r$ are cancelled). 

Every $\phi \in \Out(F_n)$ has a relative train track representative \BH. Furthermore, some positive power $\phi^k$ has a relative train track representative $f \from G \to G$ which is \emph{\eg-aperiodic}, meaning that the transition matrix of each \eg\ stratum is a Perron-Frobenius matrix; one may take $f$ to be the straightened $k^{\text{th}}$ power of any relative train track representative of $\phi$ itself, with a refined filtration.

\smallskip\textbf{Attracting Laminations.} \BookOne. Consider $\phi \in \Out(F_n)$ and an \eg-aperiodic relative train track representative $f \from G \to G$ of some positive power $\phi^i$. Associated to each \eg\ stratum $H_r \subset G$ is its \emph{attracting lamination} $\Lambda_r$, a closed subset of $\B(F_n)$ whose realization in $G$ consists of all lines $\ell$ such that each subpath of $\ell$ is contained in some $k$-tile of~$H_r$; equivalently, the lines of $\Lambda_r$ are the weak limits in $\wh\B(G)$ of sequences of $k$-tiles. The lamination $\Lambda_r$ is also characterized as the closure of some birecurrent, nonperiodic line $\ell \in \B \approx \B(G)$ of height $r$ such that for some weak neighborhood $U \subset \B$ of $\ell$ we have $\phi(U)\subset U $ and $\{\phi_\#^i(U) \suchthat i \ge 0\}$ is a neighborhood basis of~$\ell$; each such line $\ell$ is in $\Lambda_r$, $\ell$ is called a \emph{generic leaf} of $\Lambda_r$, and each such neighborhood $U$ is an \emph{attracting neighborhood} of a generic leaf. The set $\L(\phi) = \{\Lambda_r \suchthat \text{$H_r$ is an \eg\ stratum}\}$ is well-defined independent of the choice of $f \from G \to G$, and it is a finite set.

The elements of the set $\L(\phi)$ are distinguished by their free factor supports: if $\Lambda_1 \ne \Lambda_2 \in \L(\phi)$ then $\F_\supp(\Lambda_1) \ne \F_\supp(\Lambda_2)$. The elements of the two sets $\L(\phi)$ and $\L(\phi^\inv)$ correspond one-to-one according to their free factor supports: the relation $\F_\supp(\Lambda^+) = \F_\supp(\Lambda^-)$ between $\Lambda^+ \in \L(\phi)$ and $\Lambda^- \in \L(\phi^\inv)$ is a bijection. We will refer to the ordered pair $\Lambda^\pm = (\Lambda^+,\Lambda^-)$ as a \emph{(dual) lamination pair} of $\phi$, and we write their common support as $\F_\supp(\Lambda^\pm)$. The set of all lamination pairs of $\phi$ is denoted~$\L^\pm(\phi)$.

Given a lamination pair $\Lambda^\pm \in \L^\pm(\phi)$ which is fixed by~$\phi$, a line $\ell \in \B$ is \emph{weakly attracted} to $\Lambda^+$ (by iteration of $\phi$) if $\phi^k(\ell)$ weakly converges to a generic leaf of $\Lambda^+$. More details of weak attraction are found in Section~\ref{SectionPPResults}.

\smallskip\textbf{\eg\ properties of \cts.}  \recognition. There is a particularly nice kind of \rtt\ called a \emph{CT}. The necessary and sufficient condition for $\phi$ to be represented by a \ct\ is that $\phi$ be \emph{rotationless} \cite[Definition 3.13]{\recognitionTag}.  Every $\phi$ has a rotationless iterate with uniformly bounded exponent \cite[Theorem 4.28]{\recognitionTag} so one can often work exclusively with \ct s. The defining conditions for $f$ to be a \ct\ are found in \cite[Definition 4.7]{\recognitionTag}, a sequence of nine conditions with parenthesized names (some mentioned here). Most of our applications will be to a \ct\ $f \from G \to G$ with an $f$-invariant filtration $\emptyset = G_0 \subset G_1 \subset \cdots \subset G_s=G$ satisfying the following:

\smallskip$\bullet$ \emph{(Special Assumption)} The top stratum $H_s$ is \eg-aperiodic.

\smallskip\noindent
For now, instead of a full and general definition, we only list certain properties of a \ct\ $f \from G \to G$ which hold under the special assumption above:
\begin{enumerate}
\item \cite[Definition 4.7 (Filtration)]{\recognitionTag} There is no $\phi$-periodic free factor system strictly contained between the free factor systems $[\pi_1 G_{s-1}]$ and $[\pi_1 G_s] = \{[F_n]\}$.
\item\label{ItemCTiNP}
 \cite[Corollary 4.19 eg-(i)]{\recognitionTag} There exists (up to reversal) at most one indivisible Nielsen path $\rho$ of height~$s$. If $\rho$ exists then it decomposes as $\rho = \alpha \beta$ where $\alpha$ and $\beta $ are $s$-legal paths with endpoints at vertices and  $(\bar \alpha, \beta)$ is an illegal turn in $H_s$.   In particular, $f(\alpha) = \alpha \gamma$ and $f(\beta) =  \bar \gamma \beta$ for some path  $\gamma$.  Moreover:
\begin{itemize}
\item At least one endpoint of $\rho$ is disjoint from $G_{s-1}$ (see also \SubgroupsOne\ Fact 1.42)
\item The initial and terminal directions of $\rho$ are distinct fixed directions in $H_s$.
\end{itemize}
\item (\cite[Lemma 5.10]{\BHTag})  Any assignment $\ell(E)$ of lengths to the edges $e$ of $H_s$ extends to an assignment of lengths to all paths in $G$ with endpoints at vertices by adding up the lengths of the edges of $H_s$ in the edge-path description of $\sigma$. There is an \emph{eigenvector assignment of lengths} $\ell(e)$ so that  $\ell(f(e)) = \lambda \ell(e)$ for all edges $e$ of $H_s$ where $\lambda> 1$ is the  Perron-Frobenius eigenvalue of the transition matrix for $H_s$. If $\rho=\alpha\beta$ exists as in \pref{ItemCTiNP} it follows that $\ell(\alpha) = \ell(\beta)$ because $\ell(\alpha)$ and $\ell(\beta)$ both satisfy $\lambda L = L + \ell(\gamma)$.
\item\label{ItemCTSplitSimple}
(\cite[Part I, Fact 1.35]{HandelMosher:Subgroups}) If $\gamma$ is a path with endpoints at vertices $H_s$ or a circuit crossing an edge of $H_s$ then for all sufficiently large $i$ the path $f^i_\#(\gamma)$ has a splitting with terms in the set $\{$edges of~$H_s\} \union \{$height $s$ indivisible Nielsen paths$\}\union\{$paths in $G_{s-1}$ with endpoints in~$H_s\}$. 
\end{enumerate}
\emph{Remark on item \pref{ItemCTSplitSimple}:} This is implicit in \BookOne, e.g.\ Lemma 4.2.6, Remark 5.1.2, and Step 2 of the proof of Proposition 6.0.4 of \BookOne.

\smallskip\textbf{Geometricity of \eg\ strata and attracting laminations.} 
\eg\ strata are further classified into geometric and nongeometric strata, as defined and studied in \cite[Sections 5.1, 5.3]{\BookOneTag} and further studied in \SubgroupsOne. Very roughly speaking, an \eg\ stratum $H_s \subset G$ is geometric if the restriction of $f$ to $G_s$ may be modeled up to homotopy by a self-homotopy-equivalence of a 2-complex that is obtained by attaching a surface $S$ to $G_{s-1}$, such that the map restricted to $S$ is pseudo-Anosov. For now we need only the following facts:
\begin{enumeratecontinue}
\item\label{ItemCTGeomNielsen}
 (\SubgroupsOne, Fact 2.3) $H_s$ is geometric if and only if there is a closed height~$s$ individual Nielsen path.
\item (\SubgroupsOne, Section 2.4) Geometricity is a well-defined invariant on a lamination pair $\Lambda^\pm \in \L(\phi)$, independent of the choice of a \ct\ representative of a rotationless power of $\phi$. To be precise, if $f \from G \to G$ and $f' \from G' \to G'$ are \ct\ representatives of rotationless positive or negative powers of $\phi$, with \eg\ strata $H_i \subset G$, $H'_j \subset G'$ corresponding to either $\Lambda^+$ or $\Lambda^-$ depending on the sign of the power,  then $H_i$, $H'_j$ are either both geometric or both nongeometric.
\end{enumeratecontinue}

\subsection{A uniform splitting lemma}
\label{SectionUniformSplitting}

In this section we consider a rotationless $\phi \in \Out(F_n)$ represented by a \ct\ $f \from G \to G$ with an \eg\ stratum $H_s$ and a corresponding attracting/repelling lamination pair $\Lambda^\pm_s$ each realized in~$G$. If there exists an indivisible Nielsen path of height~$s$ (which must be unique up to reversal), denote that path as $\rho$; otherwise, ignore~$\rho$.

Lemma~\ref{stabilizes} will uniformize the splitting property that was reviewed in Section~\ref{SectionBasicNotions}, under item~\pref{ItemCTSplitSimple} of the heading ``\eg\ properties of \cts''. That property said that for each finite path $\sigma$ in $G$ of height~$\le s$ having endpoints at vertices of~$H_s$, some iterate $f^d_\#(\sigma)$ splits into terms each of which is either an edge of~$H_s$, an indivisible Nielsen path of height~$s$, or a path in $G_{s-1}$ with endpoints at vertices of~$H_s$. The exponent $d$ needed for this splitting is unbounded in general, as one sees for example by letting $\sigma$ be a leaf segment of $\Lambda^-_s$ that crosses a large number of edges of $H_s$. The following lemma says that such examples are the \emph{only} reason for the splitting exponent $d$ to be unbounded. 

For any path $\sigma \subset G$ of height~$\le s$, let $\abs{\sigma}_s$ denote the number of times that $\sigma$ crosses edges of~$H_s$. Also, let $\lsm(\sigma)$ denote the maximum of $\abs{\tau}_s$ over all paths $\tau$ in $G$ such that $\tau$ is a subpath both of $\sigma$ and of a generic leaf of $\Lambda^-_s$.

\begin{lemma} \label{stabilizes} 
With notation as above, for all $L > 0$ there is a positive integer $d$ so that if $\sigma \subset G$ is a circuit or a finite path with endpoints at vertices of $H_s$, and if $\lsm(\sigma) < L$, then $f^d_\#(\sigma)$ splits into terms each of which is an edge of $H_s$, an indivisible Nielsen path of height $s$, or a subpath of $G_{s-1}$ with endpoints at vertices; equivalently, $f^d_\#(\sigma)$ splits into terms each of which is $s$-legal or an indivisible Nielsen path of height~$s$.
\end{lemma}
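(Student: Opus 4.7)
The starting point is the non-uniform splitting property in item~\pref{ItemCTSplitSimple} of the list of EG properties of CTs above, which gives the desired splitting of $f^{d(\sigma)}_\#(\sigma)$ for some exponent $d(\sigma)$ depending on the individual path~$\sigma$. My goal is to upgrade this to a uniform bound $d = d(L, f)$ under the hypothesis $\lsm(\sigma) < L$. I would analyze the behavior of the illegal turns of $\sigma$ in $H_s$ under iteration and bound, uniformly in~$L$, the number of iterations needed to resolve each one either into a legal turn or into the illegal turn of a full copy of the height-$s$ iNP $\rho$ (when $\rho$ exists).

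First, decompose $\sigma$ at its illegal turns $v_1,\dots,v_k$ in $H_s$ into a concatenation $\sigma = \sigma_0 * \sigma_1 * \cdots * \sigma_k$ of maximal $H_s$-legal subpaths joined at these illegal turns (cyclically when $\sigma$ is a circuit). Each $\sigma_i$ already splits into $H_s$-edges and $G_{s-1}$ subpaths by \BH\ Lemma~5.8, so all the obstruction is localized near the~$v_i$. Classify each $v_i$ as Nielsen (its direction pair is eventually sent by some $Df^{j}$ to the illegal turn of the iNP $\rho = \alpha \cdot \beta$) or non-Nielsen (the turn dissolves under iteration without leaving a persistent iNP residue). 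Since $H_s$ has finitely many direction pairs, non-Nielsen illegal turns are all resolved after a uniform number of iterations $d_0 = d_0(f)$, each contributing a splitting point at a legal turn in $f^{d_0}_\#(\sigma)$.

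At each Nielsen $v_i$, assign a \emph{matching depth} $m_i$ quantifying how far the $H_s$-edge pattern of $\sigma$ around $v_i$ already reproduces iterates of the $\alpha\cdot\beta$ pattern of $\rho$ on both sides, measured in $H_s$-edge count. The key claim is that $m_i \le L$: by the standard CT description, generic leaves of $\Lambda^-_s$ are bi-infinite concatenations of $H_s$-legal arcs joined at Nielsen illegal turns whose edge pattern matches the $\rho$-pattern to arbitrary depth on both sides, so a matching depth $m_i > L$ around $v_i$ would exhibit a subpath of $\sigma$ sitting inside a generic leaf of $\Lambda^-_s$ with more than $L$ edges in $H_s$, contradicting $\lsm(\sigma) < L$. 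With $m_i \le L$ in hand, the Perron-Frobenius expansion factor $\lambda > 1$ of $H_s$ supplies a further uniform iteration count $d_1 = d_1(L,f)$, of order $\log L/\log\lambda$, after which each Nielsen $v_i$ lies inside a full copy of $\rho$ in $f^{d_1}_\#(\sigma)$. Setting $d := \max(d_0, d_1)$ assembles the desired splitting of $f^d_\#(\sigma)$, using \BH\ Lemma~5.8 once more on the remaining $H_s$-legal stretches.

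The main obstacle is the precise matching-depth-vs.-leaf-subpath correspondence invoked above: it requires identifying generic leaves of $\Lambda^-_s$ in the CT $f:G \to G$ as the unique bi-infinite edge paths whose maximal $H_s$-legal sub-arcs meet at Nielsen illegal turns matching $\rho$ on both sides to arbitrary depth, and translating a matched finite arc around $v_i$ back into an honest subpath of some generic leaf of $\Lambda^-_s$. This structural description, drawn from the analysis of attracting laminations via CTs applied to $\phi^{-1}$ as well as to $\phi$, is the technical heart of the argument; once pinned down, the uniform splitting exponent $d(L, f)$ follows as sketched.
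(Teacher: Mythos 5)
There is a genuine gap at what you correctly identified as the ``technical heart.'' Your key claim is that a matching depth $m_i > L$ around a Nielsen illegal turn $v_i$ would exhibit a subpath of $\sigma$ lying in a generic leaf of $\Lambda^-_s$ with more than $L$ edges in $H_s$. This fails in the geometric case, where $\rho$ is a \emph{closed} height-$s$ Nielsen path: a subpath of $\sigma$ of the form $\rho^m$ for $m$ large has matching depth on the order of $m$ at each of its interior illegal turns, but $\rho^m$ is a subpath of the \emph{periodic} line $\rho^\infty$, not of a generic leaf of $\Lambda^-_s$. Indeed the paper's own proof treats this as a genuinely distinct case (the ``third option,'' a bi-infinite iterate of $\rho$, as opposed to a line whose weak closure contains $\Lambda^-$), and rules it out by a dedicated ``well covered'' argument. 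Concretely, paths of the form $\alpha_i\,\rho^{p_i}\,\beta_i$ with $|p_i|\to\infty$ and $\alpha_i,\beta_i$ of bounded length have $\lsm$ uniformly bounded but unbounded matching depth, so no function $d = d(L,f)$ can be extracted from matching depth alone without handling the $\rho$-power phenomenon separately. Your proposal has no mechanism for this, and the Perron--Frobenius estimate $d_1 \sim \log L/\log\lambda$ that you build on top of the $m_i \le L$ bound therefore does not get off the ground in the geometric case.

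Structurally, your approach is also genuinely different from the paper's: you attempt a direct, local analysis of illegal turns, whereas the paper argues by contradiction, extracts a weak limit line from a putative sequence of counterexamples, and then invokes the weak attraction trichotomy (\cite[Lemmas 2.18, 2.19]{\SubgroupsThreeTag}) for height-$s$ lines. That trichotomy is precisely where the ``iterate of $\rho$'' alternative surfaces as an unavoidable third case. Even if the $\rho$-power obstruction were patched, a second concern is that you are inferring a global \emph{splitting} (a decomposition respected by all forward iterates of $f_\#$) from local behavior at each illegal turn; that inference needs justification of the kind given by \cite[Lemma 4.2.2]{\BookOneTag} (a long $s$-legal subpath forces a splitting vertex), which the paper uses explicitly. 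I would recommend either adopting the paper's weak-limit strategy or, if you want to keep the direct approach, adding a separate argument bounding how long a run of $\rho$'s in $\sigma$ can persist inside a single indivisible term of a maximal splitting of $f^i_\#(\sigma)$ --- essentially reinventing the ``well covered'' step.
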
 

\begin{proof} We give the argument when $\sigma$ is a finite path with endpoints at vertices of~$H_s$; the argument for circuits is almost the same.

Arguing by contradiction, if the lemma fails then there exists $L > 0$ so that for all even integers $2i$ there is a finite path $\sigma_i \subset G$ with endpoints at vertices and with $\lsm(\sigma_i) < L_1$ such that $f^{2i}_\#(\sigma_i)$ does not split into terms each of which is either $s$-legal or is an indivisible Nielsen path of height~$s$. Let $\beta_i = f^i_\#(\sigma_i)$, and let $\beta_i = \beta_{i,1} \cdot \ldots \cdot\beta_{i,J_i}$ be a maximal splitting of $\beta_i$ into subpaths with endpoints at vertices. There is a splitting $\sigma_i = \sigma_{i,1} \cdot \ldots \cdot \sigma_{i,J_i}$ with $f^i_\#(\sigma_{i,j}) = \beta_{i,j}$; the paths $\sigma_{i,j}$ need not have endpoints at vertices.

If $\abs{\beta_{i,j}}_s$ is bounded independently of $i$ and $j$ then by \SubgroupsOne\ Lemma 1.54, there exists a positive integer $d$ independent of $i,j$ such that each $f^d_\#(\beta_{i,j})$ splits into subpaths that are either $s$-legal or \iNp s of height $s$. The same is then true for $f^i_\#(\beta_{i,j})$ if $i \ge d$, in contradiction to the fact that $f^{2i}_\#(\sigma_i) = f^i_\#(\beta_{i,1})\cdot \ldots \cdot f^i_\#(\beta_{i,J_i})$ has no such splitting. 

After passing to a subsequence we may therefore choose for each $i$ an integer $j(i) \in \{1,\ldots,J_i\}$ such that $\abs{\beta_{i,j(i)}}_s \to \infinity$ as $i \to \infinity$. Since $\beta_{i,j(i)}$ does not split at any interior vertex, there is a uniform bound to the number of edges of $H_s$ in an $s$-legal subpath $\beta_{i,j(i)}$. It follows that the number of illegal turns of height $s$ in $\beta_{i,j(i)}$ goes to infinity with~$i$. This in turn implies that $\abs{\sigma_{i,j(i)}}_s$ goes to infinity with~$i$, because $\sigma_{i,j(i)}$ has at least as many height $s$ illegal turns as $\beta_{i,j(i)}$.

It follows that some weak limit of the sequence $\sigma_{i,j(i)}$ is a line of height~$s$, contained in $G_s$ and containing at least one edge of~$H_s$. Consider a height~$s$ line $\ell$ which is a weak limit of a subsequence of $\sigma_{i,j(i)}$. If $H_s$ is non-geometric then by the weak attraction theory of \SubgroupsThree, the line $\ell$ has one of two options: either the weak closure of $\ell$ contains $\Lambda^-$; or $\ell$ is weakly attracted to $\Lambda^+$ (\SubgroupsThree\ Lemma 2.18). If $H_s$ is geometric --- and so $\rho$ exists and is closed --- then there is a third option, namely that $\ell$ is a bi-infinite iterate of $\rho$ or $\bar \rho$ (\SubgroupsThree\ Lemma 2.19)

Since no $\sigma_i$ contains a subpath of $\Lambda^-$ that crosses $L$ edges of $H_s$, neither does~$\ell$, and so the weak closure of $\ell$ does not contain~$\Lambda^-$, ruling out the first option. Suppose $\ell$ is weakly attracted to $\Lambda^+$. There exists $m > 0$ such that $f^m_\#(\ell)$, and hence $f^m_\#(\sigma_{i,j(i)})$ for arbitrarily large $i$, contains an $s$-legal subpath that crosses $> C$ edges of $H_s$, for any choice of $C$. By \cite[Lemma 4.2.2]{\BookOneTag}, by choosing $C$ to be sufficiently large we may conclude that $f^m_\#(\sigma_{i,j(i)})$ splits at an interior vertex. But then for $i \ge m$ it follows that $\beta_{i,j(i)} = f_\#^{i-m}(f^m_\#(\sigma_{i,j(i)}))$ splits at an interior vertex, contradicting maximality of the splitting of $\beta_i$, and thus ruling out the second option for~$\ell$. This concludes the proof if $H_s$ is non-geometric. 

Assuming now that $H_s$ is geometric, it remains to show that the third option can be avoided, and hence by the previous paragraph the desired contradiction is achieved. That is, if $H_s$ is geometric we show that some weak limit of a subsequence of $\sigma_{i,j(i)}$ contains at least one edge of $H_s$ and is not a bi-infinite iterate of the closed path $\rho$ or $\bar \rho$. This may be done by setting up an application of Lemma 1.11 of \hbox{\SubgroupsThree,} but it is just as simple to give a direct proof. Lift $\sigma_{i,j(i)}$ to the universal cover of $G$ and write it as an edge path $\ti \sigma_{i,j(i)} = \wt E_{i1} \wt E_{i2} \ldots \wt E_{iM_i} \subset \wt G$; the first and last terms are allowed to be partial edges. Let $b$ equal  twice the number of edges in $\rho_s$.  Given $m \in \{2+b,\ldots,M_i-b-1\}$, we say that $\wt E_{im}$ is \emph{well covered} if there is a periodic line $\ti \rho_{im} \subset \wt G$ that projects to $\rho$ or to $\bar \rho$ and that contains $\wt E_{i,m-b}\ldots \wt E_{im}\ \ldots \wt E_{i,m+b}$ as a subpath. Since the intersection of distinct periodic lines  cannot contain two fundamental domains of both lines, $\ti \rho_{im}$ is unique if it exists. Moreover, if both $\wt E_{im}$ and $\wt E_{i,m+1}$ are well covered then  $\ti \rho_{im}=\ti \rho_{i,m+1}$. It follows that if $\wt E_{im}$ is well covered then we can inductively move forward and backward past other well covered edges of $\tilde\sigma_{i,j(i)}$ all in the same lift of $\rho$, until either encountering an edge that is not well covered, or encountering initial and terminal subsegments of $\ti\sigma_{i,j(i)}$ of uniform length. After passing to a subsequence, one of the following is therefore satisfied:
\begin{enumerate}
 \item \label{good weak limit} There exists a sequence of integers $K_i$ such that $2+b < K_i < M_i-b-1$, and $K_i \to \infinity$, and $M_i - K_i \to \infty$, and such that $\wt E_{iK_i} \subset \wt H_s$ is not well covered.   
 \item \label{just rho}  $\sigma_{i,j(i)} = \alpha_i \rho^{p_i} \beta_i$ where the number of edges crossed by $\alpha_i$ and $\beta_i$ is bounded independently of $i$ and $\abs{p_i} \to \infty$.
\end{enumerate}  
If~\pref{good weak limit} holds then the existence of a subsequential weak limit of $\sigma_{i,j(i)}$ that crosses an edge of $H_s$ and is not a bi-infinite iterate of $\rho$ or $\bar \rho$ follows immediately. If~\pref{just rho} holds then $\beta_{i,j(i)}=f^i_\#(\sigma_{i,j(i)})$ decomposes as $\mu_i \, \rho^{q_i} \, \nu_i$ where $\abs{q_i} \to \infty$ and  the number of illegal turns of height $s$ in $\mu_i$ and $\nu_i$ is uniformly bounded. But then for any $p \ge 0$ the number of edges  of $\rho^{q_i}$  that are cancelled when  $f^p_\#(\mu_i) \rho^{q_i}f^p_\#(\nu_i)$ is tightened to $f^p_\#(\beta_{i,j(i)})$ is bounded independently of $i$ and $p$ and therefore $\beta_{i,j(i)}$ can be split at an interior vertex, which is a contradiction.
 \end{proof}

\section{Reducing Theorem A to Theorem C and the WWPD Construction Theorem}
\label{SectionBToE}  

\subsection{Weak attraction and subgroup decomposition theory}
\label{SectionPPResults}
In this section we review some terminology and notation regarding proofs and applications of lamination ping-pong. The origins of lamination ping-pong are found in the proof of the Tits alternative for $\Out(F_n)$ by Bestvina, Feighn, and Handel \BookOne. We focus particularly on results from work on subgroup classification theory \Subgroups, and from Ghosh's work on free subgroups of $\Out(F_n)$ \cite{Ghosh:WeakAttraction}.

\medskip\textbf{The group $\IAThree$.} Recall the finite index normal subgroup $\IAThree \subgroup \Out(F_n)$ defined by
$$\IAThree = \kernel\bigl(\Out(F_n) \mapsto \Aut(H_1(F_n;\Z/3) \approx GL(n,\Z/3)\bigr)
$$
This subgroup has several important invariance properties:
\begin{itemize}
\item  $\IAThree$ is torsion free (see \cite{BaumslagTaylor:Center}; and see \cite{Vogtmann:OuterSpaceSurvey}).
\item For every $\psi \in \IAThree$ and every free factor $A \subgroup F_n$, if its conjugacy class $[A]$ is $\psi$-periodic then it is fixed by $\psi$ (see \SubgroupsTwo, Theorem 3.1).
\item For every $\psi \in \IAThree$ and every $\gamma \in F_n$, if its conjugacy class $[\gamma]$ is $\psi$-periodic then it is fixed by $\psi$ (see \SubgroupsTwo, Theorem 4.1).
\item Every virtually abelian subgroup of $\IAThree$ is abelian (see \cite{HandelMosher:VirtuallyAbelian}).
\end{itemize}
As said in the introduction, we sometimes emphasize the last result by using the terminology ``(virtually) abelian'' in the context of a subgroup of $\IAThree$. For example, here is a consequence of \cite{HandelMosher:VirtuallyAbelian} which justifies that terminology for the property of virtually abelian restrictions:

\begin{corollary}
\label{CorollaryVirtuallyAbelian}
A subgroup $\Gamma \subgroup \IAThree$ has virtually abelian restrictions if and only if it has abelian restrictions.
\end{corollary}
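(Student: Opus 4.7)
The forward direction is immediate, so the plan is to prove that if $\Gamma \subgroup \IAThree$ has virtually abelian restrictions then it has abelian restrictions. Fix a proper free factor $B \subgroup F_n$ of rank $k$ with $\Gamma \subgroup \Stab[B]$, and let $Q \subgroup \Out(B)$ be the image of the restriction map $r \from \Gamma \to \Out(B)$. By hypothesis $Q$ is virtually abelian; the goal is to upgrade this to ``abelian''. The strategy is to show that $Q$ lies in the analogous $\IA$-subgroup of $\Out(B)$, and then apply the theorem of \cite{HandelMosher:VirtuallyAbelian} one level down, which says that every virtually abelian subgroup of $\IA_k(\Z/3) \subgroup \Out(F_k) \cong \Out(B)$ is abelian.

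The main step is therefore to verify the inclusion $Q \subgroup \IA_k(\Z/3)$. Choose a free factorization $F_n = B * C$; this induces a direct sum decomposition $H_1(F_n;\Z/3) = H_1(B;\Z/3) \oplus H_1(C;\Z/3)$. Given $\phi \in \Gamma$, lift it to $\Phi \in \Aut(F_n)$ so that $\Phi(B) = B$ (possible because $\phi$ fixes $[B]$); then $r(\phi) \in \Out(B)$ is represented by $\Phi \restrict B$, well-defined up to $\Inn(B)$. Because $\phi \in \IAThree$ the automorphism $\Phi$ acts as the identity on $H_1(F_n;\Z/3)$, and because $\Phi$ preserves $B$ it preserves the direct summand $H_1(B;\Z/3)$; hence the restriction $\Phi \restrict B$ acts trivially on $H_1(B;\Z/3)$. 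This exhibits $r(\phi) \in \IA_k(\Z/3)$, giving $Q \subgroup \IA_k(\Z/3)$.

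With this inclusion in hand, the virtually abelian subgroup $Q$ of $\IA_k(\Z/3)$ is abelian by \cite{HandelMosher:VirtuallyAbelian}. Since $B$ was arbitrary among proper free factors stabilized by~$\Gamma$, the subgroup $\Gamma$ has abelian restrictions, completing the proof. The only potential obstacle is the first-homology bookkeeping in the previous paragraph, but that is a direct consequence of the fact that a $\Phi$-invariant direct-summand decomposition of $H_1(F_n;\Z/3)$ is respected by the action, and that the restriction of the identity to a summand is the identity.
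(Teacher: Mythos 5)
Your proposal is correct and follows essentially the same route as the paper: both show that the image of the restriction homomorphism lies in $\IA(\Z/3)$ of the free factor --- you via a chosen complement and the induced direct-sum decomposition of $H_1(\,\cdot\,;\Z/3)$, the paper via the equivariant injection $H_1(B;\Z/3)\to H_1(F_n;\Z/3)$, which is the same point --- and then both conclude by applying \cite{HandelMosher:VirtuallyAbelian} to that image.
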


\begin{proof}
For any free factor $A \subgroup F_n$ the inclusion $A \inject F_n$ induces an injection $H_1(A;\Z/3) \mapsto H_1(F_n;\Z/3)$ which is equivariant with respect to the natural actions $\Stab[A] \inject \Out(F_n) \act H_1(F_n;\Z/3)$ and $\Stab[A] \mapsto \Out(A) \act H_1(A;\Z/3)$. It follows that the image of $\Gamma$ under the induced homomorphism $\Gamma \mapsto \Out(A)$ is contained in $\IA_A(\Z/3)$, and so by \cite{HandelMosher:VirtuallyAbelian} that image is virtually abelian if and only if it is abelian.
\end{proof}

\smallskip\textbf{Relative full irreducibility.} An \emph{extension of free factor systems} is simply an instance $\F \sqsubset \F'$ of the usual partial ordering such that $\F \ne \F'$. Given an extension $\F \sqsubset \F'$, its \emph{co-edge number} is the minimum, over all marked graphs $G$ and subgraphs $H \subset H' \subset G$ realizing $\F \sqsubset \F'$, of the number of edges of $H' \setminus H$. If the co-edge number equals~$1$ then $\F \sqsubset \F'$ is a \emph{one-edge extension}, and this holds if and only if the free factor systems $\F$ and $\F'$ are related in one of two ways: either two components of $\F$ having ranks $i,j$ are replaced by a single component of $\F'$ of rank $i+j$ that contains the given two components of $\F$; or a single component of $\F$ having rank~$i$ is replaced by a single component of $\F'$ of rank $i+1$ that contains the given component of $\F$. Otherwise $\F \sqsubset \F'$ is a \emph{multi-edge extension}. If $\F'=\{[F_n]\}$ then we drop $\F'$ from the terminology and speak of the co-edge number of $\F$, and so $\F$ has co-edge number~$1$ if and only if $\F$ consists either of a single component of rank $n-1$ or of two components whose ranks sum to~$n$.

For any subgroup $\cH \subgroup \Out(F_n)$ and any $\cH$-invariant extension $\F \sqsubset \F'$, recall that $\cH$ is \emph{fully irreducible} relative to the extension if there does not exist a free factor system contained strictly between $\F$ and $\F'$ which is invariant under a finite index subgroup of~$\cH$. If in addition $\F'=\{[F_n]\}$ then it is dropped from the terminology and we say that $\cH$ is fully irreducible rel~$\F$. When $\cH=\<\phi\>$ is cyclic then we extend this terminology to $\phi$, and so $\phi$ is fully irreducible relative to a $\phi$-invariant extension $\F \sqsubset \F'$ if and only if no free factor system contained strictly between $\F$ and $\F'$ is $\phi$-periodic. In the context of a subgroup or element of $\IAThree$ we may drop the adverb ``fully'', because in $\IAThree$ periodic free factor systems are fixed \hbox{(\SubgroupsTwo\ Theorem 3.1).}

\smallskip\textbf{Weak attraction; nonattracting subgroup systems.}
Weak attraction theory was first developed in \BookOne\ for attracting laminations associated to top \eg\ strata of improved relative train track maps. We review here the generalization to arbitrary attracting laminations developed in Section 3 of \SubgroupsOne. We also prove Fact~\ref{FactMutualAttraction}, a ``mutual weak attraction'' result which will be applied later in a ping-pong argument. 

Consider a subgroup system~$\A$ in~$F_n$. We say that $\A$ is a \emph{vertex group system} if there exists a minimal action $F_n \act T$ on an $\reals$-tree $T$ with trivial arc stabilizers such that $\A$ is the set of conjugacy classes of nontrivial point stabilizers of the action $F_n \act T$.

Consider $\phi \in \Out(F_n)$ and a lamination pair $\Lambda^\pm_\phi \in \L^\pm(\phi)$ which is fixed by~$\phi$. Earlier we defined weak attraction of a line to the lamination $\Lambda^+_\phi$ under iteration of $\phi$. A conjugacy class is weakly attracted to $\Lambda^+_\phi$ if the periodic line representing that conjugacy class is weakly attracted. The \emph{nonattracting subgroup system} of $\Lambda^\pm_\phi$ is denoted $\A_\na(\Lambda^\pm_\phi)$, it is concretely described in terms of any \ct\ representative of a rotationless power of~$\phi$ in Definitions 1.2 of \SubgroupsThree, and it is more abstractly characterized by item~\pref{FactAnaChar} of the following compilation of results:


\begin{fact} \label{FactANA}
For any rotationless $\phi \in \Out(F_n)$ and $\Lambda^\pm_\phi \in \L^\pm(\phi)$ we have:
\begin{enumerate}
\item \label{FactANAGeom}
$\A_\na\Lambda^\pm_\phi$ is a vertex group system, and it is a free factor system if and only if the pair $\Lambda^\pm_\phi$ is nongeometric (\Subgroups\ Part I Section 3, and Part III Proposition 1.4). 
\item\label{FactAnaChar}
$\A_\na(\Lambda^\pm_\phi)$ is the unique vertex group system such that: a conjugacy class $c$ is carried by $\A_\na(\Lambda^+_\phi)$ $\iff$ $c$ is not weakly attracted to $\Lambda^+_\phi$ by iteration of $\phi$ $\iff$ $c$ is not weakly attracted to $\Lambda^-_\phi$ by iteration of $\phi^\inv$ (\SubgroupsThree\ Corollaries 1.9 and 1.10).
\item \label{ItemANATop}
If $\F$ is a proper, $\phi$-invariant free factor system, and if there exists a lamination pair $\Lambda^\pm_\phi \in \L^\pm(\phi)$ not carried by~$\F$, then: 
\begin{enumerate}
\item \label{FactAnaCoEdgeTwo}
$\F$ has co-edge number~$\ge 2$.
\item \label{FactAnaFullIrr}
If $\phi$ is fully irreducible rel~$\F$ then $\Lambda^\pm_\phi$ is the unique lamination pair in $\L^\pm(\phi)$ not carried by~$\F$, and furthermore:
\begin{enumerate}
\item $\Lambda^\pm_\phi$ fills relative to~$\F$, meaning that $\F_\supp(\Lambda^\pm_\phi;\F) = \{[F_n]\}$.
\item If the lamination pair $\Lambda^\pm_\phi$ is nongeometric then $\A_\na(\Lambda^\pm_\phi)=\F$;
\item If the pair $\Lambda^\pm_\phi$ is geometric then there exists a primitive cyclic subgroup $C \subset F_n$ such that $\A_\na(\Lambda^\pm_{\phi}) = \F \union \{[C]\}$ and $\F_\supp(\F,[C]) = \{[F_n]\}$; in particular $[C] \not\sqsubset \F$.
\end{enumerate}
\end{enumerate}
\end{enumerate}
\end{fact}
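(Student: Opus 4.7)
The plan is to prove Fact~\ref{FactANA} by assembling cited results for parts (1) and (2), and by direct train track arguments for part (3). I would first fix a rotationless positive power $\phi^k$ of $\phi$, which exists with uniformly bounded exponent by \cite[Theorem 4.28]{\recognitionTag}, and a \ct\ $f \from G \to G$ representing $\phi^k$ whose EG-aperiodic top stratum $H_s$ realizes the lamination pair $\Lambda^\pm_\phi$. With this setup the concrete construction of $\A_\na(\Lambda^\pm_\phi)$ in \cite[Definition 1.2]{\SubgroupsThreeTag} is available, namely as the system built from the noncontractible components of the nonattracting subgraph $Z \subset G$ together, in the geometric case, with the conjugacy class of the peripheral curve of the geometric model. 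Part (1) is then a direct restatement of \cite[Proposition 1.4]{\SubgroupsThreeTag}, and part (2) is the content of \cite[Corollaries 1.9 and 1.10]{\SubgroupsThreeTag}. Both of these rest on the trichotomy for weak limits of lines of height $\le s$ that was recalled and used in the proof of Lemma~\ref{stabilizes}: any such line either is weakly attracted to $\Lambda^+_\phi$, has closure containing $\Lambda^-_\phi$, or (in the geometric case) is a bi-infinite iterate of the closed height-$s$ Nielsen path; the lines avoiding attraction are exactly those carried by $\A_\na$.

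For part (3a) I would argue by contradiction. Assume $\F$ has co-edge~$1$ and realize $\F \sqsubset \{[F_n]\}$ in a \ct\ $f \from G \to G$ of a rotationless power of $\phi$ with $\F = [\pi_1 G_{s-1}]$ and $G = G_s = G_{s-1} \cup E$ a single-edge attachment, so $H_s = \{E\}$. Since $\phi^k$ is an automorphism and $\F$ is $\phi$-invariant, the induced map $f_\ast$ on $H_1(G)$ preserves the subspace $H_1(G_{s-1})$ and so its induced action on the quotient $H_1(G,G_{s-1}) \cong \Z$ must be $\pm 1$; equivalently the algebraic $E$-count in $f(E)$ is $\pm 1$. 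If $H_s$ were \eg, its transition entry $\lambda \ge 2$ would force $f(E)$ to cross $E$ in both orientations to reconcile unsigned count $\lambda$ with signed count $\pm 1$; a further \ct\ analysis, using property RTT-(i) (directions in $H_s$ map to $H_s$) together with the absence of $f$-periodic illegal turns of height~$s$ in a \ct, rules out this configuration for a one-edge stratum. Hence $H_s$ is \neg\ and contributes no attracting lamination, so every element of $\L(\phi) = \L(\phi^k)$ has height less than $s$ and is carried by $[\pi_1 G_{s-1}] = \F$, contradicting the existence of $\Lambda^\pm_\phi$ not carried by $\F$.

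For part (3b), the remaining conclusions follow from relative full irreducibility combined with parts (1) and (2). Given any lamination pair $\Lambda^\pm \in \L^\pm(\phi)$ not carried by $\F$, the free factor support $\F_\supp(\Lambda^\pm;\F)$ is a $\phi$-invariant free factor system strictly containing $\F$; by relative full irreducibility it equals $\{[F_n]\}$, proving the filling conclusion. Uniqueness is obtained by realizing $\F = [\pi_1 G_r]$ in a \ct\ of a rotationless power of $\phi$: each \eg\ stratum $H_{s_i}$ with $s_i > r$ contributes a $\phi$-invariant free factor system $[\pi_1 G_{s_i}]$ intermediate between $\F$ and $\{[F_n]\}$, and relative full irreducibility permits only one such stratum to exist above $G_r$ and only at the top, giving a unique non-carried lamination. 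For the final identifications: in the nongeometric case, part (1) gives that $\A_\na(\Lambda^\pm_\phi)$ is a $\phi$-invariant free factor system that strictly contains $\F$ (it carries the $\F$-carried conjugacy classes, none of which are weakly attracted to $\Lambda^+_\phi$) and is strictly below $\{[F_n]\}$ (since $\Lambda^+_\phi$ does attract some conjugacy class), so by relative full irreducibility $\A_\na(\Lambda^\pm_\phi) = \F$; in the geometric case the same argument applied to the free factor part of $\A_\na$ forces that part to equal $\F$, while the extra component $[C]$ from the peripheral curve of the geometric model satisfies $[C] \not\sqsubset \F$ by construction of $\A_\na$ in \cite[Definition 1.2]{\SubgroupsThreeTag}, from which $\F_\supp(\F,[C]) = \{[F_n]\}$ follows by a final application of relative full irreducibility to this $\phi$-invariant free factor system strictly containing $\F$. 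The main obstacle is the \ct\ analysis in part (3a), where one must rule out subtle single-edge EG configurations compatible with both the automorphism constraint and the structural axioms of a \ct.
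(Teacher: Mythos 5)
Your handling of items \pref{FactANAGeom} and \pref{FactAnaChar} matches the paper (both are quoted from \cite[Proposition 1.4, Corollaries 1.9 and 1.10]{\SubgroupsThreeTag}), and your argument for \pref{FactAnaFullIrr} — filling and uniqueness via invariance plus full irreducibility rel~$\F$, then identifying $\A_\na$ by the same mechanism — is essentially the paper's route; the paper cites \cite[Fact 1.55]{\SubgroupsOneTag} for uniqueness and the ``top stratum'' remark of \cite[Definition 1.2]{\SubgroupsThreeTag} for (ii) and (iii), and note that in the geometric case that citation is also what supplies the decomposition of $\A_\na$ as a free factor system together with $\{[C]\}$, which your phrase ``the free factor part of $\A_\na$'' quietly presupposes.

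The genuine gap is in \pref{FactAnaCoEdgeTwo}. First, you cannot reduce to a \ct\ in which the complement of the filtration element realizing $\F$ is a single edge forming the top stratum: co-edge number~$1$ is a statement about the minimal realization of the extension $\F \sqsubset \{[F_n]\}$ over all marked graphs, not about the chosen train track representative, whose complement $G \setminus G_r$ is in general only \emph{topologically} an arc or a ``sewing needle'' and may consist of several edges lying in several strata (the paper's own proof must treat the two-edge sewing-needle configuration separately). Your relative-homology computation of the signed $E$-count needs the one-edge hypothesis, so the argument does not get started in general. Second, even granting a one-edge top stratum, the crucial step — excluding a single-edge \eg\ stratum in which $f(E)$ crosses $E$ with both orientations and algebraic count $\pm 1$ — is exactly the heart of the matter, and you only assert that ``a further \ct\ analysis'' rules it out; moreover the tool you invoke, the ``absence of $f$-periodic illegal turns of height $s$ in a \ct,'' is not a \ct\ property (\eg\ strata of \cts\ routinely have illegal turns; that is precisely where indivisible Nielsen paths of height $s$ come from). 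The paper sidesteps all of this: starting from an arbitrary relative train track representative, it tightens $f$ on the complementary arc $\alpha$ (legitimate because the relative train track conditions impose nothing on non-\eg\ strata), quotes \cite[Corollary 3.2.2]{\BookOneTag} to conclude that the tightened image crosses $\alpha$ at most once, hence there is no \eg\ stratum above $G_r$, and then applies \cite[Lemma 3.1.10]{\BookOneTag} to force every leaf of $\Lambda^+_\phi$ into $G_r$, contradicting the hypothesis. As written, your \pref{FactAnaCoEdgeTwo} restates the difficulty rather than resolving it.
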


\begin{proof} Only item~\pref{ItemANATop} needs some justification. 

We prove~\pref{FactAnaCoEdgeTwo} by contradiction, so suppose that $\F$ has co-edge number~$1$. Let $f \from G \to G$ be a relative train track representative of $\phi$ in which $\F$ is represented by a filtration element $G_r$. There are two cases to consider, regarding how $G$ is obtained topologically from~$G_r$: by attaching either an arc or a ``sewing needle'' to $G_r$.

In the first case $G$ is obtained by attaching an arc $\alpha$ to $G_r$, identifying each endpoint of $\alpha$ with a vertex of~$G_r$. Let $f' \from G \to G$ be obtained from $G$ by tightening the restriction~$f \restrict \alpha$. Applying \cite[Corollary 3.2.2]{\BookOneTag}, the path $f'(\alpha)$ crosses $\alpha$ at most once, and so~$f'$ has no \eg\ stratum above~$G_r$. Since $f' \restrict G_r = f \restrict G_r$, the map $f'$ is also a relative train track representative of~$\phi$, because the definition of a relative train track map imposes no condition on non-\eg\ strata. Applying~\cite[Lemma 3.1.10]{\BookOneTag}, the highest stratum of $f'$ crossed by leaves of $\Lambda^+_\phi$ is \eg\ and so all such leaves are contained in $G_r$, implying that $\Lambda^+_\phi$ is supported by~$\F$, a contradiction.

In the second case $G$ is obtained from $G_r$ by attaching a ``sewing needle'', consisting of a vertex $v$ disjoint from $G_r$, an arc $\alpha$ with both endpoints identified to $v$, and another arc $\alpha'$ with one endpoint identified to $v$ and opposite endpoint identified to a vertex on $G_r$. Let $f_1 \from G_1 \to G_1$ be the topological representative obtained from $f \from G \to G$ by first collapsing $\alpha'$ to a point to get $G_1$ and then tightening the induced map on $\alpha$. The analysis of the first case now applies to this map $f_1$, reaching the same contradiction. 

We prove~\pref{FactAnaFullIrr}. For item~(i), the free factor system $\F_\supp(\Lambda^\pm_\phi;\F)$ equals $\{[F_n]\}$ because it contains $\F$ and is $\phi$-invariant, and $\phi$ is fully irreducible rel~$\F$. It also follows that $H_s$ is the top stratum. For uniqueness of $\Lambda^\pm_\phi$, if $\phi$ had a different lamination pair  not supported by $\F$ then there would exist a $\phi$-invariant free factor system strictly between $\F$ and $\{[F_n]\}$ (by Fact 1.55 of \SubgroupsOne), contradicting that $\phi$ is fully irreducible rel~$\F$. The further conclusions of~\pref{FactAnaFullIrr} then follow from Definition 1.2, ``Remark: The case of a top stratum'', in \SubgroupsThree.
\end{proof}

The conclusions of the following ``mutual weak attraction'' fact match key hypotheses of lamination ping-pong results, for example Proposition 1.3 of \SubgroupsFour, as well as Ghosh's theorem \cite[Theorem 7.3]{\GhoshWeakTag} discussed below, enabling us to apply these results below in Propositions~\ref{PropGhosh} and~\ref{multiple filling}.


\begin{fact}\label{FactMutualAttraction}
Given $\phi$, $\psi \in \Out(F_n)$ and lamination pairs $\Lambda^\pm_\phi \in \L^\pm(\phi)$ and $\Lambda^\pm_\psi \in \L^\pm(\psi)$, suppose that the following hypotheses hold:
\begin{enumerate}
\item\label{ItemPhiPsiDiffLams}
$\{\Lambda^-_\phi,\Lambda^+_\phi\} \intersect \{\Lambda^-_\psi,\Lambda^+_\psi\} = \emptyset$;
\item\label{ItemPhiPsiANA}
No generic leaves of $\Lambda^-_\phi$ or $\Lambda^+_\phi$ are carried by $\A_\na(\Lambda^\pm_\psi)$, and similarly with $\phi,\psi$ switched;
\end{enumerate} 
It follows that generic leaves of $\Lambda^+_\phi$ and of $\Lambda^-_\phi$ are weakly attracted to $\Lambda^+_\psi$ by iteration of $\psi$ and to $\Lambda^-_\psi$ by iteration of $\psi^\inv$, and similarly with $\phi,\psi$ switched.
\end{fact}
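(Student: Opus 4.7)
The plan is to apply the weak attraction theorem of \SubgroupsThree\ (specifically Lemmas~2.18 and~2.19 as cited elsewhere in this paper) to an arbitrary generic leaf of one of the laminations. First, we replace $\phi$ and $\psi$ by positive rotationless powers; this preserves the lamination pairs $\Lambda^\pm_\phi$, $\Lambda^\pm_\psi$, their nonattracting subgroup systems, and the weak-attraction conclusions, since the lamination pairs are invariant under the replacement and the forward orbits of a power are cofinal in those of the original element. Hence hypotheses~(1) and~(2) and the desired conclusion are all preserved, and we may assume $\phi,\psi$ are rotationless and each fixes its own lamination pair.

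By the symmetry of the conclusion under swapping $\phi\leftrightarrow\psi$ and $+\leftrightarrow-$, it suffices to prove one representative statement: a generic leaf $\ell$ of $\Lambda^+_\phi$ is weakly attracted to $\Lambda^+_\psi$ under iteration of $\psi$. For this we apply the weak attraction trichotomy to $(\ell,\Lambda^\pm_\psi)$, which asserts that one of the following holds: either $\ell$ is carried by $\A_\na(\Lambda^\pm_\psi)$; or $\ell$ is weakly attracted to $\Lambda^+_\psi$ under $\psi$; or $\ell$ lies in an ``exceptional family'', meaning either the weak closure of $\ell$ contains $\Lambda^-_\psi$ as a sublamination, or, only in the geometric case, $\ell$ is a bi-infinite iterate of the top-stratum indivisible Nielsen path of some \ct\ representative of a rotationless power of $\psi^\inv$.

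Hypothesis~(2) rules out the ``carried'' alternative immediately. For the first branch of the exceptional family, observe that the weak closure of a generic leaf of $\Lambda^+_\phi$ equals all of $\Lambda^+_\phi$, so it would follow that $\Lambda^-_\psi \subseteq \Lambda^+_\phi$; minimality of the attracting lamination $\Lambda^+_\phi$ (a standard property of attracting laminations from~\BookOne) then forces $\Lambda^-_\psi = \Lambda^+_\phi$, contradicting hypothesis~(1). The Nielsen-path branch is ruled out because a bi-infinite iterate of a closed Nielsen path is a periodic line, whereas every generic leaf of an attracting lamination is non-periodic by definition. Only the ``attracted'' alternative remains, yielding the conclusion for this case; the other three cases of the fact follow by the same argument after the symmetry reductions above.

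The main obstacle, as I see it, is more bookkeeping than mathematical: one must verify that the statement of the weak attraction theorem extracted from \SubgroupsThree\ applies to arbitrary lines (rather than only to lines arising as weak limits in some specific construction, as in the partial application appearing inside the proof of Lemma~\ref{stabilizes}), and one must assemble minimality of attracting laminations together with non-periodicity of their generic leaves as inputs from~\BookOne. No substantively new dynamical idea is needed beyond the weak attraction theorem itself.
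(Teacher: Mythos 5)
The step that fails is your disposal of the branch in which the weak closure of $\ell$ contains $\Lambda^-_\psi$. You invoke ``minimality of the attracting lamination $\Lambda^+_\phi$'' to force $\Lambda^-_\psi = \Lambda^+_\phi$, but attracting laminations are not minimal in general: they can contain proper sublaminations (nested elements of $\L(\phi)$, closures of nongeneric leaves), and the genuinely dangerous case is precisely $\Lambda^-_\psi \subsetneq \Lambda^+_\phi$, that is, generic leaves of $\Lambda^-_\psi$ occurring as \emph{nongeneric} leaves of $\Lambda^+_\phi$. Hypothesis \pref{ItemPhiPsiDiffLams} does not exclude this. The paper excludes it by taking a generic leaf $\ell^-_\psi$ of $\Lambda^-_\psi$ and combining its birecurrence with \cite[Lemma 3.1.15]{\BookOneTag}: if $\ell^-_\psi$ were a leaf of $\Lambda^+_\phi$, it would be either a generic leaf of $\Lambda^+_\phi$ (contradicting \pref{ItemPhiPsiDiffLams}) or, realized in a \ct\ for a rotationless power of $\phi$, contained below the $\Lambda^+_\phi$-stratum and hence carried by $\A_\na(\Lambda^\pm_\phi)$ --- and that is ruled out only by the ``$\phi,\psi$ switched'' half of hypothesis \pref{ItemPhiPsiANA}. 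Your argument for a single case never uses that switched half at all, which is a telltale sign that the subcase you dismissed via minimality is exactly the one the hypothesis exists to kill.

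The second problem is the trichotomy itself, which you flag as bookkeeping but which is where the real work hides. Lemmas 2.18--2.19 of \SubgroupsThree, as used in this paper in the proof of Lemma~\ref{stabilizes}, are applied to lines of height $s$ in a \ct\ whose \emph{top} stratum carries the lamination; Fact~\ref{FactMutualAttraction} makes no assumption that $\Lambda^\pm_\psi$ is topmost (neither $\psi$ nor $\phi$ is assumed irreducible relative to anything), so those statements do not apply to the realization of $\ell^+_\phi$ in a \ct\ for $\psi$, and the alternative list you wrote is not available in this generality. The tool that does work is the general weak attraction theorem \cite[Theorem H]{\SubgroupsThreeTag}, whose hypotheses are that the line is not carried by $\A_\na(\Lambda^\pm_\psi)$ (your hypothesis \pref{ItemPhiPsiANA}, fine) \emph{and} that it lies outside some weak neighborhood of a generic leaf of $\Lambda^-_\psi$; verifying that second condition for $\ell^+_\phi$ is equivalent to showing $\ell^-_\psi$ is not a leaf of $\Lambda^+_\phi$, i.e.\ exactly the argument of the previous paragraph. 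So repairing the trichotomy does not route around the missing step --- it leads you back to the paper's proof. (Your preliminary reductions --- passing to rotationless powers and using the symmetry of the conclusion --- are unobjectionable.)
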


\begin{proof}
By symmetry we need only check that a generic leaf $\ell^+_\phi$ of $\Lambda^+_\phi$ is weakly attracted to $\Lambda^+_\psi$ by iteration of $\psi$, which we do by applying Theorem H of \SubgroupsThree. 

Consider a generic leaf $\ell^-_\psi$ of $\Lambda^-_\psi$. If $\ell^-_\psi$ is a leaf of $\Lambda^+_\phi$ then by combining \cite[Lemma 3.1.15]{\BookOneTag} with birecurrence of $\ell^-_\psi$ it follows that one of two cases holds. In the first case, $\ell^-_\psi$ is a generic leaf of $\Lambda^+_\phi$ and so $\Lambda^-_\psi = \Lambda^+_\phi$ which contradicts~\pref{ItemPhiPsiDiffLams}. In the second case, choosing any \ct\ representative $f \from G \to G$ of a rotationless power of $\phi$ with core filtration element $G_r$ and \eg\ stratum $H_r$ corresponding to $\Lambda^\pm_\phi$, the realization in $G$ of $\ell^-_\psi$ is contained in $G_{r-1}$ and so $\ell^-_\psi$ is carried by $\A_\na(\Lambda^\pm_\phi)$, which contradicts~\pref{ItemPhiPsiANA}. 

We conclude that $\ell^-_\psi$ is not a leaf of $\Lambda^+_\phi$ and hence there exists a weak neighborhood $V^-_\psi$ of $\ell^-_\psi$ such that $\ell^+_\phi \not\in V^-_\psi$. Also, from~\pref{ItemPhiPsiANA} we conclude that $\ell^+_\phi$ is not carried by $\A_\na(\Lambda^\pm_\psi)$. Applying Theorem H~(2) of \SubgroupsThree\ it follows that for every weak neighborhood $V^+_\psi$ of $\ell^+_\psi$ there exists $m$ such that $\psi^m(\ell^+_\phi) \in V^+_\psi$, that is, $\ell^+_\phi$ is weakly attracted to $\ell^+_\psi$ by iteration of $\psi$.
\end{proof}

\medskip\textbf{A lamination ping-pong result.}  The next result is a pared down version of \hbox{\SubgroupsFour\ Proposition 1.7,} omitting several conclusions of that proposition labelled $(2^\pm)$, $(3^\pm)$,  $(4^\pm)$ that we do not need here. 

\begin{proposition}[Proposition 1.7 of \SubgroupsFour]
\label{PropPingPongVer}
Given a free factor system $\F$, rotationless $\phi,\psi \in \Out(F_n)$ both leaving $\F$ invariant, and lamination pairs $\Lambda^\pm_\phi \in \L^\pm(\phi)$, $\Lambda^\pm_\psi \in \L^\pm(\psi)$ each having a generic leaf fixed by $\phi^{\pm 1}, \psi^{\pm 1}$ (resp.) with fixed orientation, assume that the following hypotheses hold:

\smallskip

(a) $\F \sqsubset \A_\na\Lambda^\pm_\phi$ and $\F \sqsubset \A_\na\Lambda^\pm_\psi$.

(i) Generic leaves of $\Lambda^+_\psi$ are weakly attracted to $\Lambda^+_\phi$ under iteration by $\phi$. 

(ii) Generic leaves of $\Lambda^-_\psi$ are weakly attracted to $\Lambda^-_\phi$ under iteration by $\phi^{-1}$. 

(iii) Generic leaves of $\Lambda^+_\phi$ are weakly attracted to $\Lambda^+_\psi$ under iteration by $\psi$. 

(iv) Generic leaves of $\Lambda^-_\phi$ are weakly attracted to $\Lambda^-_\psi$ under iteration by $\psi^{-1}$. 

\smallskip\noindent
Under these hypotheses, 
there exists an integer $M$, such that for any $m,n \ge M$ the outer automorphism $\xi = \psi^m \phi^n$ has an invariant attracting lamination $\Lambda^+_\xi \in \L(\xi)$ and an invariant repelling lamination $\Lambda^-_\xi \in \L(\xi^\inv)$ such that each is nongeometric if $\Lambda^\pm_\phi$ and $\Lambda^\pm_\psi$ are nongeometric, and the following hold: 
\begin{itemize}
\item[(1)] $\F$ is carried by both $\A_\na\Lambda^-_\xi$ and $\A_\na\Lambda^+_\xi$, and so neither $\Lambda^+_\xi$ nor $\Lambda^-_\xi$ is carried by $\F$. Also, both $\A_\na\Lambda^-_\xi$ and $\A_\na\Lambda^+_\xi$ are carried by $\A_\na\Lambda^\pm_\phi$ and by $\A_\na\Lambda^\pm_\psi$.
\item[(5+)] For any weak neighborhood $U^+ \subset \B$ of a generic leaf of $\Lambda^+_\psi$ there exists an integer $M^+$ such that if $m,n \ge M^+$ then a generic leaf of $\Lambda^+_\xi$ is in $U^+$.
\item[(5-)] For any weak neighborhood $U^- \subset \B$ of a generic leaf of $\Lambda^-_\phi$ there exists an integer $M^-$ such that if $m,n \ge M^-$ then a generic leaf of $\Lambda^-_\xi$ is in $U^-$.
\item[(6)] $\Lambda^\pm_\xi$ is a dual lamination pair of $\xi$ under either of the following conditions:

--- Both pairs $\Lambda^\pm_\phi$ and $\Lambda^\pm_\psi$ are nongeometric; or

--- Both laminations $\Lambda^+_\xi$ and $\Lambda^-_\xi$ are geometric. \qed
\end{itemize}
\end{proposition}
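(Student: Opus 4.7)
\medskip

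\noindent\textbf{Proof proposal.} The plan is to run a lamination ping-pong argument driven by hypotheses (i)--(iv), in the same spirit as hyperbolic north--south ping-pong but carried out in the weak topology on $\B(F_n)$. The output is a north--south picture for $\xi = \psi^m \phi^n$ on a neighborhood of the pair $\{\text{generic leaf of }\Lambda^+_\psi, \text{generic leaf of }\Lambda^-_\phi\}$, from which $\Lambda^\pm_\xi$ will be extracted as weak limits of $\xi^k$-iterates.

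First I would fix generic leaves $\ell^+_\phi, \ell^-_\phi, \ell^+_\psi, \ell^-_\psi$ of the four laminations, each fixed with orientation by the appropriate iterate, and choose attracting weak neighborhoods $V^+_\phi, V^+_\psi$ of $\ell^+_\phi, \ell^+_\psi$ (for $\phi$ and $\psi$ respectively) and attracting neighborhoods $V^-_\phi, V^-_\psi$ of $\ell^-_\phi, \ell^-_\psi$ for $\phi^{-1},\psi^{-1}$. By shrinking the neighborhoods we may also assume $\ell^+_\psi \notin V^-_\phi$ and $\ell^-_\phi \notin V^+_\psi$ (by (a) together with Fact~\ref{FactANA}, none of the generic leaves are carried by the nonattracting subgroup systems of the others). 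Hypothesis (iii) then gives an integer $M_1$ such that $\psi^m(V^+_\phi) \subset V^+_\psi$ for all $m \ge M_1$, because $\ell^+_\phi$ and all lines in a small enough weak neighborhood are attracted to $\ell^+_\psi$. Hypothesis (i), applied analogously to the action of $\phi^n$ on a weak neighborhood of $\ell^+_\psi$ (modulo first pulling a slightly larger weak neighborhood of $\ell^+_\phi$ into $V^+_\phi$ by some bounded power of $\phi$), gives an integer $M_2$ such that for $n \ge M_2$, $\phi^n(V^+_\psi) \subset V^+_\phi$. Symmetric applications of (ii) and (iv) give analogous inclusions on the negative side using $\phi^{-n}$ and $\psi^{-m}$.

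Setting $M = \max(M_1,M_2)$ and $\xi = \psi^m \phi^n$ with $m,n \ge M$, I would then observe $\xi(V^+_\psi) \subset V^+_\psi$ and $\xi^{-1}(V^-_\phi) \subset V^-_\phi$. The nested sets $\xi^k(V^+_\psi)$ have nonempty intersection in $\wh\B$, and by taking weak limits of iterates of $\ell^+_\psi$ under $\xi^k$ one obtains a line $\ell^+_\xi$ with $\xi(\ell^+_\xi)=\ell^+_\xi$ and with $V^+_\psi$ serving as an attracting neighborhood (in fact $\bigcap_k \xi^k(V^+_\psi)$ is contained in the weak closure of the $\xi$-orbit of $\ell^+_\xi$). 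Symmetrically one obtains $\ell^-_\xi$ fixed by $\xi^{-1}$ with attracting neighborhood $V^-_\phi$. The main obstacle is to promote $\ell^+_\xi$ from a weak-limit fixed line to a generic leaf of an attracting lamination $\Lambda^+_\xi \in \L(\xi)$; this requires checking birecurrence and nonperiodicity of $\ell^+_\xi$ and verifying the ``shrinking attracting neighborhoods'' property from the definition of an attracting lamination. The standard route is to pass to a rotationless power $\xi^K$, apply a $\ct$ analysis to a representative of $\xi^K$, and use the inclusion $\xi^k(V^+_\psi) \subset V^+_\psi$ together with the hyperbolic-like dynamics on $\ell^+_\xi$ to locate $\ell^+_\xi$ as a generic leaf of an $\eg$-stratum lamination; invariance of the lamination under $\xi$ itself (not just $\xi^K$) then follows because the constructed $\Lambda^+_\xi$ was intrinsically associated to the $\xi$-invariant neighborhood $V^+_\psi$.

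With $\Lambda^\pm_\xi$ in hand, the remaining conclusions are essentially corollaries. For (5+) and (5-), since the attracting neighborhoods $V^+_\psi$ and $V^-_\phi$ may be chosen as small as desired at the start (with correspondingly larger $M^\pm$), the generic leaves $\ell^\pm_\xi$ lie in any prescribed weak neighborhood of $\ell^+_\psi$ respectively $\ell^-_\phi$. For conclusion (1), invariance of $\F$ under both $\phi$ and $\psi$ gives invariance under $\xi$; combined with the fact that by construction any line in $\Lambda^+_\xi$ is a weak limit of $\psi$-iterates of generic leaves of $\Lambda^+_\phi$ (and hence is not carried by $\A_\na\Lambda^\pm_\psi$ or by $\A_\na\Lambda^\pm_\phi$ when $\F$ is carried by both, by Fact~\ref{FactANA}\pref{FactAnaChar}), one deduces that $\F$ is carried by $\A_\na\Lambda^\pm_\xi$ while $\Lambda^\pm_\xi$ itself is not, giving the stated containments. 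Finally, conclusion (6) follows by matching $\Lambda^+_\xi$ and $\Lambda^-_\xi$ via their common free factor support and checking geometricity in the $\ct$ for $\xi^K$, using that the geometric/nongeometric dichotomy passes through the ping-pong construction when either both input pairs are nongeometric or both output laminations are already seen to be geometric.
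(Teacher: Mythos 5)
The proposition is stated in the paper as a citation of Proposition 1.7 of \SubgroupsFour; the $\qed$ following the statement signals that no proof is given here, so there is no in-paper argument to compare against. Your high-level strategy --- lamination ping-pong driven by the mutual weak attraction hypotheses (i)--(iv), with $\Lambda^\pm_\xi$ extracted from nested attracting neighborhoods --- is indeed the method of \SubgroupsFour\ and, before that, of \cite[Proposition~6.0.4]{\BookOneTag}. But as written, your key ping-pong step has a genuine gap, and the hardest parts of the argument are deferred without being addressed.

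The gap is in the claim that hypothesis (iii) ``gives an integer $M_1$ such that $\psi^m(V^+_\phi) \subset V^+_\psi$ for all $m\ge M_1$, because $\ell^+_\phi$ and all lines in a small enough weak neighborhood are attracted to $\ell^+_\psi$.'' That parenthetical justification is false. A weak neighborhood $V^+_\phi$ of $\ell^+_\phi$ consists of \emph{all} lines that contain some fixed finite subsegment of $\ell^+_\phi$ as a subpath; such lines are unconstrained off that segment, so $V^+_\phi$ certainly contains lines that are carried by $\A_\na\Lambda^\pm_\psi$ (for instance, lines with one or both ends wrapping around a conjugacy class in $\A_\na\Lambda^\pm_\psi$ but with the prescribed segment spliced into the middle). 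By Fact~\ref{FactANA}\pref{FactAnaChar} those lines are \emph{never} weakly attracted to $\Lambda^+_\psi$ under iteration of $\psi$, so $\psi^m(V^+_\phi)\not\subset V^+_\psi$ for any $m$. Weak attraction of the single generic leaf $\ell^+_\phi$ does not propagate to a whole weak neighborhood, and there is no uniformity to exploit in the weak topology. The actual ping-pong argument tracks the $\xi^k$-orbits of \emph{specific} lines (the chosen generic leaves and tiles derived from them via a CT) rather than pushing whole weak neighborhoods around, and the inclusions it produces are established through bounded cancellation and tiling estimates relative to the train track structure, not by naive nesting.

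Beyond this, the most delicate conclusions are left as placeholders. Promoting the weak limit $\ell^+_\xi$ to a \emph{generic leaf of an attracting lamination} requires verifying birecurrence and ruling out the possibility that $\ell^+_\xi$ is a Nielsen line for $\xi$ or is supported on a lower stratum; you acknowledge this but defer it to ``a CT analysis'' with no explanation of how the ping-pong bounds feed into it. Conclusion (1), the containments $\A_\na\Lambda^\pm_\xi \sqsubset \A_\na\Lambda^\pm_\phi$ and $\A_\na\Lambda^\pm_\xi \sqsubset \A_\na\Lambda^\pm_\psi$, needs a separate argument identifying the nonattracting subgroup system of the newly created lamination --- your appeal to ``$\F$ is $\xi$-invariant'' gives $\F \sqsubset \A_\na\Lambda^\pm_\xi$ but not the containment of $\A_\na\Lambda^\pm_\xi$ in the larger systems, which is a genuinely different statement. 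And conclusion (6), duality of the pair $\Lambda^\pm_\xi$, requires matching free factor supports and is highly sensitive to the geometric/nongeometric dichotomy (in the mixed case one output lamination could be geometric and the other not, which is precisely why the hypothesis in (6) is stated the way it is); your sketch does not engage with this.

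So: right framework, but the central neighborhood-nesting step does not work as stated, and the technically substantial parts of the proof are not filled in.
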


\medskip\textbf{Subgroup decomposition theory.} We review Theorem~I of \SubgroupsFour\ which is the main result of subgroup decomposition theory. This theorem and its proof will be applied in Proposition~\ref{PropInfIndexStab} to follow. 

Consider $\phi,\psi \in \Out(F_n)$ and lamination pairs $\Lambda^\pm_\phi \in \L^\pm(\phi)$, $\Lambda^\pm_\phi \in \L^\pm(\psi)$. The Independence Theorem \cite[Theorem 1.2]{\FSLoxTag} says that the sets $\{\Lambda^+_\phi,\Lambda^-_\phi\}$ and $\{\Lambda^+_\psi,\Lambda^-_\psi\}$ are either equal or disjoint; we refer to this by saying that the \emph{duality} relation amongst attracting laminations is well-defined. Furthermore $\Stab(\Lambda^\pm_\phi) =  \Stab(\Lambda^+_\phi) = \Stab(\Lambda^-_\phi)$, in other words if $\theta \in \Out(F_n)$ stabilizes either of $\Lambda^+_\phi$ or $\Lambda^-_\phi$ then $\theta$ stabilizes both $\Lambda^+_\phi$ and $\Lambda^-_\phi$ \cite[Corollary 1.3]{\FSLoxTag}.

Given $\phi \in \Out(F_n)$ and a $\phi$-invariant free factor system~$\F$ define 
$$\L(\phi;\F) = \{\Lambda \in \L(\phi) \suchthat \text{$\Lambda$ is not carried by~$\F$}\}
$$ 
and let $\L^\pm(\phi;\F) \subset \L^\pm(\phi)$ be similarly defined.  Given $\Gamma \subgroup \Out(F_n)$ and an $\Gamma$-invariant free factor system $\F$, let $\L(\Gamma;\F) = \union_{\phi \in \Gamma} \L(\phi;\F)$. Since free factor support is well-defined for a dual lamination pair, and since duality is well-defined for attracting laminations, the set $\L(\Gamma;\F)$ decomposes as a disjoint union of dual lamination pairs. 

Following Definition 1.2 of \SubgroupsFour, we say that $\Gamma$ is \emph{geometric above $\F$}  if every element of $\L(\Gamma;\F)$ is geometric.


\begin{theorem}[\SubgroupsFour, Theorem I]  \label{Theorem_I} 
Let   $\h \subgroup \IA_n(\Z/3)$ be a subgroup and  $\F \sqsubset \{[F_n]\}$   an $\h$-invariant multi-edge extension of free factor systems    such that $\h$ is irreducible relative to $\F$.  If  either $\h$ is finitely generated or      $\L(\h,\F) \ne \emptyset$   then there exists $\phi \in \h$ which is fully irreducible relative to $\F$.   Moreover,  for any  $\theta \in \h$  and     $\Lambda^-_\theta \in \L(\theta^{-1}, \F)$, if either   $\h$ is geometric above $\F$ or $\Lambda^-_\theta$ is non-geometric then for any weak neighborhood $U \subset \B$ of a generic leaf of  $\Lambda^-_\theta$ we may choose $\phi$ so that generic leaves of $\Lambda^-_\phi$, the unique element of $\L(\phi^{-1}, \F))$,   are contained in $U$. 
\end{theorem}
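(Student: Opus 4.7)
The plan is to produce $\phi$ as a product $\psi^m \theta^n$ of high powers of two suitably chosen elements of $\h$, using the lamination ping-pong machinery of Proposition \ref{PropPingPongVer}. First I would secure an initial $\theta \in \h$ with a rotationless power for which the prescribed $\Lambda^-_\theta \in \L(\theta^\inv, \F)$ is part of a dual pair $\Lambda^\pm_\theta$ in $\L^\pm(\theta, \F)$. If the hypothesis $\L(\h, \F) \ne \emptyset$ is in force this is essentially given; if instead $\h$ is only assumed finitely generated, I would argue that $\L(\h, \F)$ is automatically nonempty, because otherwise every element of $\h$ would grow polynomially in the top strata of any relative train track representative, and a relative Kolchin-style theorem would then yield an $\h$-invariant free factor system strictly between $\F$ and $\{[F_n]\}$, contradicting irreducibility of $\h$ rel $\F$. (Note that in this branch there is no externally prescribed $\theta$, so the ``moreover'' clause only needs to be verified for the $\theta$ the proof itself produces.)

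Next I would construct a ping-pong partner $\psi \in \h$ as a conjugate $\psi = g \theta g^\inv$ for a carefully chosen $g \in \h$. Irreducibility of $\h$ rel $\F$ furnishes $g$ with $g \cdot \Lambda^\pm_\theta \ne \Lambda^\pm_\theta$, since otherwise $\h$ would preserve the proper free factor system $\F_\supp(\Lambda^\pm_\theta; \F) \sqsupsetneq \F$. A finite sequence of such conjugations, each step using irreducibility to move off whatever vertex group system was stabilized by the previous candidate, arranges in addition the condition of Fact \ref{FactMutualAttraction}\pref{ItemPhiPsiANA}: no generic leaf of $\Lambda^\pm_\theta$ is carried by $\A_\na(\Lambda^\pm_\psi) = g \cdot \A_\na(\Lambda^\pm_\theta)$, and conversely. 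Hypothesis (a) of Proposition \ref{PropPingPongVer} holds by Fact \ref{FactANA}\pref{FactAnaChar} applied to the $\theta$- and $\psi$-invariant free factor system $\F$, and Fact \ref{FactMutualAttraction} supplies the mutual weak attraction hypotheses (i)--(iv).

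Feeding $\theta, \psi$ into Proposition \ref{PropPingPongVer} yields $\xi = \psi^m \theta^n \in \h$ with an invariant dual lamination pair $\Lambda^\pm_\xi \in \L^\pm(\xi, \F)$ not carried by $\F$; conclusion~(5$-$) positions generic leaves of $\Lambda^-_\xi$ inside any preassigned weak neighborhood of a generic leaf of $\Lambda^-_\theta$, in particular inside the prescribed $U$, and conclusion~(6) combined with the hypothesis that either $\h$ is geometric above $\F$ or $\Lambda^-_\theta$ is nongeometric guarantees that $\Lambda^\pm_\xi$ is indeed a dual pair of $\xi$. The main obstacle is then the upgrade from ``$\Lambda^\pm_\xi$ not carried by $\F$'' to ``$\xi$ fully irreducible rel $\F$'': the relative free factor support $\F_1 = \F_\supp(\Lambda^\pm_\xi; \F)$ is a priori only a $\xi$-invariant proper free factor system strictly containing $\F$, not necessarily $\{[F_n]\}$. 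When $\F_1 \ne \{[F_n]\}$, irreducibility of $\h$ rel $\F$ produces some $h \in \h$ not preserving $\F_1$, and I would repeat the ping-pong construction with $\xi$ in place of $\theta$ and $h \xi h^\inv$ in place of $\psi$, using conclusion~(5$-$) at each step to stay inside a shrinking chain of weak neighborhoods of the original generic leaf of $\Lambda^-_\theta$ within $U$. Each iteration strictly enlarges the relative free factor support of the attracting lamination pair, so finite rank of $F_n$ forces termination at $\{[F_n]\}$, yielding the desired fully irreducible $\phi$ rel $\F$ with $\Lambda^-_\phi$ inside $U$; uniqueness of $\Lambda^-_\phi$ in $\L(\phi^\inv, \F)$ follows from Fact \ref{FactANA}\pref{FactAnaFullIrr}.
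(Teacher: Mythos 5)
This statement is not proved in the paper at all: it is quoted from \cite[Theorem I]{\SubgroupsFourTag} and used as a black box, so there is no in-paper proof to compare against. Measured against the cited source's method as it is reflected in this paper's own arguments (Propositions~\ref{PropInfIndexStab} and~\ref{multiple filling}, Lemmas~\ref{invariant stabilizer} and~\ref{simultaneous conjugation}), your overall strategy --- lamination ping-pong via Proposition~\ref{PropPingPongVer} with conjugated partners, iterated until the relevant invariants stabilize, plus the relative Kolchin theorem in the finitely generated branch and conclusion~(6) for duality --- is the right skeleton, but two steps have genuine gaps.

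First, your claim that each iteration strictly enlarges $\F_\supp(\Lambda^\pm_\xi;\F)$ does not follow from the pared-down conclusions (1), (5${}^\pm$), (6) of Proposition~\ref{PropPingPongVer} as quoted here: conclusion~(5${}^-$) only places one generic leaf of $\Lambda^-_\xi$ in a prescribed weak neighborhood, which does not force the support of the new pair to contain the join of the old supports. The genuine argument (compare the use of the Inductive Step of Proposition 2.4 of \SubgroupsFour\ in Section~\ref{SectionReducingBToE}, and the $(\#)$, $(\#\!\#)$ induction in the proof of Proposition~\ref{multiple filling}) gets strict growth precisely when the current support is not $\h$-invariant, and uses the omitted conclusions of \cite[Proposition 1.7]{\SubgroupsFourTag}. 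Relatedly, your source of conjugators (``otherwise $\h$ would preserve the proper free factor system $\F_\supp(\Lambda^\pm_\theta;\F)$'') breaks down exactly when $\Lambda^\pm_\theta$ already fills rel~$\F$, or more generally when the invariant in question \emph{is} $\h$-invariant; the correct dichotomy is the conjugator lemma \cite[Lemma 2.1]{\SubgroupsFourTag} (generalized here as Lemma~\ref{simultaneous conjugation}): move $\A_\na$ or $\F_\supp$ only when they fail to be $\h$-invariant, and argue by a different route when they are.

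Second, and more seriously, the endgame is incomplete: arranging $\F_\supp(\Lambda^\pm_\phi;\F) = \{[F_n]\}$ does not imply that $\phi$ is fully irreducible rel~$\F$, and Fact~\ref{FactANA}\pref{FactAnaFullIrr} cannot supply uniqueness of $\Lambda^-_\phi$ since it presupposes full irreducibility. Irreducibility must come from control of the nonattracting subgroup system, which your sketch never addresses beyond hypothesis~(a) of Proposition~\ref{PropPingPongVer}. In the nongeometric case one needs $\A_\na(\Lambda^\pm_\phi) = \F$: then any $\phi$-invariant free factor system strictly between $\F$ and $\{[F_n]\}$ would carry a conjugacy class that is not weakly attracted to $\Lambda^\pm_\phi$ yet not carried by $\F$, contradicting Fact~\ref{FactANA}\pref{FactAnaChar}. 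In the geometric case $\A_\na(\Lambda^\pm_\phi) = \F \union \{[C]\}$ is not a free factor system, and one instead needs its $\h$-invariance together with Lemma~\ref{invariant stabilizer}\pref{ItemInvStIrr} (i.e.\ \cite[Lemma 2.3(3)]{\SubgroupsFourTag}). Both require a separate ``shrink $\A_\na$ until it is $\h$-invariant (and equal to $\F$ in the nongeometric case)'' induction, terminating by the bound on chains of vertex group systems \cite[Proposition 3.2]{\SubgroupsOneTag}, exactly as in Propositions~\ref{PropInfIndexStab} and~\ref{multiple filling}; your single induction on $\F_\supp$ omits this stage entirely.
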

  
\medskip\textbf{Laminations with infinite orbit.} The following result will be used in Section~\ref{SectionReducingBToE}. It is an application of Theorem I of \SubgroupsFour, and of the lamination ping-pong tools underlying its proof.

\begin{proposition} \label{PropInfIndexStab} 
Consider a  subgroup $\Gamma \subgroup \IA_n(\Z/3)$ and a $\Gamma$-invariant free factor system $\F$ such that $\Gamma$ is irreducible relative to $\F$. If $\L(\Gamma;\F)$ contains more than one dual lamination pair then for each $\Lambda \in \L(\Gamma;\F)$, the stabilizer of $\Lambda$ in $\Gamma$ has infinite index, equivalently the $\Gamma$-orbit of $\Lambda$ is infinite. 
\end{proposition}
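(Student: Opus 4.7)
The plan is to argue by contradiction. Fix $\Lambda^\pm \in \L(\Gamma;\F)$ and assume its $\Gamma$-orbit is finite, so that $N = \Stab_\Gamma(\Lambda^\pm)$ has finite index in $\Gamma$. Then $N \subgroup \IAThree$ inherits irreducibility relative to $\F$ (because any free factor system strictly between $\F$ and $\{[F_n]\}$ that is invariant under a finite-index subgroup of $N$ is invariant under a finite-index subgroup of $\Gamma$), and a standard argument using that attracting lamination sets are preserved under positive powers in $\IAThree$ gives $\L(N;\F) = \L(\Gamma;\F)$. In particular $\Lambda^\pm \in \L(N;\F)$, and since by hypothesis there is more than one dual pair we may pick $\Lambda'^\pm \in \L(N;\F)$ distinct from $\Lambda^\pm$, with $\Lambda'^\pm \in \L^\pm(\sigma;\F)$ for some $\sigma \in N$.

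Next I would apply the ``moreover'' clause of Theorem~I to $N$ with $\theta = \sigma^\inv$ and $\Lambda^-_\theta = \Lambda'^+$; the geometric-versus-nongeometric dichotomy in its hypothesis is handled by choosing $\Lambda'^\pm$ to be nongeometric whenever $N$ is not geometric above $\F$, which is possible because having more than one dual pair in $\L(\Gamma;\F)$ allows one to swap the roles of $\Lambda^\pm$ and $\Lambda'^\pm$ when needed. This produces $\phi \in N$ that is fully irreducible rel $\F$, whose unique rel-$\F$ dual pair $\Lambda^\pm_\phi$ has $\Lambda^-_\phi$ with a generic leaf lying in any prescribed weak neighborhood $U$ of a generic leaf of $\Lambda'^+$. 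Since $\Lambda'^\pm \ne \Lambda^\pm$ as dual pairs, by the Independence Theorem the laminations $\Lambda'^+, \Lambda^+, \Lambda^-$ are pairwise distinct, so shrinking $U$ forces $\Lambda^-_\phi \notin \{\Lambda^+, \Lambda^-\}$; applying Independence once more gives that the dual pairs $\Lambda^\pm_\phi$ and $\Lambda^\pm$ are disjoint.

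To derive a contradiction, choose $\sigma' \in \Gamma$ with $\Lambda^\pm \in \L^\pm(\sigma';\F)$. I would verify the hypotheses of Fact~\ref{FactMutualAttraction} for the pair $(\phi, \sigma')$: disjointness of dual pairs is just established, and no generic leaf of $\Lambda^+$ or $\Lambda^-$ is carried by $\A_\na(\Lambda^\pm_\phi)$ because Fact~\ref{FactANA}\itemref{FactAnaFullIrr} gives $\A_\na(\Lambda^\pm_\phi) = \F$ in the nongeometric case, while $\Lambda^\pm$ is not carried by $\F$ (with the geometric case handled via Fact~\ref{FactANA}\itemref{FactAnaFullIrr}(iii) using the extra cyclic component $[C]$). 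Fact~\ref{FactMutualAttraction} then implies that a generic leaf $\ell$ of $\Lambda^+$ is weakly attracted to $\Lambda^+_\phi$ under iteration of $\phi$ and to $\Lambda^-_\phi$ under iteration of $\phi^\inv$. But $\phi \in N$ preserves the closed lamination $\Lambda^+$, so $\phi^{\pm n}(\ell)$ remains in $\Lambda^+$ and its weak limits, which are generic leaves of $\Lambda^\pm_\phi$, must also lie in $\Lambda^+$, yielding $\Lambda^+_\phi \cup \Lambda^-_\phi \subseteq \Lambda^+$.

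The hard part is to convert this sublamination containment into an explicit contradiction. The intended route is to exploit the fact that $\Lambda^+_\phi$ and $\Lambda^-_\phi$ admit disjoint weak attracting neighborhoods under $\phi$ and $\phi^\inv$ respectively: pick a generic leaf of $\Lambda^+$ lying in the $\phi^\inv$-attracting neighborhood of $\Lambda^-_\phi$, then iterate by $\phi$ to see that it must leave that neighborhood while simultaneously remaining in $\Lambda^+$ and converging weakly into the disjoint $\phi$-attracting neighborhood of $\Lambda^+_\phi$; careful bookkeeping of attracting neighborhoods, combined with the fact that $\Lambda^+$ is itself the weak closure of a single generic leaf of the attracting lamination $\Lambda^+_{\sigma'}$, produces the required contradiction. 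This last bookkeeping will be the main technical obstacle, particularly in the geometric case where $\A_\na(\Lambda^\pm_\phi) = \F \cup \{[C]\}$, which requires an additional check regarding the cyclic component $[C]$.
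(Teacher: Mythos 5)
Your overall strategy is close in spirit to the paper's (produce an element of the group that is fully irreducible rel~$\F$ whose unique rel-$\F$ lamination pair differs from the given one, then use weak attraction to contradict invariance of the given lamination), but there is a genuine gap exactly where the paper does its real work: the mixed geometric/nongeometric case. Your application of the ``moreover'' clause of Theorem~\ref{Theorem_I} requires either that $N$ be geometric above~$\F$ or that the target lamination $\Lambda'^+$ be nongeometric. In the case where the pair $\Lambda^\pm$ that you have assumed to have finite orbit is the \emph{only} nongeometric pair in $\L(\Gamma;\F)$ (so $N$ is not geometric above~$\F$, yet every pair distinct from $\Lambda^\pm$ is geometric), neither branch applies, and your proposed fix --- ``swap the roles of $\Lambda^\pm$ and $\Lambda'^\pm$'' --- does not work because the roles are not symmetric: only $\Lambda^\pm$ carries the contradiction hypothesis of a finite-index stabilizer, and nothing is assumed about the orbit of $\Lambda'^\pm$. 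Targeting $\Lambda^\pm$ itself with the moreover clause does not help either, since the $\phi$ it produces may well have $\Lambda^\pm_\phi=\Lambda^\pm$. This is precisely the case the paper spends most of its proof on: it chooses $(\mu,\Lambda^\pm_\mu)$ with $\A_\na\Lambda^\pm_\mu$ minimal under $\sqsubset$, shows via the conjugator of \cite[Lemma~2.1]{\SubgroupsFourTag} and the ping-pong Proposition~\ref{PropPingPongVer} that non-$\Gamma$-invariance of $\A_\na\Lambda^\pm_\mu$ would create a strictly smaller nonattracting subgroup system (contradicting \cite[Proposition~3.2]{\SubgroupsOneTag}), and then invokes \cite[Lemma~2.3~(3)]{\SubgroupsFourTag} to get irreducibility of $\mu$ rel~$\F$. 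Some substitute for this argument is unavoidable in your approach.

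Two smaller points. First, the ``shrinking $U$'' step is not justified as written: the Independence Theorem gives that $\Lambda'^+\notin\{\Lambda^+,\Lambda^-\}$ as laminations, but you need that a generic leaf of $\Lambda'^+$ is not a (possibly nongeneric) leaf of $\Lambda^+\cup\Lambda^-$, otherwise every weak neighborhood $U$ of that leaf contains generic leaves of $\Lambda^+$ and the conclusion of the moreover clause does not rule out $\Lambda^-_\phi=\Lambda^+$. The paper gets the analogous disjointness from its Claim~1, which crucially uses that the ``big'' pair belongs to an element fully irreducible rel~$\F$; in your setup $\Lambda^\pm$ is not known to be such a pair, so you would first have to produce a fully irreducible $\eta$ and case-split as the paper does. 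Second, your flagged ``hard part'' is actually the easy part: once $\Lambda^+_\phi\subseteq\Lambda^+$ with the pairs disjoint, a generic leaf of $\Lambda^+_\phi$ is a birecurrent nongeneric leaf of $\Lambda^+$, hence by \cite[Lemma~3.1.15]{\BookOneTag} (applied to a relative train track representative of $\sigma'$ realizing~$\F$) it is carried by a proper free factor system containing~$\F$, contradicting that $\Lambda^\pm_\phi$ fills rel~$\F$; alternatively one quotes \cite[Proposition~6.0.8]{\BookOneTag} as the paper does, so no ``attracting-neighborhood bookkeeping'' is needed there.
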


\break
 
\begin{proof} We begin with:

\begin{description}
\item[Claim 1:] For any $\alpha,\beta \in \Gamma$ and lamination pairs $\Lambda^\pm_\alpha \in \L^\pm(\alpha;\F)$, $\Lambda^\pm_\beta \in \L^\pm(\beta;\F)$, if $\alpha$ is fully irreducible rel~$\F$, and if $\{\Lambda^-_\alpha,\Lambda^+_\alpha\} \ne \{\Lambda^-_\beta,\Lambda^-_\beta\}$, then neither of $\Lambda^\pm_\beta$ is a sublamination of either of $\Lambda^\pm_\alpha$, equivalently if $\gamma$ is a generic leaf of one of $\Lambda^-_\beta$ or $\Lambda^+_\beta$ then $\gamma$ is a leaf of neither $\Lambda^-_\alpha$ nor $\Lambda^+_\alpha$.
\end{description}
To prove this, note first that the sets $\{\Lambda^-_\alpha,\Lambda^+_\alpha\}$ and $\{\Lambda^-_\beta,\Lambda^+_\beta\}$ are disjoint (by the Independence Theorem \cite[Theorem 1.2]{\FSLoxTag}), so $\gamma$ is a generic leaf of neither $\Lambda^+_\beta$ nor~$\Lambda^-_\beta$. Also, by applying \cite[Lemma 3.1.15]{\BookOneTag} to a relative train track representative of $\alpha$ in which $\F$ is realized by a filtration element, it follows that every birecurrent, nongeneric leaf of $\Lambda^\pm_\alpha$ is carried by $\F$ and so no such leaf equals $\gamma$, completing the proof of Claim~1.

\medskip

Since $\L(\Gamma;\F) \ne \emptyset$ it follows by Fact~\ref{FactANA}~\pref{ItemANATop} that $\F \sqsubset \{[F_n]\}$ is a multi-edge extension. Applying Theorem~\ref{Theorem_I}, choose $\eta \in \Gamma$ which is fully irreducible rel~$\F$. The set $\L^\pm(\eta;\F)$ consists of a unique lamination pair~$\Lambda^\pm_\eta$. 

Let $\Theta_\L$   be the set of pairs $(\theta, \Lambda^\pm_\theta)$ where $\theta \in \Gamma$, $\Lambda^\pm_\theta \in \L(\theta; \F)$ and the pairs $\{\Lambda^-_\theta,\Lambda^+_\theta\}$ and $ \{\Lambda^-_\eta,\Lambda^+_\eta\}$  are unequal and hence disjoint.   By hypothesis, $\Theta_\L \ne \emptyset$.   For each $(\theta, \Lambda^\pm_\theta) \in \Theta_\L$,    it follows by Claim 1 that  generic leaves $\ell^\pm_\theta$ of $\Lambda^\pm_\theta$ are neither leaves of $\Lambda^-_\eta$ nor leaves of $\Lambda^+_\eta$. Since $\Lambda^-_\eta$ and $\Lambda^+_\eta$ are weakly closed, there are weak neighborhoods $U^\pm_\theta \subset \B$ of $\ell^\pm_\theta$ disjoint from $\Lambda^-_\eta$ and $\Lambda^+_\eta$. 

  If $\Lambda \in \L(\Gamma; \F)$ is neither $\Lambda^-_\eta$ nor $\Lambda^+_\eta$, choose $(\theta, \Lambda^\pm_\theta) \in \Theta_\L$ such that $\Lambda = \Lambda^+_\theta$.  From Claim~1,  $\Lambda$  is not a  sublamination of either of~$\Lambda^\pm_\eta$. Applying \cite[Proposition~6.0.8]{\BookOneTag},  $\Lambda$ is weakly attracted to $\Lambda^+_\eta$ under iteration of $\xi$. The $\eta$-orbit of $\Lambda$ is therefore infinite, and so its $\Gamma$-orbit is infinite.  This completes the proof if $\Lambda \not \in  \{\Lambda^-_\eta,\Lambda^+_\eta\}$.     The same argument applies with $\eta$ replaced by any other element of $\Gamma$ that is irreducible rel $\F$.  We are therefore reduced to

\begin{description}
\item[Claim 2:]  There exists $(\mu, \Lambda_\mu^\pm) \in \Theta_\L$  such that $\mu$ is irreducible rel~$\F$. 
\end{description}

As a first case, suppose that there exists   $(\theta, \Lambda^\pm_\theta)  \in \Theta_\L$ with non-geometric $\Lambda^\pm_\theta$.   By Theorem~\ref{Theorem_I} there exists $\mu \in \Gamma$ which is fully irreducible rel~$\F$ and such that $\Lambda^-_\mu$ is contained in $U_\theta^-$; in particular,  $\Lambda^-_\mu \not \in \{\Lambda^-_\eta,\Lambda^+_\eta\}$ and we have verified claim 2.  We are now reduced to the case that each  $\Lambda^\pm_\theta$ is geometric.  If $\Lambda^\pm_\eta$ is geometric then the same  argument  applies without change.  

We may therefore assume that $\Lambda^\pm_\eta$ is non-geometric and that each $\Lambda^\pm_\theta$ is geometric. 

Choose $(\mu, \Lambda^\pm_\mu) \in \Theta_\L$ so that $\A_\na\Lambda^\pm_\mu$ is minimal with respect to $\sqsubset$. In other words, if $(\theta,  \Lambda^\pm_\theta) \in \Theta_\L$ and $\A_\na\Lambda^\pm_{\theta} \sqsubset \A_\na\Lambda^\pm_\mu$  then $\A_\na\Lambda^\pm_{\theta} = \A_\na\Lambda^\pm_\mu$. Such a $(\mu, \Lambda^\pm_\mu) $ exists because there is a bound to the length of a chain of proper inclusions of non-attracting subgroup systems (\SubgroupsOne\  Proposition 3.2). We will complete the proof by showing that $\mu$ is irreducible rel $\F$. Since $\Lambda^\pm_\mu$ is geometric, it suffices by Lemma 2.3 (3) of \SubgroupsFour\ to show that $\Stab_{\Gamma}(A_\na\Lambda^\pm_\mu)$ has finite index in $\Gamma$.

Assuming that $\Stab_{\Gamma}(A_\na\Lambda^\pm_\mu)$ has infinite index we will arrive at a contradiction by applying  Proposition~\ref{PropPingPongVer}.  Passing to a power of $\mu$ we may assume that $\Lambda_\mu^\pm$   have generic leaves $\ell^\pm_\mu$ that are fixed by $\mu$  with fixed orientations, which is one of the hypotheses of Proposition~\ref{PropPingPongVer}. Applying Lemma 2.1 of \SubgroupsFour, there exists $\zeta \in \Gamma$ such that the following hold. 
None of the lines $\zeta(\ell^+_\mu)$, $\zeta(\ell^-_\mu)$, $\zeta^\inv(\ell^+_\mu)$, $\zeta^\inv(\ell^-_\mu)$ is carried by $\A_\na\Lambda^\pm_\mu$ (\SubgroupsFour\ Lemma 2.1 (1)). Applying Theorem H of \SubgroupsThree, it follows that each of these lines is weakly attracted to $\Lambda^\pm_\mu$ under iteration by~$\mu$. Letting $\beta = \zeta \mu \zeta^\inv$ with geometric lamination pair $\Lambda^\pm_\beta = \zeta(\Lambda^\pm_\mu) \in \L^\pm(\beta;\F)$, it follows that $\mu$ and $\beta$ with their lamination pairs $\Lambda^\pm_\mu$ and $\Lambda^\pm_\beta$ satisfy hypotheses (i)--(iv) of Proposition~\ref{PropPingPongVer}. Also, $\A_\na\Lambda^\pm_\beta = \zeta(\A_\na\Lambda^\pm_\mu) \ne \A_\na\Lambda^\pm_\mu$ (\SubgroupsFour\ Lemma 2.1~(4)).  Hypothesis (a) of Proposition~\ref{PropPingPongVer} is that $\F \sqsubset \A_\na\Lambda^\pm_\mu$ and $\F \sqsubset \A_\na\Lambda^\pm_\beta$, which holds because $\Lambda^\pm_\mu,\Lambda^\pm_\beta$ are not supported by~$\F$.  We may now apply Proposition~\ref{PropPingPongVer}  with $U^+  =  U^+_\beta$ and $U^- = U^-_\mu$.

 
 Applying the conclusions of Proposition~\ref{PropPingPongVer}, for some $m,n \ge 1$ we have: an outer automorphism $\nu = \beta^m \mu^n$; a $\nu$-invariant attracting lamination $\Lambda^+_{\nu} \in \L(\nu)$; and a $\nu$-invariant repelling lamination $\Lambda^-_\nu \in \L(\nu)$. By Conclusion (1), $\Lambda^-_\nu, \Lambda^+_\nu \in \L(\Gamma; \F)$.  By Conclusion~(5${}^\pm$), generic leaves of $\Lambda^-_\nu$ are contained in~$U^-$, and generic leaves of $\Lambda^+_\nu$ are contained in $U^+$. Since $U^-$ and $U^+$ are disjoint from $\Lambda^\pm_\eta$, neither $\Lambda^-_\nu$ nor $\Lambda^+_\nu$ is in $\{\Lambda^-_\eta,\Lambda^+_\eta\}$ and so  $\Lambda^-_\nu$ and $\Lambda^+_\nu$ are geometric. By Conclusion~(6), $\Lambda^\pm_\nu$ is a dual lamination pair for $\nu$, and so $\A_\na\Lambda^\pm_\nu$ is a well-defined vertex group system. By Conclusion (1), 
 $\A_\na\Lambda^\pm_\nu \sqsubset \A_\na\Lambda^\pm_\mu$ and $\A_\na\Lambda^\pm_\nu \sqsubset \A_\na\Lambda^\pm_\beta$ and $(\nu, \Lambda^\pm_\nu) \in \Theta_\L$. The containment $\A_\na\Lambda^\pm_\mu \sqsupset \A_\na\Lambda^\pm_\nu $ is therefore proper, in contradiction to our choice of~$\mu$.   
\end{proof}

\subparagraph{Free subgroups.} We will need the following Proposition~\ref{PropGhosh} which, inside certain subgroups of $\Out(F_n)$, produces useful free subgroups. In the nongeometric case this is a consequence of a theorem of Ghosh \GhoshWeak. The geometric case combines tools from our subgroup decomposition theory in \SubgroupsFour\ with a result of Farb and Mosher \cite{FarbMosher:quasiconvex} that produces useful free subgroups of mapping class groups.


\begin{proposition}\label{PropGhosh}
Given $\phi,\psi \in \Out(F_n)$ and a proper free factor system $\F$ that is preserved by $\phi$ and $\psi$, and given lamination pairs $\Lambda^\pm_\phi \in \L^\pm(\phi)$ and $\Lambda^\pm_\psi \in \L^\pm(\phi)$ that fill $F_n$, suppose that the following hold:
\begin{enumerate}
\item\label{ItemGhoshFullyIrr}
$\phi$, $\psi$ are both fully irreducible rel~$\F$.
\item\label{ItemGhoshDiffLams}
$\{\Lambda^-_\phi,\Lambda^+_\phi\} \intersect \{\Lambda^-_\psi,\Lambda^+_\psi\} = \emptyset$;
\item\label{ItemGhoshGeomAlt}
Either both of the lamination pairs $\Lambda^\pm_\phi$, $\Lambda^\pm_\psi$ are nongeometric, or the group $\<\phi,\psi\>$ is geometric above~$\F$.
\end{enumerate}
Then there exists $M \ge 1$ such that for any integers $m,n \ge M$ the outer automorphisms $\phi^m$ and $\psi^n$ freely generate a rank two free subgroup $\<\phi^m,\phi^n\> \subgroup \Out(F_n)$ such that any nontrivial $\xi \in \<\phi^m,\psi^n\>$ is fully irreducible rel~$\F$ and has a lamination pair $\Lambda^\pm_\xi$ that fills rel~$\F$, and if both of $\Lambda^\pm_\phi$, $\Lambda^\pm_\psi$ are nongeometric then $\Lambda^\pm_\xi$ is nongeometric.
\end{proposition}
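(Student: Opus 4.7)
The plan is to combine two known tools: Ghosh's theorem \cite[Theorem 7.3]{\GhoshWeakTag} (in the nongeometric case) and a surface-theoretic reduction via Farb--Mosher \cite{FarbMosher:quasiconvex} (in the geometric case). Before invoking either, I would verify the mutual weak attraction hypotheses of Fact~\ref{FactMutualAttraction} for the pair $\phi, \psi$. Hypothesis~\pref{ItemPhiPsiDiffLams} there is given by our~\pref{ItemGhoshDiffLams}. For hypothesis~\pref{ItemPhiPsiANA}, I would apply Fact~\ref{FactANA}~\pref{FactAnaFullIrr}: since $\phi$ is fully irreducible rel~$\F$, the vertex group system $\A_\na(\Lambda^\pm_\phi)$ is either $\F$ (nongeometric case) or $\F \cup \{[C_\phi]\}$ for a primitive cyclic $[C_\phi] \not\sqsubset \F$ (geometric case). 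A generic leaf of $\Lambda^\pm_\psi$ is nonperiodic and is not carried by $\F$ (as $\Lambda^\pm_\psi$ is not carried by $\F$ and fills rel~$\F$), so it is carried by neither option; symmetrically with $\phi, \psi$ swapped.

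In the nongeometric case, I would apply Ghosh's theorem directly. With mutual weak attraction, relative full irreducibility, distinctness of lamination pairs, and nongeometricity in hand, Ghosh produces $M$ such that for $m,n\ge M$ the subgroup $\<\phi^m,\psi^n\>$ is free of rank two and every nontrivial $\xi$ in it is fully irreducible rel~$\F$ with a nongeometric lamination pair~$\Lambda^\pm_\xi$.

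In the geometric case, the hypothesis that $\<\phi,\psi\>$ is geometric above~$\F$ means that every element of $\L(\<\phi,\psi\>;\F)$ is geometric, so the dynamics of $\<\phi,\psi\>$ above~$\F$ is modeled by a compact surface $S$ attached to a graph realizing~$\F$, and the lamination pairs $\Lambda^\pm_\phi, \Lambda^\pm_\psi$ arise from pseudo-Anosov pieces on~$S$. Using \cite[Lemma 2.3]{\SubgroupsFourTag} together with the usual equivalence between geometricity of an element rel~$\F$ and a pseudo-Anosov restriction on $S$, the outer automorphisms $\phi$ and $\psi$ restrict to independent pseudo-Anosov mapping classes of~$S$ (independence follows from~\pref{ItemGhoshDiffLams} since the lamination pairs on~$S$ determine the pA's). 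Farb--Mosher \cite{FarbMosher:quasiconvex} then gives $M$ such that for $m,n\ge M$, the elements $\phi^m, \psi^n$ freely generate a rank-two subgroup of $\mcg(S)$ all of whose nontrivial elements are pseudo-Anosov; lifting this back through the stabilizer of the geometric data, the same holds for $\<\phi^m,\psi^n\> \subgroup \Out(F_n)$, where ``pseudo-Anosov on $S$'' translates to ``fully irreducible rel~$\F$''.

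In either case, once I have a rank-two free subgroup $\<\phi^m,\psi^n\>$ of fully-irreducible-rel-$\F$ elements, the filling condition on each $\Lambda^\pm_\xi$ follows from Fact~\ref{FactANA}~\pref{FactAnaFullIrr}(i): the ping-pong structure (or direct inspection in the geometric case) shows that $\Lambda^\pm_\xi$ is not carried by~$\F$, and then relative full irreducibility of $\xi$ forces $\F_\supp(\Lambda^\pm_\xi;\F) = \{[F_n]\}$. The main obstacle I expect is the geometric case: carefully packaging the ``geometric above~$\F$'' hypothesis so that $\phi,\psi$ really do restrict to independent pseudo-Anosov classes on a single compact surface to which Farb--Mosher applies, and handling the (finite) kernel of the restriction map from the relevant subgroup of $\Out(F_n)$ to $\mcg(S)$ so that freeness and the lamination conclusions transfer back correctly.
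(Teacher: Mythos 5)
Your proposal is correct and follows essentially the same route as the paper's proof: the nongeometric case verifies Ghosh's ``independence rel~$\F$'' hypothesis via Fact~\ref{FactMutualAttraction} and Fact~\ref{FactANA}, and the geometric case reduces to independent pseudo-Anosov classes on a compact surface and applies Farb--Mosher. The only difference of note is that the paper obtains the surface reduction from \cite[Theorem~J]{\SubgroupsFourTag}, which supplies the homomorphism $\Gamma \to \MCG(S)$ together with the equivalence ``pseudo-Anosov $\iff$ fully irreducible rel~$\F$'' and the bijection of laminations, so the obstacle you flag (including the kernel concern, which is moot because any nontrivial word in $\phi^m,\psi^n$ already maps to a nontrivial element of the rank-two free subgroup of $\MCG(S)$) is handled there.
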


\smallskip\textbf{Remark.} In a more restricted context contained in the proof of Proposition~\ref{multiple filling}, we shall prove a stronger conclusion saying that, after further increasing $M$, each pair $\Lambda^\pm_\xi$ fills~$F_n$ in the absolute sense.

\begin{proof} The result breaks naturally into two cases.

\smallskip\textbf{Case 1: $\Lambda^\pm_\phi$, $\Lambda^\pm_\psi$ are both nongeometric.} The conclusion in this case exactly matches the conclusion of \cite[Theorem 7.3]{\GhoshWeakTag}. Our work here is therefore just to verify the hypothesis of that theorem, which is that $(\phi,\Lambda^\pm_\phi)$ and $(\psi,\Lambda^\pm_\psi)$ are \emph{independent rel~$\F$} \cite[Definition~7.2]{\GhoshWeakTag}. We verify each of the six clauses of independence rel~$\F$.

\begin{description}
\item[Independence rel $\F$, clause (1):] Neither of $\Lambda^\pm_\phi,\Lambda^\pm_\psi$ is carried by~$\F$.
\item[Independence rel $\F$,  clause (2):] $\{\Lambda^\pm_\phi\} \union \{\Lambda^\pm_\psi\}$ fill rel~$\F$.
\end{description}
These hold because each pair $\Lambda^\pm_\phi$ and $\Lambda^\pm_\psi$ individually fills. 

\begin{description}
\item[Independence rel $\F$, clauses (3,4):] Generic leaves of $\Lambda^\pm_\phi$ are weakly attracted to $\Lambda^-_\psi$ by iteration of $\psi^\inv$ and to $\Lambda^+_\psi$ by iteration of $\psi$, and similarly with $\phi,\psi$ switched. 
\end{description}
This follows from Fact~\ref{FactMutualAttraction} after checking its hypotheses. Hypothesis~\pref{ItemPhiPsiDiffLams} of Fact~\ref{FactMutualAttraction} is identical to Proposition~\ref{PropGhosh}~\pref{ItemGhoshDiffLams}. 
Hypothesis~\pref{ItemPhiPsiANA} of Fact~\ref{FactMutualAttraction} follows from Fact~\ref{FactANA}~\pref{ItemANATop} using that a filling generic leaf $\ell$ of an attracting lamination cannot be carried by a component of a proper free factor system, nor by the conjugacy class of a cyclic subgroup because $\ell$ is nonperiodic.

\begin{description}
\item[Independence rel $\F$, clause (5):]  The free factor systems $\A_\na\Lambda^\pm_\phi=\A_\na\Lambda^\pm_\psi$ are mutually malnormal rel~$\F$. 
\end{description}
The meaning of mutual malnormality of two free factor systems $\A_1$ and $\A_2$ rel~$\F$ is that for each subgroup $C \subgroup F_n$, if $[C] \sqsubset \A_1$ and $[C] \sqsubset \A_2$ then $[C] \sqsubset \F$. In the current situation is is obvious from the fact that $\A_\na\Lambda^\pm_\phi = \A_\na\Lambda^\pm_\psi = \F$ (Fact~\ref{FactANA}~\pref{ItemANATop}). 

\begin{description}
\item[Independence rel $\F$, clause (6):] Both lamination pairs $\Lambda^\pm_\phi$, $\Lambda^\pm_\psi$ are nongeometric.
\end{description}
This holds by assumption of Case~1, and so Case~1 is complete.

\medskip
\textbf{Case 2: $\Lambda^\pm_\phi$ and $\Lambda^\pm_\psi$ are not both nongeometric.} Denote $\Gamma = \<\phi,\psi\> \subgroup \Out(F_n)$. Combining hypothesis~\pref{ItemGhoshGeomAlt} of the proposition with the hypothesis of Case~2 it follows that $\Gamma$ is geometric above~$\F$. Also, the group $\Gamma$ is fully irreducible rel~$\F$, because it contains an element which is fully irreducible rel~$\F$, namely~$\phi$. The hypotheses of Theorem~J of \SubgroupsFour\ for the subgroup $\Gamma$ are therefore satisfied. 

From the conclusions of Theorem~J of \SubgroupsFour, we obtain a compact surface $S$ with nonempty boundary and an injection $\pi_1 S \inject F_n$ whose image is its own normalizer, such that $\Gamma$ stabilizes the subgroup system $[\pi_1 S]$. It follows that there is a well-defined homomorphism $\Gamma \to \Out(\pi_1 S)$ which is defined, for each $\xi \in \Gamma$, by choosing an automorphism $\Xi \in \Aut(F_n)$ that represents $\xi$ and preserves $\pi_1 S$, restricting to $\Xi \restrict \pi_1 S \in \Aut(\pi_1 S)$, and passing to the quotient in $\Out(\pi_1 S)$ (see e.g.\ \SubgroupsOne\ Fact 1.4). Furthermore the conclusions of Theorem~J also say that the image of this homomorphism is contained in the natural $\MCG(S)$ subgroup of $\Out(\pi_1 S)$, thereby giving a homomorphism $dj^\# \from \Gamma \to \MCG(S)$. Also, $dj^\#(\xi)$ is pseudo-Anosov if and only if $\xi$ is fully irreducible rel~$\F$. Also, the induced map $dj_\B \from \B(\pi_1(S)) \to \B(F_n)=\B$ induces a bijection between the following two sets: the set of all geodesic laminations on $S$ which are unstable laminations of pseudo-Anosov elements of $dj^\#(\Gamma)$ (here we pick a hyperbolic structure on $S$ with totally geodesic boundary); and the set of all attracting laminations not supported by~$\F$ of elements of $\Gamma$ that are fully irreducible rel~$\F$.

\newcommand\un{\text{un}}
\newcommand\st{\text{st}}

Consider $dj^\#(\phi),dj^\#(\psi) \in dj^\#(\Gamma) \subgroup \MCG(S)$. Their lamination pairs $\Lambda^\pm_\phi$, $\Lambda^\pm_\psi$ form four distinct closed subsets of $\B$, and therefore the unstable/stable geodesic laminations pairs $\Lambda^\un_\phi$, $\Lambda^\st_\phi$, $\Lambda^\un_\psi$, $\Lambda^\st_\psi$ form four distinct laminations on $S$. The hypotheses of \cite[Theorem 1.4]{FarbMosher:quasiconvex} are therefore satisfied, the conclusions of which give the existence of $M \ge 1$ such that for all $m,n \ge M$ the mapping classes $dj^\#(\phi^m)$, $dj^\#(\psi^n)$ freely generate a rank~$2$ subgroup of $dj^\#(\Gamma)$ such that any nontrivial element is pseudo-Anosov. It follows that $\phi^m,\psi^n$ freely generate a rank~$2$ subgroup of $\Gamma$ such that any nontrivial element $\xi$ is fully irreducible rel~$\F$, and has a unique attracting/repelling lamination pair $\Lambda^\pm_\xi$ not supported by~$\F$ which maps via $dj_\B$ to the unique unstable/stable lamination pair $\Lambda^\un_\xi$, $\Lambda^\st_\xi$ of $dj^\#(\xi)$ in $S$. Since $\xi$ is fully irreducible rel~$\F$, this pair $\Lambda^\pm_\xi$ fills rel~$\F$. Case~2 is now complete.
\end{proof}

\subsection{Proof that the WWPD Construction Theorem implies Theorem B.}
\label{SectionReducingBToE}

Throughout this section we fix the following notation, taken from the hypotheses of Theorem~B:
\begin{enumerate}
\itemH\label{ItemInfLamFulAbe}
 \, $\Gamma \subgroup \IAThree$ is an infinite lamination group with (virtually) abelian restrictions.
\itemH \, $\A$ is a maximal, $\Gamma$-invariant, proper free factor system of $F_n$.
\end{enumerate}

\break

In addition we know that
\begin{enumeratecontinue}
\itemH \label{ItemLowerAbelian}
For each component $[A]$ of $\A$,
\begin{enumerate}
\item\label{ItemAIsFixed} $[A]$ is fixed by~$\Gamma$.
\item\label{ItemGroupIsAbelian} The group $\Gamma_A = \image(\Gamma \mapsto \Out(A))$ is abelian.
\end{enumerate}
\end{enumeratecontinue}
Item~\pref{ItemAIsFixed} uses Theorem 3.1 of \SubgroupsTwo, which says that if $\eta \in \IAThree$ then every free factor conjugacy class that is $\eta$-periodic is fixed by~$\eta$; we use this fact without further reference in what follows. Item~\pref{ItemGroupIsAbelian} is an application of Corollary~\ref{CorollaryVirtuallyAbelian} combined with the fact that $\Gamma \subgroup \IAThree$.

We will need the following minor extension of \cite[Lemma~4.4]{FeighnHandel:abelian}.
  
\begin{lemma} \label{FinitelyManyLams} Each virtually abelian subgroup $A$ of $\Out(F_n)$ is a finite lamination subgroup. Furthemore if $A \subgroup \IAThree$ then each element of $\L(A)$ is $A$-invariant.
\end{lemma}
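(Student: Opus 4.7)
The plan is to reduce to \cite[Lemma 4.4]{FeighnHandel:abelian}, which handles the abelian case. First I would fix a normal abelian subgroup $A_0 \normal A$ of finite index; such $A_0$ exists by intersecting the finitely many $A$-conjugates of any abelian finite-index subgroup of $A$. Both assertions of the lemma then follow from the corresponding statements for $A_0$, with the second clause additionally invoking \cite{HandelMosher:VirtuallyAbelian}.

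For the first assertion I would prove $\L(A) = \L(A_0)$, after which finiteness is immediate from \cite[Lemma 4.4]{FeighnHandel:abelian}. The inclusion $\L(A_0) \subseteq \L(A)$ is trivial. For the reverse, given any $a \in A$, the power $a^N$ with $N = [A:A_0]$ lies in $A_0$. Since the set of attracting laminations of any outer automorphism coincides with that of any positive power (both being defined via the \eg\ strata of a relative train track representative of a common iterate), we have $\L(a) = \L(a^N) \subseteq \L(A_0)$. Summing over $a \in A$ gives $\L(A) \subseteq \L(A_0)$, hence equality.

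For the second assertion, assume $A \subgroup \IAThree$. The key simplification is the theorem of \cite{HandelMosher:VirtuallyAbelian}, recalled at the head of this section, which asserts that every virtually abelian subgroup of $\IAThree$ is in fact abelian. Thus $A$ itself is an abelian subgroup of $\IAThree$, and \cite[Lemma 4.4]{FeighnHandel:abelian} applies directly to $A$ to yield that each $\Lambda \in \L(A)$ is fixed by every element of $A$.

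I do not anticipate a significant obstacle: the first assertion reduces mechanically to the abelian case via positive-power-invariance of $\L(\cdot)$, and the second collapses entirely onto the abelian case thanks to \cite{HandelMosher:VirtuallyAbelian}. The only point requiring care is ensuring that the original Lemma 4.4 supplies both the finiteness of $\L(A_0)$ and the pointwise-fixedness of each $\Lambda \in \L(A_0)$ under the abelian group $A_0$, which is standard in the Feighn--Handel abelian-subgroup framework; everything else is a formal consequence of the normality of $A_0$ in $A$ and the power-invariance of attracting laminations.
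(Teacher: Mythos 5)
Your first assertion is argued essentially as the paper argues it: pass to a finite-index abelian subgroup, use $\L(\phi)=\L(\phi^k)$ for $k\ge 1$ to get $\L(A)=\L(A_0)$, and quote Feighn--Handel for finiteness. The one imprecision there is that \cite[Lemma 4.4]{FeighnHandel:abelian} is not a statement about arbitrary abelian subgroups: the paper first invokes \cite[Corollary 3.14]{FeighnHandel:abelian} to produce a finite-index abelian subgroup \emph{generated by rotationless elements}, and only then applies Lemma 4.4. For finiteness this is harmless --- take your $A_0$ inside that subgroup and your power argument goes through unchanged.

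The genuine gap is in the second assertion. You apply Lemma 4.4 directly to $A$ after observing (via \cite{HandelMosher:VirtuallyAbelian}) that $A$ is abelian, but membership in $\IAThree$ does not make an abelian subgroup rotationless or generated by rotationless elements, so the hypothesis under which Lemma 4.4 is proved is not met; if that lemma applied to every abelian subgroup there would be essentially nothing left to prove here. After the Corollary 3.14 reduction, what Lemma 4.4 actually yields is only that each $\Lambda\in\L(A')$ is invariant under the finite-index rotationless subgroup $A'$, and the entire content of the furthermore clause is the upgrade from invariance under powers (or under a finite-index subgroup) to invariance under every $\psi\in A$. The paper supplies that upgrade as follows: choose $\phi$ with $\Lambda\in\L(\phi)=\{\Lambda_1,\ldots,\Lambda_m\}$; some $\psi^k$ commutes with $\phi^k$ and hence permutes $\L(\phi)$; the free factor supports of the $\Lambda_i$ are pairwise distinct \cite[Lemma 3.2.4]{BFH:TitsOne}, so each support is $\psi$-periodic; and since $\psi\in\IAThree$ fixes periodic free factor systems \cite[Theorem 3.1]{HandelMosher:SubgroupsII}, $\psi$ preserves each support and hence each $\Lambda_i$. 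This is exactly where the $\IAThree$ hypothesis does its work, and no version of this step (nor any substitute for it) appears in your proposal, so the $A$-invariance claim is unproved as written.
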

  
\begin{proof} By \cite[Corollary 3.14]{FeighnHandel:abelian} there is a finite index abelian subgroup $A'< A$ that is generated by rotationless elements.  Applying \cite[Lemma~4.4]{FeighnHandel:abelian} it follows that $\L(A')$ is a finite collection of $A'$-invariant laminations. Since each $\phi \in A$ has a power $\phi^k \in A'$, $k \ne 0$, and since $\L(\phi)=\L(\phi^k)$, it follows that $\L(A') = \L(A)$ and so $\L(A)$ is~finite. 

Suppose now that $A \subgroup \IAThree$, that $\Lambda \in \L(A)$, and that $\psi \in A$; we prove that $\psi(\Lambda)=\Lambda$. Choose $\phi \in A$ so that $\Lambda \in \L(\phi) = \{\Lambda_1,\ldots, \Lambda_m\}$. Since $A$ is virtually abelian there exists $k \ge 1$ such that $\psi^k$ commutes with $\phi^k$. It follows that $\psi^k$ permutes~$\L(\phi)$. The free factor supports of the $\Lambda_i$'s are distinct \cite[Lemma 3.2.4]{\BookOneTag} and are permuted by $\psi^k$. Since $\psi \in \IAThree$, it preserves the free factor support of each $\Lambda_i$ and so $\psi$ preserves each $\Lambda_i$.
\end{proof}

Returning now to the context of the subgroup $\Gamma$ in property $\pref{ItemInfLamFulAbe}_\Gamma$ above, by Lemma~\ref{FinitelyManyLams} it follows that $\Gamma$ is not virtually abelian (and hence that hypothesis is not needed in the statement of Theorem~B, as it was in Theorem~C).

Applying Lemma~\ref{FinitelyManyLams} together with \prefH{ItemLowerAbelian} we have:
\begin{enumeratecontinue}
\itemH \label{item:restriction is abelian}
For each proper free factor $A \subgroup F_n$ such that $\Gamma \subgroup \Stab[A]$, the image of the homomorphism $\Gamma \inject \Stab[A] \mapsto \Out(A)$ is a finite lamination group. In particular, for each component $[A]$ of $\A$ the subgroup $\Gamma_A \subgroup \Out(A)$ is a finite lamination group.
\end{enumeratecontinue}
Given $\xi \in \Gamma$ consider the set $\L(\xi;\A)$ consisting of those laminations in $\L(\xi)$ that are not carried by $\A$. Let $\L(\Gamma;\A) = \union_{\xi \in \Gamma} \L(\xi;\A)$, which is an infinite set, because $\L(\Gamma)$ is infinite by hypothesis, but by \prefH{item:restriction is abelian} only finitely many elements of $\L(\Gamma)$ are carried by $\A$. We have:
\begin{enumeratecontinue}
\itemH
\label{item:finite lam} 
\, $\A \sqsubset \{F_n\}$ is a multi-edge extension (by Fact~\ref{FactANA}~\pref{FactAnaCoEdgeTwo}), and every element of $\L(\Gamma;\A)$ has infinite orbit under $\Gamma$ (by Proposition~\ref{PropInfIndexStab}).
\end{enumeratecontinue}

For each $\Lambda \in \L(\Gamma;\A)$ consider the free factor system $\F_\supp(\Gamma \cdot \Lambda)$, that is, the smallest free factor system carrying every lamination in the $\Gamma$-orbit of~$\Lambda$. Clearly $\F_\supp(\Gamma \cdot \Lambda)$ is $\Gamma$-invariant. By \prefH{ItemInfLamFulAbe} we have $\Gamma \subgroup \IA_n(\Z/3)$, and so each component of $\F_\supp(\Gamma \cdot \Lambda)$ is $\Gamma$ invariant. By item \prefH{item:finite lam}, one of the finitely many components of $\F_\supp(\Gamma \cdot \Lambda)$ supports infinitely many elements of $\L(\Gamma;\A)$ and so the restriction of $\Gamma$ to that component is not a finite lamination group; by \pref{item:restriction is abelian}$_{\Gamma}$ that component must be $\{[F_n]\}$. This shows:
\begin{enumeratecontinue} 
\itemH 
\label{item:fills} 
The $\Gamma$-orbit of each element of $\L(\Gamma;\A)$ fills~$F_n$.
\end{enumeratecontinue}
Using this we next verify most of conclusion~\pref{ItemWWPDExistsB} of Theorem~B:
\begin{enumeratecontinue}
\itemH \label{filling laminations} 
There exists $\eta  \in [\Gamma,\Gamma]$ such that $\eta$ is fully irreducible rel~$\A$ and such that $\eta$ acts loxodromically on $\FS(F_n)$.
\end{enumeratecontinue}
We prove \prefH{filling laminations} in two cases depending on whether $\Gamma$ is geometric above~$\A$ (see just before Theorem~\ref{Theorem_I} to recall the definition).

\smallskip

\textbf{Case 1: $\Gamma$ is not geometric above~$\A$.} Applying Proposition 2.2 (1) of \SubgroupsFour, we obtain $\eta \in \Gamma$ and a non-geometric lamination pair $\Lambda^\pm_\eta \in \L(\eta;\A)$ such that $\A_\na(\Lambda^\pm_{\eta}) = \A$. We may assume in addition that $\eta$ is chosen so that $\F_\supp(\Lambda^\pm_\eta)$ is maximal with respect to~$\sqsubset$. 

We claim that the free factor system $\F_\supp(\Lambda^\pm_\eta)$ is $\Gamma$-invariant. If not, then there exists $\zeta \in \Gamma$ such that $\F_\supp(\Lambda^\pm_\eta) \ne \zeta(\F_\supp(\Lambda^\pm_\eta))
$. Since $\F_\supp(\Lambda^\pm_\eta) = \F_\supp(\Lambda^+_\eta)) = \F_\supp(\Lambda^-_\eta)$, and similarly with $\zeta$ applied, it follows that $\{\Lambda^+_\eta,\Lambda^-_\eta\} \intersect \{\zeta(\Lambda^+_{\eta}),\zeta(\Lambda^-_{\eta})\} = \emptyset$. Applying the ``Inductive Step of Proposition 2.4 (2)'' from \SubgroupsFour, it follows that if the integer $m > 0$ is sufficiently large then $\eta' = \zeta \eta^m \zeta^\inv \eta^{-m}$ has a nongeometric lamination pair $\Lambda^\pm_{\eta'}$ such that $\A_\na(\Lambda^\pm_{\eta'})=\A$ and such that $\F_\supp(\Lambda^\pm_\eta)$ is strictly contained in $\F_\supp(\Lambda^\pm_{\eta'})$, contradicting maximality and therefore proving the claim. 

Applying $\Gamma$-invariance of $\F_\supp(\Lambda^\pm_\eta)$, for each $\theta \in \Gamma$ we have $\F_\supp(\theta \cdot \Lambda^\pm_\eta) = \theta(\F_\supp(\Lambda^\pm_\eta)) = \F_\supp(\Lambda^\pm_\eta)$
and so $\F_\supp(\Lambda^\pm_\eta) = \F_\supp(\Gamma \cdot \Lambda^\pm_\eta) = \{[F_n]\}$
where the latter equation follows from~\pref{item:fills}$_{\Gamma}$. Thus $\Lambda^\pm_\eta$ fills $F_n$ and so $\eta$ acts loxodromically on $\FS(F_n)$, by \FSLox. 

If $\eta$ were not fully irreducible rel~$\A$, that is if there existed an $\eta$-invariant free factor system $\A'$ contained strictly between $\A$ and $\{[F_n]\} = \F_\supp(\Lambda^\pm_\eta)$, then any conjugacy class carried by $\A'$ but not by $\A$ would not be weakly attracted to $\Lambda^\pm_\eta$, contradicting Fact~\ref{FactANA}~\pref{FactAnaChar}. 

Lastly, it remains to arrange that $\eta \in [\Gamma,\Gamma]$ (if it is not already true). Applying \prefH{item:finite lam} we may choose $\zeta \in \Gamma$ such that $\{\zeta(\Lambda^+_\eta),\zeta(\Lambda^-_\zeta)\} \intersect \{\Lambda^+_\eta,\Lambda^-_\eta\} = \emptyset$. Again applying the ``Inductive Step of Proposition 2.4'' from \SubgroupsFour, if the integer $m>0$ is sufficiently large then $\zeta \eta^m \zeta^\inv \eta^{-m} \in [\Gamma,\Gamma]$ satisfies all the portions of \prefH{filling laminations} already established for~$\eta$.

\smallskip\textbf{Case 2: $\Gamma$ is geometric above~$\A$.} The proof is similar to Case 1 but cites different results from \SubgroupsFour. Applying Proposition 2.2 (b) of \hbox{\SubgroupsFour,} there exists $\eta \in \Gamma$ which is fully irreducible rel~$\A$ and which has a geometric lamination pair $\Lambda^\pm_\eta \in \L(\eta;\A)$ whose nonattracting subgroup system $\A_\na(\Lambda^\pm_\eta)$ is $\Gamma$-invariant, that is, $\Stab(\A_\na(\Lambda^\pm_\eta)) = \Gamma$. Also, the nonattracting subgroup system has the form $\A_\na(\Lambda^\pm_\eta) = \A \union \{[C]\}$ for some rank~1 subgroup $C \subgroup F_n$ (\SubgroupsFour, Proposition 2.2 (b)(iii)). Note that $[C] \not \in \A$, for otherwise it would follow that $\A_\na(\Lambda^\pm_\eta) = \A$ is a free factor system, contradicting Fact~\ref{FactANA}~\pref{FactANAGeom}. 

We will need that there is no vertex group system $\A'$ strictly contained between $\A$ and $\A \union \{[C]\}$, for suppose that $\A \sqsubset \A' \sqsubset \A \union \{[C]\}$. Using that vertex group systems are malnormal (\SubgroupsOne\ Lemma 3.1), it follows that $\A$ is a subset of $\A'$, i.e. each element of $\A$ is also an element of $\A'$. If $\A'=\A$ we are done. Otherwise, consider any component $[C'] \in \A' - \A$. From malnormality it follows that $[C'] \sqsubset [C]$, and so up to conjugacy we may assume that $C' \subgroup C$ and hence $C,C'$ each have rank~$1$. By Proposition 3.2 of \SubgroupsOne, it follows that $C'=C$ and hence $\A' = \A \union \{[C]\}$.

By applying Proposition 2.3 (3)(b) of \SubgroupsFour\ we conclude that $\F_\supp(\Lambda^\pm_\eta)$ is $\Gamma$-invariant. Using the same argument as in Case~1, it follows that $\Lambda^\pm_\eta$ fills $F_n$, and so $\eta$ acts loxodromically on $\FS(F_n)$ \FSLox. 

To arrange that $\eta$ is in $[\Gamma,\Gamma]$, as before choose $\zeta \in \Gamma$ such that $\{\zeta(\Lambda^+_\eta),\zeta(\Lambda^-_\zeta)\} \intersect \{\Lambda^+_\eta,\Lambda^-_\eta\} = \emptyset$. Applying the ``Induction Step of Proposition 2.2'' from \SubgroupsFour, if the integer $m > 0$ is sufficiently large then $\eta' = \zeta \eta^m \zeta^\inv \eta^{-m} \in [\Gamma,\Gamma]$ has a geometric lamination pair $\Lambda^\pm_{\eta'} \in \L(\eta';\A)$ whose nonattracting subgroup system satisfies the containment relations
$$\A \sqsubset \A_\na(\Lambda^\pm_{\eta'}) \sqsubset \A_\na(\Lambda^\pm_{\eta}) = \A \union \{[C]\}
$$
As shown above, one of these containment relations is an equation, and it cannot be the first because $\Lambda^\pm_{\eta'}$ is geometric and so $\A_\na(\Lambda^\pm_{\eta'})$ is not a free factor system. Thus $\A_\na(\Lambda^\pm_{\eta'}) = \A_\na(\Lambda^\pm_\eta)$ and so $\Stab(\A_\na(\Lambda^\pm_{\eta'})) = \Gamma$. Applying Lemma 2.3~(3) of \SubgroupsFour\ it follows that $\eta'$ is fully irreducible rel~$\A$ and that $\F_\supp(\Lambda^\pm_{\eta'})$ is $\Gamma$-invariant, and so again by the same argument as in Case~1 it follows that $\eta'$ acts loxodromically on $\FS(F_n)$.

\bigskip

To complete the proof of Theorem~B there are just two more properties to verify:
\begin{enumeratecontinue}
\itemH \label{ItemGammaWWPD}
Every $\eta$ as in \prefH{filling laminations} --- that is, every $\eta \in [\Gamma,\Gamma]$ that is loxodromic and fully irreducible rel~$\A$ --- is a WWPD element for the action $\Gamma \act \FS(F_n)$.
\itemH \label{ItemGammaNonelementary}
The action $\Gamma \act \FS(F_n)$ is nonelementary.
\end{enumeratecontinue}
To prove item~\prefH{ItemGammaWWPD}, note that for each component $[A]$ of $\A$ the restriction of $\eta$ to $\Out(A)$ is in the commutator subgroup of the abelian group $\Gamma_A$ and hence is trivial. Applying the WWPD Construction Theorem it follows that $\eta$ is WWPD.

To prove item~\prefH{ItemGammaNonelementary}, take $\eta$ as in~\prefH{ItemGammaWWPD}, let $\Lambda^\pm_\eta$ be its filling lamination pair, and apply \prefH{item:finite lam} to conclude that the $\Gamma$ orbit of the pair $\Lambda^\pm_\eta$ is infinite. In particular, for some $\delta$ the pair $\Lambda^\pm_\eta$ is disjoint from the pair $\delta(\Lambda^\pm_\eta) = \Lambda^\pm_{\delta \eta \delta^\inv}$ which also fills. Thus $\eta^\delta = \delta\eta\delta^\inv$ is also loxodromic. Furthermore $\eta$, $\eta^\delta$ form an independent pair of loxodromics, because by \FSLox\ the set of attracting (repelling) points of loxodromic elements of $\Gamma$ in the Gromov boundary $\bdy\FS(F_n)$ corresponds bijectively and $\Gamma$-equivariantly to the set of repelling (attracting) laminations $\Lambda \in \L(\Gamma)$ such that $\Lambda$ fills~$F_n$.

\subsection{Proof that Theorems B and C imply Theorem A}
\label{SectionBCDImplyA}

Given a finitely generated subgroup $G \subgroup \Out(F_n)$ which is not virtually abelian, to prove Theorem~A it suffices to verify that the group $G$ satisfies the global WWPD hypothesis  (Definition~\ref{DefWWPDHypothesis}), for then we can apply the Global WWPD Theorem \cite{HandelMosher:WWPD} from which we obtain an embedding $\ell^1 \inject H^2_b(G;\reals)$. We shall apply Theorems B and C to verify the global WWPD hypothesis for~$G$. 

\paragraph{Setup:} Throughout this section we denote the following objects satisfying various properties:

\smallskip$\bullet$ Denote $G_0 = G \intersect \IAThree$, a finite index normal subgroup of~$G$. It follows that $G_0 \subgroup \IAThree$ is not abelian.

\smallskip$\bullet$ Choose $F_r \subgroup F_n$ to be a free factor of rank $r \le n$, with corresponding restriction homomorphism $\pi \from \Stab[F_r] \to \Out(F_r)$, such that the following two properties hold \emph{and} the rank $r$ is minimal with respect to these properties:

\smallskip \quad --- $G_0 \subgroup \Stab[F_r]$,

\smallskip \quad --- the group $\Gamma = \pi(G_0) \subgroup \Out(F_r)$ is not virtually abelian.

\smallskip\noindent
By restriction we have a surjective homomorphism $\pi \from G_0 \to \Gamma$. Since $G_0$ is contained in $\IAThree \intersect \Stab[F_r]$, and since the subgroup $\pi\bigl(\IAThree \intersect \Stab[F_r]\bigr) \subgroup \Out(F_r)$ is contained in $\IA_r(\Z/3)$, we have

\smallskip$\bullet$ $\Gamma \subgroup \IA_r(\Z/3)$.

\smallskip\noindent
By choice of the rank $r$ we have:

\smallskip$\bullet$ $\Gamma$ has virtually abelian restrictions.

\smallskip\noindent
To prove this, let $B \subgroup F_r$ be a proper free factor such that $\Gamma \subgroup \Stab_{\Out(F_r)}[B]$, meaning that $\Gamma$ stabilizes the $F_r$-conjugacy class of~$B$. It follows that $G_0$ stabilizes the $F_n$-conjugacy class of $B$, hence $G_0 \subgroup \Stab_{\Out(F_n)}[B]$. By minimality of $r$ it follows that $\image(\Gamma \mapsto \Out(B)) = \image(G_0 \mapsto \Out(B))$ is virtually abelian. Combining the last two bullet points with Corollary~\ref{CorollaryVirtuallyAbelian} we have:

\smallskip$\bullet$ $\Gamma$ has abelian restrictions.

\smallskip\noindent
With this setup, henceforth we work primarily in $\Out(F_r)$, thinking of $G$ and its subgroups somewhat abstractly rather than as subgroups of $\Out(F_n)$. In particular, our strategy for verifying the global WWPD hypothesis of $G$ is to work with a hyperbolic action of (a certain subgroup of) $G_0$ that factors through a hyperbolic action of (a certain subgroup of) $\Gamma \subgroup \Out(F_r)$, the latter action being obtained by applying Theorem B or~C. As a notational side effect, many standard notations should be interpreted in $F_r$, for example the attracting lamination notation $\L(\cdot)$.

The proof now breaks into two cases, depending on whether $\Gamma$ is an infinite lamination subgroup of $\Out(F_r)$.
 
\paragraph{Case 1: $\Gamma \subgroup \Out(F_r)$ is a finite lamination subgroup.} Applying Theorem~C, we obtain a finite index normal subgroup $N \normal \Gamma$, a hyperbolic complex $X$, and an isometric action $N \act X$, satisfying the following properties: 
\begin{enumerate}
\item[(a)] every element of $N$ acts elliptically or loxodromically on $X$;
\item[(b)] the action $N \act X$ is nonelementary;
\item[(c)] every element of $[N,N]$ is either elliptic or WWPD with respect to the action $N \act X$.
\end{enumerate}
We shall produce a certain rank~$2$ free subgroup $E \subgroup [N,N]$ that satisfies Definition~\ref{DefWWPDHypothesis} items~\pref{ItemLoxOrEll}, \pref{ItemFirstWWPD} and~\pref{ItemWWPDOrEllRestrict}, which proves the global WWPD hypothesis for~$\Gamma$. Definition~\ref{DefWWPDHypothesis}~\pref{ItemLoxOrEll} holds because it is identical to property~(a).

First we recall the notation that is found between the statement and proof of Lemma~\ref{LemmaCosetWWPD}. Let $i_1,\ldots,i_K \in \Aut(N)$ be outer representatives of the inner action of $\Gamma$ on $N$, with $i_1 = \Id_{N}$. Let $N \act_\kappa X$ denote the composed action $N \xrightarrow{i_\kappa} N \act X$; so $N \act_1 X$ is another notation for the given action $N \act X$. We also use $\act_\kappa$ to denote restriction of the action $N \act_\kappa X$ to subgroups of $N$. Since $[N,N]$ is a characteristic subgroup of $N$, it follows that property~(c) holds not just for the given action $N \act_1 X$ but for each of the composed actions $N \act_\kappa X$: 
\begin{itemize}
\item[(d)] For every $a \in [N,N]$ and every $\kappa = 1,\ldots,K$, the element $a$ is either elliptic or WWPD with respect to the action $N \act_\kappa X$. 
\end{itemize}
Apply (b) to obtain an independent pair of loxodromic elements for the action \hbox{$N \act_1 X$.} As is well known \cite{Gromov:hyperbolic}, one can then apply hyperbolic ping pong to high powers of these two elements to obtain a rank~$2$ Schottky subgroup $E_0 \subgroup N$. The commutator subgroup $[E_0,E_0] \subgroup E_0$ is free of infinite rank, and $[E_0,E_0] \subgroup [N,N]$, hence we may pick an independent pair of loxodromic elements in $[E_0,E_0]$ that freely generate a rank~$2$ subgroup $E_1 \subgroup [N,N]$. The action $E_1 \act_1 X$ is Schottky because it is a restriction of the Schottky action $E_0 \act_1 X$. With respect to the action $E_1 \act_1 X$, \emph{every} nonidentity element of $E_1$ is loxodromic, and by applying~(c), \emph{every} nonidentity element of $E_1$ is a WWPD element of the action $N \act_1 X$, thus \emph{every} rank~$2$ subgroup $E \subset E_1$ satisfies Definition~\ref{DefWWPDHypothesis}~\pref{ItemFirstWWPD}. 

To complete the verification of the WWPD hypothesis it remains to construct a rank~$2$ subgroup $E \subgroup E_1$ that satisfies Definition~\ref{DefWWPDHypothesis}~\pref{ItemWWPDOrEllRestrict}. And to do this, by applying (d) together with Lemma~\ref{LemmaCosetWWPD}, it suffices to show that $E$ satisfies the following:
\begin{itemize}
\item Each restricted action $E \act_\kappa X$ is either Schottky or elliptic, for~$\kappa=1,\ldots,K$.
\end{itemize}
For the construction of $E$ we adapt an induction argument of Bestvina and Fujiwara which is found in the proof of \cite[Theorem 8]{\BeFujiTag}.

Assume by induction that for some $\kappa = 1,\ldots,K-1$ we have a rank~$2$ subgroup $E_\kappa \subset E_1$ such that for each $i=1,\ldots,\kappa$ the action $E_\kappa \act_i X$ is either Schottky or elliptic, and hence the restriction of that action to any rank~$2$ subgroup of $E_\kappa$ is either Schottky or elliptic. We break into cases depending on the nature of the action $E_\kappa \act_{\kappa+1} X$. If that action is elliptic then, taking $E_{\kappa+1}=E_\kappa$, the induction is complete. If that action is not elliptic then there exists $\gamma \in E_\kappa$ which is loxodromic with respect to the action $E_\kappa \act_{\kappa+1} X$. It follows by (d) that $\gamma$ is WWPD with respect to the action $N \act_{\kappa + 1} X$ and so $\gamma$ is also WWPD with respect to the restricted action $E_\kappa \act_{\kappa+1} X$. Applying \cite[Corollary 2.6]{HandelMosher:WWPD} (see Proposition~\ref{PropActionWithWWPD}), the action $E_\kappa \act_{\kappa+1} X$ is either nonelementary or axial, and we consider those cases separately.

If the action $E_\kappa \act_{\kappa+1} X$ is nonelementary then, picking independent loxodromic elements and using hyperbolic ping-pong, we obtain a rank~$2$ subgroup $E_{\kappa+1} \subgroup E_\kappa$ for which the action $E_{\kappa+1} \act_{\kappa+1} X$ is Schottky, and the induction is complete. 

If the action $E_\kappa \act_{\kappa+1} X$ is axial, it preserves a two point subset $\{\xi,\eta\}$ and contains at least one loxodromic element. After replacing $E_\kappa$ with the kernel of the action on $\{\xi,\eta\}$ --- a subgroup of index at most $2$ in $E_\kappa$ --- we may assume that $E_\kappa$ fixes both $\xi$ and $\eta$. Applying item~(c) of Theorem~C, any loxodromic element of $E_\kappa$ is strongly axial, and so there exists a quasi-isometric embedding $\ell \from \reals \to X$ and a homomorphism $\tau \from E_\kappa \to \reals$ such that for all $\theta \in E_\kappa$ and $s \in \reals$ we have $\theta(\ell(r))=\ell(r + \tau(\theta))$.  Clearly $\Ker(\tau)$ is elliptic, since it fixes the point $\ell(0)$. Since $E_\kappa$ has a loxodromic element, 
%
%
%
it follows that $\tau$ has infinite image in $\R$, and so $\kernel(\tau)$ is a free subgroup of $E_\kappa$ of rank~$\ge 2$. Taking $E_{\kappa+1} \subgroup \kernel(\tau)$ to be any rank~$2$ subgroup, the action $E_{\kappa+1} \act_{\kappa+1} X$ is elliptic and the induction is complete. 

We can now set $E = E_K$ to complete the proof in Case~1.

\subparagraph{Remark.} Our case where $E_\kappa \act_{\kappa+1} X$ is axial corresponds to the second of four bulleted cases of the argument of Bestvina and Fujiwara found in \cite[page 85]{\BeFujiTag}.
In that case they make use of the fact that if a subgroup of the mapping class group fixes the attracting/repelling foliation pair of some pseudo-Anosov element, then that subgroup is virtually cyclic. No similar fact is available in our present context; we instead make use of the ``strongly axial'' conclusion in Theorem~C.

\paragraph{Case 2: $\Gamma \subgroup \Out(F_r)$ is an infinite lamination subgroup.} Letting $\A$ be any maximal, proper, $\Gamma$-invariant free factor system in $F_r$, the hypotheses of Theorem~B are satisfied, and from its conclusions we obtain the following: 
\begin{itemize}
\item[(a)] The action $\Gamma \act \FS(F_r)$ is nonelementary; 
\item[(b)] There exists a loxodromic element of $[\Gamma,\Gamma]$ which is fully irreducible rel~$\A$, and any such element satisfies WWPD for the action $\Gamma \act \FS(F_r)$
\end{itemize}
We may apply all of the numbered properties \prefH{ItemInfLamFulAbe}\,--\,\prefH{ItemGammaNonelementary} from Section~\ref{SectionReducingBToE}, each of which was proved starting only with the assumption that $\Gamma$ and~$\A$ satisfy the hypotheses of Theorem~B. 

We also adopt all the notation from the setup at the beginning of Section~\ref{SectionBCDImplyA}, but to simplify notation and highlight parallels with Case~1 we use the notation 
$$N=G_0
$$
We have an action $N \act \FS(F_r)$, given by the composition $N \xrightarrow{\pi} \Gamma \act \FS(F_r)$. Evidently the following properties corresponding to (a) and~(b) are satisfied:
\begin{itemize}
\item[(a)${}_N$] The action $N \act \FS(F_r)$ is nonelementary;
\item[(b)${}_N$] There exists a loxodromic $\phi \in [N,N]$ such that $\pi(\phi) \in \Out(F_r)$ is fully irreducible rel~$\A$, and any such $\phi$ satisfies WWPD for the action $N \act \FS(F_r)$.
\end{itemize}
The proof in Case~2 will parallel Case~1 for a brief while, before diverging. Choose $i_1,\ldots,i_K \in \Aut(N)$ to be outer representatives of the inner action of $G$ on $N$, with $i_1 = \Id_{N}$. Let $N \act_\kappa \FS(F_r)$ denote the composed action $N \xrightarrow{i_\kappa} N \act \FS(F_r)$, and use the same action symbol $\act_\kappa$ for restrictions of $N \act_\kappa \FS(F_r)$ to subgroups of $N$. Definition~\ref{DefWWPDHypothesis}~\pref{ItemLoxOrEll} is simply property (a)${}_N$ above. Thus to verify the global WWPD hypothesis for $G$ we must produce a rank~2 free subgroup $E \subgroup [N,N]$ and use it to verify Definition~\ref{DefWWPDHypothesis}~(\ref{ItemFirstWWPD}, \ref{ItemWWPDOrEllRestrict}), where Lemma~\ref{LemmaCosetWWPD} has been used to rewrite Definition~\ref{DefWWPDHypothesis}~\pref{ItemWWPDOrEllRestrict}: 
\begin{itemize}
\item[\pref{ItemFirstWWPD}] The restricted action $E \act \FS(F_r)$ is Schottky and its nontrivial elements all satisfy WWPD with respect to the action $N \act \FS(F_r)$.
\item[\pref{ItemWWPDOrEllRestrict}${}'$] For each $\kappa=1,\ldots,K$, the action $E \act_\kappa \FS(F_r)$ is either elliptic, or it is Schottky and its nontrivial elements all satisfy WWPD with respect to the action $N \act_\kappa \FS(F_r)$.
\end{itemize}

Where Cases~1 and~2 diverge is that we do \emph{not} know that every loxodromic $\phi \in [N,N]$ is a WWPD element for the action $N \act \FS(F_r)$; we know this only for those $\phi$ such that $\pi(\phi) \in \Out(F_r)$ is fully irreducible rel~$\A$. Thus, to verify the global WWPD hypothesis for $G$ using the action $N \act \FS(F_r)$ there is still quite a bit of intricate work to do involving subgroup decomposition theory, lamination ping-pong, and Ghosh's theorem, in order to discover the needed WWPD elements.

\medskip

Define $\pi_\kappa \from N \to \Gamma$ to be the composition $N \xrightarrow{i_\kappa} N \xrightarrow{\pi} \Gamma$. The action $N \act_\kappa \FS(F_r)$ is thus the same as the composed action $N \xrightarrow{\pi_\kappa} \Gamma \act \FS(F_r)$. For each component $[A]$ of $\A$, since the restriction of $\Gamma$ to $\Out(A)$ is abelian, it follows that the restriction of $[\Gamma,\Gamma]$ to $\Out(A)$ is trivial. Since the characteristic subgroup $[N,N]$ is preserved by the isomorphism $i_\kappa$, we have $\pi_\kappa[N,N] = \pi[N,N] \subgroup [\Gamma,\Gamma]$ for $\kappa = 1,\ldots,K$, and so
\begin{itemize}
\item[(c)${}_N$] For each component $[A]$ of $\A$ and each $\kappa=1,\ldots,K$, the restriction of $\pi_\kappa[N,N]$ to $\Out(A)$ is trivial.
\end{itemize}

Consider $a \in N$, and denote its images in $\Gamma$ as $\alpha_\kappa = \pi_\kappa(a) \in \Gamma$, $1 \le \kappa \le K$. We~say that $\kappa$ is a \emph{PG index for~$a$} if $\L(\alpha_\kappa; \A) = \emptyset$; otherwise $\kappa$ is an \emph{EG index for~$a$}. Assuming that $\kappa$ is an EG index for $a$, we say that $\kappa$ is a \emph{nongeometric index for $a$} if some element of $\L(\alpha_\kappa; \A)$ is non-geometric; otherwise $\kappa$ is a \emph{geometric index for~$a$}. Let $t_1 \ge 0$ be the maximum number of non-geometric indices that occurs for any element of $N$, and let $\M'$ be the set of all $a \in N$ having $t_1$ non-geometric indices; note that $t_1=0$ if and only if the subgroup $\Gamma\subgroup\Out(F_r)$ is geometric above~$\A$. Let $t_2 \ge t_1$ be the maximum number of EG indices that occur for some $a \in \M'$, and let $\M$ be the set of all $a \in \M'$ having $t_2$ EG indices. There exists $a \in N$ and $\kappa \in \{1,\ldots,K\}$ such that $\kappa$ is an EG index for $a$, because by property~(a) some element of $\Gamma$ is loxodromic and so has a filling lamination. The set~$\M$ is therefore nonempty and $t_2 \ge 1$. After permuting the $\kappa$'s if necessary we may assume that the following subset of $\M$ is nonempty:
\begin{align*}
\M_0 = \{a \in N \suchthat & \,\, \text{$\kappa$ is a non-geometric index of $a$ for $1 \le \kappa \le t_1$, and} \\
&\,\,\text{$\kappa$ is a geometric index of~$a$ for $t_1 <  \kappa \le t_2$}
\}
\end{align*}
That is, $a \in \M_0$ if and only if: $\L(\alpha_\kappa;\A)$ has a nongeometric lamination for each $1 \le \kappa \le t_1$; $\L(\alpha_\kappa;\A)$ is a nonempty set of geometric laminations for $t_1 < \kappa \le t_2$; and $\L(\alpha_\kappa;\A)$ is empty for $t_2 < \kappa \le K$.

Given $a \in \M_0$ with $\alpha_\kappa = \pi_\kappa(a)$, an \emph{assignment of lamination pairs} for~$a$ is a function
$$\kappa \mapsto \Lambda^\pm_{\alpha_\kappa} \in \L(\alpha_\kappa;\A) \quad\text{defined for $1 \le \kappa \le t_2$,}
$$
denoted in shorthand as $\bigl(\Lambda^\pm_{\alpha_\kappa} \bigr)$. We say that this assignment $\bigl(\Lambda^\pm_{\alpha_\kappa}\bigr)$ is \emph{$\M_0$-consistent} if $\Lambda^\pm_{\alpha_\kappa}$ is non-geometric for each $1 \le \kappa \le t_1$. 


\begin{proposition}  \label{multiple filling} 
\quad\hfill
\begin{enumerate}
\item \label{good for all kappa} There exists $a  \in \M_0$ with an $\M_0$-consistent assignment $\bigl(\Lambda^\pm_{\alpha_\kappa}\bigr)$ of filling lamination pairs, such that $\alpha_\kappa = \pi_\kappa(a) \in \Gamma$ is fully irreducible rel $\A$ for each $1 \le \kappa \le t_2$. 
\item \label{no mixed cases} Either $t_1 = 0$ or  $t_2 = t_1 \ge 1$. More precisely: either each $\Lambda^\pm_{\alpha_\kappa}$ is non-geometric for each $\kappa$; or $\Gamma$ is geometric above $\A$ and therefore $\Lambda^\pm_{\alpha_\kappa}$ is geometric for each~$\kappa$.
\end{enumerate}
\end{proposition}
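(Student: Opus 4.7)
The proof interlocks parts (2) and (1); I will prove (2) first and then use it to guide the construction in (1).

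For part (2), I dichotomize on whether $\Gamma$ is geometric above $\A$. If it is, then every element of $\L(\Gamma;\A)$ is geometric by definition, so no element of $N$ has any nongeometric index and $t_1=0$. Otherwise, by \cite[Proposition~2.2~(1)]{\SubgroupsFourTag} applied to $\Gamma$, there exists $\eta\in\Gamma$ with a nongeometric pair $\Lambda^\pm_\eta\in\L(\eta;\A)$ satisfying $\A_\na(\Lambda^\pm_\eta)=\A$; surjectivity of $\pi_1$ lifts $\eta$ to $N$ and shows $t_1\ge 1$. To show $t_2=t_1$ I argue by contradiction. If some $a\in\M$ had a geometric EG index $\kappa_0\in(t_1,t_2]$, I would choose $c\in N$ with $\pi_{\kappa_0}(c)$ admitting a nongeometric lamination pair in $\L(\pi_{\kappa_0}(c);\A)$, and then apply the iterated commutator construction of \cite[Inductive Step of Proposition~2.4]{\SubgroupsFourTag} coordinate-wise: for sufficiently large $m$ the element $a'=[c^{m},a]$ preserves all nongeometric indices of $a$ while making $\kappa_0$ nongeometric, producing strictly more than $t_1$ nongeometric indices and contradicting maximality.

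For part (1), I start with $a_0\in\M_0$, which is nonempty by the reordering of indices. By part (2) we are in exactly one of two cases: (a) $t_1=0$ with $\Gamma$ geometric above $\A$ and the $\M_0$-consistent assignment geometric at every index $\kappa\le t_2$; or (b) $t_1=t_2\ge 1$ with the assignment nongeometric at every index. I upgrade $a_0$ iteratively: for each index $\kappa\le t_2$ where $\alpha_{0,\kappa}$ is not yet fully irreducible rel $\A$, I apply the argument of property~\prefH{filling laminations} from Section~\ref{SectionReducingBToE} (Case~1 if (b), Case~2 if (a)) to $\alpha_\kappa\in\Gamma$. That argument produces an element of $\Gamma$ fully irreducible rel $\A$ by a maximality-of-$\F_\supp$ argument combined with the commutator steps of \cite[Inductive Step of Proposition~2.4]{\SubgroupsFourTag} or \cite[Induction Step of Proposition~2.2]{\SubgroupsFourTag}. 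I lift each step back to $N$ using sufficiently high powers so that the previously achieved coordinates are undisturbed. After at most $t_2$ upgrades I obtain $a\in[N,N]\cap\M_0$ with $\alpha_\kappa$ fully irreducible rel $\A$ for every $\kappa\le t_2$ and the geometric/nongeometric type preserved. By Fact~\ref{FactANA}~\pref{FactAnaFullIrr}, $\Lambda^\pm_{\alpha_\kappa}$ is the unique element of $\L(\alpha_\kappa;\A)$, and $\Gamma$-invariance of $\F_\supp(\Lambda^\pm_{\alpha_\kappa})$ (from the same maximality argument) combined with property~\prefH{item:fills} forces $\F_\supp(\Lambda^\pm_{\alpha_\kappa})=\{[F_r]\}$, so $\Lambda^\pm_{\alpha_\kappa}$ fills $F_r$.

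The central obstacle is simultaneous control of the $K$ images $\pi_1(a),\ldots,\pi_K(a)$: an upgrade intended to fix coordinate $\kappa_0$ can a priori degrade coordinates already handled. This is handled by performing all modifications inside $[N,N]$ (commutators remain in $[N,N]$ under every inner automorphism of $G$ restricted to $N$) and by high-power ping-pong estimates, using the Independence Theorem \cite[Theorem~1.2]{\FSLoxTag} to ensure that distinct lamination pairs of $\alpha_\kappa$ for different $\kappa$ have disjoint $\Gamma$-orbits; the overall pattern mirrors the wreath-product arguments used for mapping class groups in \BeFuji, adapted to the lamination ping-pong framework of $\Out(F_n)$.
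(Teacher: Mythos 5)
There is a genuine gap, and it occurs at the very point your plan acknowledges as "the central obstacle." In part (1) you propose to upgrade one coordinate at a time, "lifting each step back to $N$ using sufficiently high powers so that the previously achieved coordinates are undisturbed." But every modification is made to a single element of $N$, and all coordinates $\pi_1(a),\ldots,\pi_K(a)$ change simultaneously; after a ping-pong step the laminations at the previously handled coordinates are \emph{new} output laminations of Proposition~\ref{PropPingPongVer}, not the old ones, so "high powers" and membership in $[N,N]$ do nothing to preserve full irreducibility or filling already achieved there. The paper resolves this not by sequential upgrading but by a single simultaneous induction: a conjugator $b\in N$ is chosen by Lemma~\ref{simultaneous conjugation} (a simultaneous version of \cite[Lemma 2.1]{\SubgroupsFourTag}, proved by pulling back infinite-index subgroups under all the $\pi_\kappa$ and applying a pigeonhole argument) so that the ping-pong hypotheses hold at \emph{all} coordinates at once, and progress is measured by monotone invariants valid at every coordinate --- the nonattracting subgroup systems $\A_\na\Lambda^\pm_{\alpha_{i,\kappa}}$ strictly decrease until $\Gamma$-invariant, and afterwards the supports $\F_\supp\Lambda^\pm_{\alpha_{i,\kappa}}$ strictly increase until $\Gamma$-invariant, with termination guaranteed by the bounded-chain result \cite[Proposition 3.2]{\SubgroupsOneTag}; only then do Lemma~\ref{invariant stabilizer}, Fact~\ref{FactANA} and \prefH{item:fills} yield full irreducibility and filling at every $\kappa$ simultaneously. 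Your proposal names none of these mechanisms, and without a simultaneous-conjugator lemma and a coordinate-uniform monotone quantity the iteration has no reason to terminate in the desired state.

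Part (2) has a separate gap. Your contradiction argument takes $a$ with nongeometric indices $1,\ldots,t_1$ and a geometric EG index $\kappa_0$, and claims a commutator/ping-pong combination with some $c$ (nongeometric at $\kappa_0$) "preserves all nongeometric indices of $a$ while making $\kappa_0$ nongeometric." No cited result supports either half: Proposition~\ref{PropPingPongVer} guarantees a nongeometric output only when \emph{both} input pairs are nongeometric, so mixing the geometric pair $\Lambda^\pm_{\pi_{\kappa_0}(a)}$ with a nongeometric pair of $\pi_{\kappa_0}(c)$ does not produce a nongeometric lamination at $\kappa_0$; and at the coordinates $\kappa\le t_1$ you have no control over $\pi_\kappa(c)$ (it may have no laminations rel $\A$ at all), so neither the ping-pong hypotheses nor nongeometricity of the output is available there. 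In the paper, conclusion~\pref{no mixed cases} is not obtained by converting a geometric index to a nongeometric one; it falls out of the same induction as part (1): once $\A_\na\Lambda^\pm_{\alpha_{i,\kappa}}$ is $\Gamma$-invariant at a geometric index, Lemma~\ref{invariant stabilizer}~\pref{ItemInvStGeom} forces $\Gamma$ to be geometric above $\A$, whence $t_1=0$. So both halves of your proposal would need to be restructured along these lines rather than patched.
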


\noindent
Before proving Proposition \ref{multiple filling}, we apply it to the construction of a rank~2 subgroup $E \subgroup [N,N]$ that satisfies (2) and (3)${}'$.

Choose $a \in \M_0$ and an assignment of lamination pairs $\bigl(\Lambda^\pm_{\alpha_\kappa}\bigr)$ satisfying Proposition~\ref{multiple filling}~\pref{good for all kappa}. Consider the set $C_r$ of closed subsets of $\B(F_r)$, a set on which the group $\Out(F_r)$ acts naturally. Each lamination $\Lambda^-_{\alpha_\kappa}$, $\Lambda^+_{\alpha_\kappa}$ for $1 \le \kappa \le t_2$ is an element of $C_r$. By \cite[Corollary 1.3]{\FSLoxTag} we have equality of stabilizer subgroups $\Stab_{\Gamma}(\Lambda^-_{\alpha_\kappa}) = \Stab_{\Gamma}(\Lambda^+_{\alpha_\kappa})$, and by property~\prefH{item:finite lam} this subgroup has infinite index in~$\Gamma$. The subgroup $\Stab_{\Gamma}\{\Lambda^-_{\alpha_\kappa},\Lambda^+_{\alpha_\kappa}\}$, which contains $\Stab_{\Gamma}(\Lambda^-_{\alpha_\kappa}) = \Stab_{\Gamma}(\Lambda^+_{\alpha_\kappa})$ with index at most~$2$, therefore also has infinite index in~$\Gamma$. 

\bigskip
\textbf{Remark.} Since each $\Stab_{\Gamma}\{\Lambda^-_{\alpha_\kappa},\Lambda^+_{\alpha_\kappa}\}$ has infinite index in $\Gamma$, the axial case which came up in the induction argument of Case~1 does not come up at all in Case~2. 
The indices $\kappa$ for which the desired action $E \act_\kappa \FS(F_r)$ is elliptic will be precisely those for which $t_2 < \kappa \le K$.

\bigskip

For each $1 \le \kappa \le t_2$, consider the composed action $N \xrightarrow{\pi_\kappa} \Gamma \subgroup \Out(F_r) \act C_r$, denoted as $N \act_\kappa C_r$. The Second Sublemma of Lemma 2.1 of \SubgroupsFour\ applied to the actions $N \act_\kappa C_r$ produces an element $b \in N$ such that for all $1 \le \kappa \le t_2$, letting $\beta_\kappa : = \pi_\kappa(b)$, we have  
$$\hphantom{(*)} \qquad \beta_\kappa(\{\Lambda^+_{\alpha_\kappa}, \Lambda^-_{\alpha_\kappa}\}) \ne  \{\Lambda^+_{\alpha_\kappa}, \Lambda^-_{\alpha_\kappa}\}
$$ 
and therefore by the Independence Theorem \cite[Theorem 1.2]{\FSLoxTag} we have
$$(*) \qquad \beta_\kappa(\{\Lambda^+_{\alpha_\kappa}, \Lambda^-_{\alpha_\kappa}\}) \intersect  \{\Lambda^+_{\alpha_\kappa}, \Lambda^-_{\alpha_\kappa}\} = \emptyset
$$
Let $c = bab^{-1} \in N$, let $\gamma_\kappa = \pi_\kappa(c) = \beta_\kappa \alpha_\kappa \beta_\kappa^{-1}$, and let $\Lambda^\pm_{\psi_\kappa} =  \beta_\kappa( \Lambda^\pm_{\alpha_\kappa}) \in \L^\pm(\gamma_\kappa; \A)$. Then $\gamma_\kappa $ is fully irreducible rel $\A$ and $ \{\Lambda^+_{\alpha_\kappa}, \Lambda^-_{\alpha_\kappa}\} \cap  \{\Lambda^+_{\gamma_\kappa}, \Lambda^-_{\gamma_\kappa}\}  = \emptyset$  for all $1 \le \kappa \le t_2$; moreover, $\bigl(\Lambda^\pm_{\gamma_\kappa}\bigr)$ is an $\M_0$-consistent assignment of filling lamination pairs for~$c$.

We now apply Proposition~\ref{PropGhosh} to the pair of elements $\alpha_\kappa$ and $\gamma_\kappa$ for each $1 \le \kappa \le t_2$, noting that we have already verified hypotheses~\pref{ItemGhoshFullyIrr} and~\pref{ItemGhoshDiffLams} of Proposition~\ref{PropGhosh} for that pair, and that hypothesis~\pref{ItemGhoshGeomAlt} follows from $\M_0$-consistency of each of the assignments $\bigl(\Lambda^\pm_{\alpha_\kappa}\bigr)$ and $\bigl(\Lambda^\pm_{\gamma_\gamma}\bigr)$ and from Proposition~\ref{multiple filling}~\pref{no mixed cases}. From the conclusion of Proposition~\ref{PropGhosh}, for each $1 \le \kappa \le t_2$ there exists $M_\kappa$ so that for each $m \ge M_\kappa$ the subgroup $E'_\kappa$ of $\Gamma$ generated by $\alpha_\kappa^m$ and $\gamma_\kappa^m$ is free of rank $2$, each non-trivial element $\xi_\kappa \in E'_\kappa$ is irreducible rel $\A$, each $\xi_\kappa$ has a unique lamination pair $\Lambda^\pm_{\xi_\kappa}$ that fills rel $\A$, and $\Lambda^\pm_{\xi_\kappa}$ is nongeometric for $1 \le \kappa \le t_1$ and geometric for $t_1 < \kappa \le t_2$. Increasing each $M_\kappa$ to $M = \max_\kappa M_\kappa$, we obtain a subgroup $E'$ of $N$ freely generated by $a^m$ and $c^m$ such that that $E'_\kappa = \pi_\kappa(E')$, and such that for each nontrivial $x \in E'$, letting $\xi_\kappa = \pi_\kappa(x)$, the assignment $\bigl(\Lambda^\pm_{\xi_\kappa}\bigr)$ is $\M_0$-consistent. 

By \cite[Theorems 1.1 and 1.2]{\FSLoxTag}, the actions of $\alpha_\kappa$ and $\gamma_\kappa$ on $\FS(F_r)$ are loxodromic and independent. 
By standard methods of hyperbolic ping-pong (see for example \cite{HandelMosher:WWPD}), after a further increase of $M$, we may also assume that the action of each $E'_\kappa$ on $\FS(F_r)$ is Schottky. In particular, each non-trivial $\xi_\kappa \in E'_\kappa$ has a filling lamination pair, and that pair must be $\Lambda^\pm_{\xi_\kappa}$ since all other lamination pairs for $\xi_\kappa$ are supported by~$\A$. It follows that for each $1 \le \kappa \le t_2$ the restricted action  $E' \act_\kappa \FS(F_r)$ is Schottky and each non-trivial element of $\pi_\kappa(E')$ is irreducible rel~$\A$. Moreover each non-trivial element of $E'$ is contained in~$\M_0$.

Choose a rank~$2$ free subgroup $E \subgroup [E' \cap N,E' \cap N] \subgroup [N,N]$. Note that each restricted action $E \act_\kappa \FS(F_r)$ is still Schottky for each $1 \le \kappa \le t_2$, and each non-trivial element of $E$ is contained in $\M_0$. Applying property~(c)${}_N$, for each component $[A]$ of $\A$ the restriction of each $\pi_\kappa(E)$ to $\Out(A)$ is trivial. The WWPD Construction Theorem therefore implies that $E\act_\kappa \FS(F_r)$ is a WWPD Schottky group for each $1\le \kappa \le t_2$.    Since each non-trivial element of $E$ is contained in $\M_0$, the actions $E\act_\kappa \FS(F_r)$ must be elliptic for $\kappa > t_2$. This completes the verification of (2) and (3)${}'$ using $E$, thus finishing Case~2 subject to the proof of Proposition \ref{multiple filling}.

\bigskip

Next we present two lemmas needed to set up the ping-pong proof of Proposition~\ref{multiple filling}. Most of the following lemma is cited from Lemma 2.3~(3) of \SubgroupsFour.

\begin{lemma} \label{invariant stabilizer} 
Suppose that $\Gamma \subgroup \IA_r(\Z/3)$ is irreducible relative to a free factor system~$\A$, and suppose that $\phi \in \Gamma$ is rotationless and has a geometric lamination pair $\Lambda^\pm_\phi \in \L(\phi;\A)$ such that $\A_\na(\Lambda^\pm_\phi)$ is $\Gamma$-invariant, equivalently $\Stab_\Gamma(\A_\na(\Lambda^\pm_\phi))=\Gamma$.  Then   
\begin{enumerate}
\item\label{ItemInvStIrr}
$\phi$ is irreducible rel $\A$ (\SubgroupsFour\ Lemma 2.3~(3)(a)).
\item\label{ItemInvStFFS}
The free factor support of $\Lambda^\pm_\phi$ is $\Gamma$-invariant  (\SubgroupsFour\ Lemma 2.3~(3)(b)).
\item\label{ItemInvStMax}
$\A_\na\Lambda^\pm_{\phi} = \A \cup \{[C]\}$ where $C$ is a maximal infinite cyclic subgroup of $F_r$  (\SubgroupsFour\ Lemma 2.3~(3)(c)). 
\item\label{ItemInvStGeom}
$\Gamma$ is geometric above $\A$.
\end{enumerate} 
\end{lemma}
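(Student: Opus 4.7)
Items (1), (2), (3) are verbatim restatements of \cite[Lemma 2.3 (3)(a,b,c)]{\SubgroupsFourTag}, so the plan is to cite that reference directly and focus the real work on~(4). For~(4), I argue by contradiction: assume some $\psi \in \Gamma$ has a nongeometric lamination pair $\Lambda^\pm_\psi \in \L(\psi;\A)$, so that $\Gamma$ fails to be geometric above~$\A$.

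The first step extracts, from $\Gamma$-invariance of $\A_\na(\Lambda^\pm_\phi) = \A \cup \{[C]\}$ given by~(3), the stronger statement that the single conjugacy class $[C]$ is $\Gamma$-fixed. Since $\A$ is $\Gamma$-invariant and $\Gamma \subgroup \IA_r(\Z/3)$ fixes every $\Gamma$-periodic free factor conjugacy class (by \cite[Theorem 3.1]{\SubgroupsTwoTag}), $\Gamma$ fixes each component of~$\A$; its action on the vertex group system $\A \cup \{[C]\}$ then permutes the complementary singleton $\{[C]\}$, forcing $[C]$ to be $\Gamma$-fixed. Moreover, by Fact~\ref{FactANA}~(3)(b)(iii) applied to~$\phi$, we have $[C] \not\sqsubset \A$. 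The second step produces an auxiliary element of~$\Gamma$. Since $\Gamma$ is irreducible rel~$\A$ and, under the contradiction hypothesis, is not geometric above~$\A$, I would apply \cite[Proposition 2.2 (1)]{\SubgroupsFourTag} exactly as in Case~1 of Section~\ref{SectionReducingBToE} to obtain a rotationless $\eta \in \Gamma$ with a nongeometric lamination pair $\Lambda^\pm_\eta \in \L(\eta;\A)$ whose nonattracting subgroup system collapses to $\A_\na(\Lambda^\pm_\eta) = \A$.

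The third step closes the contradiction using how $\eta$ acts on $[C]$. Since $\eta \in \Gamma$ fixes $[C]$, the axis in $\B(F_r)$ representing $[C]$ is $\eta$-invariant; generic leaves of the nongeometric attracting lamination $\Lambda^+_\eta$ are non-closed birecurrent lines, so a fixed periodic line cannot weakly converge to such a leaf under iteration of~$\eta$. Therefore $[C]$ is not weakly attracted to $\Lambda^+_\eta$, and Fact~\ref{FactANA}~(2) yields $[C] \sqsubset \A_\na(\Lambda^+_\eta) = \A$, contradicting $[C] \not\sqsubset \A$. The main obstacle in executing this plan is really just confirming that the hypotheses of \cite[Proposition 2.2 (1)]{\SubgroupsFourTag} are met so that it delivers an element whose nonattracting subgroup system is exactly~$\A$ (rather than some intermediate free factor system); the remainder is an essentially formal contradiction driven by the dichotomy between the geometric setup, where $\A_\na$ includes the extra primitive cyclic class $[C]$, and the nongeometric setup, where $\A_\na$ is a free factor system and so, once collapsed to~$\A$, cannot carry that same~$[C]$.
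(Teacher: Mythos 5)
Your proposal is correct and follows essentially the same route as the paper: cite \cite[Lemma 2.3 (3)]{\SubgroupsFourTag} for (1)--(3), then for (4) argue by contradiction, invoking a result of \SubgroupsFour\ (the paper uses Proposition~2.4 there, you use Proposition~2.2(1) as in Case~1 of Section~\ref{SectionReducingBToE}, which delivers the same kind of element) to produce $\theta \in \Gamma$ with $\A_\na(\Lambda^\pm_\theta) = \A$, and then observing that $[C]$ is $\theta$-fixed yet not carried by $\A$, contradicting Fact~\ref{FactANA}~\pref{FactAnaChar}. Your write-up merely makes explicit the steps the paper leaves implicit (that $[C]$ is $\Gamma$-fixed and that a fixed conjugacy class cannot be weakly attracted).
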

 
\begin{proof} To prove~\pref{ItemInvStGeom}, if $\Gamma$ is not geometric above $\A$ then by Proposition 2.4 of \SubgroupsFour\ there exists a rotationless $\theta \in \Gamma$ that is irreducible rel $\A$ and there exists $\Lambda^\pm_\theta \in \L(\theta;\A)$ such that  $\A_\na(\Lambda^\pm_\theta) = \A$.  But this contradicts the fact that $[C]$ is $\theta$-invariant and not carried by the free factor system $\A$.
\end{proof}

Ping-pong arguments in groups can start with one player --- that is, one group element --- producing a second player by conjugating the first. The conjugating element is chosen carefully, depending on the desired outcome. The proof of Proposition~\ref{multiple filling} is a ping-pong game in the group $N$, carried out inductively: given $a_i \in \M_0$, after choosing a conjugator $b_i \in N$ we produce the second player $c_i = b_i a_i b_i^{-1} \in \M_0$ and then define $a_{i+1} = c_i^m a_i^n$ for sufficiently large $m,n$.  We then iterate this until the resulting $a_i$ satisfies the conclusions of Proposition~\ref{multiple filling}. Each step of this iteration can also be viewed as $t_2$ simultaneous ping-pong games in the quotient group $\Gamma = \pi(N)$, with $t_2$ first players $\alpha_\kappa \, (= \pi_\kappa(a_1))$, requiring a \emph{very} careful and consistent choice of $t_2$ conjugating elements $\beta_\kappa \, (= \pi_\kappa(b_1))$. The following lemma describes the choice of conjugating element $b_1$. It is a straightforward generalization of Lemma 2.1 of \SubgroupsFour, which is used to choose the conjugating maps for the proof of Theorem~I of \SubgroupsFour.


\begin{lemma} \label{simultaneous conjugation}  Suppose that $a \in \M_0$ and $\alpha_\kappa = \pi_\kappa(a) \in \Gamma$, $1 \le \kappa \le K$. Let $\Lambda^\pm_{\alpha_\kappa} \in \L(\alpha_\kappa;\A)$ be an $\M_0$-consistent assignment of lamination pairs, with generic leaves $\ell^\pm_{\alpha_\kappa}$, respectively. There exists $b \in N$ such that $\beta_\kappa = \pi_\kappa(b)$ satisfies the following properties for all $1 \le \kappa \le t_2$.
\begin{enumerate}
\item \label{conjugator1} 
None of the lines $\beta_\kappa(\ell^+_{\alpha_\kappa})$, $\beta_\kappa(\ell^-_{\alpha_\kappa})$, $\beta^\inv_\kappa(\ell^+_{\alpha_\kappa})$, $\beta^\inv_\kappa(\ell^-_{\alpha_\kappa})$ is carried by $\A_\na\Lambda^\pm_{\alpha_\kappa}$.
\item \label{conjugator2}
$\beta_\kappa \{\Lambda^+_{\alpha_\kappa}, \Lambda^-_{\alpha_\kappa}\} \cap \{\Lambda^+_{\alpha_\kappa}, \Lambda^-_{\alpha_\kappa}\} = \emptyset$.  
\item  \label{conjugator3} 
If $\A_\na\Lambda^\pm_{\alpha_\kappa}$ is not $\Gamma$-invariant then  $\beta_\kappa(\A_\na\Lambda^\pm_{\alpha_\kappa}) \ne \A_\na\Lambda^\pm_{\alpha_\kappa}$. 
\item  \label{conjugator4}  
If $\F_\supp\Lambda^\pm_{\alpha_\kappa}$ is not $\Gamma$-invariant then 
$\beta_\kappa(\F_\supp\Lambda^\pm_{\alpha_\kappa}) \ne \F_\supp\Lambda^\pm_{\alpha_\kappa}$.
\end{enumerate}
\end{lemma}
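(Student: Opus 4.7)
}

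The plan is to adapt the argument of \cite[Lemma 2.1]{\SubgroupsFourTag}, which is the single-index case of this lemma: given one element $\alpha \in \Gamma$ and a lamination pair $\Lambda^\pm_\alpha$ one produces a single $\beta \in \Gamma$ satisfying the four conditions. The present statement requires producing, instead, a single $b \in N$ whose images $\beta_\kappa = \pi_\kappa(b) \in \Gamma$ satisfy the four conditions simultaneously for all $1 \le \kappa \le t_2$. The strategy is to identify, for each pair $(\kappa,j)$ with $j \in \{1,2,3,4\}$, a ``bad'' set $B_{\kappa,j} \subset \Gamma$ such that any $\beta_\kappa \in \Gamma$ violating condition $(j)$ for index $\kappa$ lies in $B_{\kappa,j}$, to show each $B_{\kappa,j}$ is proper in $\Gamma$, and then to construct $b \in N$ avoiding every $\pi_\kappa^{-1}(B_{\kappa,j})$.

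First I would verify that the individual bad sets are proper subsets of $\Gamma$. For condition~\pref{conjugator2} the bad set is $\Stab_\Gamma\{\Lambda^+_{\alpha_\kappa},\Lambda^-_{\alpha_\kappa}\}$; by property \prefH{item:finite lam} the $\Gamma$-orbit of $\Lambda^\pm_{\alpha_\kappa}$ is infinite so this stabilizer has infinite index, and combined with \cite[Corollary 1.3]{\FSLoxTag} it is a proper subgroup. For conditions~\pref{conjugator3} and~\pref{conjugator4} the bad sets are $\Stab_\Gamma(\A_\na\Lambda^\pm_{\alpha_\kappa})$ and $\Stab_\Gamma(\F_\supp\Lambda^\pm_{\alpha_\kappa})$, which are proper by the explicit hypotheses of those clauses. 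Condition~\pref{conjugator1} is the most delicate: for each of the four lines $\ell \in \{\ell^\pm_{\alpha_\kappa},\ell^\pm_{\alpha_\kappa}\}$ (counted with the convention of taking $\beta^{\pm 1}$), the bad set is $\{\beta \in \Gamma \suchthat \beta^{\pm 1}(\ell) \text{ is carried by } \A_\na\Lambda^\pm_{\alpha_\kappa}\}$, i.e.\ $\{\beta : \ell \in \B(\beta^{\mp 1}\A_\na\Lambda^\pm_{\alpha_\kappa})\}$. Using weak attraction theory (Fact~\ref{FactANA}\pref{FactAnaChar}) together with the fact that $\ell$ is a generic leaf of $\Lambda^\pm_{\alpha_\kappa}$ and hence nonperiodic and filling inside $\F_\supp\Lambda^\pm_{\alpha_\kappa}$, this bad set is contained in a finite union of proper subsets each of the form ``$\beta$ stabilizes a specific subgroup system containing $\ell$'', exactly as in the single-index argument of \cite[Lemma~2.1]{\SubgroupsFourTag}.

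The core remaining step is to realize a single $b \in N$ outside every $\pi_\kappa^{-1}(B_{\kappa,j})$. For this I would proceed by induction on the pairs $(\kappa,j)$, running the dynamical construction from \cite[Lemma~2.1]{\SubgroupsFourTag} inside $N$ rather than inside $\Gamma$. Since each $\pi_\kappa \from N \surjection \Gamma$ is surjective (the outer automorphisms $i_\kappa$ are invertible on $N$ and $\pi$ is surjective), one may lift any element of $\Gamma$ to $N$, and one may arrange that a candidate lift $\psi \in N$ has image $\pi_\kappa(\psi)$ acting with source--sink dynamics on laminations and subgroup systems associated to $\alpha_\kappa$, for each $\kappa$ in turn. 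Taking $b = \psi^m$ for sufficiently large $m$, each $\beta_\kappa = \pi_\kappa(\psi)^m$ is pushed out of the finitely many bad sets $B_{\kappa,j}$, $j=1,\ldots,4$, simultaneously, by the standard argument that north--south dynamics eventually evict iterates from the complement of any attracting neighborhood.

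The main obstacle I expect is bookkeeping in the inductive step: when one enlarges $\psi$ so that $\pi_{\kappa+1}(\psi)$ satisfies the source--sink condition for $\alpha_{\kappa+1}$, one must preserve the work already done for $\kappa' \le \kappa$, i.e.\ not allow the new modification to push $\pi_{\kappa'}(b)$ into a bad set. This is handled by noting that the dynamical ``good sets'' constructed at each stage are open and dense in the relevant weak topology, so that the intersection of finitely many such good sets, one per $(\kappa,j)$, remains non-empty; equivalently, one refines $\psi$ and $m$ finitely many times, at each step preserving the finitely many inequalities already achieved. This is exactly the same bookkeeping used in the proof of \cite[Lemma~2.1]{\SubgroupsFourTag}, applied $t_2$ times in parallel across the outer representatives $i_1,\ldots,i_{t_2}$.
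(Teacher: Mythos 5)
Your identification of the ``bad sets'' for conclusions \pref{conjugator2}--\pref{conjugator4} (the stabilizers of $\{\Lambda^+_{\alpha_\kappa},\Lambda^-_{\alpha_\kappa}\}$, of $\A_\na\Lambda^\pm_{\alpha_\kappa}$, and of $\F_\supp\Lambda^\pm_{\alpha_\kappa}$, each of infinite index) matches the first half of the paper's proof, and your citations for infinite index are the right ones. The genuine gap is in the mechanism you propose for producing a single $b \in N$ that works for all $\kappa$ simultaneously. First, for conclusion \pref{conjugator1} you claim the bad set is ``contained in a finite union of proper subsets''; properness of finitely many subsets (or even subgroups) of a group gives no avoidance statement at all, so this does not set up any workable intersection argument. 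The paper handles \pref{conjugator1} in the opposite, positive direction: the First Sublemma in the proof of \cite[Lemma 2.1]{\SubgroupsFourTag} produces a \emph{finite index} subgroup $\Gamma_\kappa \subgroup \Gamma$ all of whose elements satisfy \pref{conjugator1}, so this condition becomes membership in a finite index subgroup rather than avoidance of anything. Second, your replacement for the simultaneity step --- choose one $\psi \in N$ whose images $\pi_\kappa(\psi)$ have source--sink dynamics for every $\kappa$ and take $b=\psi^m$ for large $m$ --- is circular and unsupported: if $\pi_\kappa(\psi)$ happens to lie in one of the stabilizers in \pref{conjugator3} or \pref{conjugator4}, then so do all of its powers, so finding such a $\psi$ already requires solving exactly the problem the lemma poses; and the ``open and dense good sets, finitely many intersect nontrivially'' bookkeeping has no precise meaning here, since the good sets live in the discrete group $N$, not in a space where a Baire-type argument applies.

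What actually closes the simultaneity gap in the paper is a purely combinatorial coset argument, not dynamics. One pulls back the finitely many infinite-index subgroups from $(b)_\kappa$, $(c)_\kappa$, $(d)_\kappa$ under the homomorphisms $\pi_\kappa \from N \to \Gamma$, then applies the Second Sublemma in the proof of \cite[Lemma~2.1]{\SubgroupsFourTag} to get an infinite subset of $N$ any two of whose elements lie in distinct left cosets of each of these pulled-back subgroups; by the pigeonhole principle two such elements $b_1 \ne b_2$ lie in the same left coset of the finite index subgroup $\bigcap_{\kappa} \pi_\kappa^\inv(\Gamma_\kappa)$, and $b = b_1^\inv b_2^{\vphantom{1}}$ then lies in every $\pi_\kappa^\inv(\Gamma_\kappa)$ (giving \pref{conjugator1}) while lying outside every pulled-back stabilizer (giving \pref{conjugator2}--\pref{conjugator4}). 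Without this Neumann-style counting input, or some equivalent, your induction over $\kappa$ has no way to guarantee that fixing the conditions at stage $\kappa+1$ does not destroy them at earlier stages.
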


\begin{proof} Following the proof of Lemma 2.1 of \SubgroupsFour, we shall show the following for all $1 \le \kappa \le t_2$:
\begin{itemize}
\item[$(a)_\kappa$] There is a finite index subgroup $\Gamma_\kappa\subgroup \Gamma$ such that for any $\beta \in \Gamma_\kappa$ none of the lines $\beta(\ell^+_{\alpha_\kappa})$, $\beta(\ell^-_{\alpha_\kappa})$, $\beta^\inv(\ell^+_{\alpha_\kappa})$, $\beta^\inv(\ell^-_{\alpha_\kappa})$, is carried by $\A_\na\Lambda^\pm_{\alpha_\kappa}$.    
\item[$(b)_\kappa$] $\Stab_{\Gamma}\{\Lambda^+_{\alpha_\kappa}, \Lambda^-_{\alpha_\kappa}\}$ has infinite index.
\item[$(c)_\kappa$] If $\A_\na\Lambda^\pm_{\alpha_\kappa}$ is not $\Gamma$-invariant then  $\Stab_{\Gamma}(\A_\na\Lambda^\pm_{\alpha_\kappa})$ has infinite index.
\item[$(d)_\kappa$] If $\F_\supp \Lambda^\pm_{\alpha_\kappa}$ is not $\Gamma$-invariant then $\Stab_{\Gamma}(\F_\supp\Lambda^\pm_{\alpha_\kappa})$ has infinite index.
\end{itemize}
Item $(a)_\kappa$ follows from the First Sublemma in the proof of Lemma~2.1 of \SubgroupsFour. Item $(b)_\kappa$ follows from property~\prefH{item:finite lam}.
Item $(c)_\kappa$ follows from Lemma 2.3 of \SubgroupsFour. Item $(d)_\kappa$ follows because if $\Stab_{\Gamma}(\F_\supp\Lambda^\pm_{\alpha_\kappa})$ has finite index then for each $\theta \in \Gamma$ some power of $\theta$ fixes the free factor system $\F_\supp\Lambda^\pm_{\alpha_\kappa}$, but since $\theta \in \IAThree$ it follows that $\theta$ itself fixes $\F_\supp\Lambda^\pm_{\alpha_\kappa}$, by Theorem~3.1 of \SubgroupsTwo.

By pulling back the infinite index subgroups in $(b)_\kappa$, $(c)_\kappa$ and $(d)_\kappa$ under the homomorphism $\pi_\kappa \from N \to \Gamma$, and letting $\kappa=1,\ldots,t_2$ vary, we obtain a finite collection of infinite index subgroups of $N$. Applying the Second Sublemma in the proof of Lemma~2.1 of \SubgroupsFour, there is an infinite subset of $N$ any two elements of which lie in distinct left cosets of each subgroup in this collection. By the pigeonhole principle, this subset must contain two elements $b_1 \ne b_2$ lying in the same left coset of the finite index subgroup 
$$\bigcap_{\kappa=1}^{t_2} j^\inv_\kappa(\Gamma_\kappa)
$$
The element $b = b_1^\inv b^{\vphantom{-1}}_2$ satisfies the conclusions of the~lemma.
\end{proof}

\bigskip
 
To finish the proof that Theorems B and C imply Theorem A, what remains is:
     
\begin{proof}[Proof of Proposition~\ref{multiple filling}] \quad  The construction is inductive, using the fact that $\M_0 \ne \emptyset$ to choose an arbitrary $a(1) \in \M_0$ and an $\M_0$-consistent assignment of lamination pairs $\bigl(\Lambda^\pm_{\pi_\kappa(a_1)}\bigr)$, and producing a sequence $a(2),a(3), \ldots \in \M_0$ with $\M_0$-consistent assignments $\bigl(\Lambda^\pm_{\pi_\kappa(a(i))}\bigr)$. The first part of the proof is a description of a single step in the induction. The remainder of the proof explains how the induction eventually terminates in an element of $\M_0$ satisfying conclusions \pref{good for all kappa} and~\pref{no mixed cases} of the proposition.

Assuming that we have produced $a_i \in \M_0$, we describe how to produce $a_{i+1}$. Let $\alpha_{i,\kappa} := \pi_\kappa(a_i)$ for $1 \le \kappa \le t_2$. Let $\bigl(\Lambda^\pm_{\alpha_{i,\kappa}}\bigr)$ be the corresponding $\M_0$-consistent assignment of lamination pairs.  
For notational convenience we suppress the subscript ``$i$'' for now, writing $a$, $\alpha_\kappa = \pi_\kappa(a)$, and $\bigl(\Lambda^\pm_{\alpha_\kappa}\bigr)$.

Choose $b \in N$ as in Lemma~\ref{simultaneous conjugation}, the conclusions of which we shall apply below. Let $\beta_\kappa = \pi_\kappa(b)$, let $c  = b a b^{-1}$,  let $\gamma_{\kappa} =  \pi_\kappa(c) = \beta_{\kappa} \alpha_{\kappa} \beta^{-1}_{\kappa}$, and let $\Lambda^\pm_{\gamma_{\kappa}} = \beta_{{\kappa}}(\Lambda^\pm_{\alpha_{\kappa}}) \in \L(\gamma_{\kappa};\A)$. Fixing $m,n \ge 1$ subject to lower bounds to be given, consider $x = c^m a^n$ and for each $1 \le \kappa \le t_2$ consider $\xi_\kappa = \pi_\kappa(x) = \gamma_\kappa^m \alpha_\kappa^n$. 
     
We want to simultaneously apply Proposition~\ref{PropPingPongVer} (aka Proposition 1.7 of \SubgroupsFour) to $\alpha_{\kappa}$, $\Lambda^\pm_{\alpha_{\kappa}}$, $\gamma_{\kappa}$ and $\Lambda^\pm_{\gamma_{\kappa}}$ for $1 \le \kappa \le t_2$, and so we must check its hypotheses. After replacing $a$, and hence $c$, with an iterate of itself, we may assume for $1 \le \kappa \le t_2$ that $\Lambda^\pm_{\alpha_{\kappa}}$ and $\Lambda^\pm_{\gamma_{\kappa}}$ have generic leaves $\ell^\pm_{\alpha_{\kappa}}$ and $\ell^\pm_{\gamma_{\kappa}} = \beta_{{\kappa}}(\ell^\pm_{\alpha_{\kappa}})$ that are fixed by $\phi$ and $\psi$ respectively, with fixed orientation. Note that if $1 \le \kappa \le t_1$ then the pair $\Lambda^\pm_{\gamma_\kappa}$ is nongeometric because it is the $\beta_\kappa$ image of the nongeometric pair $\Lambda^\pm_{\alpha_\kappa}$. It remains to check hypotheses (a) and (i)--(iv) of Proposition~\ref{PropPingPongVer}.  
    
Hypothesis~(a) of Proposition~\ref{PropPingPongVer} says that $\A \sqsubset \A_\na\Lambda^\pm_{\alpha_{\kappa}}$ and $\A \sqsubset \A_\na\Lambda^\pm_{\gamma_{\kappa}} = \beta_{\kappa} \A_\na\Lambda^\pm_{\alpha_{\kappa}}$. This follows from the assumption that  $\Lambda^\pm_{\alpha_{\kappa}}$, and hence also $\Lambda^\pm_{\gamma_{\kappa}}$, is not carried by $\A$: every iterate under $\alpha$ and $\gamma$ of every conjugacy class carried by $\A$ is also carried by $\A$, and so such iterates cannot weakly converge to a lamination that is not carried by $\A$.
    
Hypotheses (i)--(iv) of Proposition~\ref{PropPingPongVer} follow from the conclusions of Fact~\ref{FactMutualAttraction}, whose two hypotheses we verify by applying the conclusions of Lemma~\ref{simultaneous conjugation}. Hypothesis~\pref{ItemPhiPsiDiffLams} of Fact~\ref{FactMutualAttraction} is just Lemma~\ref{simultaneous conjugation}~\pref{conjugator2}. Hypothesis~\pref{ItemPhiPsiANA} of Fact~\ref{FactMutualAttraction} requires that: neither of the lines $\ell^\pm_{\gamma_{\kappa}} = \beta_{\kappa}(\ell^\pm_{\alpha_{\kappa}})$ should be carried by $\A_\na\Lambda^\pm_{\alpha_{\kappa}}$; also neither of the lines $\ell^\pm_{\alpha_{\kappa}}$ should be carried by $\A_\na\Lambda^\pm_{\gamma{\kappa}} = \beta_{\kappa} \bigl(\A_\na\Lambda^\pm_{\alpha_{\kappa}})$ which is equivalent to saying that neither of the lines $\beta^\inv_{\kappa}(\ell^\pm_{\alpha_{\kappa}})$ is carried by $\A_\na\Lambda^\pm_{\alpha_{\kappa}}$; but these altogether follow from Lemma~\ref{simultaneous conjugation}~\pref{conjugator1}.

We may now apply the conclusions of Proposition~\ref{PropPingPongVer} for each $1 \le \kappa \le t_2$, producing threshold constants $M^1_{\kappa}$, and if $m,n \ge M^1_{\kappa}$ producing laminations $\Lambda^+_{\xi_\kappa} \in \L(\xi_{\kappa};\A)$ and $\Lambda^-_{\xi_\kappa} \in \L(\xi^\inv_\kappa;\A)$. 

Define a simultaneous threshold constant \hbox{$M^1 = \max_{1 \le \kappa \le t_2} M^1_\kappa$}. Fix $m,n \ge M^1$, and so for each $1 \le \kappa \le t_2$ we have a lamination pair $\Lambda^\pm_{\xi_\kappa} \in \L^\pm(\xi_\kappa)$. 

In the case that $1 \le \kappa \le t_1$, since $\Lambda^\pm_{\alpha_\kappa}$ and $\Lambda^\pm_{\gamma_\kappa}$ are both nongeometric, Proposition~\ref{PropPingPongVer} lets us conclude that $\Lambda^+_{\xi_\kappa}$ and $\Lambda^-_{\xi_\kappa}$ are both nongeometric. In the case that $t_1<\kappa\le t_2$, the laminations $\Lambda^+_{\xi_\kappa}$ and $\Lambda^-_{\xi_\kappa}$ are both geometric by definition of $\M_0$. In either case Proposition~\ref{PropPingPongVer}~(6) lets us conclude that $\Lambda^\pm_{\xi_\kappa}$ form a dual lamination pair. Thus we have shown that $\bigl(\Lambda^\pm_{\xi_\kappa}\bigr)$ is an $\M_0$-consistent assignment of lamination pairs for $x$. 

Next we show for each $1 \le \kappa \le t_2$ that
\begin{description}
\item[$(*)$] $\A_\na\Lambda_{\xi_\kappa} \sqsubset \A_\na\Lambda_{\alpha_\kappa}$. Furthermore, this containment is strict if $\A_\na\Lambda_{\alpha_\kappa}$ is not $\Gamma$-invariant.
\end{description}
The containment $\A_\na\Lambda_{\xi_\kappa} \sqsubset \A_\na\Lambda_{\alpha_\kappa}$ follows from Proposition~\ref{PropPingPongVer}~(1) which moreover gives the containment $\A_\na\Lambda_{\xi_\kappa} \sqsubset \A_\na\Lambda_{\gamma_\kappa}$. For the ``furthermore'' part, if $\A_\na\Lambda_{\alpha_\kappa}$ is not $\cH$-invariant then, by Lemma~\ref{simultaneous conjugation}~\pref{conjugator3}, we have 
$$\A_\na\Lambda^\pm_{\gamma_\kappa} = \beta_{\kappa}(\A_\na\Lambda^\pm_{\alpha_{\kappa}})  \ne \A_\na\Lambda^\pm_{\alpha_{\kappa}},
$$
Arguing by contradiction, if furthermore $\A_\na\Lambda_{\xi_\kappa} = \A_\na\Lambda_{\alpha_\kappa}$ then we have 
$$\A_\na\Lambda_{\alpha_\kappa} = \beta_\kappa^\inv \bigl( \A_\na\Lambda_{\gamma_\kappa} \bigr) \sqsupset \beta_\kappa^\inv \bigl( \A_\na\Lambda_{\xi_\kappa} \bigr) = \beta_\kappa^\inv \bigl( \A_\na\Lambda_{\alpha_\kappa} \bigr)
$$
which must be a strict containment, and by iteration we have an infinite sequence of strict containments
$$\A_\na\Lambda_{\alpha_\kappa} \sqsupset \beta_\kappa^\inv \bigl( \A_\na\Lambda_{\alpha_\kappa} \bigr) \sqsupset \beta_\kappa^{-1} \bigl(\A_\na\Lambda_{\alpha_\kappa}  \bigr) \sqsupset \cdots
$$
But this contradicts Proposition~3.2 of \SubgroupsOne, which says any decreasing sequence of vertex groups systems is eventually constant.

Restoring the ``$i$'' subscript, and so $a=a_i$ and $c=c_i$, now define $a_{i+1} = x = c_i^m a_i^n$, so $\xi_\kappa = \pi_\kappa(a_{i+1})$ and $\Lambda^\pm_{\alpha_{i+1,\kappa}} = \Lambda^\pm_{\xi_{\kappa}}$. This completes the inductive construction of $a_{i+1}$ and its $\M_0$-consistent assignment of lamination pairs $\bigl(\Lambda^\pm_{\alpha_{i+1,\kappa}}\bigr)$. We have a containment $\A_\na\Lambda^\pm_{\alpha_{i,\kappa}} \sqsupset \A_\na\Lambda^\pm_{i+1,\alpha_\kappa}$ which is strict if $\A_\na\Lambda^\pm_{\alpha_{i+1,\kappa}}$ is not $\Gamma$-invariant. 

Consider the whole sequence of containments 
$$\A_\na\Lambda^\pm_{\alpha_{1,\kappa}} \sqsupset \A_\na\Lambda^\pm_{\alpha_{2,\kappa}} \sqsupset \A_\na\Lambda^\pm_{\alpha_{3,\kappa}} \sqsupset \cdots 
$$
Another application of Proposition~3.2 of \SubgroupsOne\ shows this sequence to be eventually constant, and again applying $(*)$ it follows that $\A_\na\Lambda^\pm_{\alpha_{i,\kappa}}$ is eventually $\Gamma$-invariant. It follows that there exists $I \ge 0$ independent of $\kappa$ such that for $i \ge I$ each $\A_\na\Lambda^\pm_{\alpha_{i,\kappa}}$ is $\Gamma$-invariant and is independent of~$i$.

\medskip

We now complete the proof of Proposition~\ref{multiple filling}. 

Conclusion~\pref{no mixed cases} of Proposition~\ref{multiple filling} follows by observing that if $t_2 > t_1$ then, by Lemma~\ref{invariant stabilizer} combined with $\Gamma$-invariance of $\A_\na\Lambda^\pm_{\alpha_{i,\kappa}}$ for each $i \ge I$ and each \hbox{$t_1 < \kappa \le t_2$,} the entire group $\Gamma$ is geometric above $\A$, from which it follows that $t_1=0$. 

Conclusion \pref{good for all kappa} will be proved in two cases. 

\textbf{Case 1: $t_2 > t_1=0$.} As just shown, in this case $\Gamma$ is geometric above~$\A$ and $\A_\na\Lambda^\pm_{\alpha_{i,\kappa}}$ is $\Gamma$-invariant for all $i \ge I$ and all $1 \le \kappa \le t_2$. Applying Lemma~\ref{invariant stabilizer}, it then follows that $\alpha_{i,\kappa}$ is irreducible rel $\A$ and $\Lambda^\pm_{\alpha_{i,\kappa}}$ has $\Gamma$-invariant free factor support and so is filling by property~\prefH{item:fills}. This proves \pref{good for all kappa} and so completes the proof of Proposition~\ref{multiple filling} in Case~1.
  
\textbf{Case 2:} $t_2=t_1$. In this case for each $1 \le \kappa \le t_2$ we know that $\Lambda^\pm_{\alpha_{i,\kappa}}$ is non-geometric for all $i$, and hence $\A_\na\Lambda^\pm_{\alpha_{i,\kappa}}$ is a proper free factor system.  Combining this with the fact that $\A \sqsubset \A_\na\Lambda^\pm_{\alpha_{i,\kappa}}$, that $\Gamma$ is irreducible rel $\A$, and that $\A_\na\Lambda^\pm_{\alpha_{i,\kappa}}$ is $\Gamma$-invariant for $i \ge I$, it follows that $\A_\na\Lambda^\pm_{\alpha_{i,\kappa}} = \A$ for $i \ge I$.  

Considering now each $1 \le \kappa \le t_2$ separately, after imposing stricter threshold constants we wish to prove by induction on $j \ge I$ that:
\begin{itemize}
\item[$(\#)$] We have containments $\displaystyle \F_\supp(\Lambda^\pm_{\alpha_{I,\kappa}}) \sqsubset \F_\supp(\Lambda^\pm_{\alpha_{I+1,\kappa}}) \sqsubset \cdots \sqsubset \F_\supp(\Lambda^\pm_{\alpha_{j,\kappa}})$
\item[$(\#\!\#)$] For $I \le i < j$ the containment $\F_\supp(\Lambda^\pm_{\alpha_{i,\kappa}}) \sqsubset \F_\supp(\Lambda^\pm_{\alpha_{i+1,\kappa}})$ is proper if and only if $\F_\supp(\Lambda^\pm_{\alpha_{i,\kappa}})$ is not $\Gamma$-invariant. 
\end{itemize}
Assuming that this has been proved for a particular $j$, and returning temporarily to the earlier notation where the subscript $j$ has been dropped, we shall apply the ``Inductive Step of Proposition 2.4'' from \SubgroupsFour, the hypotheses of which are already known to be true: the pair $\Lambda^\pm_{\alpha_{\kappa}}$ is nongeometric and its nonattracting subgroup system equals $\A$; by Lemma~\ref{simultaneous conjugation}~\pref{conjugator2} we have $\{\Lambda^-_{\alpha_\kappa},\Lambda^+_{\alpha_\kappa}\} \intersect \{\Lambda^-_{\gamma_\kappa},\Lambda^+_{\gamma_\kappa}\} = \{\Lambda^-_{\alpha_\kappa},\Lambda^+_{\alpha_\kappa}\} \intersect  \{\beta_\kappa(\Lambda^-_{\alpha_\kappa}), \beta_\kappa(\Lambda^+_{\alpha_\kappa})\}  = \emptyset$; and there are generic leaves of $\Lambda^\pm_{\alpha_\kappa}$ that are fixed by $\alpha_\kappa$ with fixed orientations. From the conclusions of the ``Inductive Step of Proposition 2.4'' \SubgroupsFour, we obtain a threshold constant $M^2_\kappa$ such that if $m,n \ge M^2_\kappa$ then $\F_\supp(\Lambda_{\alpha_\kappa}) \sqsubset \F_\supp(\Lambda_{\xi_\kappa})$ with proper inclusion if and only if $\F_\supp(\Lambda_{\alpha_\kappa})$ is not $\Gamma$-invariant. 

Letting $M^2 = \max_{1 \le \kappa \le t_2} M^2_\kappa$, and requiring that $m,n \ge M^2$, for all $1 \le \kappa \le t_2$ we have completed the inductive verification of $(\#)$ and $(\#\!\#)$ for all~$j \ge I$.

Since the length of a proper chain of free factor systems of $F_r$ is uniformly bounded, there exists $j \ge I$ such that $(\#)$ and $(\#\!\#)$ are satisfied and such that the free factor support of $\Lambda^\pm_{\alpha_{j,\kappa}}$ is $\Gamma$-invariant, and so by \prefH{item:fills} the lamination pair $\Lambda^\pm_{\alpha_{j,\kappa}}$ fills~$F_r$. Since $\A_\na\Lambda^\pm_{\alpha_{j,\kappa}}$, it follows that $\alpha_{j,\kappa}$ is irreducible rel~$\A$. This being true for each $1 \le \kappa \le t_2$, we have proved conclusion~(1) of Proposition~\ref{multiple filling} in Case~2, and so the proof of the proposition is complete.
\end{proof}

\paragraph{Remark: On avoiding the Global WWPD Theorem.} For purposes of proving the $H^2_b$ alternative of a general finitely generated subgroup $G \subgroup \Out(F_n)$, our methods produce an appropriate hyperbolic action of some finite index normal subgroup of $G$ to which the Global WWPD Theorem can be applied. However, both the statement and proof of the Global WWPD Theorem \cite{HandelMosher:WWPD} are quite intricate, and one might hope to avoid that theorem by producing an appropriate hyperbolic action of $G$ itself. Although this seems out of reach in general, in the case that $G \subgroup \IAThree$ it is sometimes possible to avoid the Global WWPD Theorem and instead to apply the following much less intricate theorem:

\begin{theorem}[\protect{\cite[Theorem 2.10]{HandelMosher:WWPD}}]
If $\Gamma \act X$ is a nonelementary group action on a hyperbolic space, and if this action has a WWPD element, then $H^2_b(G;\reals)$ contains a copy of $\ell^1$. 
\end{theorem}

For example, if $G \subgroup \IAThree$ is an infinite lamination subgroup with virtually abelian restrictions then Theorem~B guarantees that the action $G \act \FS(F_n)$ satisfies the hypotheses of the previous theorem, and hence $H^2_b(G;\reals)$ contains a copy of $\ell^1$. On the other hand, when $G \subgroup \IAThree$ is a finite lamination subgroup with virtually abelian restrictions, Theorem~C still generally requires one to pass to a further finite index normal subgroup of $G$, in which case we know no other option than to fall back on the Global WWPD Theorem.

\section{Well functions and weak tiling functions}
\label{SectionWells}
In this section and the next we are concerned with the proof of the WWPD Construction Theorem. For this section we assume that $n \ge 3$ and we shall pursue a further study of the ``well functions'' that were defined in \cite[Section~4.4]{\FSLoxTag} in the setting of outer automorphisms $\phi$ possessing a filling lamination pair $\Lambda^\pm_\phi$, and that generalize the well functions defined in \cite{AlgomKfir:Contracting} in the setting of fully irreducible outer automorphisms. 

The idea behind a well function is that for any conjugacy class $c$ that is weakly attracted to $\Lambda^\pm_\phi$ by iteration of $\phi^{\pm 1}$, there is an iterate $\phi^{w(c)}$ with coarsely well-defined exponent $w(c)$ such that the conjugacy class $\phi^{w(c)}$ is not a good approximation of either $\Lambda^-_\phi$ or $\Lambda^+_\phi$. This contrasts with being outside of the well: for exponents $l >\!\!> w(c)$ the class $\phi^{l}(c)$ is a good approximation of $\Lambda^+_\phi$ but a bad approximation of $\Lambda^-_\phi$; whereas for exponents $l <\!\!< w(c)$ the class $\phi^l(c)$ is a good approximation of $\Lambda^-_\phi$ but a bad approximation of $\Lambda^+_\phi$. The vicinity of the integer $w(c)$ within the whole of the integers $\Z$ therefore defines a ``well'' in which approximations to $\Lambda^+_\phi$ and $\Lambda^-_\phi$ are simultaneously bad.

Our main result in this section is Proposition~\ref{PropKTiles}, which allows one to coarsely evaluate the well function $w(c)$ using a \ct\ representative $f \from G \to G$ of~$\phi$, by studying how the circuit in $G$ representing $c$ can be ``weakly tiled'' using natural collections of paths in~$G$ (see Definition~\ref{DefWeakTilings}). As an application we prove Corollaries~\ref{tilings from tiles} and~\ref{bounded intersection} which describe strong combinatorial regularity properties of weak tilings. These regularity properties of weak tilings are what we shall use to verify WWPD in Section~\ref{SectionTheoremEProof}, in lieu of measure theoretic regularity properties of currents used to verify WPD in the arguments of \BeFuji\ and \BeFeighn.

\subsection{Well functions on intermediate conjugacy classes}
\label{SectionConjClassWell}
We review well functions and their properties from \cite[Lemma 4.14]{\FSLoxTag}, although see Remark~\ref{RemarkWellFunctions} to understand some differences of formulation.

\begin{definition}
\label{DefIntermediate}
Given a free factor system~$\F$ and a conjugacy class $c$ of $F_n$, we say that $c$ is \emph{filling relative to $\F$} if there is no proper free factor system that carries both $c$ and $\F$. If $c$ is not carried by $\F$ nor filling relative to $\F$ then $c$ is said to be \emph{intermediate} (relative to~$\F$). 
\end{definition}
We note some properties relating this definition to an outer automorphism $\phi \in \Out(F_n)$ that preserves~$\F$, the first of which is obvious:
\begin{itemize}
\item A conjugacy class $c$ is intermediate rel~$\F$ if and only if $\phi(c)$ is intermediate rel~$\F$. 
\item If $\phi$ is fully irreducible rel~$\F$ and if $\Lambda^\pm$ is a lamination pair for $\phi$ that is not carried by~$\F$ and hence fills rel~$\F$, then a conjugacy class $c$ is intermediate rel~$\F$ if and only if $c$ is weakly attracted to $\Lambda^\pm$ and is not filling relative to $\F$. 
\end{itemize}
To see why the second item is true, recall Theorem F of \SubgroupsThree\ which says that $c$ is weakly attracted to $\Lambda^+$ under iteration by $\phi$ if and only if $c$ is weakly attracted to $\Lambda^-$ under iteration by $\phi^{-1}$ if and only if $c$ is not carried by the nonattracting subgroup system $\A_\na(\Lambda^\pm)$. If $\Lambda^\pm$ is non-geometric then $\F = \A_\na(\Lambda^\pm)$ (see Definition 1.2 ``Remark: The case of a top stratum'' in \SubgroupsThree) and so ``not carried by~$\F$'' is equivalent to ``weakly attracted to $\Lambda^\pm$''. If $\Lambda^\pm$ is  geometric then there are conjugacy classes that are not carried by $\F$ and are not  weakly attracted to  $\Lambda^\pm$ but each of these fills relative to $\F$; each such conjugacy class is represented by a multiple of the peripheral curve in a geometric model (\SubgroupsOne, Proposition 2.18).

For most of this section we focus on the following objects whose notations we fix:

\medskip\noindent\textbf{Notation A:}

\noindent$\bullet$ \, $\F$ is a proper free factor system in $F_n$, possibly trivial. 

\noindent$\bullet$ \, $\phi,\phi^\inv \in \Out(F_n)$ are rotationless, and they fix~$\F$ and are irreducible relative to~$\F$.

\noindent
$\bullet$ \, $\Lambda^\pm \in \L^\pm(\phi)$ is a lamination pair that fills~$F_n$. 

\smallskip
\noindent
With this notation we also have the following property:

\smallskip
\noindent
$\bullet$ \, $\F$ has co-edge number~$\ge 2$ (Fact~\ref{FactANA}~\pref{FactAnaCoEdgeTwo}).

\begin{definition}
Consider the space of lines $\B$ with its weak topology. For any attracting neighborhood $U \subset \B$ of a generic leaf of $\Lambda^+$, and any conjugacy class $c$ that is weakly attracted to $\Lambda^+$ under iteration by $\phi$, there exists a maximal integer $w_{\phi,U}(c)$ such that $c \in \phi^{w_{\phi,U}(c)}(U)$ or equivalently $\phi^{-w_{\phi,U}(c)}(c) \in U$. We refer to $w_{\phi,U}$ as the \emph{well function of $\phi$ with respect to $U$.} It is immediate from the definition  that 
$$w_{\phi,U}(\phi^m(c)) = w_{\phi,U}(c) + m 
$$
for all $m$. 
\end{definition}

The two key properties of well functions are as follows:
\begin{description}
\item[Coarse well-definedness of wells:] For any two $U,V \subset \B$ attracting neighborhoods of a generic leaf of $\Lambda^+$ there exists $K \ge 0$ so that $\phi_\#^K(V) \subset U$ and $\phi^K_\#(U)  \subset V$. It follows that 
$$\abs{w_{\phi,U}(c) - w_{\phi,V}(c)} \le K
$$
for all $c$ that are weakly attracted to $\Lambda^\pm$.  In other words the function $w_{\phi,U}(\cdot)$ is coarsely well-defined independent of $U$. 
\item[Coarse additive inverse property of wells:] For any attracting neighborhoods $U^\pm$ of generic leaves of $\Lambda^\pm$, respectively, there exists $L \ge 0$ so that for all intermediate conjugacy classes $c$ we have 
$$\abs{w_{\phi,U^+}(c) + w_{\phi^{-1},U^-}(c)} \le L
$$  
In other words, the well functions $w_{\phi,U^+}(\cdot)$ and $w_{\phi^\inv,U^-}(\cdot)$ are coarse additive inverses of each other.
\end{description}
The coarse well-definedness property is obvious. The coarse additive inverse property is proved in \cite[Lemma 4.14(1)]{\FSLoxTag} (which is in turn based on Proposition 3.1 of \SubgroupsFour) for a particular choice of $U^\pm$, and so it holds for all choices. Note that coarse well-definedness does not require the hypothesis of ``intermediate''; that hypothesis need only be brought in for results, like the coarse additive inverse property, that ultimately depend on Proposition 3.1 of \SubgroupsFour.

\smallskip

\emph{Henceforth,} when attracting neighborhoods of generic leaves of $\Lambda^+$ and of $\Lambda^-$ are chosen, implicitly or explicitly, we write the corresponding well functions of $\phi$~and~$\phi^\inv$ as $w_\phi(\cdot)$ and $w_{\phi^\inv}(\cdot)$, suppressing the dependence on the choice of attracting neighborhoods.

\smallskip

\begin{remark}\label{RemarkWellFunctions}
Our presentation here of well functions differs from the presentation in \FSLox\ version 1 in a few regards. One difference is that well functions were defined there only for particular choices of $U^-,U^+$; the coarse well-definedness property lets us use any choices. Another difference is that $w_\phi$ as defined here equals the additive inverse of $w_{\phi^\inv}$ as it was defined in Lemma 4.14 of \FSLox\ version 1; by the coarse additive inverse property, this changes the definition of $w_\phi$ by a constant depending only on the choice of $U^-,U^+$.
\end{remark}

\subsection{Weak tiling functions on intermediate conjugacy classes} 

Following up on Notation~A, for much of this section we shall also focus on these additional objects whose notations we also fix:

\medskip\noindent\textbf{Notation B:} 

\noindent$\bullet$ \, $f \from G \to G$ is a \ct\ representative of $\phi$ with core filtration element $G_r$ representing the free factor system~$\F$. 

\noindent$\bullet$ \, The attracting lamination $\Lambda^+$ corresponds to the top stratum $H_s$ of~$G$, an \eg\ stratum.

\noindent$\bullet$ \, $H^z_s$ is the union of $H_s$ with the zero strata, if any, that it envelops, those strata being the contractible components of $G_{s-1}$ \cite[Definition 2.18]{\recognitionTag}.

\noindent$\bullet$ \, $G_u = G - H^z_s$ is the maximal filtration element that deformation retracts to~$G_r$.

\noindent$\bullet$ \, $\rho_s$ is an indivisible Nielsen path of height $s$ \emph{if one exists} (and see items~\pref{ItemCTiNP} and~\pref{ItemCTGeomNielsen} under the heading ``\eg\ properties of \cts'' in Section~\ref{SectionBasicNotions}).

\smallskip\noindent
With this notation we have the following property:

\smallskip\noindent
$\bullet$ \, If $\rho_s$ exists and is closed then the conjugacy class $c$ represented by $\rho_s$ fills relative to~$\F$ (\SubgroupsOne\ Lemma 2.5) and so $c$ is not intermediate.

\medskip

Our immediate goal, formulated in Proposition~\ref{PropKTiles}, is to give quantitative bounds on well functions, expressed in terms of the \ct\ chosen in Notation~B, and more specifically in terms of tiles as defined and studied in \cite[Section 3]{\BookOneTag}. 

Earlier we recalled that a \emph{$k$-tile} of the stratum $H_s$ is a path of the form $f^k_\#(E)$ where $E$ is an edge of $H_s$ and $k \ge 0$. Tiles satisfy a \emph{self-similarity property} saying that for any integers $l > k > 0$, if the difference $l-k$ is sufficiently large then every $l$-tile contains every $k$-tile as a subpath \cite[Lemma 3.1.8 (3)]{\BookOneTag}. 

Given a path or circuit $\sigma$ in $G$, a \emph{$k$-tiling} of $\sigma$ is a splitting of $\sigma$ each term of which is either a $k$-tile or a subpath of $G_{s-1}$ with endpoints on $H_s$. Every generic leaf $\gamma^+$ of $\Lambda^+$ has a $k$-tiling for all $k \ge 1$ \cite[Lemma 3.1.10 (3)]{\BookOneTag}. 

Given a line $\gamma$ in $G$, an \emph{exhaustion by tiles} is an increasing family of finite subpaths $\gamma_1 \subset \gamma_2 \subset \cdots$ whose union is all of $\gamma$ such that each $\gamma_i$ is a $k_i$-tile for some sequence $k_1 < k_2 < \cdots$. Every generic leaf $\gamma^+$ of $\Lambda^+$ has an exhaustion by tiles \cite[Lemma 3.1.10 (4)]{\BookOneTag}. 

\smallskip

\textbf{Remark:} Given a generic leaf $\gamma^+$ with an exhaustion by tiles $\gamma_1 \subset \gamma_2 \subset \cdots$, those tiles define a neighborhood basis $V_1 \supset V_2 \supset \cdots$ of $\gamma^+$ where $V_k \subset \B$ is the set of lines having $\gamma_k$ as a subpath. Once an attracting neighborhood $U^+ \subset \B$ of $\ell$ is chosen, the sequence $\phi^i(U^+)$ also forms neighborhood basis of~$\ell$. It follows that for any sequence of conjugacy classes $c_i$, the sequence of values of a well function $w_\phi(c_i)$ correlates with the value of the maximum integer $k_i \ge 0$ such that $c_i$ contains a $k_i$ tile. Namely, $w_\phi(c_i) \to +\infinity$ if and only if $k_i \to +\infinity$. This gives the first hint to the quasi-comparability of well functions and of the ``weak tiling functions'' to be introduced in Definition~\ref{DefWeakTilingFunctions} below.

%
%
%

We will need a less restrictive kind of tiling, allowing terms that are Nielsen paths of height~$s$.

\begin{definition}[Weak tilings]
\label{DefWeakTilings}
A \emph{weak $k$-tiling} of a path or circuit $\sigma\subset G$ is a splitting of $\sigma$ each term of which is either a $k$-tile, or an \iNp\ of height $s$, or a maximal subpath of $\sigma$ in~$G_{s-1}$. Weak $k$-tilings are most useful when $k$ is large in the following sense: the \emph{weak tiling threshold constant} $k_0 \ge 0$ is defined to be the minimum integer such that for all $k \ge k_0$ the endpoints of each $k$-tile are periodic.
\end{definition}

\noindent
\textbf{Examples of weak tilings:} The following lemma gives general methods for constructing weak tilings. Lemma~\ref{stabilizes} gives a another method, with tighter quantitative control, for producing circuits with weak $0$-tilings; that lemma plays a key role in the proof of Proposition~\ref{PropKTiles}.

\begin{lemma} \label{pullback} Following Notations A~and~B, we have:
\begin{enumerate}   
\item \label{ItemTilingExistence}
For each circuit or finite path $\sigma \subset G$ with endpoints, if any, at vertices there exists $m_\sigma \ge 0$ such that $f^{m_\sigma}_\#(\sigma)$ has a weak $0$-tiling.
\item \label{ItemTilingPushforward}
For all $k\ge 0$, if $\sigma = \mu_1 \cdot \ldots \cdot \mu_I$ is a weak $k$-tiling then $f_\#(\sigma) = f_\#(\mu_1) \cdot \ldots \cdot f_\#(\mu_I)$ is a weak $(k+1)$-tiling.
\item \label{ItemTilingPullback}
Suppose that  $f_\#(\sigma) =\nu_1 \cdot \ldots \cdot \nu_I$ is a weak $(k+1)$-tiling for some $k \ge k_0$ and that any endpoint of $f_\#(\sigma)$ that is contained in $G_{s-1}$ is fixed by $f$ (and hence is contained in $G_u$). Then there is weak $k$-tiling $\sigma = \mu_1 \cdot \ldots \cdot \mu_I$ such that $f_\#(\mu_i) = \nu_i$ for all $i$.
\end{enumerate}
\end{lemma}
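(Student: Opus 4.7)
My plan addresses the three items in increasing order of difficulty: (1) and (2) follow from direct applications of the CT splitting machinery reviewed in Section~\ref{SectionBasicNotions}, while (3) requires a lift-and-straighten argument to pull back splitting points from $f_\#(\sigma)$ to $\sigma$. For part (1), given $\sigma$ with endpoints at vertices, I would invoke item~(4) of the ``\eg\ properties of \cts'' (the consequence of \cite[Fact 1.35]{\SubgroupsOneTag}) to obtain an integer $m_\sigma$ such that $f^{m_\sigma}_\#(\sigma)$ splits with terms in $\{\text{edges of } H_s\} \cup \{\text{height-}s\text{ iNps}\} \cup \{\text{subpaths of } G_{s-1} \text{ with endpoints in } H_s\}$, handling the case $\sigma \subset G_{s-1}$ trivially with $m_\sigma = 0$; merging any pair of consecutive $G_{s-1}$-subpath terms (an operation that preserves the splitting property) then produces a weak $0$-tiling of $f^{m_\sigma}_\#(\sigma)$.

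For part (2), because the decomposition $\sigma = \mu_1 \cdot \ldots \cdot \mu_I$ is a splitting, the equation $f_\#(\sigma) = f_\#(\mu_1) \cdots f_\#(\mu_I)$ holds as a concatenation with no cancellation, and it is again a splitting because splittings of $\sigma$ propagate through all iterates of $f_\#$. I would then identify each pushed-forward term: a $k$-tile $f^k_\#(E)$ becomes the $(k+1)$-tile $f^{k+1}_\#(E)$ by definition; the iNp $\rho_s$ satisfies $f_\#(\rho_s) = \rho_s$; and a maximal $G_{s-1}$-subpath term maps into $G_{s-1}$ by $f$-invariance of the filtration, remaining maximal since its neighboring $(k+1)$-tile or iNp terms continue to cross $H_s$.

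For part (3), the strategy is first to verify that every splitting point of the weak $(k+1)$-tiling $f_\#(\sigma) = \nu_1 \cdots \nu_I$ is a vertex fixed by $f$: endpoints of $(k+1)$-tiles are periodic (since $k+1 > k_0$), hence fixed by the rotationless $f$; endpoints of $\rho_s$ are fixed by item~(2) of the \eg\ properties of \cts; and endpoints of maximal $G_{s-1}$-subpath terms are either vertices on $H_s$ inherited from adjacent fixed tile or iNp endpoints, or endpoints of $f_\#(\sigma)$ itself, fixed by the added hypothesis. Next, passing to the universal cover with lifts $\tilde f, \tilde\sigma$, and $\tilde f_\#(\tilde\sigma) = \tilde\nu_1 \cdots \tilde\nu_I$, for each splitting vertex $\tilde v_i$ I would trace back through the non-cancelled portion of the straightening of $\tilde f(\tilde\sigma)$ to locate a preimage vertex $\tilde w_i$ of $\tilde\sigma$ with $\tilde f(\tilde w_i) = \tilde v_i$; this yields the decomposition $\sigma = \mu_1 \cdot \ldots \cdot \mu_I$ with $f_\#(\mu_i) = \nu_i$, which is automatically a splitting. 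A term-by-term check then identifies the types of the $\mu_i$: the $k$-tile $f^k_\#(E)$ for each $(k+1)$-tile $\nu_i = f^{k+1}_\#(E)$; the iNp $\rho_s$ itself; and a subpath in $G_{s-1}$ for each maximal $G_{s-1}$-subpath $\nu_i$, using (RTT-(i)) to rule out any edges of $H_s$ in $\mu_i$ (since such an edge would persist in $f_\#(\mu_i) = \nu_i$).

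The main obstacle will be the universal-cover step of (3): verifying that non-cancellation at each splitting vertex $\tilde v_i$, combined with its fixity by the chosen lift of $f$, forces the preimage point to lie at a vertex of $\tilde\sigma$ rather than in the interior of an edge. This should follow from the fact that fixed vertices of $f$ lie in $G_u$ (as noted in the parenthetical of the hypothesis), together with the combinatorial structure of tile and iNp endpoints, but must be handled with care to ensure that the resulting decomposition $\sigma = \mu_1 \cdot \ldots \cdot \mu_I$ consists of subpaths with vertex endpoints as required by the weak tiling definition.
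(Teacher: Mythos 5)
Your treatment of items \pref{ItemTilingExistence} and \pref{ItemTilingPushforward} is essentially the paper's: the paper proves \pref{ItemTilingExistence} by citing \cite[Lemma~4.2.6]{\BookOneTag} (the source of the splitting property you invoke), and \pref{ItemTilingPushforward} by exactly your term-by-term observation. The problem is item \pref{ItemTilingPullback}. Your plan is to subdivide $\sigma$ at preimages of the splitting vertices of $f_\#(\sigma)$, traced back through the straightening in the universal cover, and the difficulty you flag at the end is a genuine gap rather than a point to be ``handled with care'': the fact that a splitting vertex $v$ of $f_\#(\sigma)$ is fixed by $f$ gives no control over where its preimages along $\sigma$ sit. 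An $H_s$-edge of $\sigma$ maps over a long edge path and can cross $v$ through its interior, so the traced-back point can perfectly well be interior to an edge; moreover $v$ may be hit several times by $f\circ\sigma$, so you must also show that a preimage can be chosen so that the two resulting pieces tighten exactly to the two sides of the splitting (no cancellation across the chosen point) and that this particular point is a vertex. None of this follows from fixedness of $v$ together with ``fixed vertices lie in $G_u$.'' There is a secondary gap in the same item: your identification of the $G_{s-1}$-type terms rests on the claim that an $H_s$-edge of $\mu_i$ ``would persist in $f_\#(\mu_i)=\nu_i$'' by (RTT-(i)); but (RTT-(i)) only controls initial directions of $H_s$-edges and does not prevent all $H_s$-edges of a non-$s$-legal path from cancelling when its image is tightened, so this step is unjustified as written.

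The paper sidesteps the subdivision problem entirely by constructing each $\mu_i$ directly from the type of $\nu_i$ instead of locating points on $\sigma$: if $\nu_i$ is the height-$s$ indivisible Nielsen path, take $\mu_i=\nu_i$; if $\nu_i$ is a $(k+1)$-tile $f^{k+1}_\#(E)$, take the $k$-tile $f^k_\#(E)$, whose endpoints are periodic because $k\ge k_0$ and hence are pinned down since $f$ is rotationless; if $\nu_i$ is a maximal subpath in $G_{s-1}$, its endpoints are fixed (they are endpoints of $f_\#(\sigma)$, fixed by hypothesis, or principal vertices), and since zero strata contain no fixed points one gets $\nu_i\subset G_u$ and a pullback path $\mu_i\subset G_u$ with the same endpoints satisfying $f_\#(\mu_i)=\nu_i$. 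One then verifies that $\mu_1\cdot\ldots\cdot\mu_I$ is a splitting with adjacent terms not both in $G_{s-1}$ by a direction argument: the junction directions of tile and Nielsen-path terms map into $H_s$, those of $G_u$-terms stay in $G_u$, so the absence of cancellation and the non-adjacency of $G_{s-1}$-terms are inherited from the $\nu$-splitting. Any repair of your approach would essentially have to reprove these facts, so for \pref{ItemTilingPullback} you should adopt this direct construction.
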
   

\begin{proof} Item \pref{ItemTilingExistence} follows from \cite[Lemma~4.2.6]{\BookOneTag}. Item \pref{ItemTilingPushforward} follows from the fact that if $\mu_i$ is a path in $G_{s-1}$, an \iNp\ of height $s$, or a $k$-tile, then $f_\#(\mu_i)$ is a path in $G_{s-1}$, an \iNp\ of height $s$ or a $(k+1)$-tile respectively. 

For \pref{ItemTilingPullback} we define $\mu_i$ given $\nu_i$ as follows. If $\nu_i$ is an \iNp\ of height~$s$, let $\mu_i = \nu_i$. Suppose next that $\nu_i$ is a maximal subpath of~$G_{s-1}$. An endpoint of $\nu_i$ is either an endpoint of $f_\#(\sigma)$ or is contained in $H_s$. In the former case it is fixed by hypothesis and in the latter case it is a principal vertex by \cite[Remark~ 4.9]{\recognitionTag} and hence fixed because $f$ is rotationless. Since the components of $G_{s-1} - G_u$ are zero strata, and since zero strata contain no fixed points, it follows that $\nu_i \subset G_u$ and that there is a path $\mu_i\subset G_u$ with the same endpoints as $\nu_i$ and satisfying $f_\#(\mu_i) = \nu_i$. Suppose finally that $\nu_i$ is a $(k+1)$-tile with endpoints say $y_1$ and $y_2$. For $i=1,2$, let $x_i$ be the unique vertex in the $f$-orbit of $y_i$ such that $f(x_i) = y_i$. There exists a $k$-tile $\mu_i$ such that $f_\#(\mu_i) = \nu_i$. Since $k \ge k_0$, the endpoints of $\mu_i$ are periodic and map to $y_1$ and $y_2$ and so must be $x_1$ and $x_2$. To complete the proof we need only verify that $\mu_1 \cdot \ldots \cdot \mu_I$ is a splitting and that adjacent terms are not both contained in $G_{s-1}$. If $\mu_i$ is an \iNp\ of height $s$ or a $k$-tile then $f$ maps the initial and terminal directions of $\mu_i$ to the initial and terminal directions of $f_\#(\mu_i)$, all of which are contained in $H_s$. If $\mu_i \subset G_u$ then the initial and terminal directions of $f_\#(\mu_i)$ are contained in $G_u$. Since $\nu_1 \cdot \ldots \cdot \nu_m$ is a splitting and since adjacent terms are not both contained in $G_u$, the same is true for $\mu_1 \cdot \ldots \cdot \mu_I$.
 \end{proof}

\begin{definition}[Weak tiling functions]
\label{DefWeakTilingFunctions}   
Following notations A and~B, we can now define the integer valued \emph{weak tiling function} $\tau_f$ on the set of intermediate conjugacy classes. Consider an intermediate conjugacy class $c$ that is represented by a circuit $\sigma \subset G$. If $c$ is such that $\sigma$ has a weak $k_0$-tiling, then choose the maximal $k \ge k_0$ such that $\sigma$ has a weak $k$-tiling and define $\tau_f(c) = k-k_0$. By Lemma~\ref{pullback} we can extend this to arbitrary $c$, by defining $\tau_f(c)$ to be the maximal integer such that $f_\#^{-\tau_f(c)}(\sigma)$ has a weak $k_0$-tiling.
\end{definition}

\subsection{Coarse equivalence of well functions and \hfill\break weak tiling functions.}
\label{SectionWellAndTilingFunctions}
The following proposition is the main result of this section.  It states that the weak tiling function determined by $\fG$ is coarsely equivalent to any well function associated to $\Lambda^+$. As corollaries we will state some strong regularity properties of weak $k$-tilings (see Corollaries~\ref{tilings from tiles} and \ref{bounded intersection}).
   
\begin{prop} \label{PropKTiles} Following notations A and~B, let $w_\phi=w_{\phi,U}$ be a well function for $\Lambda^+$ defined with respect to some attracting neighborhood $U$ of a generic leaf of $\Lambda^+$, let $k_0$ be the weak tiling threshold for $f \from G \to G$ (Definition~\ref{DefWeakTilings}), and let $\tau_f$ be the associated weak tiling function (Definition~\ref{DefWeakTilingFunctions}). Then there is a constant $N$ so that for all intermediate $c$ we have
$$\abs{w_\phi(c) - \tau_f(c)} \le N
$$ 
\end{prop}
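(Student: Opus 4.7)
The plan is to exploit that both $w_\phi$ and $\tau_f$ are integer-valued on intermediate conjugacy classes and that each shifts by $1$ under $\phi$: $w_\phi(\phi(c)) = w_\phi(c) + 1$ is immediate, and $\tau_f(\phi(c)) = \tau_f(c) + 1$ follows from Lemma~\ref{pullback} parts~(2),~(3) applied to circuits, for which the endpoint hypothesis in~(3) is vacuous. Since $w_\phi - \tau_f$ is therefore $\phi$-invariant, it suffices to bound $\abs{w_\phi(c) - \tau_f(c)}$ uniformly in $c$. I will use coarse well-definedness of $w_\phi$ to choose the attracting neighborhood $U$ to suit tile geometry: fix an edge $E_0$ of $H_s$ and an integer $k_1 \ge k_0$ large enough that the $k_1$-tile $\alpha := f^{k_1}_\#(E_0)$ is a subpath of a generic leaf of $\Lambda^+$, so that $U := \{\ell \in \B : \ell \text{ contains } \alpha\}$ is an attracting neighborhood of that leaf, and so that by the self-similarity property \cite[Lemma 3.1.8(3)]{\BookOneTag} every $j$-tile with $j \ge k_1 + C$ contains $\alpha$ for some constant $C$.

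For the inequality $\tau_f(c) \le w_\phi(c) + N_1$, I would suppose $\sigma$ has a weak $k$-tiling with $k$ above some uniform threshold absorbed into $N_1$. The first step is to show the tiling must contain at least one $k$-tile term: otherwise every term is an \iNp\ of height $s$ or a maximal $G_{s-1}$-subpath, and a short case analysis (using that an \iNp\ has exactly one $s$-illegal turn and that adjacent weak-tiling terms cannot both lie in $G_{s-1}$) bounds the $H_s$-edge count of every $s$-legal subpath of $\sigma$ by a uniform constant $B$; this structure is preserved under $f_\#$-iteration since \iNp s are $f$-fixed and $f$ preserves $G_{s-1}$, but weak attraction of the intermediate class $c$ to $\Lambda^+$ would eventually plant in $f^i_\#(\sigma)$ a $k'$-tile of a generic leaf of $\Lambda^+$ whose $H_s$-edge count grows like $\lambda^{k'}$, a contradiction. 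Next I would iterate Lemma~\ref{pullback}(3) to conclude that for each $0 \le w \le k - k_0$ the circuit $\sigma_{-w}$ representing $\phi^{-w}(c)$ has a weak $(k-w)$-tiling, which by the same argument applied to the still-intermediate $\phi^{-w}(c)$ contains a $(k-w)$-tile term. For $k - w \ge k_1 + C$ this tile contains $\alpha$, so $\sigma_{-w} \in U$ and $w_\phi(c) \ge w$; taking $w = k - k_1 - C$ gives $\tau_f(c) = k - k_0 \le w_\phi(c) + (k_1 + C - k_0)$.

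For the reverse inequality $w_\phi(c) \le \tau_f(c) + N_2$, I let $w := w_\phi(c)$ so $\sigma_{-w}$ contains $\alpha$, and I let $m \ge 0$ be the minimum integer such that $\sigma_{-w+m} = f^m_\#(\sigma_{-w})$ admits a weak $0$-tiling, which exists by Lemma~\ref{pullback}(1). Then Lemma~\ref{pullback}(2) promotes this to a weak $k_0$-tiling of $\sigma_{-w+m+k_0}$, and translation equivariance of $\tau_f$ gives $\tau_f(c) \ge w - m - k_0$; the task becomes bounding $m$ uniformly. By Lemma~\ref{stabilizes} it suffices to bound $\lsm(\sigma_{-w})$ uniformly. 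I would use the coarse additive inverse property $\abs{w_\phi(c) + w_{\phi^{-1}}(c)} \le L_0$ and the shift relation for $w_{\phi^{-1}}$ to get $w_{\phi^{-1}}(\phi^{-w}(c)) \in [-L_0, L_0]$. Fixing an attracting neighborhood $U^-$ of a generic leaf of $\Lambda^-$ cut out by a subpath $\alpha^-$ chosen so that (by recurrence of $\Lambda^-$) every $\Lambda^-$-leaf subpath crossing at least $R$ edges of $H_s$ contains $\alpha^-$, if $\lsm(\sigma_{-w})$ were arbitrarily large a $\Lambda^-$-leaf subpath $\tau \subset \sigma_{-w}$ with arbitrarily large $\abs{\tau}_s$ would exist; since $f$ acts on $\Lambda^-$-leaves as the inverse of the $\phi^{-1}$-dynamics on $\Lambda^-$, contracting leaf-parametrization by $\lambda^-$, a train-track estimate tracking the $H_s$-edge count under this contraction would give $\abs{f^{L_0+1}_\#(\tau)}_s \gtrsim \abs{\tau}_s / (\lambda^-)^{L_0+1}$, forcing $\sigma_{-w+L_0+1}$ to contain $\alpha^-$ once $\abs{\tau}_s$ passes a uniform threshold, contradicting $w_{\phi^{-1}}(\phi^{-w}(c)) \le L_0$. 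The main obstacle is precisely this train-track estimate on the survival of $H_s$-edges in a $\Lambda^-$-leaf subpath under $f_\#$-iteration performed in the \ct\ for $\phi$ rather than for $\phi^{-1}$, working around the cancellation at its $s$-illegal turns; the argument should parallel the internal analysis of Lemma~\ref{stabilizes}.
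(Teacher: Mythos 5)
Your upper-bound direction ($\tau_f(c) \le w_\phi(c) + \text{const}$) is essentially the paper's argument: the paper normalizes to $w_\phi(c)=0$ and quotes Corollary~\ref{endpoint condition}~(1) for the existence of a tile term in any weak tiling of an intermediate circuit, while you re-derive that existence from weak attraction of intermediate classes to $\Lambda^+$; that half is fine. The problem is the reverse inequality. You correctly reduce it (via the coarse additive inverse property and Lemma~\ref{stabilizes}) to a uniform bound on $\lsm(\sigma_{-w})$ — the paper's Claim $(*)$ — but at that point you lean on two unproved statements about the generic leaf $\gamma^-$ of $\Lambda^-$ realized in $G$, the \ct\ for $\phi$: (a) a uniform recurrence statement, that every subpath of $\gamma^-$ crossing at least $R$ edges of $H_s$ contains the fixed certifying segment $\alpha^-$, which you attribute to ``recurrence of $\Lambda^-$''; plain birecurrence gives no such uniformity, and uniformity of return measured in $H_s$-edges is precisely what must be proved; and (b) the survival estimate $\abs{f^{L_0+1}_\#(\tau)}_s \gtrsim \abs{\tau}_s/(\lambda^-)^{L_0+1}$ for $\Lambda^-$-leaf subpaths $\tau$. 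You flag (b) as the main obstacle but do not supply it, and the suggested parallel with the internal analysis of Lemma~\ref{stabilizes} does not deliver it: that proof is a qualitative compactness argument (maximal splittings, subsequential weak limits, and the weak-attraction trichotomy), and it yields no quantitative rate of survival of $H_s$-edges under $f_\#$ for segments of $\gamma^-$, which in $G$ is maximally $s$-illegal, so cancellation under $f_\#$ is a priori uncontrolled.

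Both (a) and (b) are true, but their natural proofs pass through the dual \ct\ $g \from G' \to G'$ representing $\phi^{-1}$, and this is exactly how the paper's proof sidesteps your obstacle: it proves the analogous Claim $(*')$ entirely in $G'$, where $\gamma^-$ is $s'$-legal and carries $m$-tilings, so the bound $w_{\phi^{-1}}(c') < W$ (your $L_0$) directly forbids the line $\zeta'$ from containing a suitable tile $\delta$ of $\gamma^-$, while every subpath of $\gamma^-$ with enough $H'_{s'}$-edges contains such a tile — no estimate on $f_\#$-images of leaf segments is needed at all. The resulting $H'_{s'}$-edge bound is then transported to the needed $H_s$-edge bound in $G$ by collapsing $G_{s-1}$ and $G'_{s'-1}$ to obtain trees $T,T'$ and invoking the equivariant quasi-isometries of \cite{GuirardelLevitt:DefSpaces}. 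Your proposal contains no mechanism for comparing $H_s$-counts in $G$ with $H'_{s'}$-counts in $G'$ (or any equivalent device), and without one, (a) and (b) are genuine missing ingredients rather than routine train-track estimates; as written the proof is incomplete. (A smaller point you would also need: $f^{L_0+1}_\#(\tau)$ need not be a subpath of $\sigma_{-w+L_0+1}$; one must pass to $f^{L_0+1}_{\shsh}(\tau)$, at a bounded cost — minor compared with (a) and (b).)
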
 

The proof of the proposition is given after the statement and proof of the following lemma and a corollary thereof. This lemma is a relative version of \cite[Proposition 3.8]{AlgomKfir:Contracting}.

\begin{lemma}  \label{Whitehead} Suppose that the lamination pair $\Lambda^\pm$ is geometric and consider $\rho_s$, the \iNp\ of height~$s$. There exists an integer $M \ge 1$ so that if $\sigma \subset G$ is a circuit representing an intermediate conjugacy class then $\sigma$ does not   contain a   subpath of the form $\rho_s^M$ or $\rho_s^{-M}$.  
\end{lemma}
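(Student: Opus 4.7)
The plan is to argue by contradiction: suppose no such $M$ exists; then, after replacing $\phi$ with $\phi^{-1}$ if necessary (which swaps $\Lambda^+$ and $\Lambda^-$), we obtain intermediate conjugacy classes $c_i$ with cyclically reduced circuit representatives $\sigma_i \subset G$ containing $\rho_s^{M_i}$ as a subpath, where $M_i \to \infty$. Let $\delta \in \pi_1(G,v)$ be the element represented by $\rho_s$ based at its common endpoint $v$ (well defined because $\rho_s$ is closed and $v$ is a principal vertex fixed by $f$).

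The first step is to establish the key preliminary fact that $[\delta]$ fills relative to $\F$. Since $\rho_s$ is a Nielsen path, $\phi$ fixes $[\delta]$, and combined with $\phi$-invariance of $\F$ the free factor support $\F_\supp(\F \cup \{[\delta]\})$ is $\phi$-invariant. Full irreducibility of $\phi$ rel $\F$ forces this support to equal $\F$ or $\{[F_n]\}$; since $\rho_s$ contains an edge of $H_s$, $[\delta]$ is not carried by $\F$, so the support must equal $\{[F_n]\}$. In particular, no proper free factor system containing $\F$ carries $[\delta]$, and no conjugate of $\delta$ lies in any component of such a system.

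Next I would cyclically rotate each $\sigma_i$ so that it begins with a maximal $\rho_s$-run, writing $\sigma_i = \rho_s^{k_i}\tau_i$ cyclically with $k_i \ge M_i$ and $\tau_i$ neither beginning with $\rho_s$ nor ending with $\rho_s^{-1}$ (if $\tau_i$ is empty then $c_i = [\delta^{k_i}]$ contradicts intermediacy). Mimicking the well-covered edge analysis from the proof of Lemma~\ref{stabilizes}, pass to a subsequence so that either (Case A) $|\tau_i|_G$ is bounded and, after further refinement, $\tau_i = \tau$ is constant, giving $c_i = [\delta^{k_i}\gamma]$ for a fixed $\gamma \in F_n$, or (Case B) $|\tau_i|_G \to \infty$. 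In Case A, if $\gamma \in \langle\delta\rangle$ then $c_i$ is once more a power of $[\delta]$; otherwise I apply a Bass--Serre tree argument: letting $\F'_i \sqsupset \F$ be a proper free factor system carrying $c_i$ and $T_i'$ its simplicial Bass--Serre tree (trivial edge stabilizers, unit-length edges), the key fact from the first step guarantees that $\delta$ acts hyperbolically on $T_i'$ with translation length at least $1$. The hypothesis that $c_i$ is carried by $\F'_i$ provides a vertex $u_i \in T_i'$ fixed by $\delta^{k_i}\gamma$ (after conjugating), and the identity $\gamma \cdot u_i = \delta^{-k_i} \cdot u_i$ forces the displacement $d_{T_i'}(u_i, \gamma u_i) \ge k_i \cdot \ell_{T_i'}(\delta) \ge k_i$, contradicting the bounded word length of $\gamma$ once uniformity across varying $\F'_i$ is established. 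In Case B, extract a weak subsequential limit of $\sigma_i$ (recentered appropriately) in $\B$; this limit contains the periodic line $\rho_s^\infty$, so its free factor support together with $\F$ is $\{[F_n]\}$ by the first step, and the closedness of ``carried by a given free factor system'' in the weak topology combined with passage to subsequences of fixed free factor shape forces some $c_i$ to fill relative to $\F$, contradicting intermediacy.

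The hard part will be the uniformization step in Case A: the trees $T_i'$ and the conjugating elements vary with $i$, so the upper bound on $d_{T_i'}(u_i,\gamma u_i)$ is not immediately uniform. I expect to handle this by first passing to a subsequence where the rank-shape of $\F'_i$ stabilizes --- there being only finitely many shapes of proper free factor systems containing $\F$ --- then exploiting malnormality of free factor components together with the conclusion of the preliminary step that no conjugate of $\delta$ lies in any component of $\F'_i$, so that the vertex $u_i$ can be normalized within a uniformly bounded neighborhood of a fundamental domain; a word-length bound on $\gamma$ in a fixed generating set for $F_n$ then yields the required uniform displacement bound.
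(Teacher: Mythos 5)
Your preliminary step — showing that $[\delta]$ fills relative to $\F$ — is correct and matches the paper's first step (the paper cites \cite[Proposition 2.18]{\SubgroupsOneTag} for this, while you argue directly from the Nielsen property, full irreducibility rel $\F$, and rotationlessness; both are fine). But from there your route diverges substantially from the paper's, and the divergence introduces gaps that are not merely technical.

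The paper's proof is a direct, ``monotone'' argument: since $\{c_0\}\cup\F$ fills, Whitehead's theorem produces a marked rose $R$ in which the Whitehead graph of $\{c_0,c_1,\ldots,c_k\}$ is connected with no cut vertex; any circuit $\sigma'$ in $R$ containing $\tau^2$ (the representative of $c_0$) has a Whitehead graph \emph{containing} that one, hence also connected with no cut vertex, hence $\{[\sigma'],c_1,\ldots,c_k\}$ fills; bounded cancellation then fixes $M$. The key point is that the Whitehead condition is preserved under enlargement — no compactness or uniformity over a varying family is needed.

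Your approach argues by contradiction and therefore has to extract uniformity over a sequence of trees $T'_i$ and free factor systems $\F'_i$, and this is where it breaks. In Case A you need a bound on $d_{T'_i}(u_i,\gamma u_i)$ — equivalently (by the inequality $\ell(a^k b)\ge k\ell(a)-\ell(b)$ in trees) a bound on $\ell_{T'_i}(\gamma)$ — but translation lengths of a fixed element $\gamma$ on Bass–Serre trees of \emph{varying} proper free factor systems with unit edge lengths are \emph{not} uniformly bounded (e.g.\ pushing a fixed $\F'$ around by powers of a fully irreducible automorphism makes $\ell_{T'_i}(\gamma)$ grow exponentially), and ``normalizing $u_i$ in a fundamental domain'' has no content when the tree itself varies. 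In Case B the claim that one can ``pass to subsequences of fixed free factor shape'' to conclude fails for the same reason: there are infinitely many free factor systems of a given shape, $\B(\A)$ is closed only for a \emph{fixed} $\A$, and a weak limit of lines each carried by a different proper free factor system can perfectly well fill. So both cases, as written, do not close up; the acknowledged ``hard part'' is in fact the whole difficulty, and the Whitehead-graph monotonicity in the paper is precisely the device that avoids it.
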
 

\begin{proof} Let $c_0$ be the   conjugacy class of the circuit in $G$ determined by the closed path~$\rho_s$. By Proposition 2.18 of \SubgroupsOne, the joint free factor support of $c_0$ and $\F$ is equal to the joint free factor support of $\Lambda^\pm$ and $\F$, and hence is equal to $\{[F_n]\}$.  We may therefore choose  conjugacy classes $c_1,\ldots,c_k$ carried by $\F$ whose free factor support equals $\F$, and therefore $\{c_0,c_1,\ldots,c_k\} $ fills. By Whitehead's theorem \cite{Whitehead:CertainSets} there is a marked rose $R$ such that the Whitehead graph for $\{c_0,\ldots,c_k\}$ is connected and has no cut vertices---this is the graph with a vertex $v_e$ for every directed edge $e$ in $R$ and an edge $v_e$---$v_{e'}$ whenever $\bar e e'$ is a subpath of the circuit in $R$ representing one of $c_0,\ldots,c_k$ or their inverses. Let $\tau \subset R$ be a closed path determining the circuit in $R$ that corresponds to $c_0$. It follows that if $\sigma' \subset R$ is any circuit that contains $\tau^2$ as a subpath then the Whitehead graph for $\{[\sigma'], c_1,\ldots,c_k\}$ contains the Whitehead graph for $\{c_0,\ldots,c_k\}$ and so is also connected and has no cut points. A second application of Whitehead's theorem shows that $\{[\sigma'], c_1,\ldots,c_k\}$ is filling. To complete the proof, apply the bounded cancellation lemma and choose $M$ so large that if $\sigma \subset G$ contains a subpath of the form $\rho_s^M$ then the realization of $[\sigma]$ in $R$ contains a subpath of the form $\tau^2$.
\end{proof}

\begin{corollary}\label{endpoint condition} For any circuit $\sigma \subset G$ representing an intermediate conjugacy class~$c$ the following hold:
\begin{enumerate} \item \label{at least one tile} 
If $\sigma$ has a weak $k$-tiling then some term of that tiling is a $k$-tile. 
\item \label{k tile separation} For all $k\ge 0$ there exists $L \ge 1$ so that if $\sigma$ has a weak $k$-tiling and if $\sigma'$ is a subpath of $\sigma$ that crosses  $L$ edges of $H_s$ then $\sigma'$ contains a $k$-tile.
\end{enumerate}  
\end{corollary}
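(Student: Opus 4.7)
\textbf{Plan for Corollary~\ref{endpoint condition}.} Both parts rest on a combinatorial analysis of weak $k$-tilings, driven by two inputs from Notation~B and item~\pref{ItemCTiNP}: the iNp $\rho_s$ (when it exists) has at least one outer endpoint $v$ in $H_s \setminus G_{s-1}$, and $\sigma$ is a non-backtracking circuit. Whenever a copy of $\rho_s^{\pm 1}$ appears in the tiling at~$v$, the cyclically adjacent term must start at $v$, and the constraint $v \notin G_{s-1}$ together with non-backtracking leaves very limited options: in the non-geometric case, essentially none other than a $k$-tile; in the geometric case, another $\rho_s^{\pm 1}$ of matching orientation.

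For~\itemref{at least one tile}, I argue by contradiction. Assume $\sigma = \mu_1 \cdot \ldots \cdot \mu_I$ is a weak $k$-tiling with no $k$-tile term, so each $\mu_\ell$ is an iNp of height~$s$ or a maximal $G_{s-1}$-subpath. If some $\mu_\ell$ is an iNp and $H_s$ is non-geometric, then $\rho_s$ is not closed; after possibly reversing and shifting, its terminal outer endpoint lies in $H_s \setminus G_{s-1}$, and the cyclically next term $\mu_{\ell+1}$ begins at that vertex but cannot be a $G_{s-1}$-subpath, a further iNp, or (by hypothesis) a $k$-tile, a contradiction. If $H_s$ is geometric then $\rho_s$ is closed at $v \in H_s \setminus G_{s-1}$; inductively each $\mu_{\ell+1}$ must equal $\rho_s$ (ruling out $\bar\rho_s$ by backtracking), so $\sigma = \rho_s^n$ as a circuit and $[\sigma]$ shares the free factor support of $[\rho_s]$, which fills relative to~$\F$ by the last bullet of Notation~B, contradicting that $[\sigma]$ is intermediate. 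Finally, if no $\mu_\ell$ is an iNp then $\sigma \subset G_{s-1}$; the $\phi$-invariant free factor system $[\pi_1 G_{s-1}]$ sits between $\F$ and $\{[F_n]\}$, and relative irreducibility of $\phi$ together with the fact that an EG top stratum must contribute nontrivially to $\pi_1$ forces $[\pi_1 G_{s-1}] = \F$; hence $[\sigma]$ is carried by~$\F$, again contradicting intermediateness.

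For~\itemref{k tile separation}, given a subpath $\sigma'$ of $\sigma$ containing no $k$-tile as a subpath, write $\sigma' = \alpha_i \cdot \mu_{i+1} \cdot \ldots \cdot \mu_{j-1} \cdot \alpha_j$, where $\alpha_i$ and $\alpha_j$ are the suffix/prefix truncations of the boundary tiling terms $\mu_i, \mu_j$. No interior term $\mu_\ell$ is a $k$-tile, so the same case analysis controls the interior. In the non-geometric case the interior contains at most two iNps (only at positions $i+1$ and $j-1$, since any further iNp would force an adjacent interior $k$-tile), interleaved with maximal $G_{s-1}$-subpaths, giving interior $H_s$-count at most $2\abs{\rho_s}_s$. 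In the geometric case the interior is either a single maximal run of $\rho_s^{\pm 1}$---whose length is bounded by $M-1$ using Lemma~\ref{Whitehead}---or a single maximal $G_{s-1}$-subpath, giving interior $H_s$-count at most $(M-1)\abs{\rho_s}_s$. Each boundary piece $\alpha_i, \alpha_j$ contributes $H_s$-count at most $\max(t_k, \abs{\rho_s}_s)$, where $t_k$ bounds the $H_s$-count of any $k$-tile. Setting $L$ to exceed the resulting uniform total suffices.

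The main obstacle is the careful bookkeeping needed to exclude all non-$k$-tile continuations at the distinguished vertex $v \in H_s \setminus G_{s-1}$ across the geometric/non-geometric dichotomy and the cyclic wrap-around for circuits; once this is done the non-backtracking constraint and Lemma~\ref{Whitehead} do the rest of the work.
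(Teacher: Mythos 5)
Your proof is correct and follows essentially the same route as the paper's: for \itemref{at least one tile} you use the endpoint of $\rho_s$ lying off $G_{s-1}$ to force a $k$-tile neighbor (or, when $\rho_s$ is closed, to reduce $\sigma$ to an iterate of $\rho_s$, which fills rel~$\F$), and for \itemref{k tile separation} you bound the interior iNp terms by the distinct-endpoints argument in the nongeometric case and by Lemma~\ref{Whitehead} in the geometric case, then count $H_s$-edges in tiles and in $\rho_s$. The differences from the paper (explicit boundary truncations, slightly different constant $L$) are organizational rather than substantive.
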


\begin{proof} We prove~\pref{at least one tile} by contradiction, assuming that no term (of~the weak $k$-tiling of~$\sigma$) is a $k$-tile. If $\rho_s$ does not exist, or if it exists and no term is a copy of $\rho_s$ or $\bar\rho_s$, then $\sigma$ is contained in $G_r$ and so $c$ is supported by $\F = [G_r]$, contradicting that $c$ is intermediate. We may therefore assume that $\rho_s$ exists and that at least one term is a copy of $\rho_s$ or $\bar\rho_s$. At least one endpoint $x$ of $\rho_s$ is contained in the interior of $H_s$. If $\rho_s$ is not closed then $\sigma$ has a term incident to $x$ that is not a copy of $\rho_s$ or $\bar\rho_s$, but that term must contain an edge of $H_s$ and so can only be a $k$-tile, contradicting the assumption. If $\rho_s$ is closed then by a similar argument $\sigma$ can only be an iterate of $\rho_s$ or $\bar\rho_s$, and so $c$ fills relative to~$\F$ (see Notation~B), also contradicting that $c$ is intermediate.

For item \pref{k tile separation} suppose that $\sigma = \sigma_1\ldots \sigma_m$ is a weak $k$-tiling. Let $L_0$ be the maximal number of $H_s$-edges in a $k$-tile. If $\rho_S$ exists let $L_s$ be the number of $H_s$ edges in $\rho_s$, otherwise $L_s=0$. If $\Lambda^\pm$ is nongeometric and $\rho_s$ does not exist then the conclusion is evident with $L=2L_0$. If $\Lambda^\pm$ is non-geometric and $\rho_s$ exists then $\rho_s$ has distinct endpoints, so if $2 \le i \le m-1$ and if $\sigma_i = \rho_s^{\pm}$ then either $\sigma_{i-1}$ or $\sigma_{i+1}$ is $k$-tile; the conclusion holds with $L=L_s+2L_0$. If $\Lambda^\pm$ is geometric then $\rho_s$ exists and is closed and, letting $M$ be the constant of Lemma~\ref{Whitehead}, at most $M$ consecutive terms of any cyclic permutation of the given weak $k$-tiling of $\sigma$ can be copies of $\rho_s$ or $\bar\rho_s$. In all cases the conclusion follows with $L =  \max\{2,M\} \cdot L_s + 2L_0$.
\end{proof}

\begin{proof}[Proof of Proposition~\ref{PropKTiles}]   We simplify notation slightly by writing $w$ and $\tau$ for the functions $w_\phi$ and $\tau_f$, suppressing the dependence on $\phi$ and $f$.  Let $\Lambda^- \in \L(\phi^\inv)$ denote the dual repelling lamination of $\Lambda^+$.

Since $w(\phi^m(c)) = w(c) + m$ and $\tau(\phi^m(c)) = \tau(c) + m$ for all $m \in \Z$ it suffices to show that there are uniform upper and lower bounds to $\tau(c)$ as $c$ varies over all intermediate conjugacy classes with $w(c) = 0$. 

An upper bound is easy to find, using properties of $\Lambda^+$. Choose a generic leaf $\gamma^+$ of $\Lambda^+$ realized in~$G$. Since $U$ is a weak neighborhood of $\gamma^+$ there is subpath $\delta \subset G$ of $\gamma^+$ such that every line in $G$ that contains $\delta$ as a subpath is contained in $U$. By \cite[Lemma 3.1.1~(4)]{\BookOneTag} there exists $k_\delta > 0$ and a $k_\delta$-tile $\kappa$ that contains $\delta$ as a subpath. By \cite[Lemma 3.1.8~(4)]{\BookOneTag} there exists $l > \max\{k_0,k_\delta\}$ such that  every $l$-tile contains every $k_\delta$-tile as a subpath. It follows that every $l$-tile contains $\delta$ as a subpath. Corollary~\ref{endpoint condition}~\pref{at least one tile} implies that any circuit that represents an intermediate $c$ and that has a weak $l$-tiling must contain an $l$-tile, and therefore that circuit determine a bi-infinite line contained in $U$. To put it another way, if $\tau(c) \ge l - k_0$ then $w(c) \ge 0$. It follows that $w(c) < 0 \Longrightarrow \tau(c) < l - k_0$ and that $w(c) \le 0 \Longrightarrow \tau(c) \le l-k_0$, and so $l-k_0$ is the desired upper bound to $\tau(c)$ when $w(c)=0$.

It remains to find the lower bound, using properties of $\Lambda^-$. We show that there exists $d > 0$ so that if $w(c) = 0$ and if $\sigma \subset G$ is the circuit representing $c$ then $f_\#^d(\sigma)$ has a weak $0$-tiling, for once that is done it follows that $-(d+k_0)$ is a lower bound for $\tau(c)$. Let $\zeta$ be the line in $G$ that wraps bi-infinitely around~$\sigma$.

\break
\begin{description}
\item[Claim $(*)$:] There exists $L > 0$, independent of $c$, so that no common subpath of $\zeta$ and a generic leaf of $\Lambda^-$ in $G$ crosses $L$ edges of $H_{s}$. 
\end{description}
The conclusion of Claim $(*)$ implies that no common subpath of $\sigma$ and a generic leaf of $\Lambda^-$ crosses $L$ edges of $H_s$, which is equivalent to the inequality $\lsm(\sigma) < L$ in the hypothesis of Lemma~\ref{stabilizes}, and so we may apply the conclusion of that lemma to obtain $d$, depending only on $L$ and so independent of~$c$, such that $f^d_\#(\sigma)$ has a weak $0$-tiling.
 
To prove Claim $(*)$, let $g \from G' \to G'$ be a \ct\ representing $\phi^{-1}$ with top stratum $H'_{s'}$ corresponding to $\Lambda^-$ and with core filtration element $G'_{r'} \subset G'_{s'-1}$ representing~$\F$. Let $\gamma^-$ be a generic leaf for $\Lambda^-$ realized in~$G'$. Let $\sigma' \subset G'$ be the circuit in $G'$ representing $c$, and let $\zeta'$ be the line in $G'$ wrapping bi-infinitely around $\sigma'$. We shall show:
\begin{description}
\item[Claim $(*')$:] There exists $L' > 0$, independent of $c$, so that no common subpath of $\zeta'$ and $\gamma^-$ in $G'$ crosses $L'$ edges of $H'_{s'}$.
\end{description}
This suffices to prove Claim $(*)$, for the following reasons. In $\wt G$ let $\wt H_s$ be the total lift of $H_s$, and let $\wt G_{s-1} = \wt G \setminus \wt H_s$ be the total lift of $G_{s-1}$; in $\wt G'$ define $\wt H'_{s'}$ and $\wt G'_{s'-1}$ similarly. Let $T$ be the simplicial $F_n$-tree obtained from the universal cover $\wt G$ by collapsing each component of $\wt G_{s-1}$ to a point, and let $T'$ be similarly obtained from $\wt G'$ and $\wt G'_{s'-1}$. Using that $[G'_{s'-1}]=[G'_{r'}] = \F = [G_r] = [G_{s-1}]$ it follows that there are equivariant quasi-isometries between $T$ and $T'$ in either direction, and that these are equivariant coarse inverses to each other \cite[Theorem 3.8]{GuirardelLevitt:DefSpaces}. For any lifts $\ti\zeta,\ti\gamma \in \wt\B$, using subscripts to represent their realizations as lines in $\wt G$ and $\wt G'$ and the projections to lines in $T$ and $T'$, it follows that the number of $\wt H_s$ edges of $\ti\zeta_G \intersect \ti\gamma_G$ equals the edge path length of $\ti\zeta_T \intersect \ti\gamma_{T}$, which is quasi-comparable to the edge path length of $\ti\zeta'_{T'} \intersect \ti\gamma^-_{T'}$, which equals the number of $\wt H'_s$ edges of $\ti\zeta_{G'} \intersect \ti\gamma_{G'}$. The existence of the uniform bound $L'$ in Claim~$(*')$ therefore implies the existence of the uniform bound~$L$ in Claim~$(*)$.

Choose an attracting neighborhood $U^- \subset \B$ of $\gamma^-$. Let $w' := w_{\phi^{-1}, U^-}$ be the corresponding well function. By the coarse additive inverse property of well functions we have a uniform upper bound $\abs{w'(c) + w(c)} < W$ independent of~$c$. The set $\phi^{-W}(U^-) \subset \B$ is evidently also an attracting neighborhood of $\gamma^-$. Using an exhaustion by tiles of $\gamma^-$, there exists an integer $l \ge 1$ and an $l$-tile $\delta$ such that every line having $\delta$ as a subpath is contained in $\phi^{-W}(U^-)$. Using the self-similarity property of tiles there exists an integer $m > l$ such that every $m$-tile has $\delta$ as a subpath. Since we are restricting to those $c$ with $w(c) = 0$, we obtain a uniform upper bound $w'(c) < W$. It follows that $c \not\in \phi^{-W}(U^-)$, and so $\zeta'$ does not contain $\delta$ as a subpath. We may therefore choose $L'$ to be twice the maximal number of $H'_{s'}$-edges in an $m$-tile: since $\gamma^-$ has an $m$-tiling, every subpath of $\gamma^-$ that contains $L'$ edges of $H'_{s'}$ contains an $m$-tile and so also contains $\delta$, and that subpath of $\gamma^-$ is therefore not a subpath of $\zeta'$. This completes the proof of Claim~$(*')$, of Claim~$(*)$, and of the lemma.
\end{proof}

\subsection{Regularity properties of weak tilings.} 
\label{SectionTileRegularity}

We continue to follow notations A~and~B. 

A general circuit $\sigma$ in $G$ can have highly irregular behavior with respect to tiles and weak tilings. For example, choose $\alpha$ to be a finite path in $G$ having at least two distinct illegal turns, and choose $j \ge 2$ so that each $k$-tile with $k \ge j$ has more $H_s$ edges than $\alpha$ has. It follows that if $k \ge j$ and if $\sigma$ has a weak $k$-tiling then $\sigma$ does not contain $\alpha$ as a subpath. On the other hand, one can easily construct $\sigma$ so as to contain $\alpha$ as a subpath and to also contain some $k$-tile with arbitrarily large~$k$. This kind of irregularity --- a large gap between the greatest $j$ for which $\sigma$ has a weak $j$-tiling and the greatest $k$ for which $\sigma$ contains a $k$-tile --- is ruled out by the following corollary for those circuits representing intermediate conjugacy classes. As an application of the corollary, the circuit $\sigma$ just constructed must fill rel~$\F$ if $k$ is sufficiently large.

\begin{corollary} \label{tilings from tiles}  Following notations A~and~B, and letting $k_0$ be as in Definition~\ref{DefWeakTilings}, there is a positive constant $M$ so that for all $k \ge  k_0+M$, every circuit $\sigma \subset G$ that realizes an intermediate conjugacy class and that contains a $k$-tile has a weak $k-M$~tiling.  
\end{corollary}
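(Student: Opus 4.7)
The plan is to exploit the coarse equivalence between the well function $w_\phi$ and the weak tiling function $\tau_f$ given by Proposition~\ref{PropKTiles}, reducing the corollary to a direct estimate showing that containing a large tile forces the well function value to be correspondingly large.

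The main step is to establish the following: there is a constant $C$ such that for every intermediate conjugacy class $c$ whose circuit $\sigma \subset G$ contains a $k$-tile, we have $w_\phi(c) \ge k - C$. To prove this, I would refine the choice of attracting neighborhood used in the proof of Proposition~\ref{PropKTiles}. Pick an integer $l > k_0$ large enough, and choose $\delta$ to be an $l$-tile which is a subpath of a generic leaf $\gamma^+$ of $\Lambda^+$; such $\delta$ exists because $\gamma^+$ has a $k$-tiling for every~$k$ by \cite[Lemma 3.1.10]{\BookOneTag}, and since $l > k_0$ its endpoints are periodic hence fixed. Let $U$ be the attracting neighborhood of $\gamma^+$ consisting of all lines containing $\delta$ as a subpath. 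Because $\delta$ is a legal $H_s$-tile with fixed endpoints, the train track property together with $\phi$ being rotationless gives the inclusion $\{\ell : \ell \supset f^j_\#(\delta)\} \subset \phi^j(U)$ for each $j \ge 0$: the straightened preimage of any such $\ell$ under $f^j_\#$ necessarily contains $\delta$, by uniqueness of the legal preimage at fixed endpoints.

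Next, invoke uniform self-similarity of tiles \cite[Lemma 3.1.8(3)]{\BookOneTag}: there exists a constant $\Delta$ such that every $(l+j+\Delta)$-tile contains the $(l+j)$-tile $f^j_\#(\delta)$ as a subpath. Hence if $\sigma$ contains a $k$-tile with $k \ge l + j + \Delta$, then $\sigma \supset f^j_\#(\delta)$, so the line representing $c$ lies in $\phi^j(U)$, yielding $w_\phi(c) \ge j$. Taking $j = k - l - \Delta$ gives $w_\phi(c) \ge k - C$ with $C := l + \Delta$. Applying Proposition~\ref{PropKTiles} then yields $\tau_f(c) \ge w_\phi(c) - N \ge k - C - N$, which by Definition~\ref{DefWeakTilingFunctions} means that $\sigma$ has a weak $(k - C - N + k_0)$-tiling. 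Setting $M := C + N - k_0 = l + \Delta + N - k_0$ (positive since $l > k_0$), the circuit $\sigma$ has a weak $(k-M)$-tiling whenever $k \ge k_0 + M$, exactly as required (the inequality $k \ge k_0 + M$ guaranteeing $k - M \ge k_0$ so that the weak tiling notion applies).

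The main obstacle is justifying the pullback inclusion $\phi^j(U) \supset \{\ell : \ell \supset f^j_\#(\delta)\}$, i.e.\ that containment of the specific translate $f^j_\#(\delta)$ in a line $\ell$ propagates backward under $\phi^{-j}$ to containment of $\delta$ in the $f^j$-preimage. This rests on the train track condition \textbf{(RTT-(i))} for $H_s$ together with the fact that the endpoints of $\delta$ are fixed principal vertices, which together force the $f^j_\#$-preimage of the subpath $f^j_\#(\delta) \subset \ell$ to be precisely $\delta$ (rather than some other sub-word shortened by cancellation at the boundaries).
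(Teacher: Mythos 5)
Your overall plan---reduce via Proposition~\ref{PropKTiles} to showing that an intermediate $c$ whose circuit contains a $k$-tile has $w_\phi(c) \ge k - C$---is reasonable, and that intermediate inequality is indeed essentially equivalent to the corollary. But the way you try to prove it has a genuine gap, namely the pullback inclusion you flag as the main obstacle. First, the set $U = \{\ell : \ell \supset \delta\}$ is not obviously an attracting neighborhood: that notion requires $\phi(U) \subset U$, i.e.\ $\ell \supset \delta \Rightarrow f_\#(\ell) \supset \delta$, and this can fail because cancellation in tightening $f(\ell)$ near the endpoints of $\delta$ can eat into $f_\#(\delta)$, and in any case $f_\#(\delta)$ (an $(l+1)$-tile) need not contain the $l$-tile $\delta$---self-similarity \cite[Lemma 3.1.8(3)]{\BookOneTag} only guarantees containment once the difference of levels exceeds a constant. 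Second, and more seriously, the inclusion $\{\ell : \ell \supset f^j_\#(\delta)\} \subset \phi^j(U)$ asks that $\ell \supset f^j_\#(\delta)$ force $\phi^{-j}(\ell) \supset \delta$. Your justification invokes ``uniqueness of the legal preimage at fixed endpoints,'' but that principle applies to finite paths whose endpoints are prescribed in advance; here $f^j_\#(\delta)$ sits at an arbitrary location inside the line $\ell$, and there is no canonical ``preimage subpath'' of it inside $\ell' := \phi^{-j}(\ell)$. The tightening that produces $\tilde\ell$ from $\tilde f^j(\tilde\ell')$ can identify many points, the vertices $\tilde P_1, \tilde P_2$ bounding the lift of $f^j_\#(\delta)$ need not have unique or well-positioned preimages along $\tilde\ell'$, and a line $\ell'$ can produce $f^j_\#(\delta)$ after cancellation without actually containing $\delta$.

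The paper sidesteps exactly this difficulty. It argues by contradiction and first pulls back only \emph{finite paths between fixed periodic vertices} (where the $f_\#$-preimage is genuinely unique), reassembling the pieces into a circuit $\alpha_i$ and controlling the cancellation explicitly via \cite[Lemma 4.2.2]{\BookOneTag} to see that $\alpha_i$ still carries a long tile. It then replaces your pointwise inequality with a weak-limit argument: circuits containing $l'_i$-tiles with $l'_i \to \infty$ must weakly converge to a generic leaf $\gamma^+$ (by self-similarity of tiles), hence eventually lie in any fixed attracting neighborhood $\phi^N(U)$, contradicting the uniform upper bound on $w_\phi[\alpha_i]$ coming from Proposition~\ref{PropKTiles}. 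In short, the paper never tries to pull a neighborhood of the lamination backward through $\phi^{-j}$; it only pushes forward in a controlled way and exploits compactness. If you want to rescue a direct argument, you would need to prove that your $U$ is an honest attracting neighborhood and that the pullback inclusion holds with a bounded loss, which would amount to redoing the cancellation estimates the paper does anyway.
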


\begin{proof} If the corollary fails then there are sequences $k_i \ge k_0$ and $l_i \ge k_i$ such that $l_i - k_i \to +\infty$, and there are circuits $\sigma_i \subset G$   representing intermediate conjugacy classes, such that $\sigma_i$ contains an $l_i$-tile but has no weak $k_i$-tiling. Assuming this, we argue to a contradiction.

The main step in the proof is to reduce to the case that $k_i = k_0$ for all $i$.   More precisely, we claim that there exist a sequence $l'_i \to +\infty$ and circuits $\alpha_i \subset G$ representing  intermediate conjugacy classes such that $\alpha_i$ contains an $l'_i$-tile  but has no weak $k_0$-tiling. To prove the claim, decompose $\sigma_i = \nu_i \nu'_i$ as a concatenation of two subpaths where $\nu_i$ is an $l_i$-tile. Let $y^-_i$ and $y^+_i$ be the initial and terminal endpoints of $\nu_i$ respectively and let $x^-_i$ and $x^+_i$ be the unique periodic points satisfying $f^{k_i-k_0}(x^-_i) = y^-_i$ and $f^{k_i-k_0}(x^+_i) = y^+_i$.  There are unique paths $\mu_i$ connecting $x^-_i$ to $x^+_i$ and $\mu'_i$  connecting $x^+_i$ to $x^-_i$ such that $f_\#^{k_i-k_0}(\mu_i) = \nu_i$ and $f_\#^{k_i-k_0}(\mu'_i) = \nu'_i$.  As argued in the proof of Lemma~\ref{pullback}, $\mu_i$ is an $(l_i+k_0 - k_i)$-tile; it follows that $\mu_i$ has an $l$ tiling for all $l = 1 \,,\,  \ldots\, , \,l_i+k_0-k_i$  \cite[3.1.8]{\BookOneTag}.    The circuit $\alpha_i$ obtained by tightening $\mu_i \mu_i'$ satisfies $f_\#^{k_i-k_0}(\alpha_i) = \sigma_i$ and so has no weak $k_0$-tiling by Lemma~\ref{pullback} and by the assumption that $\sigma_i$ has no weak $k_i$-tiling.   Since $\mu_i$ is $s$-legal and the initial [resp. terminal] directions of $f_\#^{k_i-k_0}(\bar \mu_i) = \bar\nu_i$ and $f_\#^{k_i-k_0}(\mu'_i)=\nu'_i$ are distinct, there is a uniform upper bound to the number of $H_s$-edges in the maximal common initial [resp. terminal] subpath of $\bar \mu_i$ and $\mu_i'$ 
\cite[Lemma 4.2.2]{\BookOneTag}. Thus only finitely many edges of $H_s$ are cancelled when  $\mu_i \mu_i'$ is tightened to $\alpha_i$.  We may therefore choose $l_i''< l_i$ so that $l_i - l''_i$ is  independent of $i$ and so that at least one of the terms in the $(l_i''+k_0-k_i)$-tiling of the  $(l_i+k_0 - k_i)$-tile $\mu_i$ remains after cancellation and so occurs in $\alpha_i$.  This verifies the claim with $l'_i = l_i''+k_0-k_i$.   

Choose an attracting neighborhood $U$ associated to $\Lambda^+$ and let $w_\phi$ be the corresponding well function. From the claim we have $\tau_f[\alpha_i] < 0$.  Proposition~\ref {PropKTiles} therefore implies that $w_\phi[\alpha_i]$ has a uniform upper bound, and hence that there exists a positive integer $N$ so that for each $i$,   $\alpha_i \not \in f^N_\#(U)$.  This contradicts the fact that $\Lambda^+$ is a weak limit of the $\alpha_i$'s which follows from the assumption that $l'_i \to \infty$.
\end{proof}

\begin{corollary}   \label{bounded intersection}  
Following notations A and~B, let $w_\phi$ be a well function for $\Lambda^+$ defined with respect to some attracting neighborhood of $\Lambda^+$, and let $k_0$ be as in Definition~\ref{DefWeakTilings}. Then there exists  $K > 0$ so that for all $A \ge k_0$ there exist  $L>0$ so that if $\alpha, \beta$ are circuits in $G$    that  represent intermediate conjugacy classes   and that     have a common subpath that crosses at least $L$ edges of $H_s$  and if $w_\phi([\alpha]) \le A$ then $w_\phi([\beta]) \le A+K$.
\end{corollary}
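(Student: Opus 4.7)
The plan is to exploit the coarse equivalence between the well function $w_\phi$ and the weak tiling function $\tau_f$ (Proposition~\ref{PropKTiles}) in order to translate the hypothesis on $w_\phi([\alpha])$ and the desired conclusion on $w_\phi([\beta])$ into statements about the existence of weak tilings and the presence of tiles as subpaths. Let $N$ be the constant from Proposition~\ref{PropKTiles} and $M$ the constant from Corollary~\ref{tilings from tiles}, and set $K = 2N+M+1$. This choice of $K$ depends only on the \ct\ $f \from G \to G$ and the chosen well function, not on $A$.

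Given $A \ge k_0$, the key step is to define the target tile size $k = k_0 + A + K - N$ and to choose $L$ by applying Corollary~\ref{endpoint condition}~\pref{k tile separation} to this $k$; this gives us $L$ (depending on $A$) such that any subpath of a circuit with a weak $k$-tiling that crosses at least $L$ edges of $H_s$ must contain a $k$-tile. Now suppose for contradiction that circuits $\alpha$, $\beta$ represent intermediate conjugacy classes, share a common subpath crossing at least $L$ edges of $H_s$, and satisfy $w_\phi([\alpha]) \le A$ while $w_\phi([\beta]) > A + K$.

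From $w_\phi([\beta]) > A + K$ and Proposition~\ref{PropKTiles}, we deduce $\tau_f([\beta]) > A + K - N$, so $\beta$ admits a weak $k$-tiling with $k = k_0 + A + K - N$. By our choice of $L$, the common subpath of $\alpha$ and $\beta$ must then contain a $k$-tile, so $\alpha$ contains a $k$-tile as well. Since $K > 2N + M$ ensures $k \ge k_0 + M$, Corollary~\ref{tilings from tiles} applies to the intermediate circuit $\alpha$ and yields a weak $(k-M)$-tiling of $\alpha$; hence $\tau_f([\alpha]) \ge k - M - k_0 = A + K - N - M$. Applying Proposition~\ref{PropKTiles} in the other direction gives $w_\phi([\alpha]) \ge \tau_f([\alpha]) - N \ge A + K - 2N - M > A$, contradicting $w_\phi([\alpha]) \le A$.

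The argument is essentially a ``lift-and-transport'' argument between $\alpha$ and $\beta$ via their shared subpath, and the main obstacle conceptually is lining up the two directions of Proposition~\ref{PropKTiles} (once on $\beta$ to produce a tile in the common subpath, once on $\alpha$ to turn a tile in $\alpha$ back into a bound on $w_\phi([\alpha])$) with the loss of $M$ incurred by Corollary~\ref{tilings from tiles} when passing from the presence of a single tile to the existence of a full weak tiling. Choosing $K > 2N + M$ absorbs exactly these three losses.
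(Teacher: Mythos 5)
Your proof is correct and follows essentially the same route as the paper's: both use Proposition~\ref{PropKTiles} together with Corollary~\ref{tilings from tiles} to convert the hypotheses on $w_\phi$ into statements about weak tilings and tiles, and then Corollary~\ref{endpoint condition}~(\ref{k tile separation}) to transfer a large tile through the common subpath from $\beta$ to $\alpha$. The only differences are cosmetic bookkeeping of constants (your explicit $K=2N+M+1$ and $k=k_0+A+K-N$ versus the paper's even $K$ and tile size $A+K/2$) and your phrasing by contradiction rather than contrapositive.
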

 
\begin{proof}  By Proposition~\ref{PropKTiles} and Corollary~\ref{tilings from tiles} there exists an even integer $K $ so that so that the following are satisfied for all circuits $\alpha , \beta \subset G$  representing intermediate conjugacy classes and all $A \ge k_0$. 
\begin{enumerate}
\item If $w_\phi([\alpha]) \le A$ then $\alpha$ does not contain any $(A+K/2)$-tiles.
\item If $w_\phi([\beta]) >A+ K$ then $\beta$ has a weak $(A+K/2)$-tiling.
\end{enumerate}  
It therefore suffices to show that there exists   $L> 0$ so that if $\beta$ has a   weak $(A+K/2)$-tiling    then every subpath of $\beta$ that crosses at least $L$ edges of $H_s$ contains an  $(A+K/2)$ tile.  The existence of $L$ follows from  Lemma~\ref{endpoint condition}~\pref{k tile separation}. \end{proof}
 
\section{Proof of the WWPD Construction Theorem}
\label{SectionTheoremEProof}

After repeating the theorem for convenience, we shall then review some basic facts and notations regarding the free splitting complex~$\fscn$.

\begin{theoremE}
Suppose that $n \ge 3$, that $\cH$ is a subgroup of $\IAThree$ that preserves a (possibly empty) proper free factor system $\F$ and that $\phi \in \cH$ satisfies the following: 
\begin{description}
\item[Relative irreducibility:] $\phi$ is irreducible rel $\F$; 
\item[Trivial restrictions:] $\phi \restrict \Out(A)$ is trivial for each component $[A] \in \F$; 
\item[Filling lamination:] There exists a filling attracting lamination $\Lambda_\phi^+ \in \L(\phi)$, and hence $\phi$ acts loxodromically on $\fscn$ \FSLox.  
\end{description}
Then $\phi$ satisfies \wwpd\ with respect to the action of $\cH$ on $\fscn$.
\end{theoremE}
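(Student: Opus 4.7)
The plan is to verify WWPD via the ``no bad sequence'' criterion in Proposition~\ref{PropDefWWPD}~\pref{ItemBadRayNo}. After replacing $\phi$ by a positive power (which is harmless for WWPD), we may assume $\phi$ is rotationless and fix a \ct\ representative $f \from G \to G$ with a core filtration element $G_r$ realizing $\F$ and top stratum $H_s$ that is \eg-aperiodic and corresponds to the filling lamination pair $\Lambda^\pm_\phi$. Using the Stallings-fold sequence attached to~$f$, we build a quasi-axis $\gamma \subset \fscn$ for $\phi$ together with a basepoint $x \in \gamma$ whose $\phi$-iterates $\phi^m(x)$ lie on~$\gamma$ and realize natural free splittings of~$F_n$ of bounded complexity.

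Suppose toward a contradiction that a bad sequence exists: elements $g_i \in \cH$ lying in pairwise distinct left cosets of $\Stab(\bdy_\pm \phi)$, together with $x$ and $R > 0$, such that for every $M \ge 0$ there exists $I$ with $d(g_i \phi^m(x), \phi^m(x)) < R$ for all $i \ge I$ and $0 \le m \le M$. Standard quasi-axis fellow-travelling arguments in the hyperbolic complex $\fscn$ (of the kind used in the proof of Proposition~\ref{PropDefWWPD}) produce integers $c_i$ and an increasing sequence $M_i \to \infinity$ so that $h_i := g_i \phi^{-c_i}$ $\fscn$-displaces each of $x, \phi(x), \ldots, \phi^{M_i}(x)$ by a uniformly bounded amount. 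It then suffices to show $h_i \in \Stab(\bdy_\pm \phi)$ for infinitely many~$i$, which contradicts the distinct-coset hypothesis.

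To establish this, we translate bounded $\fscn$-displacement into combinatorial constraints on the action of $h_i$ on intermediate conjugacy classes of~$F_n$. Each vertex $\phi^m(x)$ determines a free splitting in which certain intermediate conjugacy classes are realized as short circuits or as circuits of controlled well value. Bounded displacement of $\phi^m(x)$ by~$h_i$ forces images $h_i(c)$ to share with~$c$ a long common subpath in~$G$ (after tightening), with length growing with~$m$. Proposition~\ref{PropKTiles} identifies $w_\phi$ with the weak tiling function $\tau_f$ up to a bounded additive error, and together with Corollaries~\ref{tilings from tiles} and~\ref{bounded intersection} this yields uniform bounds $|w_\phi(h_i(c)) - w_\phi(c)| \le K$ on every intermediate class~$c$ visible at $\phi^m(x)$ for $0 \le m \le M_i$. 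Applying the same argument to $\phi^\inv$ and using the coarse additive-inverse property from Section~\ref{SectionConjClassWell} yields analogous bounds for $w_{\phi^\inv}$.

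The conclusion is a rigidity step. The trivial-restrictions hypothesis guarantees that $\phi$ acts as the identity on every conjugacy class carried by a component of~$\F$, so the well function records purely the motion transverse to~$\F$ and is not perturbed by any $\F$-relative drift inside~$\cH$. Combining the uniform bounds on $w_\phi$ and $w_{\phi^\inv}$ with the fact that $\Lambda^\pm_\phi$ fills~$F_n$, we conclude that $h_i$ sends generic leaves of $\Lambda^+_\phi$ into arbitrarily small attracting neighborhoods of $\Lambda^+_\phi$, and similarly for $\Lambda^-_\phi$, forcing $h_i(\Lambda^\pm_\phi) = \Lambda^\pm_\phi$ and hence $h_i \in \Stab(\bdy_\pm \phi)$. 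The main obstacle I anticipate is precisely the combinatorial translation of paragraph three: extracting, from the purely $\fscn$-distance hypothesis, a sufficiently large family of intermediate conjugacy classes on which Corollary~\ref{bounded intersection} applies. This requires a careful identification of which circuits at each $\phi^m(x)$ carry the relevant tiling data, together with an appeal to the uniform splitting Lemma~\ref{stabilizes} to ensure that these circuits split compatibly with~$f$ across the whole range $0 \le m \le M_i$.
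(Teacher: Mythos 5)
Your opening moves match the paper: reduce to a rotationless power, fix a \ct\ representative, and argue by contradiction against the ``no bad sequence'' criterion of Proposition~\ref{PropDefWWPD}~\pref{ItemBadRayNo}, translating bounded displacement along the $\phi$-orbit in $\fscn$ into coarse control of well functions via Proposition~\ref{PropKTiles} and Corollaries~\ref{tilings from tiles} and~\ref{bounded intersection}. But the final step of your outline contains the essential gap. You claim that uniform bounds $\abs{w_\phi(h_i(c))-w_\phi(c)}\le K$ (together with the analogous bounds for $w_{\phi^\inv}$ and the fact that $\Lambda^\pm_\phi$ fills) force $h_i(\Lambda^\pm_\phi)=\Lambda^\pm_\phi$, hence $h_i\in\Stab(\bdy_\pm\phi)$. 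This inference is unsupported and is, in effect, the whole difficulty of the theorem. If WWPD were to fail, the offending elements would move $\bdy_\pm\phi$ to points converging to, but never equal to, $\bdy_\pm\phi$; such elements distort well values by an amount that is bounded (indeed tending to optimal) along longer and longer segments of the orbit, yet they do \emph{not} stabilize the laminations. So bounded well-value distortion cannot by itself yield stabilization, and your target conclusion (individual elements lying in $\Stab(\bdy_\pm\phi)$) is also stronger than what the criterion requires or what is true --- one only needs two of the elements to lie in the \emph{same} coset.

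Concretely, the paper's proof shows that the well-function machinery you invoke only disposes of part of the problem (its Case~1 and Case~2a, where one contradicts the inequality $\abs{W_\phi(S_0\cdot\phi^k\theta_i)-k}<R$ directly). The remaining case is where the conjugating elements are represented by maps $h_i\from G\to G$ with uniformly bounded Lipschitz and cancellation constants and where the images $(h_i)_\#$ of long leaf segments of a generic leaf $\gamma^-$ of $\Lambda^-_\phi$ all reappear inside $\gamma^-$. There the Trivial Restrictions hypothesis is used not as a statement that ``the well function is unperturbed by $\F$-relative drift,'' but to pin down the topological structure of $\Lambda^-_\phi$ (the bottom filtration element is pointwise fixed and the remaining \neg\ edges are linear), after which a genuinely different kind of argument is needed: a universal splitting of $\gamma^-$ into pieces $\mu_t$, a finiteness claim for the possible values of $(h_i)_\#(\mu_t)$, and a rigidity argument using fixed subgroups $\Fix(\Upsilon_i^\inv\Upsilon_1)$ of rank $\le n$ (the Bestvina--Handel bound), their primitivity, and a count of valence-$\ge 3$ vertices in the corresponding Stallings graph, to conclude that after passing to a subsequence $\theta_i^\inv(\Lambda^-_\phi)$ is constant and hence two $\theta_i$'s share a coset. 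None of this is present or anticipated in your proposal; the obstacle you flag (finding enough intermediate classes to feed Corollary~\ref{bounded intersection}) is real but is the easier half, and your paragraph-three claim that bounded $\fscn$-displacement forces $h_i(c)$ and $c$ to share long common subpaths is also not what the displacement hypothesis gives --- it controls well values of intermediate classes, not overlaps between a circuit and its image. As written, the proof does not go through.
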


Henceforth we fix $\F$, $\phi$, and $\Lambda^+_\phi \in \L(\phi)$ as in the theorem and we let $\Lambda^-_\phi \in \L(\phi^\inv)$ be the dual repelling lamination of $\L(\phi)$, and so $\Lambda^-_\phi$ also fills. The existence of a filling lamination forces $\F$ to have co-edge number~$\ge 2$ (Fact~\ref{FactANA}~\pref{FactAnaCoEdgeTwo}).

\subsection{Free splitting complex and spine of relative outer space.}
\label{SectionFSCRelSpine}
We first set some notation regarding the free splitting complex $\FS(F_n)$. A \emph{free splitting} is a minimal, simplicial action of $F_n \act T$ on a simplicial tree $T$ having trivial edge stabilizers; usually we suppress the action from the notation and write simply $T$ for a free splitting. The conjugacy classes of vertex stabilizers of $T$ forms a free factor system $\F(T)$. Two free splittings are equivalent if there is an $F_n$-equivariant homeomorphism $S \mapsto T$, in which case $\F(S)=\F(T)$. On each free splitting we have the \emph{natural simplicial structure} whose vertices are the points of valence~$\ge 3$, and we have the \emph{natural geodesic metric} which assigns length~$1$ to each natural edge. Fix a set of representatives $\S$ of equivalence classes of free splittings.

The free splitting complex $\FS(F_n)$ has a $k$-simplex for each $T \in \S$ such that the set of natural edges has $k+1$ orbits under the action of $F_n$. The simplex corresponding to $S \in \S$ has a face corresponding to $T \in \S$ if and only if there is a \emph{collapse map $S \mapsto T$} meaning an $F_n$-equivariant map whose point pre-images are connected. Alternatively, we may regard $\S$ as the vertex set of the first barycentric subdivision $\FS'(F_n)$; the 1-skeleton of $\FS'(F_n)$ has a directed edge from $S \in \S$ to $T \in \S$ if and only if there is a collapse map $S \mapsto T$; and $\FS'(F_n)$ is characterized as the directed simplicial complex generated by its 1-skeleton, with a $k$-simplex for every directed path of length $k$. 

A \emph{map} $f \from S \to T$ between $S,T \in \S$ is a function which is simplicial with respect to some simplicial structures on $S$ and $T$ that refine the natural structures. Given $\Phi \in \Aut(F_n)$, a map $f$ is \emph{$\Phi$-twisted equivariant} if $f(\gamma \cdot x) = \Phi(\gamma) \cdot f(x)$; the case $\Phi = \Id$ yields ordinary equivariance. If not otherwise specified, maps are assumed to be equivariant in the ordinary sense.

The canonical right action of $\Out(F_n)$ on $\S$ is defined as follows: for each $T \in\S$ and $\theta \in \Out(F_n)$ the image $T^\theta \in \S$ is well-defined up to equivalence by precomposing the given action $F_n \act T$ with an automorphism $\Theta \in \Aut(F_n)$ representing $\theta$. This extends to a simplicial right action of $\Out(F_n)$ on $\FS'(F_n)$, and to a continuous action on the Gromov bordification $\overline\FS(F_n) = \FS(F_n) \union \bdy \FS(F_n)$. 

For readability, expressions of the right action $T^\theta$ will sometimes be rewritten $T \cdot \theta$.

Many relations between free splittings and conjugacy classes (of group elements or subgroups) satisfy an equivariance property with respect to each $\theta \in \Out(F_n)$ acting from the right on free splittings and $\theta^\inv$ acting from the left on conjugacy classes. For example, a conjugacy class $c$ is elliptic with respect to $S \in \S$ if and only if $\theta^\inv(c)$ is elliptic with respect to $S^\theta$. The same format holds for relations between free splittings and lines, and the following example of this relation will play a key role in the proof of the WWPD Construction Theorem:

\begin{theorem} \cite[Theorem 1.2]{\FSLoxTag}
The set of all $\tau \in \bdy\FS(F_n)$ which are attracting or repelling points for elements of $\Out(F_n)$ acting loxodromically on $\FS(F_n)$ corresponds bijectively with the set of filling laminations $\Lambda$ for elements of $\Out(F_n)$. This correspondence is equivariant in the following manner: for all corresponding pairs $\tau \leftrightarrow \Lambda$ and all $\theta \in \Out(F_n)$, $\tau^\theta \leftrightarrow \theta^\inv(\Lambda)$ is also a corresponding pair. 
\end{theorem}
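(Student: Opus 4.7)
The plan is to establish the bijection by combining the loxodromic characterization theorem of \FSLox\ with an intrinsic description of those boundary points of $\FS(F_n)$ that come from loxodromic elements. First I would invoke the loxodromic characterization: every $\phi \in \Out(F_n)$ acting loxodromically on $\FS(F_n)$ has a (necessarily unique) filling attracting lamination $\Lambda^+_\phi \in \L(\phi)$, and conversely any element with a filling attracting lamination is loxodromic. This already gives a well-defined assignment from loxodromic elements to filling laminations, and together with its inverse it provides surjectivity of the composed map from $\bdy\FS(F_n)$-points onto filling laminations.

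The crux is to descend this assignment from loxodromic elements $\phi$ to their attracting fixed points $\tau_+ = \bdy_+\phi \in \bdy\FS(F_n)$, which requires both well-definedness (if $\bdy_+\phi_1 = \bdy_+\phi_2$ then $\Lambda^+_{\phi_1} = \Lambda^+_{\phi_2}$) and injectivity (distinct $\tau_+$'s give distinct $\Lambda^+$'s). I would establish both via an intrinsic characterization of $\Lambda^+_\phi$ from $\tau_+$ alone: for any loxodromic $\phi$ and any $S \in \S$, the sequence $S \cdot \phi^n$ converges to $\tau_+$ in $\overline{\FS(F_n)}$, and the corresponding translation-length functions on conjugacy classes, suitably rescaled, converge projectively to a limit length function $\mu_{\tau_+}$ that depends only on $\tau_+$. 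The dual lamination of $\mu_{\tau_+}$, in the sense of the $\Out(F_n)$ theory of small trees and their laminations, then coincides with $\Lambda^+_\phi$. Since this construction depends only on $\tau_+$, the assignment $\tau_+ \mapsto \Lambda^+_\phi$ is well defined; since distinct $\tau_+$'s produce non-projectively-equivalent $\mu_{\tau_+}$'s and hence distinct dual laminations, it is also injective.

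Equivariance is a direct chase through definitions: if $S \cdot \phi^n \to \tau$ in $\overline{\FS(F_n)}$, then $(S \cdot \theta) \cdot (\theta^\inv \phi \theta)^n = (S \cdot \phi^n) \cdot \theta \to \tau \cdot \theta = \tau^\theta$, so $\tau^\theta$ is the attracting fixed point of $\theta^\inv \phi \theta$; combined with the elementary identity $\Lambda^+_{\theta^\inv \phi \theta} = \theta^\inv(\Lambda^+_\phi)$ this yields exactly the stated formula, that $\tau \leftrightarrow \Lambda$ implies $\tau^\theta \leftrightarrow \theta^\inv(\Lambda)$.

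The hardest step will be the intrinsic characterization of $\Lambda^+_\phi$ from limiting data at $\tau_+$, which requires the projection and convergence machinery of \FSLox\ (well functions, closest-point projections onto quasi-axes, and control of projection diameters under iteration of $\phi$). Without this machinery one cannot rule out the scenario in which two loxodromic elements with distinct filling laminations share an attracting fixed point, and excluding that scenario is the essential content of the bijection.
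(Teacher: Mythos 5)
This statement is not proved in the paper at all: it is quoted verbatim from \FSLox\ (Theorem 1.2 there) and used as a black box, so there is no internal argument to measure your sketch against; the comparison has to be with the machinery of the cited paper. Against that standard, your first step (loxodromic $\iff$ existence of a filling attracting lamination, which is Theorem 1.1 of \FSLox) and your equivariance computation $(S\cdot\theta)\cdot(\theta^\inv\phi\theta)^n = (S\cdot\phi^n)\cdot\theta$ together with $\L(\theta^\inv\phi\theta)=\theta^\inv\L(\phi)$ are fine, modulo the convention of which lamination of the pair is attached to which fixed point.

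The genuine gap is the descent step, which is exactly where all the content lies. You propose that the projectivized translation length functions of $S\cdot\phi^n$ converge to a limit $\mu_{\tau_+}$ ``depending only on $\tau_+$,'' and that distinct boundary points give projectively inequivalent limits whose dual laminations recover $\Lambda^+_\phi$. None of this is available: convergence of $S\cdot\phi^n$ to a point of the Gromov boundary of $\FS(F_n)$ gives no control over length functions (the Gromov bordification of the free splitting complex is not known to map to, let alone embed in, a space of projective trees or length functions), so ``$\mu_{\tau_+}$ depends only on $\tau_+$'' is an assertion at least as deep as the theorem itself --- it would amount to a partial identification of $\bdy\FS(F_n)$ with a space of trees, which is not what the well-function/projection machinery of \FSLox\ provides (that machinery is combinatorial, about intermediate conjugacy classes and collapses, not about limiting length functions). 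Even granting such convergence, the claimed injectivity (distinct $\tau_+$ give non-equivalent $\mu_{\tau_+}$) is unargued, and the identification of the dual lamination of a limit tree with $\Lambda^+_\phi$ on the nose is false in general without further work (dual laminations of limit trees typically contain extra, e.g.\ diagonal, leaves beyond the attracting lamination). Finally, your sketch only treats attracting fixed points of a single element: the theorem's well-definedness also requires handling the case where one boundary point is attracting for one loxodromic and repelling for another, which your construction does not address. The proof in \FSLox\ avoids all of this by working directly with the fold-axis/weak-attraction apparatus used to prove loxodromicity, rather than passing through limit length functions, and that is the missing bridge in your outline.
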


Let $\S_\F$ denote those $T \in \S$ such that $\F(T)=\F$. The flag subcomplex of $\FS'(F_n)$ spanned by $\S_\F$ is denoted $\K_\F$ and is called the \emph{spine of the outer space of $F_n$ rel $\F$}; for example if $\F = \emptyset$ then $\K_\F$ is the ordinary spine of the outer space of~$F_n$. The action of any $\theta \in \Out(F_n)$ takes $\K_\F$ to $\K_{\theta^\inv(\F)}$, and hence we have an equation of stabilizer subgroups $\Stab(\F) = \Stab(\K_\F)$. The complex $\K_\F$ is contractible, being a restricted deformation space in the sense of \cite[Theorem 6.1]{GuirardelLevitt:DefSpaces}).

Given $S \in \cS_\F$ and a conjugacy class $c$ of $F_n$ which is not carried by~$\F$ and hence is loxodromic on $S$, we say that $c$ is \emph{$S$-intermediate} or that $c$ \emph{does not fill~$S$} if there exists an $F_n$-orbit of edges which misses the axes of~$c$. Note that an $S$-intermediate conjugacy class is intermediate  in the sense of Definition~\ref{DefIntermediate}. Note also that an $S$-intermediate conjugacy class exists for each $S \in \cS_\F$, as follows. Since $\F$ has co-edge number~$\ge 2$, there exists a finite arc $\alpha \in S$ whose endpoints $v,w \in S$ have nontrivial stabilizer, such that $\alpha$ misses the $F_n$-orbit of some edge $E \subset S$. Since edges of $S$ have trivial stabilizers, one easily produces $\gamma \in F_n$ acting loxodromically on $S$ with axis equal to a union of translates of $\alpha$ which therefore misses the orbit of $E$; the conjugacy class of $\gamma$ is therefore $S$-intermediate.

\subparagraph{The well function on the spine of relative outer space.} \emph{Henceforth,} as we did in Section~\ref{SectionWells}, we shall fix attracting neighborhoods of $\Lambda^-_\phi$ and of $\Lambda^+_\phi$ with respect to which the well functions $w_{\phi^\inv}$ and $w_\phi$ are defined.

Consider $S \in \S_\F$ and a pair of $S$-intermediate conjugacy classes $c_1,c_2$, so for $i=1,2$ the axis of $c_i$ in $S$ misses the orbit of the interior of some natural edge $E_i \subset S$. If $E_1,E_2$ have the same orbits then $c_1,c_2$ are supported by the same proper free factor system --- namely, the conjugacy classes of stabilizers of components of $S \setminus (F_n \cdot E_1) = S \setminus (F_n \cdot E_2)$ --- and so we may apply \cite[Lemma 4.14 (3)]{\FSLoxTag} to conclude that $\abs{w_{\phi^{-1}}(c_1) - w_{\phi^{-1}}(c_2)}$ is uniformly bounded independent of $c_1,c_2$. If on the other hand $E_1,E_2$ have different orbits then there are one-edge free splittings $S_1,S_2$ connected by an edge such that $c_i$ is elliptic in $S_i$ --- namely, $S_i$ is obtained from $S$ by collapsing to a point each component of $S \setminus F_n \cdot E_i$ --- and so we may apply \cite[Lemma 4.20]{\FSLoxTag} to get the same conclusion. Define a well function
$$W_\phi(S) = w_{\phi^{-1}}(c)
$$
for all $S \in \cS_\F$, by simply choosing $c$ to be any $S$-intermediate conjugacy class. Note that $W_\phi(S)$ is coarsely well-defined, meaning that changing the choice of $c$ changes the value of $W_\phi(S) \in \Z$ by at most a constant independent of $S$ and $c$. Note also that $W_\phi(S)$ is a Lipschitz function on the spine of relative outer space: if $S,T \in \cS_\F$ and if there is a collapse map $S \mapsto T$ then any $T$-intermediate conjugacy class $c$ is also $S$-intermediate and so $\abs{W_\phi(S)-W_\phi(T)}$ is uniformly bounded. We record this as:

\begin{lemma}\label{LemmaCoarseAndLipschitz} $W_\phi(S)=w_{\phi^\inv}(c) \from \S_\F \to \mathbb{Z}$ is coarsely well-defined and Lipschitz.~\qed
\end{lemma}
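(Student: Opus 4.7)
The plan is that this lemma is essentially the formalization of the sketch in the paragraph preceding its statement, so the proof amounts to carefully organizing the two arguments already given there. I would begin by fixing $S \in \cS_\F$ and considering two $S$-intermediate conjugacy classes $c_1, c_2$, each avoiding the $F_n$-orbit of some natural edge $E_1, E_2 \subset S$ respectively. Both classes are intermediate (relative to~$\F$) in the sense of Definition~\ref{DefIntermediate}, so $w_{\phi^\inv}(c_1)$ and $w_{\phi^\inv}(c_2)$ are defined.

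For coarse well-definedness I would split into two cases. If $E_1$ and $E_2$ lie in the same $F_n$-orbit then $c_1$ and $c_2$ are both carried by the free factor system obtained from conjugacy classes of vertex stabilizers of components of $S \setminus (F_n \cdot E_1)$, and \cite[Lemma 4.14(3)]{\FSLoxTag} yields a uniform bound on $|w_{\phi^\inv}(c_1) - w_{\phi^\inv}(c_2)|$ depending only on that free factor system; since there are only finitely many $F_n$-orbits of such proper free factor systems arising from one-edge collapses (up to the action of $\Out(F_n)$ this can be arranged, but in fact what we need is that the bound depends only on the chosen attracting neighborhoods, which is exactly what \cite[Lemma 4.14(3)]{\FSLoxTag} provides). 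If $E_1$ and $E_2$ lie in different orbits, then collapsing everything outside $F_n \cdot E_i$ produces one-edge free splittings $S_i$ with $c_i$ elliptic in $S_i$, and $S_1, S_2$ span an edge in $\FS'(F_n)$; \cite[Lemma 4.20]{\FSLoxTag} then gives the bound. Taking the maximum of these two uniform constants yields coarse well-definedness of $W_\phi$.

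For the Lipschitz property I would observe that it suffices to bound $|W_\phi(S) - W_\phi(T)|$ whenever $S,T \in \cS_\F$ are joined by an edge of $\K_\F$, i.e.~whenever there is a collapse map $S \to T$ (or vice versa). In that case any $T$-intermediate conjugacy class $c$ is automatically $S$-intermediate, since if the axis of $c$ in $T$ misses the orbit of some natural edge of $T$, then its axis in $S$ misses the orbit of any natural edge of $S$ mapping onto that edge of $T$. Thus we may compute both $W_\phi(S)$ and $W_\phi(T)$ using the same $c$, which gives equality up to the coarse well-definedness constant already established. The Lipschitz constant is then simply that constant.

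The only subtlety, and the step most worth double-checking, is the first case of coarse well-definedness when $E_1, E_2$ lie in the same orbit: one must confirm that the hypothesis of \cite[Lemma 4.14(3)]{\FSLoxTag} --- that both $c_i$ are carried by a common proper free factor system not containing $\Lambda^-_\phi$ --- is indeed satisfied, which uses that the filling property of $\Lambda^\pm_\phi$ prevents that free factor system from carrying $\Lambda^-_\phi$. Once this is in place, the rest of the proof is bookkeeping.
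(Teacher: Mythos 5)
Your proposal is correct and follows essentially the same argument the paper gives (in the paragraph preceding the lemma): coarse well-definedness via the two cases of $E_1,E_2$ in the same orbit (citing Lemma 4.14(3) of \FSLox) or in different orbits (one-edge collapses and Lemma 4.20 of \FSLox), and the Lipschitz property from the observation that a collapse map $S \to T$ makes every $T$-intermediate class $S$-intermediate. Your added check that the hypotheses of Lemma 4.14(3) hold because the filling pair $\Lambda^\pm_\phi$ cannot be carried by the proper free factor system arising from the one-edge collapse is a reasonable point of care, but otherwise the proof coincides with the paper's.
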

 
\begin{remark} Here we have defined $W_\phi(S)$ for free splittings $S \in \S_\F$ where $\F=\F(S)$ is a multi-edge free splitting, whereas in \cite[Definition 4.15]{\FSLoxTag} we instead defined $W_\phi(T)$ for one-edge free splittings $T$. While the domains of these two well functions are at opposite extremes in some sense, they are nonetheless related: if there is a collapse map $S \mapsto T$ then $\abs{W_\phi(T) - W_\phi(S)}$ is uniformly bounded. This is an easy consequence of \cite[Definition 4.15]{\FSLoxTag} and of the properties laid out in Section~\ref{SectionConjClassWell} regarding well functions on intermediate conjugacy classes, namely coarse well-definedness and the coarse additive inverse property; see also Remark~\ref{RemarkWellFunctions}. One could therefore unify the two definitions to obtain a coarsely well-defined, Lipschitz well function $W_\phi$ on the free splitting complex of $F_n$ relative to~$\F$ in the sense of~\cite{HandelMosher:RelComplexHyp}.
\end{remark}

If $c$ is $S$-intermediate  and $\xi \in \cH$ then $\xi^{-1}(c)$ is $S^\xi$-intermediate.  Thus 
$$W_\phi(S^\xi) = w_{\phi^{-1}}(\xi^{-1}(c))
$$ 
for any $S$-intermediate $c$.  In the special case that $\xi = \phi^m$ we have 
\begin{align*}
W_\phi(S^{\phi^m}) &= w_{\phi^{-1}}(\phi^{-m}(c)) = w_{\phi^{-1}}(c) + m \\ &= W_\phi(S) + m
\end{align*}
(This accounts for our defining $W_\phi$ in terms of $w_{\phi^{-1}}$ and not $w_\phi$; if we had done the latter, we would have $W_\phi(S^{\phi^m})   = W_\phi(S) - m$.)

\subsection{Setting up the proof.} 
\label{SectionThmESetup}
We may assume without loss of generality that $\phi$~and~$\phi^{-1}$ are rotationless. The action of $\phi$ on $\fscn$ is loxodromic by \cite[Theorem 1.1]{\FSLoxTag}. 

For proving the WWPD Construction Theorem by contradiction, we assume that of $\phi$ is not a WWPD element of the action $\cH \act \FS(F_n)$. Using that WWPD is equivalent to item~\pref{ItemBadRayNo} of Proposition~\ref{PropWWPDProps}, there exists a sequence $\theta_i \in \cH$ satisfying the following:

\bigskip

\noindent\textbf{Distinct Coset Property:} $\theta_i^{-1}\theta_j \not \in \Stab(\partial_\pm \phi)$ for $i \ne j$.

\medskip\noindent\textbf{Long Cylinder Property:} For each $S_0 \in \cS_\F$ there exists $R>0$ such that for any $K \ge 0$ there exists $I \ge 0$ such that if $0 \le k \le K$ and if $i \ge I$ then 
$$d(S_0 \cdot {\phi^{k}\theta_i}, S_0 \cdot {\phi^{k}}) < R 
$$
Setting $k=0$ and applying Lemma~\ref{LemmaCoarseAndLipschitz}, after further increasing $R$ the following inequality also holds for all $i$
$$\abs{W_\phi(S_0 \cdot {\theta_i})} < R
$$ 
Therefore, for any $K \ge 0$ there exists $I \ge 0$ such that if $0 \le k \le K$ and if $i \ge I$ then  
$$ \abs{W_\phi(S_0 \cdot {\phi^{k}\theta_i}) -k} < R 
$$
The latter inequality uses that $W_\phi(S_0 \cdot {\phi^{k}}) = W_\phi(S_0)+k$.

\bigskip\emph{Henceforth} we fix an initial choice of the sequence~$\Theta = (\theta_1,\theta_2,\ldots)$ satisfying the \emph{Distinct Coset Property} and the \emph{Long Cylinder Property}, although notice that these two properties each continue to hold after passing to an arbitrary subsequence of $\Theta$, which we will do as the proof proceeds. We also fix a choice of $S_0$ and a corresponding choice of $R$, although in later parts of the proof we may impose further constraints on the choice of~$S_0$ (the \emph{Long Cylinder Property} lets us change $S_0$ at the expense of increasing $R$).

\paragraph{Case analysis and outline of the proof:} Let $\C_\Theta$ be the set whose elements are infinite sequences $(c_1,c_2,\ldots)$ of intermediate conjugacy classes such that for some $S \in \cS_\F$ --- and hence for every $S \in \cS_\F$ (Lemma~\ref{LemmaEqQI}~\pref{ItemLengthBilip}) --- the length $L_S(c_i)$ of $c_i$ in $S$ is bounded independently of $i$, whereas the length $L_S(\theta_i^{-1}(c^{\vphantom{-1}}_i))$  is not  bounded independently of~$i$.

The proof of the WWPD Construction Theorem will break into Case~1 where $\C_\Theta \ne \emptyset$, and Case~2 where $\C_\Theta = \emptyset$; the geometric meaning of this case analysis is explained in Lemma~\ref{LemmaCTheta}. Case~2 will break into two further subcases, based on an analysis of the images of a generic leaf of $\Lambda^-$ under the outer automorphisms $\theta_i^\inv$.

In Cases 1 and 2(a), using neither the \emph{Distinct Coset Property}, nor the hypothesis on triviality of the restrictions of $\phi$, after passing to a subsequence of $(\theta_1,\theta_2,\ldots)$ we will obtain contradictions to regularity properties of well functions that were derived in Section~\ref{SectionWells}.  Having eliminated cases 1 and 2(a) we will be able to pass to a further subsequence of $(\theta_1,\theta_2,\ldots)$ satisfying very strong properties. In the final case 2(b) those properties will be combined with the hypothesis on triviality of restrictions of~$\phi$, allowing us to pass to one final subsequence for which the images $\theta_i^\inv(\Lambda^-)$ of the repelling lamination $\Lambda^-$ are all the same. By applying results of \FSLox\ it will follows that the images $\theta_i(\bdy_-\phi) \in \bdy\FS(F_n)$ of the repelling fixed point are all the same, as are the images $\theta_i(\bdy_+\phi) \in \bdy\FS(F_n)$ of the attracting fixed point, from which we will obtain a contradiction to the \emph{Distinct Coset Property}. 

\bigskip

For each $S \in \cS_\F$, define $\C_\Theta(S)$ to be set of all sequences $(c_1,c_2,\ldots)$ in $\C_\Theta$ such that each $c_i$ is $S$-intermediate. 

\begin{lemma} \label{LemmaCTheta}The following are equivalent.
\begin{enumerate}
\item\label{ItemCThetaEmpty}
$\C_\Theta = \emptyset$.
\item\label{ItemCThetaSEmpty}
$\C_\Theta(S) =  \emptyset$ for some $S \in \cS_\F$.
\item\label{ItemUnifLipEq}
For all $S \in \cS_\F$, the outer automorphisms $\theta_i^\inv$ are represented by $\theta_i^\inv$-twisted equivariant maps $\bar h_i : S \to S$ with uniformly bounded Lipschitz constants, uniformly bounded cancellation constants, and uniformly bounded quasi-isometry constants.  
\end{enumerate}
\end{lemma}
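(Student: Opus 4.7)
The implication $(3) \Rightarrow (1)$ is direct: a uniform Lipschitz bound $K$ on $\bar h_i : S \to S$ gives $L_S(\theta_i^\inv c) \le K \, L_S(c)$ for every conjugacy class $c$, so no sequence $(c_i)$ of intermediate classes with $L_S(c_i)$ bounded can have $L_S(\theta_i^\inv c_i)$ unbounded, forcing $\C_\Theta = \emptyset$. For $(1) \Leftrightarrow (2)$, the key claim is that a conjugacy class $c$ not carried by $\F$ is intermediate in the sense of Definition~\ref{DefIntermediate} if and only if $c$ is $S$-intermediate for any $S \in \cS_\F$. If $c$ is intermediate then $c$ is carried by some proper free factor system $\F'$ with $\F \sqsubsetneq \F'$; an equivariant collapse of $S$ onto a free splitting $S'$ realizing $\F' = \F(S')$ makes $c$ elliptic in $S'$, so the axis of $c$ in $S$ lies in the collapsed sub-forest and misses at least one edge orbit. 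Conversely, if the axis of $c$ in $S$ misses some orbit $\omega$, then collapsing all other edge orbits yields a one-edge splitting $S''$ with $\F \sqsubsetneq \F(S'') \sqsubsetneq \{[F_n]\}$ in which $c$ is elliptic, so $c$ is not filling rel $\F$. Consequently $\C_\Theta = \C_\Theta(S)$ for every $S \in \cS_\F$, giving $(1) \Leftrightarrow (2)$.

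The main content is $(1) \Rightarrow (3)$. By Lemma~\ref{LemmaEqQI}~\pref{ItemLengthBilip}, the three bounds in (3) transfer, up to uniform constants, between any two elements of $\cS_\F$, so it suffices to verify them for a single $S \in \cS_\F$, choosing each $\bar h_i$ to be a $\theta_i^\inv$-twisted equivariant simplicial representative of minimal Lipschitz constant. I would prove uniform Lipschitz by contradiction: suppose $\Lip(\bar h_i)$ is unbounded along a subsequence. After a further subsequence there is a fixed $F_n$-orbit $\omega$ of edges of $S$, with representative $E_\omega$, such that $L_S(\bar h_i(E_\omega)) \to \infty$. Since $\F$ has co-edge number $\ge 2$ (Fact~\ref{FactANA}~\pref{FactAnaCoEdgeTwo}), there is another orbit $\omega' \ne \omega$, and one can find a conjugacy class $c$ of bounded $S$-length whose axis crosses $\omega$ at least once while missing $\omega'$; by the equivalence of intermediate and $S$-intermediate established above, such $c$ is intermediate. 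Deducing $L_S(\theta_i^\inv c) \to \infty$ would then contradict $\C_\Theta = \emptyset$.

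The main obstacle is to control the cancellation when tightening $\bar h_i$ applied to the axis of $c$, since a priori the bounded cancellation constant of $\bar h_i$ grows with $\Lip(\bar h_i)$. The approach I envision is to choose $c$ (depending on $i$) so that the two turns of its axis at the endpoints of some crossing of $\omega$ are \emph{legal} with respect to the direction map induced by $\bar h_i$; legality prevents backtracking between $\bar h_i(E_\omega)$ and the images of the adjacent edges. Since the quotient $S/F_n$ is a finite graph of groups and the direction map induced by $\bar h_i$ has only finitely many combinatorial types, after passing to a further subsequence the direction map can be assumed constant and $c$ chosen to be a fixed intermediate class with the required legality; this yields $L_S(\theta_i^\inv c) \ge L_S(\bar h_i(E_\omega)) - O(1) \to \infty$, the desired contradiction. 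Once uniform Lipschitz is established, the bounded cancellation lemma provides uniform cancellation constants, and uniform quasi-isometry constants follow by applying the same argument to $\theta_i$-twisted equivariant representatives of the opposite direction, together with the standard fact that equivariantly homotopy-inverse maps of $F_n$-trees with two-sided Lipschitz bounds give uniform quasi-isometries on universal covers.
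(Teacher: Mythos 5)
Your proof of $(3)\Rightarrow(1)$ is fine, and your citation of bounded cancellation and the reduction to a single $S\in\cS_\F$ via Lemma~\ref{LemmaEqQI}~\pref{ItemLengthBilip} match the paper's strategy. There are, however, two gaps.

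The first is minor and fixable. You claim a conjugacy class not carried by $\F$ is intermediate (Definition~\ref{DefIntermediate}) if and only if it is $S$-intermediate. The direction ``$S$-intermediate $\Rightarrow$ intermediate'' is true, but the converse is false: an intermediate class $c$ is carried by some proper free factor system $\F'$ with $\F\sqsubsetneq\F'$, but $\F'$ need not be compatible with $S$, so there may be no collapse map $S\to S'$ realizing it. For instance, with $\F=\emptyset$, $S$ the Cayley tree of the rose on $\{a,b,c\}$, the class $[abc]$ is carried by the proper free factor $\langle abc\rangle$, hence intermediate, but its axis crosses all three edge orbits. So $\C_\Theta(S)\subsetneq\C_\Theta$ in general, and your proof of $(2)\Rightarrow(1)$ fails. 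This is repairable by proving $(2)\Rightarrow(3)$ directly rather than $(1)\Rightarrow(3)$: the classes $c$ you construct are $S$-intermediate, so the contradiction already lands inside $\C_\Theta(S)$, and the chain $(3)\Rightarrow(1)\Rightarrow(2)\Rightarrow(3)$ closes without needing $(2)\Rightarrow(1)$.

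The second gap is the substantive one and is where your proof would actually fail. You propose to choose a circuit $c$ crossing the badly-stretched orbit $\omega$ whose turns are legal for the direction map induced by $\bar h_i$, and you assert that after passing to a subsequence this is possible with a fixed $c$. That assertion is false in general. Consider $S$ corresponding to a rose $G$ (the case $\F=\emptyset$) and suppose the direction map of $h_i$ at the unique vertex has a gate $\alpha$ such that every edge has at least one end in $\alpha$ and some edge $E$ has both ends in $\alpha$: then any circuit crossing $E$ must, at one of the two endpoints of that crossing, make a turn whose two sides both map into $\alpha$, i.e.\ an illegal turn. So no choice of circuit crossing $E$ avoids cancellation at $E$, no matter how one passes to subsequences. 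The paper's Lemma~\ref{LemmaTopLegalCircuits} is written precisely to resolve this: in the hard case it \emph{replaces} $h_i$ by a different representative of $\theta_i^\inv$, obtained by dragging the vertex across the offending edge $\eta$, and shows by induction on $\sum_{E\text{ with both ends in }\alpha}|h(E)|$ that one eventually reaches a representative for which legal circuits exist; the no-cancellation property is built into the output of the lemma, and one then reads off the uniform Lipschitz bound from the boundedness of $L_{S'}([(h_i)_\#(\tau_{E,i})])$. Your approach omits the possibility that the representative itself must be changed, and without that the desired legal circuits simply do not exist. (A small further point: once uniform Lipschitz is in hand, the paper's Lemma~\ref{LemmaEqQI}~\pref{ItemEquivIsQI} gives uniform quasi-isometry constants directly for each $\bar h_i$, without needing to run the construction in the opposite direction.)
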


Underlying the proof of Lemma~\ref{LemmaCTheta} are Lemmas~\ref{LemmaEqQI} and~\ref{LemmaTopLegalCircuits}. The first of these includes results of Forester and of Guirardel-Levitt, together with a uniformity clause. 
 
\begin{lemma}\label{LemmaEqQI} For all $S,T \in \cS_\F$ with their natural geodesic metrics, we have:
\begin{enumerate}
\item\label{ItemQIExists}
\cite[Theorem 1.1]{Forester:Deformation}  There exists an $F_n$-equivariant quasi-isometry $S \mapsto T$.
\item\label{ItemLengthBilip}
\cite[Theorem 3.8 (7)]{GuirardelLevitt:DefSpaces}
There exists $k \ge 1$ depending only on $S,T$ such that for all conjugacy classes $c$ in $F_n$ we have $\frac{1}{k} \cdot L_T(c) \le L_S(c) \le k \cdot L_T(c)$.
\item\label{ItemEquivIsQI}
For every $\ell \ge 1$ there exists $\ell' \ge 1$, $c' \ge 0$ such that for every $\Phi \in \Aut(F_n)$, each $\ell$-Lipschitz $\Phi$-twisted equivariant map $f \from S \mapsto T$ is an $(\ell',c')$-quasi-isometry.
\end{enumerate}
\end{lemma}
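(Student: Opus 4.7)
Items (1) and (2) are verbatim citations of Forester and of Guirardel-Levitt respectively; neither requires further argument. Only item (3) needs a proof plan.

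My first step for (3) is to convert $\Phi$-twisted equivariance into ordinary equivariance by re-twisting the target. Let $T_\Phi$ denote the tree $T$ equipped with the new $F_n$-action $\gamma\cdot_\Phi t := \Phi(\gamma)\cdot t$. Then $T_\Phi$ is a minimal, cocompact, simplicial $F_n$-tree with trivial edge stabilizers (its vertex-stabilizer conjugacy classes form the free factor system $\Phi^{-1}(\F)$), and $f : S \to T_\Phi$ is $F_n$-equivariant in the ordinary sense. Crucially, $T$ and $T_\Phi$ coincide as metric spaces, so the $\ell$-Lipschitz hypothesis on $f : S\to T$ is identical to the $\ell$-Lipschitz hypothesis on $f : S \to T_\Phi$, and the upper quasi-isometry inequality $d_T(f(x),f(y))\le \ell\, d_S(x,y)$ holds automatically.

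For quasi-surjectivity, the image $f(S)$ is $F_n$-invariant in $T_\Phi$ (since $F_n\cdot_\Phi f(S) = f(F_n\cdot S) = f(S)$), so minimality of $T_\Phi$ as an $F_n$-tree forces $f(S)=T$; hence $f$ is in fact surjective. For the lower distortion bound, my plan is to precompose with Forester's equivariant quasi-isometry $g : T\to S$ from item~(1) to produce $h := g\circ f : S\to S$, which is $(\ell\cdot\ell_g)$-Lipschitz, $\Phi$-twisted equivariant, and surjective. I would then invoke the standard ``Lipschitz implies bi-Lipschitz'' principle for equivariant Lipschitz surjections between cocompact minimal simplicial $F_n$-trees with trivial edge stabilizers: any such surjection is an $(\ell',c')$-quasi-isometry with constants depending only on the underlying metric trees and on $\ell$. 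The proof of that principle reduces to the bounded cancellation lemma applied on a finite fundamental domain of $F_n\act S$, combined with the translation-length comparison $|\Phi(\gamma)|_T \le \ell\,|\gamma|_S$ derived from $f$ and its counterpart through $g$, which together prevent the collapse of any $F_n$-orbit of edges without contradicting minimality of~$T$.

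The main obstacle is guaranteeing that the constants $\ell'$ and $c'$ do not depend on $\Phi$, since $\Phi\in\Aut(F_n)$ is allowed to be arbitrary. The resolution is geometric rather than algebraic: $\Phi$-twisting never alters the underlying metric tree~$T$, only the $F_n$-action on it. All Lipschitz estimates on $f$ viewed as a map of metric spaces are therefore automatically $\Phi$-independent, and $\Phi$-twisted equivariance enters only to propagate metric control from a finite fundamental domain of $F_n\act S$ to all of $S$ by cocompactness, an operation that is indifferent to which particular automorphism $\Phi$ is used to define the target action.
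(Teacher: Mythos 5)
Your untwisting step matches the paper's (the paper precomposes the $F_n$-action on $S$ with an automorphism rather than re-twisting $T$, but it is the same reduction), and the upper bound and surjectivity observations are fine. The gap is in the lower distortion bound, which is the entire content of item~(3). You delegate it to a ``standard Lipschitz implies bi-Lipschitz principle'' for equivariant Lipschitz surjections between cocompact minimal simplicial $F_n$-trees with trivial edge stabilizers; no such principle holds at that level of generality, and in the case at hand it is essentially the statement being proved. Concretely: for $F_2=\langle a,b\rangle$, let $S$ be the Cayley tree of $F_2$ and $T$ the Bass--Serre tree of the splitting $\langle a\rangle * \langle b\rangle$; the equivariant map $g\mapsto g\langle a\rangle$, extended over edges, is $2$-Lipschitz, equivariant and surjective between minimal cocompact simplicial trees with trivial edge stabilizers, yet it collapses the axis of $a$ to a point and is not a quasi-isometry. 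Bounded cancellation holds for this map, so ``bounded cancellation plus no collapsed orbit of edges'' is not the right mechanism; what is actually needed, and what your sketch never isolates quantitatively, is the hypothesis that source and target have the same vertex stabilizer system $\F$ (i.e.\ lie in one deformation space), which is exactly where $S,T\in\cS_\F$ must enter.

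The paper makes that input explicit and quantitative: after untwisting, it factors $f$ as a collapse map followed by a Stallings fold sequence; since $f$ is $\ell$-Lipschitz for the natural metrics, the number of folds is bounded in terms of $\ell$ and $n$ alone, so it suffices to prove uniform quasi-isometry constants for a single collapse and a single fold. For a collapse, equality of vertex stabilizers forces each component of the collapsed forest to contain at most one natural vertex per orbit, hence to have diameter bounded by a constant depending only on~$n$; for a fold, the same hypothesis forces the two identified endpoints to lie in different orbits with at most one nontrivial stabilizer, so point preimages have diameter at most~$4$. If you want to keep your reduction to the self-map $h=g\composed f \from S \to S$, you would still have to prove the uniform lower bound for such (twisted) equivariant maps by an argument of this kind; the composition trick by itself postpones rather than does the work (and note that $g$ from item~(1) is only a quasi-isometry, so $h$ is only coarsely Lipschitz and only quasi-surjective, minor points that signal the same issue).
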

 
\begin{proof} The twisted version of \pref{ItemEquivIsQI} follows from the untwisted version by precomposing the action homomorphism $F_n \to \Aut(S)$ with an appropriate automorphism. To prove the untwisted version, the given map $f$ factors as a product of an initial collapse map---which collapses to a point each edge whose $f$-image is a vertex of $T$---followed by a Stallings fold sequence. Since $S$ and $T$ have their natural geodesic metrics, the number of folds in the sequence is bounded by a constant depending only on the Lipschitz constant $k$ and the rank~$n$. It therefore suffices to consider separately the cases that $f$ is either a collapse map or a single fold, because a composition of sequence of quasi-isometries has QI constants depending only on the sequence of QI constants of the factors of the composition. 

If $f \from S \to T$ is a collapse map, let $\sigma \subset S$ be the union of collapsed natural edges. Each component of $\sigma$ contains at most one natural vertex in each orbit, because $S,T$ have the same vertex stabilizers. The diameters of the components of $\sigma$, in the natural metric on $S$, are therefore uniformly bounded by a constant $D$ depending only on the rank~$n$. Given $x,y \in S$, the segment $\overline{xy}$ decomposes into an alternating concatenation of segments of uncollapsed edges and collapsed segments of length at most~$D$, and so $d_T(f(x),f(y)) \ge \frac{1}{1+D} d(x,y) - 2D$; since $f$ is $1$-Lipschitz, this gives uniform quasi-isometry constants. 

If $f \from S \to T$ is a fold map, then there are oriented natural edges $E_0,E_1$ with the same initial vertex $v$ and initial segments $e_i \subset E_i$ with terminal points $w_i$ such that $f$ is a quotient map that identifies $\gamma \cdot e_0$ and $\gamma \cdot e_1$ for all $\gamma \in \gamma$. The points $w_0,w_1$ are in different orbits and at most one has nontrivial stabilizer, because $S,T$ have the same vertex stabilizers. It follows that if $x,y \in S$ satisfy $f(x)=f(y) \in T$ then the segment $\overline{xy}$ has length at most~$4$. We may assume that if $e_i \ne E_i$ then $e_i$ has length~$\frac{1}{2}$ and so on each edge of $S$ the map $f$ stretches length by a factor between~$1$ and~$2$. It follows that $f$ has uniform quasi-isometry constants independent of~rank. 
\end{proof}

Suppose that $G$ is a marked graph and that $H $ is a subgraph whose non-contractible components represent $\F$. In this context we refer to the edges of $G\setminus H$ as \emph{top} edges; since $\F$ has co-edge number $\ge 2$, there are at least two top edges. We obtain a free splitting $S \in \cS_\F$ from the universal cover $\wt G$ by collapsing to a point each component of the full pre-image $\wt H$ of $H$. We say that $S$ is \emph{determined by} or \emph{represented by the graph pair $(G,H)$}. (In \cite{\FSLoxTag} we required that $H$ be a natural subgraph of $G$ and have only non-contractible components.  In the current context it is more useful to drop these requirements.) Every $S \in \cS_\F$ is represented by at least one graph pair $(G,H)$. If $S$ is determined by $(G,H)$ then a conjugacy class is $S$-intermediate if and only its representative in $G$ crosses some but not all top edges. 
 
Let $\cS_\F^* \subset \cS_\F$ be the set of those $S \in \cS_\F$ such that one of two possibilities holds: either every vertex of $S$ has non-trivial stabilizer; or all vertex stabilizers of $S$ are trivial and there is only one orbit of vertices. Equivalently $S$ is realized by a graph pair $(G,H)$ such that: either $H$ is nonempty contains all the vertices of $G$ and has no contractible components; or $H$ is empty and $G$ is a rose. The first possibility applies when $\F$ is nonempty, the second when $\F$ is empty.

 \begin{lemma} \label{LemmaTopLegalCircuits} For each $S \in \cS_\F^*$, for each graph pair $(G,H)$ representing $S$ as above, and for each $\theta \in \cH$, there exists a homotopy equivalence $h : (G,H) \to (G,H)$ representing $\theta^{-1}$ so that for each top edge $E$ there is a closed path   $\tau_E$ forming a circuit  and satisfying the following properties.
 \begin{enumerate}
 \item \label{item:does not fill}$\tau_{E} $ crosses $E$ but not all top edges.
 \item \label{item:no cancellation}No top edges  are cancelled when  $h_i(\tau_{E} )$ is tightened to $(h_i)_\#(\tau_{E})$.
  \item \label{item:at most two} Each $\tau_{E}$ crosses at most two  top edges, counted with multiplicity.  \end{enumerate}  
 \end{lemma}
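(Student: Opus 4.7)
The plan is to construct the homotopy equivalence $h$ and the circuits $\tau_E$ by exploiting the fact that $\cH \subseteq \IAThree$ fixes each component of $\F$ individually. I would first select a suitable $h$, then build $\tau_E$ by a case analysis on the endpoints of $E$, and verify condition~(2) via a simple cancellation-avoidance argument that uses the subgraph structure of $(G,H)$.

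Since $\theta \in \cH \subseteq \IAThree$ preserves $\F$ and every $\IAThree$-periodic free factor conjugacy class is fixed (\cite[Theorem~3.1]{\SubgroupsTwoTag}), each component of $H$ is $\theta$-invariant. I therefore choose a homotopy equivalence $h \from (G,H) \to (G,H)$ representing $\theta^{-1}$ that preserves each component of $H$ setwise, and further adjust $h$ within its free homotopy class so that it fixes every vertex of $H$ (using torsion-freeness of $\IAThree$). For each top edge $E$ with endpoints $v,w \in H$, I construct $\tau_E$ as follows. If $v$ and $w$ lie in the same component $A$ of $H$ (including the case $E$ is a loop), take $\tau_E = E \cdot \gamma$ where $\gamma \subset A$ is a nontrivial reduced path (or loop, when $v = w$) from $w$ to $v$. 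If $v$ and $w$ lie in distinct components $A \ni v$ and $B \ni w$ of $H$, take $\tau_E = E \, \alpha \, \bar E \, \beta$ where $\alpha$ is a nontrivial reduced loop in $B$ based at $w$ and $\beta$ is a nontrivial reduced loop in $A$ based at $v$; such loops exist since no component of $H$ is contractible. The auxiliary $H$-paths can be arranged so that the cyclic concatenation is an immersed circuit, because at every vertex-junction in $\tau_E$ one side of the junction arrives along a top edge while the other leaves along an $H$-edge, and these cannot be inverses. Since $\tau_E$ crosses $E$ (once or twice) and no other top edge, properties~(1) and~(3) are immediate, using that the co-edge number of $\F$ is at least two.

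To verify (2), observe that in both subcases $\tau_E$ has the cyclic form $E_1 \, \alpha_1 \, E_2 \, \alpha_2 \cdots E_k \, \alpha_k$ with each $E_j$ a top edge and each $\alpha_j$ an edge path in $H$. Because $h$ preserves $H$, each image $h(\alpha_j)$ is again an edge path in $H$, whereas the top edges appearing in each $h(E_j)$ never lie in $H$. Consequently, at every vertex-junction between $h(E_j)$ and $h(\alpha_j)$ the two sides of the concatenation begin with edges of different types and so cannot be inverses. Any cancellation that occurs when tightening $h(\tau_E)$ is therefore trapped entirely inside the $H$-paths $h(\alpha_j)$ and never consumes a top edge, which yields~(2). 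The hard part of the argument is the degenerate case of $\cS_\F^*$ in which $\F = \emptyset$ and $H$ is empty, so that $G$ is a rose and every edge is a top edge: here the $H$-protection mechanism of Step~3 collapses. To handle it one would take $\tau_E = E$ and choose $h$ to be a specially well-behaved representative of $\theta^{-1}$ (for instance a relative train-track or Whitehead-minimal representative), arranging that $h(E)$ is already cyclically reduced for every loop-edge $E$; alternatively one could take $\tau_E = E \cdot F$ for a cleverly chosen second edge $F$ avoiding the bad cancellation pairs at the two cyclic junctions. Securing this choice of $h$ simultaneously for all $E$ is the most delicate point.
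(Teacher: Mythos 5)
Your construction in the case where both endpoints of $E$ lie in the same component of $H$ does not establish property~(2), and the cancellation argument you give for it is false. The problem is that $h_\#(E)$ need not begin or end with a top edge: writing $h_\#(E)=\mu\,\sigma\,\nu$ with $\mu,\nu$ the maximal initial and terminal subpaths in $H$, the junctions of $h(\tau_E)$ pit the $H$-paths $\mu,\nu$ against $h_\#(\gamma)\subset H$, so cancellation is \emph{not} blocked by a ``different type of edge'' there. If the connecting path $\gamma$ happens to lie in the one bad homotopy class for which $\nu\, h_\#(\gamma)\,\mu$ tightens to a trivial path, the first and last top edges of $\sigma$ become cyclically adjacent and can cancel. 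Concretely, for the automorphism $a\mapsto a$, $c\mapsto c$, $b\mapsto a(\bar c a c)\,b\,(\bar c a c)a$ on a rose with $H$ the $a$-loop and top edges $b,c$, your recipe allows $\gamma=\bar a\bar a$, and then $h_\#(E\gamma)$ loses a $c\bar c$ pair of top edges upon cyclic tightening. This is exactly why the paper's proof, in the $C_1=C_2$ subcase, insists that the closed path $\beta_1$ be chosen so that $\nu\, h_\#(\beta_1)\,\bar\mu$ does not tighten to the trivial path; with that one extra condition (avoiding a single bad class, which is possible since the component is non-contractible) your construction goes through, and in the distinct-components subcase your $E\alpha\bar E\beta$ with nontrivial loops does work, essentially as in the paper.

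The larger gap is the rose case $\F=\emptyset$, $H=\emptyset$, which you explicitly leave unresolved. This is the genuinely delicate half of the lemma, and your suggested remedies are not viable as stated: the graph $G$ is the fixed rose determined by $S\in\cS_\F^*$, so you cannot pass to a relative train track or otherwise change the marked graph, and ``Whitehead-minimal'' does not obviously give cyclic reducedness of $h(E)$ for every petal simultaneously, nor does it address the configuration in which every edge has an end in one gate. The paper handles this case by choosing $h$ fixing the vertex with at least two gates and then, when some gate $\alpha$ meets every edge and contains both ends of some edge, running an induction on $\sum\abs{h(E)}$ over the edges with both ends in $\alpha$: post-composing $h$ with a map dragging the vertex across the common initial edge $\eta$ strictly decreases this sum while keeping a representative of $\theta^\inv$ on the same rose, and the process terminates in an ``easy'' configuration where one can take $\tau_E=E$ or $\tau_E=EE'$. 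Without an argument of this kind (or some substitute), the lemma is not proved in the case that is actually needed when $\F$ is empty.
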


 \begin{proof}   As a first case, assume that $H$ contains every vertex in $G$ and that each component of $H$ is non-contractible.      Choose a homotopy equivalence $h :(G,H) \to (G,H)$ representing $\theta$ that restricts to an immersion on each edge and preserves each component of $H$.  Let $C_1$ and $C_2$ be the components of $H$ that contain the initial and terminal endpoints of $E$ respectively and let    $\mu \subset C_1$ and $\nu\subset C_2$ be the maximal initial and terminal subpaths of $(h)_\#(E)$ that are contained in $H$.      If $C_1 \ne C_2$ then we choose  $\tau_{E} = E\beta_2  \overline E \beta_1$ where $\beta_1 \subset C_1$ and $\beta_2 \subset C_2$ are any closed paths.  Top edges are not  cancelled when $h(  E\beta_2  \overline E \beta_1)$ is tightened to $(h)_\#( E\beta_2  \overline E \beta_1)$ because  $  \nu (h)_\#(\beta_2) \bar \nu$  and $  \bar \mu (h)_\#(\beta_1)   \mu$ do not tighten to  trivial paths.    If $C_1 = C_2$   then we  choose $\tau_{E} = E\beta_1$ where $\beta_1 \subset C_1$ is a closed path  such that $  \nu (h)_\#(\beta_1) \bar \mu$ does not tighten to the trivial path.  This completes the proof of \pref{item:no cancellation};    \pref{item:does not fill} and \pref{item:at most two} are clear from the construction because $G \setminus H$ has at least two edges.
 
 Suppose now that $H = \emptyset$ and that $G$ is  a rose.    Choose a homotopy equivalence $h : G \to G$ representing $\theta$ that restricts to an immersion on each edge, that fixes the unique vertex  and that has at least two gates at that vertex.    The easy case is that for each edge $E$, either the ends of   $E$ are contained in distinct gates  or the ends of $E$ are contained in a single gate  and there is an edge $E'$ with neither end in that gate. In the former case we take $\tau_{E_1} = E_1$ and in the latter      $\tau_{E_1} = E_1E'$. 
 
 The hard case is that there is a gate $\alpha$ such that every edge has at least one end in $\alpha$ and some edge has both ends in $\alpha$.   We induct on the sum of the lengths $|h(E)|$ of the paths $h(E)$ over all edges $E$ that have both ends in $\alpha$. Enumerate and orient the edges of $G$ as $E_1,\ldots, E_K, E_{K+1},\ldots, E_L \,\, (1 \le  K < L)$, so that $E_1,\ldots,E_K$ have both ends in $\alpha$ and $E_{K+1},\ldots, E_L$ have only their initial end in $\alpha$. 

Let $\beta$ be the complement of $\alpha$, consisting of the terminal ends of $E_{K+1}, \ldots, E_L$. There is an oriented edge $\eta$ so that the edge path $h(E_j)$ begins with $\eta$ [resp. ends with $\bar\eta$] if and only  $E_j$ has initial end in $\alpha$ [resp. terminal end in $\alpha$]. Post-compose $h$ with a map that drags the unique vertex of $G$ across $\eta$, producing a new map $h' : G \to G$ still representing $\theta^{-1}$. Then the edge path $h'(E_j)$ is obtained from $h(E_j)$ by first removing the initial $\eta$ and then either removing the terminal $\bar \eta$  if $j \le K$ or adding $ \eta$ to the terminal end if $j > K$.   

Replace $h : G \to G$ by $h': G' \to G'$. If one of the easy cases holds, we are done.  Otherwise there is a gate $\alpha'$ such that every edge has at an end in $\alpha'$ and some edge has both ends in $\alpha'$.      All ends in the set $\beta$ (the terminal ends of $E_{K+1},...,E_L)$ are mapped by $h'$ to the direction $\bar\eta$, and no other end is mapped by $h'$ to $\bar\eta$, so $\beta$ is a single gate of $h'$ and no edge has both ends in the gate $\beta$. In particular,  $\beta \ne \alpha'$. Furthermore, $\alpha' $ must contain all the initial ends of $E_{K+1}\ldots E_L$, so none of $E_{K+1}\ldots,E_L$ have both ends in $\alpha'$. Thus the edges with both ends in $\alpha'$ form a subset of $E_1,\ldots,E_K$. Since  $|h'(E_k)| = |h(E_k)| - 2$ for all $1 \le k \le K$,  we are done by induction.  
 \end{proof}

\begin{proof}[Proof of Lemma~\ref{LemmaCTheta}] $\pref{ItemUnifLipEq} \Longrightarrow \pref{ItemCThetaEmpty} \Longrightarrow \pref{ItemCThetaSEmpty}$ is obvious so it suffices to show that $\pref{ItemCThetaSEmpty} \Longrightarrow \pref{ItemUnifLipEq}$. Suppose then that $\C_\Theta(S) = \emptyset$. Once we produce the maps $\bar h_i$ with uniformly bounded Lipschtz constants, the uniform bounds on cancellation constants and quasi-isometry constants follow from \cite[Lemma 3.1]{\BookZeroTag} and Lemma~\ref{LemmaEqQI} respectively. 

Choose $S' \in \cS_\F^*$ that is obtained from $S$ by collapsing a forest. If a conjugacy class is $S'$-intermediate then it is  $S$-intermediate. Thus $\C_\Theta(S') = \emptyset$. Since the equivariant collapse map $S \mapsto S'$ is 1-Lipschitz, it is a quasi-isometry with uniform constants by Lemma~\ref{LemmaEqQI} and hence has an equivariant coarse inverse $S' \mapsto S$ with uniform constants. It therefore suffices to verify \pref{ItemUnifLipEq} for $S'$. 

Let $(G,H)$ be  a graph pair representing  $S'$ as in the definition of $\cS_\F^*$. For each $\theta_i$ in the sequence $\Theta$ and for each top edge $E$ of $G \setminus H$, let  $h_i : (G,H) \to (G,H)$ be the homotopy equivalence representing $\theta_i^{-1}$ and $\tau_{E,i} \subset G $ the circuit  obtained by applying Lemma~\ref{LemmaTopLegalCircuits}. Lifting $h_i$ to the universal cover and collapsing the total lift of $H$, the map $h_i$ induces a $\theta_i^\inv$-twisted equivariant map $\bar h_i \from S' \to S'$. The conjugacy class $c_i$ determined by   $\tau_{E,i}$  is $S'$-intermediate and satisfies $L_{S'}(c_i) \le 2$.  Since $(c_1,c_2\ldots)$ is not an element of $\C_\Theta(S')$ it must be that $L_{S'}(\theta_i^{-1}c_i) = L_{S'}([(h_i)_\#(\tau_{E,i})])$ is uniformly bounded. Lemma~\ref{LemmaTopLegalCircuits}~\pref{item:no cancellation}  implies that there is a uniform bound to the number of top edges crossed by $h_i(E)$ for each top edge $E$.  Equivalently, the  Lipschitz constants are uniformly bounded.   
\end{proof}

\subsection{The case analysis.} 
\label{SectionThmECaseAnalysis}
We begin with:

\smallskip\noindent\textbf{Case 1: $\C_\Theta \ne \emptyset$} 

\smallskip\noindent
It is convenient in this case to choose $S_0 \in \cS_\F^*$. Let $(G_0,H_0)$ be a graph pair representing $S_0$. For each $\theta_i$ in the sequence $\Theta$ and for each top edge $E$ of $G_0 \setminus H_0$, let $h_i \from (G_0,H_0) \to (G_0,H_0)$ be the homotopy equivalence representing $\theta_i^{-1}$ and $\tau_{E,i} \subset G_0 $ the circuit  obtained by applying Lemma~\ref{LemmaTopLegalCircuits}.

\begin{lemma} \label{two consequences} 
 Suppose that $(c_1,c_2,\ldots)$ is an element of $\C_\Theta$. Then there are arbitrarily large $i$ and arbitrarily large $M$ and  a top edge $E$ of $G_0$   such that $h_i(E)$ and the circuit representing $\theta_i^{-1}(c_i)$  have a common subpath that crosses $M$ top edges.
 \end{lemma}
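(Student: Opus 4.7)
The plan is to exploit the sharp asymmetry that $L_{S_0}(c_i)$ is uniformly bounded while $L_{S_0}(\theta_i^{-1}(c_i)) = L_{S_0}([(h_i)_\#(\sigma_i)])$ is unbounded along a subsequence, where $\sigma_i$ denotes the circuit in $G_0$ realizing $c_i$. A double pigeonhole argument will then localize the growth of top-edge count to a single top edge of~$G_0$.

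First write $\sigma_i$ cyclically as $\sigma_i = \alpha_0 E_1^{(i)} \alpha_1 E_2^{(i)} \cdots E_{k_i}^{(i)} \alpha_{k_i}$, where $E_1^{(i)},\ldots,E_{k_i}^{(i)}$ are the successive top-edge traversals of $\sigma_i$ and each $\alpha_j$ is a (possibly trivial) edge path in $H_0$. By the definition of $\C_\Theta$ there is a constant $B$ with $k_i = L_{S_0}(c_i) \le B$ for all~$i$, while $L_{S_0}(\theta_i^{-1}(c_i))\to\infty$ along some subsequence (passed to henceforth). By Lemma~\ref{LemmaTopLegalCircuits} the map $h_i$ preserves $H_0$ (vacuously if $H_0=\emptyset$), so each $h_i(\alpha_j)$ lies in $H_0$ and contains no top edges. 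Consequently, in the tightening of $h_i(\sigma_i)=\prod_j h_i(\alpha_{j-1})\, h_i(E_j^{(i)})\, h_i(\alpha_j)$ to $(h_i)_\#(\sigma_i)$, every top-edge traversal originates inside some piece $h_i(E_j^{(i)})$.

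Since $h_i$ is an immersion on each edge, each piece $h_i(E_j^{(i)})$ is itself a tight edge path. A parenthesis-matching analysis of the tightening process---no two adjacent edges of $h_i(E_j^{(i)})$ are inverses, so any cancellation of an edge of $h_i(E_j^{(i)})$ must pair it with an edge strictly outside the piece---shows that the surviving edges of $h_i(E_j^{(i)})$ form a consecutive middle subpath $\beta_j^{(i)}\subset h_i(E_j^{(i)})$, which is simultaneously a consecutive subpath of $(h_i)_\#(\sigma_i)$. Counting top edges gives $\sum_{j=1}^{k_i}|\beta_j^{(i)}|_{\mathrm{top}} = L_{S_0}(\theta_i^{-1}(c_i))$. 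Pigeonholing over $j\in\{1,\ldots,k_i\}$ yields an index $j(i)$ with $|\beta_{j(i)}^{(i)}|_{\mathrm{top}}\ge L_{S_0}(\theta_i^{-1}(c_i))/B$, a quantity tending to infinity. Since $G_0$ has only finitely many oriented top edges, a further pigeonhole produces a single top edge $E$ of $G_0$ together with a sub-subsequence along which $E_{j(i)}^{(i)}=E$. For this subsequence $\beta_{j(i)}^{(i)}$ is a common subpath of $h_i(E)$ and of the circuit $(h_i)_\#(\sigma_i)$ representing $\theta_i^{-1}(c_i)$, crossing at least $L_{S_0}(\theta_i^{-1}(c_i))/B$ top edges, with both $i$ and this bound as large as desired.

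The main technical point is the parenthesis-matching claim---that the surviving portion of a tight piece is a single consecutive middle block---which rests crucially on the input from Lemma~\ref{LemmaTopLegalCircuits} that $h_i$ preserves~$H_0$; beyond that the argument is pure pigeonhole.
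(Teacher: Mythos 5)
Your proof is correct and follows essentially the same route as the paper's: decompose the circuit $\sigma_i$ into a uniformly bounded number of subpaths (top edges and $H_0$-paths), transfer that decomposition to $(h_i)_\#(\sigma_i)$, and pigeonhole the unbounded top-edge count into the image of a single top edge. You have usefully made explicit the nested-cancellation observation (that the surviving part of each tight piece $h_i(E_j^{(i)})$ is a consecutive subpath) and the double pigeonhole, both of which the paper leaves implicit; the extra pigeonhole that fixes a single top edge $E$ along a subsequence is harmless but not required, since the lemma allows $E$ to depend on $i$.
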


\begin{proof}  Decompose the circuit $\sigma_i \subset G_0$ representing $c_i$ into a concatenation of a uniformly bounded number of subpaths, each of which is either contained in $H$ or is a top edge. The circuit $(h_i)_\#(\sigma_i)$ representing $\theta_i^{-1}(c_i)$ therefore decomposes into a concatenation of  a uniformly bounded number of subpaths, each of which is either contained in $H$ or contained in the $h_i$-image of some top edge. Since $(c_1,c_2\ldots) \in \C_\Theta$, the number of top edges crossed by the latter subpaths is unbounded.  
\end{proof}

For each $m \ge 0$, let $S_m =  S_0^{\phi^m}$.     By Lemma~\ref{LemmaCTheta} we can choose an element $(c_1,c_2,\ldots)$ in $\C(S_m)$.   From the fact that $c_i$ is  $S_m$-intermediate, it follows that  $\theta_i^{-1}(c_i)$ is $S_m^{\theta_i}$-intermediate.   Thus 
$$W_\phi(S_m^{\theta_i}) =  w_{\phi^{-1}}(\theta_i^{-1}(c_i))$$ 
for all $i$.  Similarly, Lemma~\ref{LemmaTopLegalCircuits}\pref{item:does not fill} implies that 
$$W_\phi(S_0^{\theta_i}) =  w_{\phi^{-1}}([(h_i)_\#(\tau_{E,i})])$$
where $[(h_i)_\#(\tau_{E,i})]$  is the conjugacy class of the circuit formed by $(h_i)_\#(\tau_{E,i})$.
By  Lemma~\ref{two consequences},     we can choose  $ i$   and a top edge $E$ of $G_0$ so that the circuit    representing $\theta_i^{-1}(c_i)$ in $G_0$  contains a subpath of $h_i(E)$ that crosses at least $m$ top edges.  Lemma~\ref{LemmaTopLegalCircuits}\pref{item:no cancellation} then implies that the circuit    representing $\theta_i^{-1}(c_i)$ in $G_0$ and the circuit $(h_i)_\#(\tau_{E,i})$ have a common subpath that crosses at least $m$ top edges.   Since there is a uniform bound for $w_{\phi^{-1}}([h_\#(\tau_{E,i})]) =W_\phi(S_0^{\theta_i})$,  Corollary~\ref{bounded intersection} implies that there is a uniform bound for $w_{\phi^{-1}}(\theta_i^{-1}(c_i))= W_\phi(S_m^{\theta_i}) $.  This contradicts the assumption that  $|W_\phi(S_m^{\theta_i}) -m| < R$ for all sufficiently large $i$ and so completes the proof in case 1.  

\bigskip\noindent\textbf{Case 2: $\C_\Theta =  \emptyset$}    

\smallskip

Let $\fG$ be a \ct\ representing $\phi^{-1}$  with top stratum $G_s$  and with $\F$ realized by a core filtration element $G_r$. Let $H^z_s$ be the union of $H_s$ with the zero strata, if any, that it envelops \cite[Definition 2.18]{\recognitionTag} and let $G_u = G- H^z_s$. Each zero stratum enveloped by $H_s$ is a contractible component of $G_{s-1}$. Since $\phi \restrict [G_r]$ is trivial and $\phi$ is irreducible rel $[G_r] \sqsubset [G_s]$, $f \restrict G_r$ is the identity and $G_u$ deformation retracts to~$G_r$. The edges of $G_u - G_r$ are non-fixed \neg\ and so are linear.    

We now constrain $S_0 \in \cS$ to be the free splitting determined by the graph pair $(G,G_{s-1})$. By Lemma~\ref{LemmaCTheta}, there exist $\theta_i^{-1}$ twisted equivariant maps $\bar h_i :S \to S$ with uniformly bounded Lipschitz constants, cancellation constants, and quasi-isometry constants. Letting $h_i : (G,G_{s-1}) \to (G,G_{s-1})$ be homotopy equivalences corresponding to $\bar h_i :S \to S$, it follows that:
\begin{enumerate}
\item \label{item:bounded  image}  
For each top edge $E_j$ the number of top edges in $h_i(E_j)$ is uniformly bounded. 
\item  \label{item:bcc} 
There is a uniform constant $C$ so that if $\sigma = \sigma_1 \sigma_2 \subset G$ is a decomposition into subpaths, then for all $i$ at most $C$ pairs of top edges are cancelled when $(h_i)_\#(\sigma_1) (h_i)_\#(\sigma_2)$ is tightened to $(h_i)_\#(\sigma)$.  
\item \label{item:LOneLTwo}
For all $L$ there exists $L'$ independent of $i$ so that if $\sigma \subset G$ crosses $\ge L'$ top edges then $(h_i)_\#(\sigma)$ crosses   $\ge L$ top edges.
\item \label{item:LTwoLOne}
For all $L'$ there exists $L$ independent of $i$ so that for any path $\sigma \subset G$, if $(h_i)_\#(\sigma)$ crosses $\ge L$ top edges then $\sigma$ crosses $\ge L'$ top edges.
\end{enumerate}

As a consequence of \pref{item:bcc} we have the following consequence for the path maps $(h_i)_{\shsh}$ (see Section~\ref{SectionBasicNotions}, or  Section 1.1.6 of \SubgroupsOne\ for a more comprehensive review):
\begin{enumeratecontinue}
\item \label{item:##} For all $i$ and all paths $\sigma \subset G$, $(h_i)_{\#\#}(\sigma)$ contains the path obtained from $(h_i)_\#(\sigma)$ by removing the maximal initial and terminal subpaths with exactly $C$ top edges (\SubgroupsOne, Lemma 1.6).
\end{enumeratecontinue}

After passing to a subsequence of the $\theta_i$'s (and the corresponding subsequences of $n_i$'s and $h_i$'s), we may assume by \pref{item:bounded image}:
 \begin{enumeratecontinue}
\item For each top edge $E_j$ the sequence of top edges crossed by $h_i(E_j)$ is independent of $i$.  
\end{enumeratecontinue}
Given a top edge $E_j$, write $h_i(E_j) = \alpha_0 \epsilon_1 \alpha_1 \ldots \epsilon_k \alpha_k$ where $\epsilon_1,\ldots,\epsilon_k$ are the top edges that are crossed. After passing to a further subsequence of the $\theta_i$'s we may assume that for all $0 \le l \le k$, the subpath $\alpha_i$ is either always trivial or never trivial, independent of $i$. We may therefore subdivide $E_j$ into subpaths called \emph{edgelets} and modify $h_i$, without changing $h_i(E_j)$, so that: 
\begin{enumeratecontinue}
\item For all $i$ and $j$, $h_i$ maps each edgelet in $E_j$ to either a top edge or to a non-trivial path in $G_{s-1}$. 
\end{enumeratecontinue}
  Recall that $\fG$ represents $\phi^{-1}$, not $\phi$, and that tiles are defined with respect to $f$. As $k$ goes to infinity, the number of top edges in a $k$-tile goes to infinity. We may therefore choose a fixed integer $K \ge 1$ so that for each $i,j$, the $K$-tile $\omega_j =  f^K_\#(E_j)$ has the property that its $(h_i)_\#$-image $\sigma_{ij} := (h_i)_\#(\omega_j)$ crosses at least $3C$ top edges.
 
 There may be some cancellation of top edges when $h_i(\omega_j)$ is tightened to $\sigma_j$. After passing to a further subsequence of the $\theta_i$'s, we may assume that this cancellation is independent of $i$; in particular, for each $K$-tile $\omega_j$, the sequence of top edges  crossed by $\sigma_{ij} $ is independent of $i$. Decompose $\sigma_{ij}$ into subpaths $\sigma_{ij} = \sigma^-_{ij} \sigma^+_{ij}$ where $\sigma^-_{ij}$ is the shortest  initial subpath of $\sigma_{ij}$ that completely contains  $C$ top edges and that terminates at the midpoint of a top edge.  Note that the endpoints of $\sigma^-_{ij}$ are independent of $i$. For each $K$-tile $\omega_j$ we define its \emph{prefix--suffix} decomposition $\omega_j = \omega^-_j \omega^+_j$ where $\omega^-_j$ is the shortest initial subpath  satisfying $(h_i)_\#(\omega_j^-) = \sigma_{ij}^-$. The common endpoint of $\omega^-_j$ and $\omega^+_j$, which we refer to as the \emph{midpoint} of $\omega_j$, is contained in the interior of an edgelet and maps to the common endpoint of $\sigma^-_{ij}$ and $\sigma^+_{ij}$.  Since the cancellation of top edges when $h_i(\omega_j)$ is tightened to $\sigma_j$ is independent of $i$  we have:

 \begin{enumeratecontinue}
\item \label{midpoint independence} For each $K$-tile $\omega_j$ its prefix--suffix decomposition $\omega_j = \omega^-_j \omega^+_j$, its midpoint and the $h_i$-image of its midpoint are independent of $i$.
\end{enumeratecontinue}

A generic leaf $\gamma^-$ of $\Lambda^-_{\phi}$ has a $0$-tiling; i.e.\ a splitting whose terms are either edges in $H_s$ or paths in $G_{s-1}$. After increasing $K$ if necessary, we may assume that $f^K(H_j) \subset G_u$ for each zero stratum $H_j$ enveloped by $H_s$. Applying $f_\#^K$ to the above $0$-splitting, we have a splitting of $\gamma^-$ whose terms are either $K$-tiles or paths in $G_u$. We may therefore write  $\gamma^-$ as an alternating concatenation of the form
$$\gamma^- = \cdots \omega_{j_{t-1}} \,\, \rho_t \,\, \omega_{j_t} \,\, \rho_{t+1} \,\, \omega_{j_{t+1}} \cdots
$$
where each $\omega_j$ is a $K$-tile and each $\rho_t$ is either trivial or contained in $G_{u}$. After taking the prefix--suffix decomposition of each $K$-tile and then collecting terms we obtain a new decomposition of $\gamma^-$ as follows:
\begin{align*}
(*) \qquad \gamma^- 
        &= \cdots \omega^-_{j_{t-1}} \,
                        \underbrace{\omega^+_{j_{t-1}} \, \rho_t \,  \,
                        \omega^-_{j_t}}_{\mu_t}  \,
                        \underbrace{\omega^+_{j_t} \, \rho_{t+1}  \, \,
                         \omega^-_{j_{t+1}}}_{\mu_{t+1}}  \,
                         \omega^+_{j_{t+1}} 
                        \cdots \\
         &= \cdots \mu_{t-1} \,\,\mu_t \,\, \mu_{t+1} \,\,\mu_{t+2} \cdots
\end{align*}
 Our choice of $C$ guarantees that the above decomposition is a ``universal $1$-splitting'' in the sense that 
$$(h_i)(\gamma^-) =  \,\,\,\ldots \,\,\,(h_i)_\#(\mu_{-1}) \,\,\, (h_i)_\#( \mu_{0}) \,\,\, (h_i)_\#(\mu_{1}) \,\,\,\ldots
$$  
is a decomposition into subpaths for all $h_i$. By \pref{midpoint independence}, 
\begin{enumeratecontinue}
\item \label{image independence} For each $\mu_t$, the $h_i$-image of the endpoints of $\mu_t$ are independent of $i$.
\end{enumeratecontinue} 
\vspace{.1in}

The proof now divides into two subcases.  

\medskip
\noindent{\bf Case 2a: \,  There exists $p<q$ so that for infinitely many values of $i$, the path $(h_i)_\#(\mu_{p} \ldots \mu_{q})$ is not a subpath of $\gamma^-$.}

\medskip\noindent
After passing to a subsequence of the $\theta_i$'s we may assume that the assumption of Case~2a holds for \emph{all} values of~$i$.  Choose $p' < p $ and $q' > q$ so that $(h_i)_\#(\mu_ {p'} \ldots \mu_{p-1})$ and $(h_i)_\#(\mu_ {q+1} \ldots \mu_{q'})$ cross  at least $C$ top edges.   By \pref{item:##}  
$$(h_i)_{\#\#}(\mu_{p'} \ldots \mu_{q'})  \supset (h_i)_\#(\mu_{p} \ldots \mu_{q})
$$
for all $i$. By \cite[Lemma 3.1.8]{\BookOneTag} there exists $K'$ such that 
$$f^{K'}_\#(E_j) \supset \mu_{p'} \ldots \mu_{q'}
$$
for each $j$. From Lemma 1.6-(3) of \SubgroupsOne\ it therefore follows that  
$$(h_i)_{\#\#}(f^{K'}_\#(E_j)) \supset (h_i)_{\#\#}(\mu_{p'} \ldots \mu_{q'})
$$
We conclude that:
\begin{itemize}
\item[$(\#)$] If a path $\sigma_0$ contains a $K'$-tile then $(h_i)_\#(\sigma_0)$ contains $(h_i)_\#(\mu_{p} \ldots \mu_{q})$, and hence $(h_i)_\#(\sigma_0)$ does not occur as a subpath of $\gamma^-$.
\end{itemize}   
 
Choose a circuit $\alpha \subset G$ that crosses some but not all edges of $G - G_{s-1}$. Then the circuit
$$\beta_{i,m} := (h_i)_\#f_\#^m(\alpha) \subset G
$$
determines a conjugacy class in $F_n$ that is represented by a circuit in $G \cdot \phi^m\theta_i$ which crosses some but not all top edges of $(G,G_{s-1}) \cdot \phi^m\theta_i$. It follows that 
$$w_{\phi^\inv}[\beta_{i,m}] = W(S_0 \cdot \phi^m\theta_i)
$$
and hence (after further increasing $R$) for each~$m$ we have $\abs{w_{\phi^\inv}[\beta_{i,m}] - m}<R$ if~$i$ is sufficiently large. It follows that:
\begin{itemize}
\item[$(\shsh)$] There is a sequence $L_m \to +\infinity$ such that for each $m$, if $i$ is sufficiently large then there exists a common subpath of $\gamma^-$ and $\beta_{i,m}$ that crosses $L_m$ top edges. 
\end{itemize}

For reasons as above we have
$$w_{\phi^\inv}[f^m_\#(\alpha)] = W(S_0^{\phi^m}) 
$$
Thus  $w_{\phi^\inv}[f^m_\#(\alpha)]$ goes to infinity with~$m$. By Proposition~\ref {PropKTiles}, for all sufficiently large $m$, $f_\#^m(\alpha)$ has a weak $K'$-tiling. By Lemma~\ref{endpoint condition}~\pref{k tile separation} there exists $L' > 0$ so that for all sufficiently large $m$, every subpath $\sigma \subset f_\#^m(\alpha)$ that contains at least $L'$ top edges contains a $K'$-tile. Choose $L> 0$ so that if $\sigma$ is any path in $G$ and if $\zeta$ is a subpath of $(h_i)_\#(\sigma)$ that crosses at least $L$ top edges then there is a subpath $\sigma_0$ of $\sigma$ that crosses at least $L'$ top edges such that $(h_i)_\#(\sigma_0)$ is a subpath of $\zeta$; this follows by applying item~\pref{item:LTwoLOne} above, with our current $\zeta$ in the role of the $\sigma$ of item~\pref{item:LTwoLOne}.
Thus any subpath of $\beta_{i,m}$ that crosses at least $L$ top edges contains a subpath of the form $(h_i)_\#(\sigma_0)$ where $\sigma_0$ is a subpath of $f^m_\#(\alpha)$ that contains at least $L'$ top edges, and hence $\sigma_0$ contains a $K'$-tile. Combining this with conclusion $(\#)$, it follows that no subpath of $\beta_{i,m}$ that crosses at least $L$ edges occurs as a subpath of $\gamma^{-}$.  This contradicts $(\shsh)$, completing the proof in Case~2a.

\medskip 
\noindent{\bf Case 2b:    For any  $p<q$, the path \ $(h_i)_\#(\mu_{p} \ldots \mu_{q})$ is  a subpath of $\gamma^-$ for all sufficiently large $i$.}

\medskip\noindent
Our first claim is that
\begin{description}
\item[$(*)$] For any given $p$, as $i$ varies, the expression $(h_i)_\#(\mu_{p})$ takes on only finitely many values. 
\end{description}
We can see this as follows. The number of top edges in $(h_i)_\#(\mu_{p})$ is uniformly bounded so the sequence of top edges in $(h_i)_\#(\mu_{p})$ takes on only finitely many values. We are therefore reduced to bounding the number of edges in a maximal subpath $\nu$ of $(h_i)_\#(\mu_{p})$ in $G_{s-1}$. Since $\gamma^-$ is birecurrent, there exists $q > p$ such that $\mu_p = \mu_q$. There is a copy $\nu'$ of $\nu$ in $(h_i)_\#(\mu_{q})$ that is separated from $\nu$ in $(h_i)_\#(\mu_{p} \ldots \mu_{q})$ by a uniformly bounded number of top edges. Assuming without loss that $(h_i)_\#(\mu_{p} \ldots \mu_{q})$ is a subpath of $\gamma^-$, we have that $\nu$ and $\nu'$ are separated in $\gamma^-$ by a uniformly bounded number, say $P$, of top edges. Choose $k$ so that every $k$-tile contains at least $P+1$ edges. Since $\gamma^{-}$ decomposes into $k$-tiles and subpaths in $G_{s-1}$, at least one of $\nu$ and $\nu'$ must be contained in a $k$-tile. This gives the uniform bound on the number of edges in $\nu$ and so completes the proof of the first claim.

\medskip\noindent
\textbf{The topological structure of $\Lambda^-_\phi$.} The fact that $G_u$ is the union of a filtration element of fixed edges $G_r$ and some \neg-linear edges $G_u \setminus G_r$ implies that $\Lambda^-_\phi$ has a particularly simple topological structure. If there were no linear edges, then $\Lambda^-_\phi$ would be minimal, meaning every leaf would be dense. With only linear edges, the structure is still quite simple: for example, the only nondense leaves are the finitely many periodic lines corresponding to circuits around which the linear edges of $G_u \setminus G_r$ twist. We now describe a decomposition of generic leaves of $\Lambda^-_\phi$ which reflects this structure.

The generic leaf $\gamma^-$ decomposes as a concatenation of subpaths in $H_s$ and maximal subpaths in $G_{s-1}$. We further divide the latter into subpaths in zero strata enveloped by $H_s$ called \emph{middle pieces} (referred to in \cite{\recognitionTag} as \lq taken connecting paths\rq) and subpaths in $G_u$ called \emph{bottom pieces}. There are only finitely many middle pieces and the $f_\#$-image of a middle piece is either another middle piece or a bottom piece. Each tile also divides into subpaths in $H$, bottom pieces and middle pieces. A bottom piece is \emph{primitive} if it occurs in a $1$-tile or is the $f_\#$-image of a middle piece. There are only finitely many primitive bottom pieces; we denote them with the symbol~$\delta$. Every bottom piece of a $k$-tile can be written as $f^j_\#(\delta)$ for some primitive bottom piece $\delta$ and some $j \ge 0$. This is obvious for $k=1$ and it follows by the obvious induction argument on $k$ using the fact that each $k$-tile is $s$-legal. 

\newcommand\under{u}

Recall that $f \restrict G_r$ is the identity and that each edge of $G_u \setminus G_r$ is \neg-linear. Since the initial vertices of edges of $G_u - G_r$ have valence one in $G_u$, paths in $G_u$ can only cross edges of $G_u - G_r$ in their first or last edges. A primitive bottom piece $\delta$ therefore has one of three types:
\begin{description}
\item[Constant Type:] $\delta$ is contained in $G_r$; 
\item[Once-Linear Type:] Either the initial or terminal edge of $\delta$, but not both, is contained in $G_u - G_r$; 
\item[Twice-Linear Type:] The initial and terminal edges of $\delta$ are contained in $G_u - G_r$. 
\end{description} 
If $\delta$ has constant type then $f_\#^j(\delta)$ is independent of $j$. If $\delta$ has once linear type then there is an edge $E$ of $G_u - G_r$, a closed path $\under$ in $G_r$ such that $f_\#^j(E) = E \under^j$, and a path $\beta$ in $G_r$, such that
\begin{align*}
\delta = E \, \beta, &\quad\text{and}\quad f_\#^j(\delta) =  E \, [\under^j\beta] \\
\text{or}\quad 
\delta = \beta \,  \overline E, & \quad\text{and}\quad f_\#^j(\delta) =  [\beta\bar \under^j] \,  \overline E
\end{align*}
If $\delta$ has twice linear type then there are edges $E,E'$ in $G_u - G_r$, closed paths $\under, \under'$ in $G_r$ such that $f^j_\#(E) =  E \under^j$ and $f^j_\#(E') =  E' \under'{}^j$, and a path $\beta$ in $G_r$, such that
$$\delta = E \, \beta \, E' \quad\text{and}\quad f_\#^j(\delta) =  E \, [\under^j\, \beta \, \bar \under'^j ] \, \overline E'
$$

Recall that for each term $\mu_t$ of the decomposition $\gamma^- =  \cdots \mu_{t-1} \,\,\mu_t \,\, \mu_{t+1} \,\,\mu_{t+2} \cdots$ there is a decomposition $\mu_t = \omega_{j_{t-1}}^+  \rho_t \, \omega_{j_t}^-$ where $\omega_{j_{t-1}}^+$ is a $K$-tile suffix, $\rho_t$ is either trivial or a maximal subpath in $G_u$, and $\omega_{j_t}^-$ is a $K$-tile prefix. Since every non-trivial subpath of $\gamma^-$ occurs in a tile,  each non-trivial $\rho_t$ can be written as  $f^j_\#(\delta)$ for some primitive bottom piece $\delta$ and some $j \ge 0$.  

To each $K$-tile suffix $\omega^+$, each primitive bottom piece $\delta$, and each $K$-tile prefix $\omega^-$ we associate the set of paths 
$$\cM(\omega^+\!,\delta,\omega^-) = \{\mu_t  \,\, \suchthat \,\, \omega^+ = \omega^+_{j_{t-1}} , \,\,\text{and}\,\,  f^j_\#(\delta) = \rho_t \,\, \text{for some $j \ge 0$, and}\,\, \omega^- = \omega^-_{j_t}\,\, \}
$$
Also, for each $K$-tile suffix $\omega^+$ and $K$-tile prefix $\omega^-$ we define 
$$\cM(\omega^+\!,\omega^-)  = \begin{cases} \quad
\{\omega^+\omega^- \} = \{\mu_t\} & \quad\text{if $\omega^+ = \omega^+_{j_{t-1}}$ and $\omega_- = \omega^-_{j_t}$ for some $t$} \\
\quad \emptyset &\quad\text{otherwise}
\end{cases}
$$
We say that the set $\cM(\omega^+\!,\delta,\omega^-)$ has constant, once-linear or twice-linear type if $\delta$ does. The set $\{\mu_t: t \in \Z\}$ is the (not necessarily disjoint) union of the sets $\cM(\omega^+\!,\delta,\omega^-)$ and the sets $\cM(\omega^+\!,\omega^-)$ as $\omega^+$, $\omega^-$, and $\delta$ vary. Note that there are finitely many of the sets $\cM(\omega^+\!,\delta,\omega^-)$ and $\cM(\omega^+\!,\omega^-)$. Note also that each set $\cM(\omega^+\!,\omega^-)$ has finite cardinality whereas the sets $\cM(\omega^+\!,\delta,\omega^-)$ may have infinite cardinality.

Our second claim is that:
\begin{description}
\item[$(**)$] Each $  \cM(\omega^+\!,\delta,\omega^-)$ contains a finite subset  $  \cM_0(\omega^+\!,\delta,\omega^-)$ so that for any~$i$ and~$j$, if $(h_i)_\#(\mu_t) =  (h_j)_\# (\mu_t)$ for all $\mu_t \in   \cM_0(\omega^+\!,\delta,\omega^-)$ then  $(h_i)_\#(\mu_t)= (h_j)_\# (\mu_t)$ for all $\mu_t \in   \cM(\omega^+\!,\delta,\omega^-)$.    
\end{description}
 
Before proving this claim we use it to complete the proof of the WWPD Construction Theorem, by arriving at a contradiction that settles Case 2b. From claim $(*)$ it follows that for any finite subset $M \subset \{\mu_t: t \in \Z\}$ we can pass to a subsequence so that $(h_i)_\#(\mu_{t}) = (h_j)_\#(\mu_{t})$ for all $i,j$ and for all $\mu_t \in M$. We may therefore assume that $(h_i)_\#(\mu_{t}) = (h_j)_\#(\mu_{t})$ for all $i,j$ and for all $\mu_t$ in each $ \cM(\omega^-,\omega^+)$ and in each $ \cM_0(\omega^+,\delta,\omega^-)$. From claim~$(**)$ it then follows that $(h_i)_\#(\mu_{t})= (h_j)_\#(\mu_{t})$ for all  $i,j$ and all $\mu_t$. This in turn implies that $(h_i)_\#(\gamma^-)= (h_j)_\#(\gamma^-)$ for all $i,j$, and hence taking weak closures that $\theta_i^\inv(\Lambda^-) = \theta_j^\inv(\Lambda^-)$. It follows by \cite[Theorem~1.2]{\FSLoxTag} that $\theta_i(\bdy_-\phi) = \theta_j(\bdy_-\phi)$ and that $\theta_i(\bdy_+ \phi) = \theta_j(\bdy_+\phi)$, and so $\theta_i^\inv\theta_j(\bdy_\pm\phi) = \bdy_\pm\phi$, in contradiction to the \emph{Distinct Coset Property} saying that $\theta_i^{-1}\theta_j \not \in \Stab(\partial_\pm\phi)$.

\bigskip

It remains to verify claim $(**)$. This is obvious if $\cM(\omega^+\!,\delta,\omega^-)$ is finite, in particular if it has constant type and therefore has cardinality at most one. For notational simplicity we let $M = \cM(\omega^+\!,\delta,\omega^-)$, we assume $M$ is infinite, and we denote the desired finite subset by $M_0 = \cM_0(\omega^+\!,\delta,\omega^-)$. There exists an infinite sequence of integers $0 \le j_1 < j_2 < \dots$ so that 
$$M \, = \, \{\tau_l \, := \, \omega^+ \, f^{j_l}_\#(\delta) \,\, \omega^- \, \suchthat \, l \ge 1\}
$$ 
For all $l \ge 1$, the paths $\tau_l$ have the same initial vertex $v_-$ and the same terminal vertex $v_+$. The images $v'_- := h_i(v_-)$ and $v'_+ := h_i(v_+)$ are independent of $i$ by \pref{image independence}. We include $\tau_1 = \omega^+ \, f^{j_1}_\#(\delta) \, \omega^-$ in $M_0$, and then applying $(*)$ we pass to a subsequence of the $\theta_i$'s so that $h_i(\tau_1)$ is independent of~$i$. 

For each $l \ge 2$ consider the closed edge path $\tau_1 \bar \tau_l$ based at $v_-$, with nontrivial path homotopy class denoted $[\tau_1 \bar\tau_l] \in \pi_1(G,v_-)$. Let $\gamma_l$ denote the closed path obtained by straightening $\tau_1 \bar \tau_l$, and so $\gamma_l$ is the unique closed path based at $v_-$ that represents $[\tau_1 \bar\tau_l]$. Let $D = \{[\tau_1 \bar\tau_l] \suchthat l \ge 2\} \subset \pi_1(G,v_-)$, an infinite set of nontrivial elements of $\pi_1(G,v_-)$.

Let $\Upsilon_i \from \pi_1(G,v_-) \to \pi_1(G,v'_-)$ be the isomorphism induced by $(h_i)_\#$. Under the identification $\pi_1(G,v_-) \approx F_n$, we obtain automorphisms 
$$\Upsilon_i^{-1} \Upsilon^{\vphantom{-1}}_1 \in \Aut(\pi_1(G,v_-)) \approx \Aut(F_n)
$$ 
Denote their fixed subgroups by 
$$B_i = \Fix(\Upsilon_i^{-1} \Upsilon^{\vphantom{-1}}_1) = \Fix(\Upsilon^\inv_1 \Upsilon_i^{\vphantom{{-1}}} ) < F_n
$$
For each $l \ge 2$, it follows that the path $(h_i)_\#(\tau_l)$ is independent of $i$ if and only if $\Upsilon_i [\tau_1 \bar\tau_l] \in \pi_1(G,v'_-)$ is independent of $i$ if and only if $[\tau_1 \bar\tau_l] \in B_i$ for all $i$. 

Each $B_i$ has rank~$\le n$ by the solution to the Scott conjecture \cite[Theorem 6.1]{\BHTag}, and each $B_i$ is primitive meaning that if $B_i$ contains a nonzero power of an element then it contains that element. 

It therefore suffices to show that there exists a finite subset $D_0 \subset D$ such that for any subgroup $B < F_n$ that is a primitive and has rank $\le n$, if $D_0 \subset B$ then $D \subset B$. Once that is shown then we finish claim~$(**)$ by defining $M_0 = \{ \tau_l \suchthat [\tau_1 \bar\tau_l] \in D_0\}$.

For the rest of the proof we fix $B < F_n$ to be primitive and of rank~$\le n$; we shall define the subset $D_0 \subset D$ independent of~$B$.

If the set $M$ is of once-linear type then either $\delta = E \beta$ or $\delta = \beta \bar E$, and $f^j_\#(E) = E \under^j$. The two cases are symmetric so we assume that $\delta = E \beta$. Thus 
\begin{align*}
\tau_l &= \omega^+ \, E \, [\under^{j_l} \, \beta] \, \omega^- \\
\gamma_l &= (\omega^+E) \, \under^{j_1-j_l} \, (\omega^+ E)^{-1}
\end{align*}
It follows that as $l$ varies, the $[\tau_1 \bar\tau_l]$'s are all contained in a cyclic subgroup of $\pi_1(G,v_-)$. Any primitive subgroup that contains a single nontrivial element of this cyclic subgroup contains all of $D$, so taking $D_0$ to be any nonempty, finite subset of $D$ we are done in the once-linear case.
 
We are now reduced to the case that $M$ is of twice-linear type. In this case we have $\delta = E \, \beta \, \overline E'$, $f^j_\#(E) = E \under^j$, and $f^j_\#(E') = E' \under'^j$, and so 
\begin{align*}
\tau_{l} &= \omega^+ E \, [\under^{j_l} \, \beta \, \bar \under'^{j_l} ] \, \overline E' \, \omega^- \\
\gamma_l &= (\omega^+ E) \, [\under^{j_1} \, \beta \, ( \under')^{j_l-j_1} \, \bar \beta \, \under^{-j_l}] \, (\omega^+ E)^\inv 
\end{align*}
If the number of edges cancelled when $\under^{j_1} \beta \, ( \under')^{j_l-j_1}  \bar \beta \, \under^{-j_l}$ is tightened is not bounded independently of $l$ then there is a closed path $\alpha$ such that $\under',\beta$ and $\under$ are all iterates of~$\alpha$. The path $\under^{j_1}\beta ( \under')^{j_l-j_1} \bar\beta \under^{-j_l}$ therefore tightens to an iterate of $\alpha$ and so each $\gamma_l$ has the form $(\omega^+ E) \, \alpha^* \, (\omega^+ E)^\inv$. Each $[\tau_1 \bar\tau_l]$ is therefore contained in the cyclic subgroup of $\pi_1(G,v_-)$ generated by $(\omega^+E) \, \alpha \, (\omega^+ E)^{-1}$, and the proof concludes as in the once linear case with $D_0$ any nonempty, finite subset of~$D$. 

It remains to consider the case that the cancellation of $\under^{j_1} \beta \, ( \under')^{j_l-j_1}  \bar \beta \, \under^{-j_l}$ is uniformly bounded. In this case, there are paths $\sigma_1,\sigma_2, \sigma_3$, root-free closed paths $\alpha_1,\alpha_2$, an integer $l_0 \ge 2$, and increasing sequences of positive integers $\{s(l)\}$ and $\{t(l)\}$ defined for $l \ge l_0$, such that if $l \ge l_0$ then $\gamma_l$ can be written in the form
$$\gamma_l = \sigma_1\alpha_1^{s(l)} \sigma_2 \, \alpha_2^{-t(l)}\sigma_3
$$
and such that the following hold: 
\begin{itemize}
\item $\under'$ is an iterate of $\alpha_1$, and $\under$ is an iterate of $\alpha_2$; 
\item The path $\sigma_1$ is obtained by straightening the concatenation of $\omega^+ E \, \under^{j_1} \, \beta$ followed by some initial subpath of $(\under')^*$; 
\item The path $\sigma_2$ is obtained by straightening the concatenation of some terminal subpath of $(\under')^*$, followed by the path $\bar\beta$, followed by some initial subpath of~$(\under^\inv)^*$; 
\item The path $\sigma_3$ is obtained by straightening the concatenation of some terminal subpath of $(\under^\inv)^*$ followed by~$\overline E \bar \omega^+$.
\item The maximal common initial subpath of $\sigma_2$ and $\alpha_1$ is a proper initial subpath of~$\alpha_1$; we denote this subpath $\sigma'_2$.
\end{itemize}
Let 
$$D_0 = \{[\tau_1 \bar\tau_l] \suchthat 2 \le l \le l_0 + 2n-1\} \subset \pi_1(G,v_-)
$$
Assuming that $D_0 \subset B$ we complete the proof by showing that the path $\sigma_1 \alpha_1^s \sigma_2 \alpha_2^t \sigma_3$ represents an element of $B$ for all $s, t \in \Z$. 
 
Since $B \subgroup \pi_1(G,v_-)$ has rank~$\le n$, there is a finite core graph $X$ of rank~$\le n$ equipped with a basepoint $x$ and an immersion $Q : (X,x) \to (G,v_-)$ such that a closed path in $G$ based at $v_-$ lifts to a closed path in $X$ based at $x$ if and only if it represents an element of $B$. Equivalently, the induced homomorphism $Q_* \from \pi_1(X,x) \to \pi_1(G,v_-)$ is an injection with image~$B$. The path $\gamma_{l_0+2n-1}$ representing $[\tau_1 \bar\tau_{l_0+2n-1}] \in D_0$ lifts to a closed path $\tilde\gamma_{l_0+2n-1}$ in $X$ based at $x$. We have the following forms for a decomposition of $\gamma_{l_0+2n-1}$ and the corresponding decomposition of $\tilde\gamma_{l_0+2n-1}$:
\begin{align*}
\gamma_{l_0+2n-1} &= \sigma_1 \,\, \underbrace{\alpha_1 \,\, \alpha_1 \,\, \cdots \,\, \alpha_1}_{\text{$s(l_0+2n-1)$ times}} \,\, \sigma_2 \,\, \underbrace{\alpha^\inv_2 \,\, \alpha^\inv_2 \,\, \cdots \,\, \alpha^\inv_2}_{\text{$t(l_0+2n-1)$ times}} \,\, \sigma_3 \\
\tilde\gamma_{l_0+2n-1} &= \tilde\sigma_1 \,\, \tilde\alpha_{1,1} \,\, \tilde\alpha_{1,2} \,\,  \ldots \,\, \tilde \alpha_{1,s(l_0+2n-1)} \,\, \tilde\sigma_2 \,\, \tilde\alpha^{-1}_{2,1} \,\, \tilde\alpha^{-1}_{2,2} \,\, \ldots \,\, \tilde\alpha^{-1}_{2,t(l_0+2n-1)} \,\, \tilde\sigma_3
\end{align*}
In $X$ denote the following vertices, where $1 \le i < s(l_0+2n-1)-1$:
\begin{align*}
y_0 &= \text{the terminal vertex of $\tilde\sigma_1$} =  \text{the initial vertex of $\tilde\alpha_{1,1}$} \\
y_i &= \text{the terminal vertex of $\tilde\alpha_{1,i}$} = \text{the initial vertex of $\tilde\alpha_{1,i+1}$} \\
\tilde\sigma'_{2,i} &= \text{the unique lift of $\sigma'_2$ with initial vertex $y_i$} \\
z_i &= \text{the terminal vertex of $\ti\sigma'_{2,i}$}
\end{align*}
Also, for $l_0 \le l \le l_0 + 2n-2$, the initial and terminal vertices of $\tilde\sigma'_{2,s(l)}$ are denoted $\eta_l = y_{s(l)}$ and let $\zeta_l = z_{s(l)}$. 

The vertex $\zeta_{l_0}$ has valence $\ge 3$ in $X$, because both of the paths $\gamma_{l_0}$ and $\gamma_{l_0+1}$ lift to paths in $X$ starting at~$x$, these two lifts have maximal common initial subpath $\tilde\sigma_1 \, \tilde\alpha_{1,1} \, \ldots \, \ti\alpha_{1,s(l_0)} \, \tilde\sigma'_{2,s(l_0)}$, this subpath is proper in each of $\gamma_{l_0}$ and $\gamma_{l_0+1}$, and the terminal endpoint of this subpath is $\zeta_{l_0}$; this follows by maximality of $\sigma'_2$. Similarly each of the vertices $\zeta_{l_0+1},\ldots,\zeta_{l_0+2n-2}$ has valence $\ge 3$ in $X$. 

Since $X$ has at most $2n-2$ distinct vertices of valence $\ge 3$, it must be that there is repetition amongst the vertices $\zeta_{l_0},\ldots,\zeta_{l_0+2n-2}$. Let $a<b$ be indices such that $\zeta_{l_a} = \zeta_{l_b}$. It follows that $y_{s(l_a)} = \eta_{l_a} = \eta_{l_b} = y_{s(l_b)}$. Since there is at most one lift of $\alpha_1$ to $X$ that terminates at each $y_i$, it must be that $y_0 = y_{s(l_b)-s(l_a)}$. This implies that some iterate of $\alpha_1$ lifts to a closed path in $X$ based at $y_0$ and hence that some straightened iterate of $\sigma_1 \alpha_1 \bar \sigma_1$ lifts to a closed path in $X$ based at $x$. Since $B$ is primitive, the path $\sigma_1 \alpha_1 \bar \sigma_1$ itself lifts to a closed path based at $x$ and so determines an element of~$B$. 
 
The symmetric argument shows that $\bar \sigma_3 \alpha_2 \sigma_3$ determines an element of $B$ and hence that each path 
$$\sigma_1\alpha_1^s \sigma_2 \alpha_2^t\sigma_3 = [(\sigma_1 \alpha_1^{s- s_2}\bar \sigma_1)(\sigma_1\alpha_1^{s_2 } \sigma_2\alpha_2^{-t_2}\sigma_3)(\bar \sigma_3 \alpha_2^{t+t_2} \sigma_3)]
$$ 
determines an element of $B$ for each $s,t \in \Z$ as needed to complete the proof.
\qed

\bibliographystyle{amsalpha} 
\bibliography{mosher} 

\newcommand{\etalchar}[1]{$^{#1}$}
\def\cprime{$'$} \def\cprime{$'$}
\providecommand{\bysame}{\leavevmode\hbox to3em{\hrulefill}\thinspace}
\providecommand{\MR}{\relax\ifhmode\unskip\space\fi MR }
\providecommand{\MRhref}[2]{%
  \href{http://www.ams.org/mathscinet-getitem?mr=#1}{#2}
}
\providecommand{\href}[2]{#2}
\begin{thebibliography}{{Ham}14}

\bibitem[AK11]{AlgomKfir:Contracting}
Yael Algom-Kfir, \emph{Strongly contracting geodesics in outer space}, Geom.
  Topol. \textbf{15} (2011), no.~4, 2181--2233.

\bibitem[Ali02]{Alibegovic:translation}
E.~Alibegovi{\'c}, \emph{Translation lengths in {${\rm Out}(F\sb n)$}},
  Geometriae Dedicata \textbf{92} (2002), 87--93.

\bibitem[BBF15]{BBF:MCGquasitrees}
M.~Bestvina, K.~Bromberg, and K.~Fujiwara, \emph{Constructing group actions on
  quasi-trees and applications to mapping class groups}, Publ. Math. Inst.
  Hautes \'Etudes Sci. \textbf{122} (2015), 1--64.

\bibitem[BBF16]{BBF:SCLonMCG}
\bysame, \emph{Stable commutator length on mapping class groups}, Ann. Inst.
  Fourier \textbf{66} (2016), no.~3, 871--898.

\bibitem[BF02]{BestvinaFujiwara:bounded}
M.~Bestvina and K.~Fujiwara, \emph{Bounded cohomology of subgroups of mapping
  class groups}, Geom. Topol. \textbf{6} (2002), 69--89 (electronic).

\bibitem[BF10]{BestvinaFeighn:HypComplex}
M.~Bestvina and M.~Feighn, \emph{A hyperbolic {$\text{Out}(F_n)$}-complex},
  {Groups Geom. Dyn.} \textbf{4} (2010), no.~1, 31--58.

\bibitem[BFH97]{BFH:laminations}
M.~Bestvina, M.~Feighn, and M.~Handel, \emph{Laminations, trees, and
  irreducible automorphisms of free groups}, Geom. Funct. Anal. \textbf{7}
  (1997), 215--244.

\bibitem[BFH00]{BFH:TitsOne}
\bysame, \emph{{The Tits alternative for ${\rm Out}(F\sb n)$. I. Dynamics of
  exponentially-growing automorphisms.}}, Ann. of Math. \textbf{151} (2000),
  no.~2, 517--623.

\bibitem[BFH04]{BFH:Solvable}
\bysame, \emph{Solvable subgroups of {${\rm Out}(F\sb n)$} are virtually
  {A}belian}, Geometriae Dedicata \textbf{104} (2004), 71--96.

\bibitem[BFH05]{BFH:TitsTwo}
\bysame, \emph{The {T}its alternative for {${\rm Out}(F\sb n)$}. {II}. {A}
  {K}olchin type theorem}, Ann. of Math. \textbf{161} (2005), no.~1, 1--59.

\bibitem[BH92]{BestvinaHandel:tt}
M.~Bestvina and M.~Handel, \emph{Train tracks and automorphisms of free
  groups}, Ann. of Math. \textbf{135} (1992), 1--51.

\bibitem[BHS14]{BHS:Hierarchy}
J.~Behrstock, M.~Hagen, and A.~Sisto, \emph{{Hierarchically hyperbolic spaces
  I: curve complexes for cubical groups}}, arXiv:1412.2171, 2014.

\bibitem[BM99]{BurgerMonod:BoundedCohomology}
M.~Burger and N.~Monod, \emph{Bounded cohomology of lattices in higher rank lie
  groups}, J. Eur. Math. Soc. \textbf{1} (1999), no.~2, 199--235.

\bibitem[Bow08]{Bowditch:tight}
B.~Bowditch, \emph{Tight geodesics in the curve complex}, Invent. Math.
  \textbf{171} (2008), no.~2, 281--300.

\bibitem[Bro81]{Brooks:H2bRemarks}
R.~Brooks, \emph{Some remarks on bounded cohomoogy}, Rieman surfaces and
  related topics: Proceedings of the 1978 Stony Brook Conference, Ann. of Math.
  Stud., vol.~97, Princeton Univ. Press, 1981, pp.~53--63.

\bibitem[BS84]{BrooksSeries:H2bSurface}
R.~Brooks and C.~Series, \emph{Bounded cohomology for surface roups}, Topology
  \textbf{23} (1984), no.~1, 29--36.

\bibitem[BT68]{BaumslagTaylor:Center}
G.~Baumslag and T.~Taylor, \emph{The centre of groups with one defining
  relator}, Math. Ann. \textbf{175} (1968), 315--319.

\bibitem[BW11]{BridsonWade:ActionsOnFreeGroups}
M.~Bridson and R.~Wade, \emph{Actions of higher-rank lattices on free groups},
  Compos. Math. \textbf{147} (2011), no.~5, 1573--1580.

\bibitem[CV86]{CullerVogtmann:moduli}
M.~Culler and K.~Vogtmann, \emph{Moduli of graphs and automorphisms of free
  groups}, Invent. Math. \textbf{84} (1986), 91--119.

\bibitem[EF97]{EpsteinFujiwara}
D.~B.~A. Epstein and K.~Fujiwara, \emph{The second bounded cohomology of
  word-hyperbolic groups}, Topology \textbf{36} (1997), no.~6, 1275--1289.

\bibitem[FH09]{FeighnHandel:abelian}
M.~Feighn and H.~Handel, \emph{{Abelian subgroups of $\Out(F_n)$}}, Geometry
  and Topology \textbf{13} (2009), 1657--1727.

\bibitem[FH11]{FeighnHandel:recognition}
\bysame, \emph{{The recognition theorem for $\text{Out}(F_n)$}}, Groups Geom.
  Dyn. \textbf{5} (2011), 39--106.

\bibitem[FLP{\etalchar{+}}12]{FLPKM}
A.~Fathi, F.~Laudenbach, V.~Poenaru, et~al., \emph{Thurston's work on
  surfaces}, Mathematical Notes, vol.~48, Princeton Univ. Press, 2012,
  translated by D. M. Kim and D. Margalit from the 1979 original \emph{Travaux
  de {Thurston} sur les surfaces}, Ast\'erisque 66-67.

\bibitem[FM02]{FarbMosher:quasiconvex}
B.~Farb and L.~Mosher, \emph{Convex cocompact subgroups of mapping class
  groups}, Geometry and Topology \textbf{6} (2002), 91--152.

\bibitem[For02]{Forester:Deformation}
M.~Forester, \emph{Deformation and rigidity of simplicial group actions on
  trees}, Geometry and Topology \textbf{6} (2002), 219--267.

\bibitem[Fuj98]{Fujiwara:H2BHyp}
K.~Fujiwara, \emph{{The second bounded cohomology of a group acting on a
  Gromov-hyperbolic space}}, Proc. London Math. Soc. (3) \textbf{76} (1998),
  no.~1, 70--94.

\bibitem[Fuj00]{Fujiwara:H2bFreeProduct}
\bysame, \emph{The second bounded cohomology of an amalgamated free product of
  groups}, Trans. AMS \textbf{352} (2000), no.~3, 1113--1129.

\bibitem[Gho16]{Ghosh:WeakAttraction}
P.~Ghosh, \emph{{Applications of weak attraction theory in
  $\mathsf{Out}(F_n)$}}, Geom. Dedicata \textbf{181} (2016), 1--22.

\bibitem[GL07]{GuirardelLevitt:DefSpaces}
V.~Guirardel and G.~Levitt, \emph{Deformation spaces of trees}, Groups Geom.
  Dyn. \textbf{1} (2007), 135--181.

\bibitem[Gro87]{Gromov:hyperbolic}
M.~Gromov, \emph{Hyperbolic groups}, Essays in group theory (S.~Gersten, ed.),
  MSRI Publications, vol.~8, Springer, 1987.

\bibitem[{Ham}14]{Hamenstadt:LinesOfMinima}
U.~{Hamenst\"adt}, \emph{Lines of minima in outer space}, Duke Math. J.
  \textbf{163} (2014), no.~4, 733--776, arXiv:0911.3620.

\bibitem[Ham17]{Hamann:GroupActions}
M.~Hamann, \emph{Group actions on metric spaces: fixed points and free
  subgroups}, Abh. Math. Semin. Univ. Hambg. \textbf{87} (2017), no.~2,
  245--263.

\bibitem[HM09]{HandelMosher:SubgroupOutF_n}
M.~Handel and L.~Mosher, \emph{Subgroup classification in {$\text{Out}(F_n)$}},
  arXiv:0908.1255, 2009.

\bibitem[HM13]{HandelMosher:FreeSplittingHyperbolic}
\bysame, \emph{{The free splitting complex of a free group I: Hyperbolicity}},
  Geom. Topol. \textbf{17} (2013), 1581--1670.

\bibitem[HM14]{HandelMosher:RelComplexHyp}
\bysame, \emph{Hyperbolicity of relative free splitting and free factor
  complexes}, arXiv:1407.3508, 2014.

\bibitem[HM17]{HandelMosher:BddCohomologyII}
\bysame, \emph{{Hyperbolic actions and 2nd bounded cohomology of subgroups of
  $\mathsf{Out}(F_n)$ Part II: Finite lamination subgroups}}, arXiv:1702.08050,
  2017.

\bibitem[HM18]{HandelMosher:WWPD}
\bysame, \emph{{Second bounded cohomology and WWPD}}, arXiv:1901.01301, 2018.

\bibitem[HM19a]{HandelMosher:Subgroups}
\bysame, \emph{{Subgroup decomposition in $\mathsf{Out}(F_n)$}}, Memoirs AMS
  (2019), To appear. See also: Intro arXiv:1302.2681, Part I arXiv:1302.2378,
  Part II arXiv:1302.2379, Part III arXiv:1306.4712, Part IV arXiv:1306.4711.

\bibitem[HM19b]{HandelMosher:FreeSplittingLox}
\bysame, \emph{{The free splitting complex of a free group II: Loxodromic outer
  automorphisms}}, Trans. AMS (to appear) (2019), arXiv:1402.1886.

\bibitem[HM19c]{HandelMosher:VirtuallyAbelian}
\bysame, \emph{Virtually abelian subgroups of $\text{IA}_n(\mathbb{Z}/3)$ are
  abelian}, Michigan Math. J. (to appear) (2019), arXiv:1801.09241.

\bibitem[Hor16]{Horbez:HandelMosher}
C.~Horbez, \emph{{A short proof of Handel and Mosher's alternative for
  subgroups of $\mathsf{Out}(F_n)$}}, {Groups Geom. Dyn.} \textbf{2} (2016),
  709--721.

\bibitem[Iva92]{Ivanov:subgroups}
N.~V. Ivanov, \emph{{Subgroups of Teichm\"uller modular groups}}, Translations
  of Mathematical Monographs, vol. 115, Amer. Math. Soc., 1992.

\bibitem[Mar91]{Margulis:DiscreteSubgroups}
G.~A. Margulis, \emph{Discrete subgroups of semisimple {L}ie groups},
  Ergebnisse der Mathematik und ihrer Grenzgebiete (3) [Results in Mathematics
  and Related Areas (3)], vol.~17, Springer-Verlag, Berlin, 1991.

\bibitem[MM99]{MasurMinsky:complex1}
H.~Masur and Y.~Minsky, \emph{{Geometry of the complex of curves, I.
  Hyperbolicity}}, Invent. Math. \textbf{138} (1999), no.~1, 103--149.

\bibitem[Osi16]{Osin:AcylHyp}
D.~Osin, \emph{Acylindrically hyperbolic groups}, Trans. AMS \textbf{368}
  (2016), no.~2, 851--888.

\bibitem[Vog02]{Vogtmann:OuterSpaceSurvey}
K.~Vogtmann, \emph{Automorphisms of free groups and outer space}, Proceedings
  of the Conference on Geometric and Combinatorial Group Theory, Part I (Haifa,
  2000), vol.~94, 2002, pp.~1--31.

\bibitem[Whi36]{Whitehead:CertainSets}
J.~H.~C. Whitehead, \emph{On certain sets of elements in a free group}, Proc.
  London Math. Soc. \textbf{S2-41} (1936), no.~1, 48--56.

\bibitem[Zim84]{Zimmer:book}
R.~Zimmer, \emph{{Ergodic theory and semisimple groups}}, Monographs in Math,
  vol.~81, Birkhauser, 1984.

\end{thebibliography}

\end{document}